\numberwithin{equation}{section}
\newtheorem{teo}{Theorem}[section]
\newtheorem{prop}[teo]{Proposition}
\newtheorem{lema}[teo]{Lemma}
\newtheorem{coro}[teo]{Corollary}
\theoremstyle{definition}
\newtheorem{defi}[teo]{Definition}
\newtheorem{notation}[teo]{Notation}
\newtheorem{ej}[teo]{Example}
\theoremstyle{remark}
\newtheorem*{ack}{Acknowledgements}
\title{Stability under scaling in the local phases of multiplicative functions}
\author{Miguel N. Walsh}
\address{Departamento de Matemática e IMAS-CONICET, Facultad de Ciencias Exactas y Naturales, Universidad de Buenos Aires, 1428 Buenos Aires, Argentina}
\email{mwalsh@dm.uba.ar}
\begin{document}

\def\F{\mathbb{F}}
\def\Fqn{\mathbb{F}_q^n}
\def\Fq{\mathbb{F}_q}
\def\Fp{\mathbb{F}_p}
\def\Di{\mathbb{D}}
\def\E{\mathbb{E}}
\def\Z{\mathbb{Z}}
\def\Q{\mathbb{Q}}
\def\C{\mathbb{C}}
\def\R{\mathbb{R}}
\def\N{\mathbb{N}}
\def\H{\mathbb{H}}
\def\P{\mathbb{P}}
\def\T{\mathbb{T}}
\def\modp{\, (\text{mod }p)}
\def\modN{\, (\text{mod }N)}
\def\modq{\, (\text{mod }q)}
\def\modone{\, (\text{mod }1)}
\def\ZN{\mathbb{Z}/N \mathbb{Z}}
\def\Zp{\mathbb{Z}/p \mathbb{Z}}
\def\Zan{a^{-n}\mathbb{Z}/ \mathbb{Z}}
\def\Zal{a^{-l} \Z / \Z}
\def\Pr{\text{Pr}}
\def\leftsize{\left| \left\{}
\def\rightsize{\right\} \right|}

\begin{abstract}
We introduce a strategy to tackle some known obstructions of current approaches to the Fourier uniformity conjecture. Assuming GRH, we then show the conjecture holds for intervals of length at least $(\log X)^{\psi(X)}$, with $\psi(X) \rightarrow \infty$ an arbitrarily slowly growing function of $X$. We expect the methods should adapt to nilsequences, thus also showing that the Generalised Riemann Hypothesis implies close to exponential growth in the sign patterns of the Liouville function.
\end{abstract}

\maketitle

\bigskip

\section{Introduction}

\subsection{Statement of results}

The Chowla and Sarnak conjectures are two central problems in analytic number theory with a large amount of literature around them. In \cite{T} it was shown that after performing a logarithmic averaging, both problems become equivalent to a 'local uniformity' estimate for the Liouville function $\lambda(n)$, which predicts that if $H, X$ are parameters with $H \rightarrow \infty$ as $X \rightarrow \infty$, then
\begin{equation}
\label{o25}
 \frac{1}{X} \int_X^{2X} \sup \left| \sum_{x \le n \le x+H} \lambda(n) F(n) \right| dx  = o(H), 
\end{equation}
where the supremum is taken among all nilsequences $F$ with a fixed degree of nilpotency. In fact, it was shown in \cite{MRTTZ} that it would suffice to establish (\ref{o25}) in the range $H \ge (\log X)^{\varepsilon}$ for every $\varepsilon > 0$.

In this article we shall focus on the Fourier uniformity conjecture, which deals with the particular case where the nilsequences $F$ are given by linear phases $n \mapsto e(\alpha n)$. While technically simpler, this case should encompass the conceptual difficulties of the full conjecture and indeed, we strongly expect the methods developed in this article to extend to the general case of arbitrary nilsequences. 

Following the work in \cite{MRT, MRTTZ, W1, W2}, the conjecture is currently known to hold in the range $H \ge \exp( C (\log X)^{1/2} (\log \log X)^{1/2})$ and this is a crucial barrier to break, since it is the natural limit of many of the known techniques on these problems as well as of other related results (e.g. \cite{HR}). Very roughly speaking, if we partition $[X,2X]$ into intervals of the form $[x,x+H]$, then by multiplicativity the behaviour of $\lambda$ on any one interval influences its behaviour on $O(H)$ other intervals. Thus, in order to have enough intervals related to each other as to affect the integral in (\ref{o25}) we need to carry an iterative procedure that involves $\sim \frac{\log X/H}{\log H}$ steps. But with current technology, such iterations create an error of a constant factor at each step. As a consequence, this error exceeds the right-hand side of (\ref{o25}) as soon as $H$ goes below the aforementioned range, making this procedure inviable. 

In this article we shall introduce some new ideas to tackle certain known obstructions of current approaches to the Fourier uniformity conjecture. Among these is a general structure theorem for the set of phases of a possible counterexample, which opens up the possibility of carrying out iterative arguments with extremely small losses at each step. This morally reduces the problem to two specific types of potential counterexamples, which we shall discuss later in this introduction. Since assuming GRH these can be easily ruled out in the range $H \ge (\log X)^{\psi(X)}$, with $\psi(X) \rightarrow \infty$ an arbitrary slowly growing function of $X$, we arrive in particular at the following result.

\begin{teo}
\label{1}
Assume GRH. Then, given $\eta > 0$ there exist $C, K>1$ such that, for every complex-valued multiplicative function $g$ with $|g| \le 1$ satisfying
\begin{equation}
\label{A}
 \int_X^{2X} \sup_{\alpha} \left| \sum_{x \le n \le x+H} g(n) e(\alpha n) \right| dx \ge \eta H X,
 \end{equation}
for some $H \ge (\log X)^K$, we have $\mathbb{D} (g; CX^2/H^2,C) \le C$.
\end{teo}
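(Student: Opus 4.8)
The plan is to feed the hypothesis (\ref{A}) into the structure theorem for the phases of a counterexample and then use classical consequences of GRH to dispose of the two scenarios it produces. So suppose $g$ is as in the statement and satisfies (\ref{A}). Since $|g|\le 1$ forces the inner sum to have absolute value at most $H+1$, a reverse Markov inequality followed by a measurable selection over a $1/H$-net of frequencies produces a set $\mathcal{B}\subseteq[X,2X]$ with $|\mathcal{B}|\gg_\eta X$ and frequencies $\alpha(x)$, $x\in\mathcal{B}$, with $\big|\sum_{x\le n\le x+H}g(n)e(\alpha(x)n)\big|\gg_\eta H$. After a harmless dyadic decomposition one may further assume that the primes and scales occurring below lie in fixed dyadic ranges. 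The driving mechanism is the identity $g(pn)=g(p)g(n)$ for $p\nmid n$, which for each such $p$ ties the short sum on $[x,x+H]$ at frequency $\alpha(x)$ to the short sum on the dilated interval $[x/p,(x+H)/p]$ at frequency $p\alpha(x)$, with an error that the structure theorem is engineered to keep of size $o(H)$ even through the $\asymp\log(X/H)/\log H$ dilations one is forced to iterate.

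Applying the structure theorem to $(\mathcal{B},\alpha)$ then confines the frequencies to a rigid, bounded-complexity family that is essentially invariant under the maps $\alpha\mapsto p\alpha$ above. Separating according to whether this family clusters around a single Archimedean orbit or around finitely many rationals, one reduces --- unless $g$ already satisfies the desired pretentious bound --- to two scenarios, each holding on a positive-density subset $\mathcal{B}'\subseteq\mathcal{B}$: \emph{(I)} $e(\alpha(x)n)$ agrees on $[x,x+H]$, up to $o(1)$, with $c(x)\,n^{it}$ for one fixed real $t$, which must satisfy $|t|\ll X^2/H^2$ because otherwise $n^{it}$ oscillates too much on an interval of length $H$ to coincide there with a pure linear phase; and \emph{(II)} $\alpha(x)=a(x)/q+O\big(1/(qH)\big)$ for a single modulus $q$, where a priori $q$ could be as large as a power of $X$.

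In scenario (I) the hypothesis becomes a lower bound $\gg_\eta H$ for $\big|\sum_{x\le n\le x+H}g(n)n^{-it}\big|$ on a positive proportion of $x$, and, since $g(n)n^{-it}$ is again multiplicative, the inverse theorem for short multiplicative sums (the Matom\"aki--Radziwi\l\l\ / Hal\'asz input) shows that $g(n)n^{-it}$ pretends to $\chi(n)n^{is}$ for a Dirichlet character $\chi$ and a small real $s$, whence $g$ pretends to $\chi(n)n^{i(s+t)}$ with $|s+t|\ll X^2/H^2$. In scenario (II) a Fourier expansion over $\Z/q\Z$ shows that $g$ pretends, on each of these short intervals, to $\chi(n)n^{it(x)}$ for some $\chi$ of conductor $q$. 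In both cases the one remaining point is to bound the conductor of $\chi$, and this is where GRH enters: under GRH the relevant $L$-functions have the widest possible zero-free region, and the resulting estimates for character sums and for $\mathbb{D}(\chi,1;\cdot)$ are incompatible with $g$ exhibiting such correlation on a positive-density set of intervals of length as small as $(\log X)^K$ once the conductor exceeds an absolute constant. Collecting the cases, $g$ pretends to $\chi(n)n^{iu}$ with $\chi$ of bounded conductor and $|u|\ll X^2/H^2$, which after enlarging $C$ is exactly $\mathbb{D}(g;CX^2/H^2,C)\le C$; the value of $K$ is dictated by the quantitative inputs of the previous steps.

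The hard part is the structure theorem, and --- granting it --- the bookkeeping needed to make the two reductions quantitatively honest: one must check that the cumulative error really remains $o(H)$ through all the dilation steps, and that the GRH elimination of intermediate moduli persists all the way down to poly-logarithmic $H$, rather than halting at the classical threshold $\exp\!\big(c\sqrt{\log X\,\log\log X}\big)$. The GRH computations in the last step are, by comparison, routine.
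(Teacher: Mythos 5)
There is a genuine gap, and it is located exactly where you write ``granting it'': the ``structure theorem for the phases of a counterexample'' that your argument feeds (\ref{A}) into is not an available off-the-shelf input --- it is the main content of this paper, and proving it occupies Sections 3--7. The known structural results (from \cite{MRT, W1, W2}) lose a constant factor at each of the $\sim \log(X/H)/\log H$ dilation steps, which is precisely why the unconditional range stops at $H \ge \exp(C(\log X)^{1/2}(\log\log X)^{1/2})$. To get down to $H \ge (\log X)^K$ one must redo the iteration so that each step loses only $O(1/\log X)$, and that requires the new machinery here: the parallel-rigidity lemmas (Section 3), the fixed-prime lifting decomposition $\mathcal{A}=\bigcup_r \mathcal{A}_r$ of Proposition \ref{lift28} (Section 4), and the concentration-increment argument built on the parameter $\mathcal{C}(\mathcal{A})$ (Section 5). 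Your remark that ``one must check that the cumulative error really remains $o(H)$ through all the dilation steps'' understates the issue: even an $o(1)$ loss per step is fatal, and making the loss $O(1/\log X)$ is the central new idea, not bookkeeping.

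A secondary but substantive inaccuracy is in how GRH enters. It is not used through zero-free regions or character-sum estimates applied to $g$ to ``bound the conductor of $\chi$.'' It enters only through Lemma \ref{GRHbound}, which feeds the mixing lemma (Lemma \ref{ML} and Corollary \ref{ML0}): a combinatorial expansion statement counting multiplicative paths of primes in $[P,2P]$ subject to a congruence condition mod $Q$. This expansion is what rules out the two potential counterexamples of the introduction --- the fragmentation of $[X,2X]$ into many pieces each carrying its own $T_i$ (archimedean expansion, Lemma \ref{mon6}/\ref{grow26}), and a large modulus $b$ in the rational part of the frequency (modular expansion, Lemma \ref{b6}, via paths of length $s=M\lceil\log b/\log P\rceil$ and the Blakley--Roy inequality). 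Your scenarios (I) and (II) are recognizable as the two obstructions, but the mechanism you propose for eliminating them under GRH would not, as stated, produce the bounds $b=O(1)$ and $|T|\lesssim X^2/H^2$ that the final application of the Matom\"aki--Radziwi\l{}\l{} results requires.
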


Here we are writing as usual $\mathbb{D}(g;T,Q)$ for the 'pretentious' distance introduced by Granville and Soundararajan \cite{GS}:
$$ \mathbb{D}(g;T,Q) = \inf \left( \sum_{p \le T} \frac{1 - \text{Re}(g(p)p^{it}\chi(p))}{p} \right)^{1/2} ,$$
with the infimum taken over all $|t| \le T$ and all Dirichlet characters of modulus at most $Q$. In particular, we see that (\ref{A}) cannot hold for the Möbius and Liouville functions.

As it was mentioned before, we expect the methods to extend to nilsequences. By \cite[Theorem 5.1]{MRTTZ}, this would in particular imply that under GRH the number of sign patterns of length $k$ of the Liouville function is at least $2^{k^{1/\psi(k)}}$ with $\psi(k)$ an arbitrarily slowly growing function of $k$. In other words, that the Generalised Riemann Hypothesis implies close to exponential growth in the sign patterns of the Liouville function. We refer to \cite{FH, MR, MRT3, MRTTZ, Mc, T0, TT, Te} for recent work on this problem.

Following the discussion in \cite{MRTTZ}, one can identify three main obstacles of current approaches to the local uniformity conjecture. The first one is the need of some form of 'mixing' or 'expansion', which even assuming GRH we only know how to guarantee in the range of Theorem \ref{1}. We do not address this is issue in the present article and it is indeed the reason for our hypotheses in that theorem.

The second one was already mentioned and is the inefficiency of iterative arguments when $H$ is small, which as previously discussed we will be able to overcome through some structural results we shall establish. Finally, the third obstruction is the 'lack of enough primes' in the range $H \le \log X$ which stops one from creating a sufficiently large modulus out of the primes smaller than $H$ and was considered a central limitation of previous approaches (again, see \cite{MRT, MRTTZ} for some discussion). As it turns out, the methods of the present article also point at a way of resolving this issue, as will be explained towards the end of this introduction.

In summary, the obstacle that remains in this approach is to obtain some suitable form of 'expansion'. In this regard, since our use of GRH is rather direct, we cannot discard that a more refined argument could be used to extend our conditional result to the range $H=(\log X)^{\varepsilon}$, possibly by taking advantage of some further averaging.

\subsection{General notation} We will write $X \lesssim Y$ or $X=O(Y)$ to mean that there is some absolute constant $C$ with $|X| \le C Y$ and we shall write $X \sim Y$ if both $X \lesssim Y$ and $Y \lesssim X$ hold. If the implicit constants depend on some additional parameters, a subscript will be used to indicate this. For a finite set $S$ we write $|S|$ for its cardinality. We abbreviate $e(x):=e^{2 \pi i x}$ and write $\| \cdot \|$ for the distance to $0$ in $\R / \Z$. 

\subsection{Overview of the methods}

We will now discuss in some length the main ideas and the general organisation of the paper. We will begin in Section 2 by setting up some preliminaries. Among these is a mixing lemma obtained as in \cite{MRT, MRTTZ}, but using GRH as input. Although the proof of this mixing lemma is the only place where the Riemann Hypothesis is used, the lemma itself will be used several times throughout the arguments to obtain some needed 'expansion' properties.

In Section 2 we will also invoke some estimates from previous work that reduce the Fourier uniformity conjecture to an inverse problem, the proof of which will be our goal. Let us briefly recall the general idea. If Theorem \ref{1} fails, then we can find $\gtrsim X/H$ disjoint intervals in $[X, 2X]$ of the form $[x,x+H]$ such that in each such interval $g(n)$ has a large correlation with $e(\alpha_x n)$ for some $\alpha_x \in \R / \Z$. This gives us what we call an $H$-'configuration': a collection of elements in $\R \times \R / \Z$ whose first coordinates are $H$-separated points. If we write $\mathcal{J}$ for the $H$-configuration given by these elements $(x, \alpha_x)$, then one can show using Elliott's inequality and the large sieve that there must exist some $P$ equal to a small power of $H$, such that if we write $\mathcal{P}$ for the set of primes in $[P,2P]$, there are $\gtrsim \frac{X}{H} |\mathcal{P}|^2$ quadruples $( (x,\alpha_x), (y,\alpha_y) , p , q ) \in \mathcal{J}^2 \times \mathcal{P}^2$ with $x/p$ close to $y/q$ and $p \alpha_x$ close to $q \alpha_y$. Let us write $(x,\alpha_x) \sim_{p,q} (y,\alpha_y)$ when this happens. The objective is then to show that given any $H$-configuration with this property, there must exists some real number $T$ with $|T| \lesssim X^2/H^2$ and some positive integer $q$ with $q=O(1)$, such that for $\gtrsim X/H$ of the elements of $(x,\alpha_x) \in \mathcal{J}$ there exists some integer $a_x$ such that $\alpha_x$ is close to $T/x + a_x/q$. Once this is accomplished, a simple Taylor expansion shows that $g(n)$ must correlate in the corresponding intervals with $n^{iT} \chi(n)$ for some Dirichlet character $\chi$ of modulus $q$ and by an application of the results of \cite{MR, MR2, MRT2} this yields Theorem \ref{1} (these ideas originate in \cite{MRT}, which the reader may consult for further discussion).

To obtain this inverse theorem our starting point will be the strategy introduced in \cite{W1}, which we also quickly summarise. Let $\mathcal{J}$ be the $H$-configuration just described, write $\mathcal{Z}$ for a maximal set of $PH$-separated points in $[XP, 4 XP]$ and for each $z \in \mathcal{Z}$ let $K^z$ consist of those pairs $((x,\alpha_x),p) \in \mathcal{J} \times \mathcal{P}$ such that $z/p$ is close to $x$. Using certain 'contagion' arguments, it is shown in \cite{W1} that it is possible to partition each $K^z$ into disjoint sets $K_i^z$, $i \ge 0$, such that $|K_0^z| \gtrsim |\mathcal{P}|$ for a positive proportion of the elements $z$ and such that for each $i \ge 0$ there exists some $\alpha_i^z \in \R / \Z$ with $p \alpha_i^z$ close to $\alpha_x$ for every $( (x,\alpha_x) ,p ) \in K_i^z$. It then follows from the pigeonhole principle and the triangle inequality that if we consider the $PH$-configuration $\mathcal{J}_1$ given by the elements $(z,\alpha_0^z)$, this configuration will also contain $\gtrsim \frac{X}{H} |\mathcal{P}|^2$ relations of the form $(z,\alpha_0^z) \sim_{p,q} (z',\alpha_0^{z'})$. This allows us to iterate the same contagion arguments until we reach a configuration $\mathcal{J}_k$ and an element $(z,\alpha) \in \mathcal{J}_k$ with $z \sim X P^k$ such that, for a positive proportion of elements $(x,\alpha_x) \in \mathcal{J}$, we can find primes $p_1,\ldots,p_k$ such that $z/\prod_{i=1}^k p_i$ is close to $x$ and $\alpha \prod_{i=1}^k p_i $ is close to $\alpha_x$. Once this global relation is established, the desired result follows easily from Vinogradov's lemma (we refer to \cite{W1} for more details).

Starting from a configuration that has $\ge c_0 \frac{X}{H} |\mathcal{P}|^2$ relations, the above procedure thus returns a new configuration having $\gtrsim \frac{X}{H} |\mathcal{P}|^2$ relations. The problem, of course, is the worsening of the implicit constant, with the methods of \cite{W1} returning something in the order of $c_0^2$. Since the argument needs to be iterated $\sim \frac{\log X}{\log H}$ times, this becomes useless as soon as $H$ is somewhat small. Therefore, what one would ideally like to have is a more efficient form of the above process. However, notice that not only losing a constant factor is problematic, but even losing only an $o(1)$ term at each step would fail to be enough. Indeed, to carry the above process $\sim \frac{\log X}{\log H}$ times for small values of $H$, we would need to ensure that $c_0$ is replaced at each step by a quantity that is at worst of the form $c_0 - O(\frac{1}{\log X})$. Although such a small loss may seem unrealistic at first, it turns out to be possible to accomplish.

The first ingredient in the proof is to exploit a certain rigidity present in the sets $K_i^z$ described above and this is carried out in Section 3. We call this phenomenon 'parallel rigidity': relations between only a few elements of $K_i^z$ and $K_j^{z'}$ are enough to force a relation between the corresponding frequencies $\alpha_i^z$, $\alpha_j^{z'}$ and therefore between all the elements in $K_i^z$ and $K_j^{z'}$. This is a variant of the contagion arguments from \cite{W1}. We will now describe three examples that will later help us explain some further arguments.

A first instance of this goes as follows. Suppose we are given $p_1, p_2 \in \mathcal{P}$ and elements $((x,\alpha_x),p_1), ((y,\alpha_y),p_2) \in K_i^z$ and $((x',\alpha_{x'}),p_1), ((y',\alpha_{y'}),p_2) \in K_j^{z'}$. Then, if we can find primes $q_1, q_2 \in \mathcal{P}$ such that $(x,\alpha_x) \sim_{q_1,q_2} (x',\alpha_{x'})$ and $(y,\alpha_y) \sim_{q_1,q_2} (y',\alpha_{y'})$, this automatically implies that $q_1 \alpha_i^z$ must be close to $q_2 \alpha_j^{z'}$ and therefore $(z,\alpha_i^z) \sim_{q_1,q_2} (z',\alpha_j^{z'})$. This, in turn, implies that a corresponding relation holds between all elements of $K_i^z$ and $K_j^z$.

On the other hand, suppose we have elements $((x,\alpha_x),p_1), ((y,\alpha_y),p_2) \in K^z$ which we do not know to be related under $\sim$. In particular, we now do not assume they belong to the same $K_i^z$ in the partition of $K^z$. If we are given another element $z'$ and a pair $((x',\alpha_{x'}),p_1), ((y',\alpha_{y'}),p_2) \in K_i^{z'}$ with $(x,\alpha_x) \sim (x',\alpha_{x'})$ and $(y,\alpha_y) \sim(y',\alpha_{y'})$, this unfortunately does not ensure that $(x,\alpha_x) \sim (y,\alpha_y)$. However, a second manifestation of what we call parallel rigidity says that something close to this is true, in that if we have a third element $z''$ and a corresponding pair of elements $((x'',\alpha_{x''}),p_1), ((y'',\alpha_{y''}),p_2) \in K_j^{z''}$ which also satisfy $(x,\alpha_x) \sim (x'',\alpha_{x''})$ and $(y,\alpha_y) \sim (y'',\alpha_{y''})$, then now this is indeed enough to guarantee that $(x,\alpha_x) \sim (y,\alpha_y)$.

Finally, let us mention one last instance of this rigidity, which is that only very few of the relations $(x,\alpha_x) \sim (y,\alpha_y)$ between elements of $K^z$ can happen between elements $(x,\alpha_x) \in K_i^z$ and $(y,\alpha_y) \in K_j^z$ with $i \neq j$.

Even equipped with this rigidity, it is not easy to see how to proceed. One would be tempted to build a configuration assigning to each $z \in \mathcal{Z}$ a 'popular' value of $\alpha_i^z$ that depends on the values of $\alpha_j^{z'}$ that arise from the relations of the elements of $K^z$, but carrying this out seems to lead to the same iterative inefficiencies that are present in previous approaches. Instead, what we will do is to try to 'lift' the exact structure of relations present in $\mathcal{J}$ through a fixed prime $p^{\ast} \in \mathcal{P}$. By this we mean in particular that to each $(x,\alpha_x)$ we will associate an element in the new configuration that will have the form $(z, \alpha_x^{\uparrow})$ for some $z$ close to $p^{\ast} x$ (actually, we will adjust things so that $z$ is exactly $p^{\ast} x$) and where $\alpha_x^{\uparrow}$ is a pre-image of $\alpha_x$ under $p^{\ast}$. The fact that we can build this new configuration lifting all the elements by the same fixed prime $p^{\ast}$ turns out to be a dramatic advantage, both for the current approach as well as for other possible strategies.

Carrying out such a 'fixed-prime lifting' is the second main ingredient in the proof and will be done in Section 4. To understand the strategy it is a good idea look at the above rigidity properties probabilistically, with the lifting prime $p^{\ast}$ as the variable. Let then $(x,\alpha_x) \sim_{p,q} (y,\alpha_y)$ be a pair of elements in $\mathcal{J}$ related by primes $p,q \in \mathcal{P}$ and let $i,j,z,z'$ be such that $( (x,\alpha_x) , p^{\ast}) \in K_i^z$ and $( (y,\alpha_y), p^{\ast}) \in K_j^{z'}$. Our goal would be to show that a similar relation $(z, \alpha_x^{\uparrow}) \sim_{p,q} (z', \alpha_y^{\uparrow})$ holds in our new configuration, where $\alpha_x^{\uparrow}$ is the preimage of $\alpha_x$ by $p^{\ast}$ that lies closest to $\alpha_i^z$ and $\alpha_y^{\uparrow}$ is the preimage of $\alpha_y$ by $p^{\ast}$ that lies closest to $\alpha_j^{z'}$.

Let us for the time being assume that $K_i^z$ has size comparable to $|\mathcal{P}|$ and following our discussion about acceptable error terms, let us say an event happens almost surely if the probability it fails is $O( \frac{1}{\log X})$. Using that $|\mathcal{P}| \gtrsim (\log X)^K$ for some large value of $K$ and that by assumption, a positive proportion of the quadruples $( (x,\alpha_x), (y, \alpha_y),p,q) \in \mathcal{J}^2 \times \mathcal{P}^2$ with $x/p$ close to $y/p$ satisfy $(x,\alpha_x) \sim_{p,q} (y,\alpha_y)$, one can prove that in the situation of the previous paragraph we almost surely have many other elements $(x', \alpha_{x'}) \in K_i^z$, $(y', \alpha_{y'}) \in K^{z'}$ with $(x',\alpha_{x'}) \sim_{p,q} (y', \alpha_{y'})$. In each case, we know that $(x,\alpha_x) \sim (x',\alpha_{x'})$ since both elements belong to $K_i^z$, but we do not necessarily know that $(y, \alpha_y) \sim (y', \alpha_{y'})$. However, by the second form of parallel rigidity we discussed, if they fail to relate this would make $(x,\alpha_x), (x',\alpha_{x'})$ the unique pair of related elements to which they both simultaneously relate. This is unlikely and indeed, it allows us to prove that almost surely we have $(y,\alpha_y) \sim (y', \alpha_{y'})$ for many of these elements. But by the third instance of rigidity, we know this is also unlikely to happen unless $(y',\alpha_{y'}) \in K_j^{z'}$, which means that almost surely we have $(y',\alpha_{y'}) \in K_j^{z'}$ for many of these elements as well. In particular, we have thus obtained two elements of $K_i^z$ relating with two elements of $K_j^{z'}$. By the first form of parallel rigidity we explained, this means in turn that $p \alpha_i^z$ must be close to $q \alpha_j^{z'}$ and from here we can conclude that (almost surely) we have $(z, \alpha_x^{\uparrow}) \sim_{p,q} (z', \alpha_y^{\uparrow})$, as desired.

Although this is an accurate representation of the basic idea, carrying this out in practice is unfortunately a bit cumbersome. One of the reasons for this is that every time we apply 'parallel rigidity' estimates, the implicit constants in the relations worsen. This forces us to carefully study certain nested sequences of sets and in particular leads to the chain of parameters discussed in (\ref{27chain}). The detailed argument is implemented in Section 4.

Recall that for the above probabilistic argument we had to assume that $K_i^z$ has size comparable to $|\mathcal{P}|$. Because of this, the exact result we obtain deals with sets $\mathcal{J}_r$, $0 \le r \le s$, for some large parameter $s = O(1)$, with $\mathcal{J} = \bigcup_{r=0}^s \mathcal{J}_r$ and such that if $(x,\alpha_x) \in \mathcal{J}_r$ for some $0 \le r < s$ then its corresponding set $K_i^z$ has size $\gtrsim \frac{1}{(\log X)^{rB}} |\mathcal{P}|$ for some large constant $B$. Then, the conclusion of the above arguments is that if $(x, \alpha_x) \sim (y,\alpha_y)$ with $(x,\alpha_x) \in \mathcal{J}_r$ for some $0 \le r < s$, then almost surely we have that $(y,\alpha_y) \in \mathcal{J}_{r+1}$ and that the relation lifts to a corresponding relation $(z, \alpha_x^{\uparrow}) \sim (z',\alpha_y^{\uparrow})$ in the new configuration. This structural estimate forms the basis of the proof of Theorem \ref{1} and is entirely unconditional.

The third main ingredient of the article is developed in Section 5 and consists of what we may call a 'concentration-increment' argument, since it is much in the spirit of other increment or decrement arguments in the literature. This will allow us to exploit the above decomposition by studying the behaviour of a 'concentration' parameter $\mathcal{C}(\mathcal{A})$ associated to any configuration $\mathcal{A}$ and which essentially measures how many relations $(x,\alpha_x) \sim_{p,q} (y,\alpha_y)$ we have between elements of $\mathcal{A}$ in proportion to the size of $\mathcal{A}$. The crucial property of $\mathcal{C}(\mathcal{A})$ that will allow us to run an increment argument is that, by the mixing lemma, unless $\mathcal{A}$ is very small this quantity is always uniformly bounded from above by an absolute constant.

Let then $\mathcal{A}$ be a configuration with $\mathcal{C}(\mathcal{A}) \sim 1$, which is for instance the case if $\mathcal{A} = \mathcal{J}$. This hypothesis allows us to obtain sets $\mathcal{A}_0, \ldots, \mathcal{A}_s$ with the properties described above. The issue we have at present is that we have no information about how relations involving pairs of elements from $\mathcal{A} \setminus \bigcup_{j=0}^{s-1} \mathcal{A}_j$ lift to the new configuration. However, we will be able to show that if this set is large, then we can find some $0 \le t < s$ such that, writing $\mathcal{B}_1 = \bigcup_{j=0}^t \mathcal{A}_j$ and $\mathcal{B}_2 = \mathcal{A} \setminus \mathcal{B}_1$, there are relatively few relations between elements of $\mathcal{B}_1$ and  elements of $\mathcal{B}_2$. This, in turn, will imply that one of these sets must satisfy a rather clean concentration-inequality of the form $\mathcal{C}(\mathcal{B}_i) \ge \mathcal{C}(\mathcal{A}) \left( \frac{|\mathcal{B}_i|}{|\mathcal{A}|} \right)^{-1/2}$. This iterates nicely and since the concentration parameter is always bounded from above by an absolute constant, we eventually reach a subset $\tilde{\mathcal{A}}$ of $\mathcal{A}$ with $\mathcal{C}(\tilde{\mathcal{A}}) \ge \mathcal{C}(\mathcal{A}) \left( \frac{|\tilde{\mathcal{A}}|}{|\mathcal{A}|} \right)^{-1/2}$, and therefore $|\tilde{\mathcal{A}}| \gtrsim |\mathcal{A}|$, such that the corresponding set $\tilde{\mathcal{A}} \setminus \bigcup_{j=0}^{s-1} \tilde{\mathcal{A}}_j$ is very small. This finally allows us to conclude that there exists some prime $p^{\ast}$ and a corresponding 'lifted' configuration $\tilde{\mathcal{A}}^{\uparrow p^{\ast}}$, such that every relation in $\tilde{\mathcal{A}}$ lifts almost surely to a relation in $\tilde{\mathcal{A}}^{\uparrow p^{\ast}}$.

A nice feature of this concentration increment argument is that it enables us to relate configurations lying at different scales. The process goes as follows. We start with $\mathcal{J}$ and apply the above arguments to reach a subset $\mathcal{A}_{(0)} \subseteq \mathcal{J}$ with $\mathcal{C}(\mathcal{A}_{(0)}) \ge \mathcal{C}(\mathcal{J}) \left( \frac{|\mathcal{A}_{(0)}|}{|\mathcal{J}|} \right)^{-1/2}$ and a prime $p_1^{\ast}$ such that every relation between elements of $\mathcal{A}_{(0)}$ lifts almost surely to a relation between the corresponding elements in a lifted configuration $\mathcal{A}_{(0)}^{\uparrow p_1^{\ast}}$. This implies in particular that $\mathcal{C}(\mathcal{A}_{(0)}^{\uparrow p_1^{\ast}}) = \mathcal{C}(\mathcal{A}_{(0)}) + O ( \frac{1}{\log X})$. We then iterate this. Once we have a lifted configuration $\mathcal{A}_{(j-1)}^{\uparrow p_j^{\ast}}$, we apply the concentration increment argument to obtain a new configuration $\mathcal{A}_{(j)} \subseteq \mathcal{A}_{(j-1)}^{\uparrow p_j^{\ast}}$ satisfying by induction an estimate of the form 
$$\mathcal{C}(\mathcal{A}_{(j)}) \ge \mathcal{C}(\mathcal{A}_{(j-1)}^{\uparrow p_j^{\ast}}) \left( \frac{|\mathcal{A}_{(j)}|}{|\mathcal{A}_{(j-1)}^{\uparrow p_j^{\ast}}|} \right)^{-1/2} \ge \mathcal{C}(\mathcal{J}) \left( \frac{| \mathcal{A}_{(j)} |}{|\mathcal{J}|} \right)^{-1/2} + O \left(\frac{j}{\log X} \right).$$
Furthermore, we also obtain a new prime $p_{j+1}^{\ast}$ such that every relation in $\mathcal{A}_{(j)}$ lifts almost surely to a relation in a lifted configuration $\mathcal{A}_{(j)}^{p_{j+1}^{\ast}}$, which therefore satisfies $\mathcal{C} ( \mathcal{A}_{(j)}^{p_{j+1}^{\ast}}) = \mathcal{C} (\mathcal{A}_{(j)}) + O \left( \frac{1}{\log X} \right)$ and this allows us to iterate the above procedure.

The conclusion is that we can find a sequence of primes $p_1^{\ast}, \ldots, p_k^{\ast}$ and corresponding $H \prod_{t=1}^j p_t^{\ast}$-configurations $\mathcal{A}_{(j)}$ whose first coordinates lie in $[X \prod_{t=1}^j p_t^{\ast}, 2 X \prod_{t=1}^j p_t^{\ast}]$, such that $|\mathcal{A}_{(j)}| \gtrsim X/H$ and the number of quadruples $( (x,\alpha_x), (y,\alpha_y), p ,q) \in \mathcal{A}_{(j)}^2 \times \mathcal{P}^2$ with $(x,\alpha_x) \sim_{p,q} (y,\alpha_y)$ is $\gtrsim \frac{X}{H} |\mathcal{P}|^2$, where in both cases the implicit constants are uniformly bounded from below thanks to the above concentration inequalities.

From here we wish to conclude the proof as in \cite{W1}. With this in mind, we develop in Section 6 some variants of estimates obtained in \cite{W2}, which we will employ in Section 7 to show that if $k$ is sufficiently large and $(x_0, \alpha_{x_0}) \in \mathcal{A}_{(k)}$, then there are $\gtrsim X/H$ elements $(x,\alpha_x) \in \mathcal{J}$ for which we can find primes $p_1, \ldots, p_k$ with $x_0/\prod_{j=1}^k p_i$ close to $x$ and $\alpha_{x_0} \prod_{j=1}^k p_j$ close to $\alpha_x$. This is achieved through the mixing lemma and is a good place in this introduction to provide an example showing why the expansion properties provided by that lemma are indeed necessary.

\begin{ej}
\label{closed}
Let $S$ be a set of $H$-separated points in $[X,2X]$ of size $\gtrsim X/H$. Suppose we can find a partition $S_1, \ldots, S_m$ of $S$ into sets of comparable size, such that for every $1 \le i \le m$, there are $\gtrsim |S_i| |\mathcal{P}|^2$ quadruples $(x,y,p,q) \in S_i^2 \times \mathcal{P}^2$ with $x/p$ close to $y/q$. Then, given any choice of real numbers $T_1, \ldots, T_m \lesssim X^2/H^2$, if we consider the configuration that assigns to each element $x \in S_i$ the frequency $\alpha_x = T_i/x \, (\text{mod }1)$, we obtain a configuration that satisfies the hypotheses of the inverse conjecture. However, if $m$ is sufficiently large, it is easy to see that for most choices of the elements $T_i$ it does not satisfy its conclusion.
\end{ej}

This example illustrates rather neatly why we are unable to make our results unconditional. Assuming the Riemann Hypothesis and as long as $P$ is a sufficiently large power of $\log X$, we can use the mixing lemma to show that any set $S_i$ as in the example must have size at least $c X/H$ for some absolute constant $c > 0$, thus forbidding the parameter $m$ to be sufficiently large as to contradict the inverse conjecture. In fact, since $P$ needs to be of the form $H^c$ with $c$ sufficiently small with respect to $\eta$, this is what forces us to have $K$ depend on $\eta$ in Theorem \ref{1}. On the other hand, if such an example existed for a large value of $m$, we could not expect, for instance, an element $(x_0, \alpha_{x_0}) \in \mathcal{A}_{(k)}$ to be connected to $\gtrsim X/H$ elements of $\mathcal{J}$ in the way recently described. It seems reasonable to suspect that if one had an unconditional method for ruling out this specific example, this could allow us to make our arguments unconditional.

Let $(x_0, \alpha_{x_0}) \in \mathcal{A}_{(k)}$ be as before. It is easy to see that as long as $k-j$ is sufficiently large, we can find an element $(x,\alpha_x) \in \mathcal{A}_{(j)}$ and two different sets of primes $p_1^{(i)}, \ldots, p_{k-j}^{(i)}$, $i=1,2$, such that $x_0/\prod_{r=1}^{k-j} p_{r}^{(i)}$ is close to $x$ and $\alpha_{x_0} \prod_{r=1}^{k-j} p_r^{(i)}$ is close to $\alpha_x$. From this it follows that $\alpha_{x_0}$ is close to $\frac{a}{b}$ for some integers $a, b$ with $b$ dividing $\left| \prod_{r=1}^{k-j} p_r^{(1)} - \prod_{r=1}^{k-j} p_r^{(2)} \right|$. Unfortunately, when $H$ is small we do not have a good control on this difference and can therefore only deduce from this a bound of the form $b \lesssim X^{\varepsilon}$. This is problematic, since using the arguments of \cite{W1} this at best would allow us to conclude that for many elements $(x,\alpha_x) \in \mathcal{J}$ we have some integer $a_x$ with $\alpha_x = \frac{a_x}{b} + O(1/H)$ and this is of course meaningless if $b \gtrsim H$. 

The trick here is to first run the argument with $\mathcal{A}_{(j)}$ in place of $\mathcal{J}$ for a certain $0 < j < k$. The advantage is that the contagion arguments make the relations $(x,\alpha_x) \sim_{p,q} (y,\alpha_y)$ between elements of $\mathcal{A}_{(j)}$ become stronger as $j$ grows, in the sense that the distance between $p \alpha_x$ and $q \alpha_y$ gets smaller the larger $j$ is. In particular, if we know that $\alpha_x$ is close to $\frac{a_x}{b}$ and $\alpha_y$ is close to $\frac{a_y}{b}$, then for an adequate choice of $j$ we will be able to guarantee that $p a_x$ has to actually be \emph{equal} to $q a_y$ ($\text{mod }b$). This brings us to the second potential counterexample.

\begin{ej}
Let $b \in \N$ and let $S$ be a collection of $H$-separated points in $[X,2X]$. Suppose that to each $x \in S$ we can assign an integer $a_x$ coprime to $b$ in such a way that we have $\gtrsim \frac{X}{H} |\mathcal{P}|^2$ quadruples $(x,y,p,q) \in S^2 \times \mathcal{P}^2$ with $x/p$ close to $y/q$ and $p a_x \equiv q a_y \, (\text{mod }b)$. Then, if $b$ is sufficiently large, the configuration given by the elements $(x,\frac{a_x}{b})$ contradicts the inverse conjecture.
\end{ej}

This counterexample can be ruled out as long as we have some sort of 'modular' expansion, analogously as how the previous example was dealt with through what we may call 'archimedean' expansion. Indeed, assuming GRH we can easily show using the mixing lemma that for any such example it must be $b=O(1)$, which is acceptable. This is done in Section 7. On the other hand, this example may be an interesting model problem to consider, given that its 'exact' nature may allow for a different set of tools. It should be mentioned that the bound $b \lesssim X^{\varepsilon}$ we previously referred to can be improved a bit and indeed, the upper bounds one can obtain for $b$ largely depend on how efficient is the method we have to obtain 'archimedean' expansion. In particular, it is plausible that an adequate argument to deal with Example \ref{closed} could lead to a regime of $b$ that can be handled unconditionally, which is why we believe that first example is the most representative of the obstruction our methods face.

Going back to our arguments, we deduce that $\alpha_{x_0}$ is close to $\frac{a}{b}$ for some $a,b = O(1)$ and this allows us to write $\alpha_{x_0} = \frac{a}{b} + \frac{T}{x_0}$ for some $|T| \lesssim X^2/H^2$. Since we know that $(x_0,\alpha_{x_0})$ is connected through products of primes to $\gtrsim X/H$ elements of $\mathcal{J}$, we can then quickly conclude the proof of Theorem \ref{1}. This is actually carried out in two parts, obtaining the result with a weaker bound of the form $|T| \lesssim (X^2/H^2) \log X$ in Section 7 and then removing the extra logarithmic factor in Section 8.

One would of course like to extend Theorem \ref{1} to the range $H \ge (\log X)^{\varepsilon}$. Although the lack of expansion stops us from doing this, the methods we have explained do seem to rule out what was considered a major obstruction of previous approaches in this regime. Let us finish this introduction with a brief discussion of this issue.

The starting point of the methods of \cite{MRT, MRTTZ, W2} is the observation (made in \cite{MRT}) that for each element $(x,\alpha_x) \in \mathcal{J}$ we may use the Chinese Remainder Theorem to replace $\alpha_x$ with a congruent real number $\tilde{\alpha}_x$ such that for a positive proportion of the relations $(x,\alpha_x) \sim_{p,q} (y,\alpha_y)$, we have that $p \tilde{\alpha}_x$ and $q \tilde{\alpha}_y$ are close not just mod $1$, but to a large modulus $Q_{x,y}$. If these moduli are sufficiently large, in particular larger than the expected value of $|T|$, this opens up a number of tools that eventually allow one establish the conjecture in the range $H \ge \exp(C (\log X)^{1/2} (\log \log X)^{1/2})$. However, as it was noted in \cite{MRT}, since these moduli $Q_{x,y}$ are obtained as the product of distinct primes of size at most $H$, it is impossible to make them sufficiently large once $H$ drops below $\log X$.

On the other hand, the conditional methods of this article provide us with primes $p_1^{\ast}, \ldots, p_k^{\ast}$ and a configuration $\mathcal{A}_{(k)}$ in $[X \prod_{t=1}^k p_t^{\ast}, 2X \prod_{t=1}^k p_t^{\ast}]$ of size $\gtrsim X/H$ with $\gtrsim \frac{X}{H} |\mathcal{P}|^2$ relations of the form $(x,\alpha_x) \sim_{p,q} (y,\alpha_y)$ between its elements. Furthermore, for each $(x,\alpha_x) \in \mathcal{A}_{(k)}$, there exists some $(x',\alpha_{x'}) \in \mathcal{J}$ with $x/\prod_{t=1}^k p_t^{\ast} = x'$ and $\alpha_x \prod_{t=1}^k p_t^{\ast} = \alpha_{x'}$. If for each $\alpha_x$ we take a representative in $[0,1)$ and then redefine the corresponding $\alpha_{x'}$ as the real number $\alpha_x \prod_{t=1}^k p_t^{\ast}$, we obtain that every time we have a relation $(x,\alpha_x) \sim_{p,q} (y,\alpha_y)$ in $\mathcal{A}_{(k)}$, their corresponding elements in $\mathcal{J}$ satisfy both $(x',\alpha_{x'}) \sim_{p,q} (y',\alpha_{y'})$ and that $p \alpha_{x'}$ is close to $q \alpha_{y'}$ mod $Q = \prod_{t=1}^k p_t^{\ast}$. Since our methods allow for a large choice of $k$, this not only makes $Q$ sufficiently large as to avoid the aforementioned limitation but also makes all the choices of $Q_{x.y}$ equal to a fixed $Q$, which is a substantial advantage in practice. Of course, in order to profit from this it still remains to deal with the expansion issue. 

Finally, the alert reader may notice that we have taken advantage of the fact that $H$ is a large power of $\log X$ in other parts of the arguments, for example when carrying out the probabilistic approach. However, these uses seem less fundamental and there appear to be a number of ways to remedy them if necessary. For example, by choosing a large positive integer $k=O(1)$, iterating the contagion arguments $k$ times at the beginning of the proof and then having the role of $\mathcal{P}$ played by its $k$-fold product $\mathcal{P}^k$. 

\begin{ack}
This work was partially supported by a von Neumann Fellowship from the Institute for Advanced Study and the National Science Foundation under Grant No. DMS-1926686.
\end{ack}

\section{Preliminaries}

In this section we develop some preliminary estimates we shall need. In the first part, we recall some results from previous work that reduce Theorem \ref{1} to an inverse problem. We then use GRH to establish a variant of the mixing lemma from \cite{MRT} that we will be using throughout the article. Finally, we end this section with a brief discussion of the relations between the parameters that will be used in the proof of Theorem \ref{1}.

\subsection{Configurations}

Let us begin by formally defining our notion of configurations.

\begin{defi}
Given $\tilde{H} > 1$, $c > 0$ and an interval $I \subseteq \R$, we say $\mathcal{J} \subseteq I \times \R$ is a $(c,\tilde{H})$-configuration in $I$ if $|\mathcal{J}| \ge c|I|/\tilde{H}$ and the first coordinates are $\tilde{H}$-separated points in $I$ (i.e. $|x-y| \ge \tilde{H}$ if $x \neq y$). 
\end{defi}

The next lemma, which goes back to \cite{MRT}, shows that a counterexample to Theorem \ref{1} would lead to a specific type of configuration.

\begin{lema}
\label{base00}
Let $\varepsilon > 0$ be sufficiently small with respect to $\eta$ and assume Theorem \ref{1} fails for some choice of $K$ that is sufficiently large with respect to $\varepsilon, \eta$. Then, there exist $c_0, c_0', \delta_0, C_0 \sim_{\eta, \varepsilon} 1$ with $\delta_0 < 10^{-10}$, $P \in [H^{\varepsilon^2}, H^{\varepsilon}]$ and a $(c_0, H)$-configuration $\mathcal{J}$ in $[X,2X]$ such that, writing $\mathcal{P}$ for the set of primes in $[P,2P]$, we have that for at least $\ge c_0' \frac{X}{H} |\mathcal{P}|^2$ pairs $(x,\alpha_x), (y,\alpha_y) \in \mathcal{J}$, there exists $p,q \in \mathcal{P}$ with $|x/p - y/q| < \delta_0 H/P$ and $\| p \alpha_x - q \alpha_y \| \le C_0 /(PH)$.
\end{lema}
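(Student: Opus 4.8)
The plan is to start from the failure of Theorem \ref{1}: there is a multiplicative $g$ with $|g| \le 1$ satisfying \eqref{A} for some $H \ge (\log X)^K$, while $\mathbb{D}(g; CX^2/H^2, C) > C$ for the large constant $C$ that Theorem \ref{1} would produce. First I would partition $[X, 2X]$ into $\sim X/H$ disjoint intervals of length $H$ and use \eqref{A}, together with the pigeonhole principle, to extract a subcollection of $\gtrsim \eta X/H$ such intervals on each of which $\left| \sum_{x \le n \le x+H} g(n) e(\alpha_x n) \right| \ge (\eta/2) H$ for a suitable frequency $\alpha_x$. (One has to be slightly careful to pass from the integral over $x$ to a statement about disjoint intervals, but this is standard: restrict the supremum to a $1/H$-net of frequencies and average.) This produces the $(c_0, H)$-configuration $\mathcal{J} = \{(x, \alpha_x)\}$ in $[X,2X]$ with $c_0 \sim_\eta 1$.

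The heart of the matter is then to produce the many $\sim_{p,q}$ relations. Here the plan is to invoke Elliott's inequality (in the quantitative form due to Matom\"aki--Radziwi\l\l--Tao, as used in \cite{MRT}): if $g$ does not pretend to be $n \mapsto n^{it}\chi(n)$ for any small $t$ and any small-modulus character, then for a typical prime $p \sim P$ the correlation of $g$ on the interval $[x, x+H]$ is comparable to the correlation on the dilated interval, because $g(pn) = g(p) g(n)$ for $(n,p)=1$ and $g$ is 'equidistributed' across residues/dilations in the pretentious sense. Concretely, summing over a dyadic range of primes $p \in [P, 2P]$ and changing variables $n \mapsto pm$, the bound \eqref{A} transfers to a lower bound for $\frac{1}{X}\int \sup_\alpha |\sum_{x/p \le m \le x/p + H/P} g(m) e(\alpha m)|^2$-type averages, and then a second-moment/large-sieve computation over the pairs $(p,q) \in \mathcal{P}^2$ forces $\gtrsim \frac{X}{H}|\mathcal{P}|^2$ quadruples for which the frequency $p\alpha_x$ attached near $x/p$ must agree (to within $O(1/(PH))$, the reciprocal of the length of the dilated interval) with the frequency $q\alpha_y$ attached near $y/q$, whenever $x/p$ and $y/q$ lie in a common short interval, i.e. $|x/p - y/q| < \delta_0 H/P$. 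The role of the hypothesis $\mathbb{D}(g; CX^2/H^2, C) > C$ is exactly to license Elliott's inequality with the loss absorbed into the constants $c_0', \delta_0, C_0$; the freedom to choose $P \in [H^{\varepsilon^2}, H^{\varepsilon}]$ (rather than a single $P$) comes from the pigeonholing over dyadic scales inside $[H^{\varepsilon^2}, H^{\varepsilon}]$ needed to make the prime count $|\mathcal{P}| \sim P/\log P$ cooperate with the density of surviving quadruples. I would cite \cite[Lemma ...]{MRT} (or \cite{MRTTZ}) for the precise packaging of this reduction rather than reprove it, since the statement says the lemma 'goes back to \cite{MRT}'.

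The main obstacle I anticipate is the bookkeeping in the large-sieve/second-moment step: one must show not merely that many $(x,\alpha_x)$ survive, but that many \emph{pairs} are linked by a \emph{common} pair of primes with both the archimedean closeness $|x/p - y/q| < \delta_0 H/P$ and the frequency closeness $\|p\alpha_x - q\alpha_y\| \le C_0/(PH)$ simultaneously. The archimedean part is a counting fact about how the dilated points $x/p$ distribute in $[X/(2P), 2X/P]$ — there are $\sim X/(PH) \cdot |\mathcal{P}|$ of them in an interval of length $\sim X/P$, so by pigeonhole a positive proportion fall into common windows of length $\delta_0 H/P$ — while the frequency part is where Elliott's inequality is genuinely used, to rule out that the surviving correlations point in 'incompatible directions'. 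Keeping the two conditions coupled, and tracking that the loss at each pigeonholing stays $\sim_{\eta,\varepsilon} 1$ rather than degrading with $X$, is the delicate point; everything else (extraction of $\mathcal{J}$, choice of $\delta_0 < 10^{-10}$, the dyadic decomposition producing $P$) is routine. Since the excerpt explicitly attributes the lemma to prior work, I would present this as a recollection with the key inputs — the interval-to-configuration pigeonhole, Elliott's inequality, and the large sieve — cited in place.
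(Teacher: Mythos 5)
Your proposal matches the paper's approach: the paper's proof is a one-line citation of \cite[Lemma 3.6]{W1} followed by the pigeonhole principle and the triangle inequality, and your sketch correctly describes the Elliott-inequality-plus-large-sieve machinery underlying that cited lemma (the pigeonholing being what fixes a single dyadic scale $P$ and couples the archimedean and frequency conditions). The one small inaccuracy is that the failure of the conclusion $\mathbb{D}(g;CX^2/H^2,C)\le C$ is not what licenses this step --- the configuration and its relations are extracted from \eqref{A} alone via Tur\'an--Kubilius-type arguments, and the pretentious-distance hypothesis only enters much later to rule out the structured outcome.
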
 

\begin{proof}
This follows from \cite[Lemma 3.6]{W1}, upon applying the pigeonhole principle and the triangle inequality.
\end{proof}

\begin{notation}
For the rest of the article we will let $P, \mathcal{P}$ be as in Lemma \ref{base00}.
\end{notation}

As in previous work \cite{MRT, MRTTZ, W1, W2}, we will proceed by establishing an inverse theorem that implies Theorem \ref{1}. Precisely, using the arguments at the end of \cite[Section 5]{MRT} or at the end of \cite[Section 4]{W1}, one easily sees that Theorem \ref{1} follows from the following estimate, which will be our goal in the remainder of this article.

\begin{teo}
\label{2}
Let $\mathcal{J}$ be as in Lemma \ref{base00}. Assume GRH and that $K$ is sufficiently large with respect to $\eta$. Then, there exist $T \in \R$ with $|T| \lesssim_{\eta} X^2/H^2$ and $b \in \N$ with $b=O_{\eta}(1)$ such that, for $\gtrsim_{\eta} X/H$ elements $(x,\alpha_x) \in \mathcal{J}$, there exists $a_x \in \N$ with $\| \alpha_x - \frac{a_x}{b} - \frac{T}{x} \| \lesssim_{\eta} H^{-1}$.
\end{teo}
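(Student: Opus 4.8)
The plan is to prove Theorem \ref{2} by the iterative "lifting" procedure described in the introduction, the whole point being to replace the constant-factor loss of \cite{W1} at each step by a loss of size $O(1/\log X)$. Starting from the configuration $\mathcal{J}$ of Lemma \ref{base00}, with its $\gtrsim \frac{X}{H}|\mathcal{P}|^2$ relations $(x,\alpha_x)\sim_{p,q}(y,\alpha_y)$, the first objective is to manufacture, at a much larger scale $X\prod_{t=1}^k p_t^{\ast}$, a configuration $\mathcal{A}_{(k)}$ of size $\gtrsim X/H$ still carrying $\gtrsim\frac{X}{H}|\mathcal{P}|^2$ relations, together with "connecting data" relating each of its elements $(x_0,\alpha_{x_0})$ back, through products of $k$ primes in $\mathcal{P}$, to $\gtrsim X/H$ elements of $\mathcal{J}$.

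The core is the single-step lift. Given a configuration $\mathcal{A}$ with many relations, one runs the contagion arguments of \cite{W1} to partition, for each $z$ in a net $\mathcal{Z}$ at the larger scale, the fibre $K^z$ into pieces $K_i^z$ on each of which one frequency $\alpha_i^z$ controls all the $\alpha_x$ via $p\alpha_i^z\approx\alpha_x$. I would then establish the 'parallel rigidity' estimates: (i) two relations between $K_i^z$ and $K_j^{z'}$ force $p\alpha_i^z\approx q\alpha_j^{z'}$; (ii) if $(x,\alpha_x),(y,\alpha_y)\in K^z$ both relate to a common related pair lying in two different $K_i^{z'},K_j^{z''}$, then $(x,\alpha_x)\sim(y,\alpha_y)$; (iii) very few relations inside $K^z$ cross between distinct pieces. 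Reading these probabilistically with the lifting prime $p^{\ast}$ as the random variable, and using $|\mathcal{P}|\gtrsim(\log X)^K$, one shows that for most $p^{\ast}$ every relation $(x,\alpha_x)\sim_{p,q}(y,\alpha_y)$ with $(x,\alpha_x)$ in the large-fibre part lifts to $(z,\alpha_x^{\uparrow})\sim_{p,q}(z',\alpha_y^{\uparrow})$, where $\alpha_x^{\uparrow}$ is the $p^{\ast}$-preimage of $\alpha_x$ nearest $\alpha_i^z$. Since only fibres of size $\gtrsim(\log X)^{-rB}|\mathcal{P}|$ are handled, this gives a decomposition $\mathcal{A}=\bigcup_{r=0}^s\mathcal{A}_r$ with the lift behaving well on relations emanating from $\bigcup_{r<s}\mathcal{A}_r$ and $\mathcal{A}_s$ uncontrolled.

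To absorb $\mathcal{A}_s$ I would run the concentration-increment argument: define $\mathcal{C}(\mathcal{A})$ measuring the density of relations in $\mathcal{A}$, note via the mixing lemma (this is where GRH enters) that $\mathcal{C}$ is bounded above by an absolute constant whenever $\mathcal{A}$ is not too small, and show that if $\mathcal{A}_s$ is large then $\mathcal{A}$ splits as $\mathcal{B}_1=\bigcup_{j\le t}\mathcal{A}_j$ and $\mathcal{B}_2=\mathcal{A}\setminus\mathcal{B}_1$ with few cross-relations, forcing $\mathcal{C}(\mathcal{B}_i)\ge\mathcal{C}(\mathcal{A})(|\mathcal{B}_i|/|\mathcal{A}|)^{-1/2}$ for some $i$. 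Iterating, and using boundedness of $\mathcal{C}$, one reaches $\tilde{\mathcal{A}}\subseteq\mathcal{A}$ with $|\tilde{\mathcal{A}}|\gtrsim|\mathcal{A}|$ on which the uncontrolled part is negligible, hence a genuine fixed-prime lift $\tilde{\mathcal{A}}^{\uparrow p^{\ast}}$ with $\mathcal{C}(\tilde{\mathcal{A}}^{\uparrow p^{\ast}})=\mathcal{C}(\tilde{\mathcal{A}})+O(1/\log X)$. Applying this $k=O_{\eta}(1)$ times, with the role of $\mathcal{J}$ progressively played by the $\mathcal{A}_{(j)}$, produces the primes $p_1^{\ast},\ldots,p_k^{\ast}$, the configurations $\mathcal{A}_{(j)}$ at scale $X\prod_{t=1}^j p_t^{\ast}$, and the connecting data; the chain of concentration inequalities keeps $|\mathcal{A}_{(j)}|\gtrsim X/H$ and the relation count $\gtrsim\frac{X}{H}|\mathcal{P}|^2$ uniformly in $j$.

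Finally, to extract $T$ and $b$: fix $(x_0,\alpha_{x_0})\in\mathcal{A}_{(k)}$. Using the mixing lemma and variants of the \cite{W2} estimates, one connects $(x_0,\alpha_{x_0})$ through two genuinely different products of primes to a common $(x,\alpha_x)\in\mathcal{A}_{(j)}$, forcing $\alpha_{x_0}\approx a/b$ with $b$ dividing the difference of the two prime products. A priori this only bounds $b\lesssim X^{\varepsilon}$; the remedy is to have first run the machine with $\mathcal{A}_{(j)}$ ($0<j<k$) in place of $\mathcal{J}$, exploiting that the relations in $\mathcal{A}_{(j)}$ are quantitatively sharper as $j$ grows, so that one gets $p a_x\equiv q a_y\pmod{b}$ \emph{exactly}; then a 'modular' version of the mixing lemma (again GRH) rules out large $b$, giving $b=O_{\eta}(1)$. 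Writing $\alpha_{x_0}=a/b+T/x_0$ with $|T|\lesssim_{\eta}X^2/H^2$ and transporting this, via a Taylor expansion, back to the $\gtrsim X/H$ elements $(x,\alpha_x)\in\mathcal{J}$ connected to $(x_0,\alpha_{x_0})$ yields $\|\alpha_x-a_x/b-T/x\|\lesssim_{\eta}H^{-1}$ for those $x$; this first appears with an extra $\log X$ in the bound for $|T|$, removed by a sharper count afterwards. The main obstacle throughout is the efficiency of the iteration — securing the $O(1/\log X)$ per-step loss — which is exactly what the parallel-rigidity, fixed-prime-lifting and concentration-increment ingredients are built to deliver; the one thing this scheme cannot avoid is the need for 'expansion', available unconditionally only in the range of Theorem \ref{1}, which is the sole reason GRH is used.
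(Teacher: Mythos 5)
Your outline follows the paper's actual proof essentially step for step: the parallel-rigidity lemmas and the probabilistic fixed-prime lifting (Sections 3--4), the concentration-increment argument with $\mathcal{C}(\mathcal{B}_i) \ge \mathcal{C}(\mathcal{A})\left(|\mathcal{B}_i|/|\mathcal{A}|\right)^{-1/2}$ driven by the GRH mixing lemma (Section 5), the downward-path expansion (Section 6), the extraction of $a/b$ from two distinct prime products with the same endpoint followed by the modular mixing lemma to force $b=O(1)$ (Section 7), and the final removal of the spurious $\log X$ factors (Section 8). This is the right architecture and the right division of labour between the unconditional structural results and the two uses of GRH.

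One concrete misstatement: you say the lifting step is applied $k=O_{\eta}(1)$ times. This contradicts both the paper and the rest of your own outline. The number of iterations must be $k \sim \frac{\log (X/H)}{\log P}$, which is unbounded when $H$ is a power of $\log X$; this is precisely why a per-step loss of $O(1/\log X)$ (rather than a constant factor) is indispensable, which you yourself identify as the crux. Quantitatively, the scale $X\prod_{t=1}^{k}p_t^{\ast}$ must reach roughly $X^2/H$ so that (a) the rigidity of downward paths (Lemma \ref{expansion3}) forces two paths with common endpoints to use the same multiset of primes, (b) the difference of two such products can be as large as $X^{\tau}$, which is what makes the a priori bound on $b$ a power of $X$ and necessitates the modular mixing lemma, and (c) the resulting modulus is large enough to be useful. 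With $k=O(1)$ the lifted scale is only $XP^{O(1)} = XH^{O(\varepsilon)}$, none of these mechanisms engage, and indeed the whole concentration-increment machinery would be superfluous since constant losses per step would then be affordable. Apart from this slip, the proposal is faithful to the paper's argument.
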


We now need a notion of 'paths' similar to that in \cite{W2}.

\begin{defi}
\label{path}
Given $P < \tilde{H} < Y$ and a set $A \subseteq [Y/10,10Y]$, we define a path of length $k$ inside $A$ to be a sequence of elements $x_1,\ldots,x_{k+1} \in A$ such that, for every $1 \le i \le k$, there exist primes $p_i, q_i \in \mathcal{P}$ with $|x_i/p_i - x_{i+1}/q_i| \lesssim \tilde{H}/P$. We call $x_1$ the initial point of the path, $x_{k+1}$ the end point and say the path connects $x_1$ and $x_{k+1}$.
\end{defi}

We then have the following estimate from \cite{W2}.

\begin{lema}
\label{pd21}
There exists $\epsilon \sim 1$ such that, for every path as in Definition \ref{path} of length $k \le \epsilon \log (Y/\tilde{H})$, we have
$$|x_1 \prod_{i=1}^{k} q_i/p_i - x_{k+1}|  \lesssim k \tilde{H}.$$
\end{lema}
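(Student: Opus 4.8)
The plan is to work multiplicatively rather than additively. The naive approach would be to expand $x_1\prod_{i=1}^{k}q_i/p_i - x_{k+1}$ as a telescoping sum over the $k$ edges of the path and bound each summand; but since every ratio $q_i/p_i$ can be as large as roughly $2$, this produces a spurious factor $2^{k}$ and is useless once $k$ is of size a constant times $\log(Y/\tilde{H})$. The point is instead to keep track of \emph{relative} errors, so that the contributions of the individual edges combine with total relative error $O(k\tilde{H}/Y)$ rather than blowing up geometrically.

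Concretely, I would first rewrite the hypothesis of Definition \ref{path}. Multiplying $|x_i/p_i - x_{i+1}/q_i| \lesssim \tilde{H}/P$ by $p_iq_i \sim P^2$ gives $|x_iq_i - x_{i+1}p_i| \lesssim P\tilde{H}$, and since $x_{i+1}p_i \gtrsim YP$ (because $x_{i+1} \in [Y/10,10Y]$ and $p_i \in [P,2P]$), dividing yields
$$\frac{x_i}{x_{i+1}} = \frac{p_i}{q_i}\Bigl(1 + O(\tilde{H}/Y)\Bigr), \qquad 1 \le i \le k.$$
Taking the product of these $k$ identities, the $x$-ratios telescope and one obtains
$$\frac{x_1}{x_{k+1}} = \Bigl(\prod_{i=1}^{k}\frac{p_i}{q_i}\Bigr)\prod_{i=1}^{k}\Bigl(1 + O(\tilde{H}/Y)\Bigr).$$

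It then remains to control the product of the $k$ error factors, and this is exactly where the length restriction $k \le \epsilon\log(Y/\tilde{H})$ enters: since $t\log(1/t) \le 1/e$ for $0 < t < 1$, the quantity $k\,\tilde{H}/Y$ is at most an absolute constant times $\epsilon$, so for $\epsilon \sim 1$ chosen small enough we have $\prod_{i=1}^{k}(1 + O(\tilde{H}/Y)) = 1 + O(k\tilde{H}/Y)$. Substituting back and rearranging gives $x_1\prod_{i=1}^{k}q_i/p_i = x_{k+1}\bigl(1 + O(k\tilde{H}/Y)\bigr)$, and since $x_{k+1} \lesssim Y$ the error term is $O(k\tilde{H})$, which is the claim. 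The only genuinely delicate point is the bookkeeping just described; once one commits to propagating relative errors the estimate is essentially forced, and the precise form of the hypothesis on $k$ (stronger than the $k \lesssim Y/\tilde{H}$ this argument alone would tolerate) is simply the regime in which the lemma is applied later in the paper.
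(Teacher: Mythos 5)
Your argument is correct and is essentially the same multiplicative telescoping the paper relies on: the proof of Lemma \ref{pd21} itself is deferred to \cite[Corollary 4.2]{W2}, but the identical computation is carried out explicitly in Section 6 (Lemmas \ref{prod2} and \ref{down3}), where each edge is likewise converted into $p/q = (x_{i+1}/x_i)(1+O(\tilde{H}/Y))$ and the ratios are telescoped. The only difference is in the bookkeeping of the error product: the paper expands $\prod_i(1+O(\tilde{H}/Y))$ into $2^k$ terms and uses $k \le \epsilon\log(Y/\tilde{H})$ to make $2^k\tilde{H}/Y$ small, whereas your bound $\exp(O(k\tilde{H}/Y)) = 1+O(k\tilde{H}/Y)$ is slightly sharper and, as you observe, would tolerate any $k \lesssim Y/\tilde{H}$.
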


\begin{proof}
This follows from \cite[Corollary 4.2]{W2}. Similar arguments will be performed explicitly in Section 6.
\end{proof}

\subsection{Mixing lemmas}
\label{mixing lemmas}

We will be using the following standard consequence of GRH and Perron's formula (see for example \cite[Section 4]{Tb}).

\begin{lema}
\label{GRHbound}
Let $\chi$ be a Dirichlet character of modulus $q$ and write $\Lambda$ for the von Mangoldt function. Assuming the Generalised Riemann Hypothesis, we have the bound
$$ \frac{1}{P} \sum_{n \le P} \Lambda(n) \chi(n) n^{it} \lesssim \frac{\delta(\chi)}{|t|+1} + P^{-c},$$
for every $P \gtrsim \log^C (q(2 + |t|))$ and some absolute constants $c,C > 0$, where $\delta(\chi)=1$ if $\chi$ is a principal character and $0$ otherwise.
\end{lema}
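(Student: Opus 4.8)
The plan is to apply Perron's formula to the Dirichlet series $-\frac{L'}{L}(s,\chi)$ twisted by the shift $n^{it}$, and then push the contour past the line $\mathrm{Re}(s) = 1$ into the critical strip using GRH. Concretely, writing $F(s) = -\sum_{n} \Lambda(n)\chi(n) n^{it} n^{-s} = -\frac{L'}{L}(s+it,\chi)$, a truncated Perron formula gives
\[
\sum_{n \le P} \Lambda(n)\chi(n) n^{it} = \frac{1}{2\pi i} \int_{\kappa - iU}^{\kappa + iU} F(s) \frac{P^s}{s}\, ds + (\text{error}),
\]
with $\kappa = 1 + \frac{1}{\log P}$ and a parameter $U$ to be chosen; the standard truncation error is $O\!\big(\frac{P \log^2 P}{U} + \log P\big)$, using the crude bound $\Lambda(n) \le \log n$. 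First I would fix $U$ to be a small power of $P$ (say $U = P^{c_0}$), which makes this error term $O(P^{1-c_0}\log^2 P)$, absorbed into the claimed $P^{-c}$ after shrinking $c$.

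Next I would shift the contour from $\mathrm{Re}(s) = \kappa$ to $\mathrm{Re}(s) = -c_1$ for a small absolute $c_1 > 0$, staying inside the zero-free region that GRH provides (GRH actually gives zero-free up to the line $\mathrm{Re} = 1/2$, so any $c_1 < 1/2$ is safe once $P$ is large enough in terms of $q$ and $|t|$, which is exactly the hypothesis $P \gtrsim \log^C(q(2+|t|))$). Crossing the line picks up residues: a pole of $\frac{P^s}{s}$ at $s = 0$ contributing $F(0) = -\frac{L'}{L}(it,\chi)$, which under GRH is $O(\log^2(q(2+|t|)))$ and hence negligible against $P$; and, only when $\chi$ is principal, the pole of $-\frac{L'}{L}(s+it,\chi)$ at $s = 1 - it$, whose residue is $\frac{P^{1-it}}{1-it}$, of size $\frac{P}{|t|+1}$. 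This is the source of the main term $\frac{\delta(\chi)}{|t|+1}$ after dividing by $P$.

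The main obstacle is the bound on the shifted horizontal and vertical contour integrals, which requires the standard GRH estimate $\frac{L'}{L}(\sigma + i\tau, \chi) \ll \log^2(q(2+|\tau|))$ uniformly for $\sigma \ge -c_1$ (away from the pole of the principal character), together with control of $L'/L$ near $s = 0$. On the vertical segment $\mathrm{Re}(s) = -c_1$ the factor $P^{s}/s$ contributes $P^{-c_1}/|s|$, and integrating $\log^2(q(2+|\tau|))/(|\tau|+1)$ over $|\tau| \le U$ costs only another factor $\log^3 U \lesssim \log^3 P$, so this piece is $O(P^{-c_1}\log^{C'} P \cdot \log^2 q)$; since $P \gtrsim \log^C(q(2+|t|))$ with $C$ large, this is $\lesssim P^{-c}$ for a slightly smaller $c$. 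The horizontal segments at height $\pm U$ carry the factor $1/U = P^{-c_0}$ times $\int_{-c_1}^{\kappa} P^\sigma\, d\sigma \lesssim P^{\kappa} \lesssim P$, giving $O(P^{1-c_0}\log^2(qU))$, again absorbed. Assembling the residue contributions and these tail bounds, dividing through by $P$, and choosing $c = \min(c_0, c_1)/2$ (and $C$ correspondingly large) yields the stated inequality. No serious difficulty remains beyond bookkeeping the uniformity in $q$ and $|t|$, which is precisely what the hypothesis $P \gtrsim \log^C(q(2+|t|))$ is designed to handle.
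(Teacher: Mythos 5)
The paper does not actually prove this lemma; it cites it as a standard consequence of GRH and Perron's formula (referring to \cite{Tb}). Your overall template --- truncated Perron for $-\frac{L'}{L}$ twisted by $n^{it}$, a contour shift, the principal-character pole producing the main term $\delta(\chi)/(|t|+1)$, and absorption of all error terms into $P^{-c}$ using $P \gtrsim \log^C(q(2+|t|))$ --- is indeed the standard route. (There are harmless sign/shift typos: $\sum_n \Lambda(n)\chi(n)n^{it}n^{-s} = -\frac{L'}{L}(s-it,\chi)$, not what you wrote, but this does not affect anything.)

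However, there is a genuine error in the contour shift. You move the line of integration from $\mathrm{Re}(s)=\kappa$ to $\mathrm{Re}(s)=-c_1$ and assert that this stays ``inside the zero-free region that GRH provides,'' because ``GRH gives zero-free up to the line $\mathrm{Re}=1/2$.'' This is backwards: under GRH the nontrivial zeros of $L(\cdot,\chi)$ all lie \emph{on} the line $\mathrm{Re}=1/2$ (and there are infinitely many of them, plus trivial zeros at $s=0$ or the negative integers). Shifting from $\mathrm{Re}(s)>1$ to $\mathrm{Re}(s)=-c_1<0$ necessarily sweeps the contour across the critical line, so the integrand $-\frac{L'}{L}(s\mp it,\chi)\frac{P^s}{s}$ has a pole at every zero you cross, and the residues $\sum_\rho \frac{P^{\rho\pm it}}{\rho\pm it}$ cannot be ignored. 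Correspondingly, the uniform bound $\frac{L'}{L}(\sigma+i\tau,\chi)\ll \log^2(q(2+|\tau|))$ for all $\sigma\ge -c_1$ that your vertical-segment estimate relies on is false: $L'/L$ blows up at each zero on $\mathrm{Re}=1/2$. The standard fix is either (a) to shift only to $\mathrm{Re}(s)=\tfrac12+\delta$ for a fixed small $\delta>0$, where GRH does give $\frac{L'}{L}(s,\chi)\ll_{\delta}\log(q(2+|\mathrm{Im}\,s|))$, so the vertical integral contributes $\ll P^{1/2+\delta}\log^{O(1)}(qU)$ and hence $\ll P^{-c}$ after dividing by $P$ and invoking $P\gtrsim \log^C(q(2+|t|))$; or (b) to keep the zeros and bound the resulting zero sum by $\ll P^{1/2}\log^2(qPU)$ via the GRH zero-counting estimates. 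Either repair yields the lemma; as written, your step 2 is invalid.
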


As we have discussed, we will rely on a version of the mixing lemma proved in \cite[Lemma 5.1]{MRT}, but which uses Lemma \ref{GRHbound} as input. We now proceed to establish this.

\begin{lema}
\label{ML}
Let $a,Q \in \N$ with $\gcd(a,Q)=1$ and let $P \le \tilde{H} \le \frac{Y}{\log^2 Y}$. Let $\mathcal{A}_1,\mathcal{A}_2$ be $\tilde{H}$-separated subsets of $[Y/10,10Y]$ and let $\log Q \le \log Y \le P^{\varepsilon}$ for some sufficiently small $\varepsilon > 0$. Let $\epsilon > 0$ be as in Lemma \ref{pd21}. Then, the number of paths in $\mathcal{A}_1 \cup \mathcal{A}_2$ of length $k \le \epsilon \log (Y/\tilde{H})$ connecting an element of $\mathcal{A}_1$ with an element of $\mathcal{A}_2$ and such that the corresponding primes $(p_1,\ldots,p_{k},q_1,\ldots,q_{k})$ are coprime to $Q$ and satisfy
\begin{equation}
\label{convmod}
  \prod_{i=1}^{k} p_i \equiv a \prod_{i=1}^{k} q_i \, \,  (\text{mod }Q),
  \end{equation}
is bounded by
$$ \lesssim \frac{C^{k}}{\varphi(Q)}  \frac{\tilde{H}}{Y}  |\mathcal{P}|^{2k} |\mathcal{A}_1| |\mathcal{A}_2| + C^{k} |\mathcal{A}_1|^{1/2} |\mathcal{A}_2|^{1/2} |\mathcal{P}|^{2k}  \varphi (Q) P^{-ck},$$
for some absolute constants $c,C > 0$.
\end{lema}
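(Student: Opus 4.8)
This would follow the strategy of the mixing lemma in \cite[Lemma 5.1]{MRT}, the one novelty being that, granting GRH, Lemma \ref{GRHbound} remains effective all the way down to $P \gtrsim \log^C Y$; indeed the hypothesis $\log Y \le P^{\varepsilon}$ is designed so that the threshold $\log^C(q(2+|t|))$ appearing there stays below $P$ throughout. The first step is to detect the congruence (\ref{convmod}) via the characters modulo $Q$,
\[
  \mathbf{1}\Bigl[ \prod_{i=1}^k p_i \equiv a \prod_{i=1}^k q_i \ (\text{mod }Q) \Bigr] = \frac{1}{\varphi(Q)} \sum_{\chi \bmod Q} \bar\chi(a) \prod_{i=1}^{k} \chi(p_i) \bar\chi(q_i),
\]
so that the quantity to be bounded becomes $\frac{1}{\varphi(Q)} \sum_{\chi} \bar\chi(a) N(\chi)$, with $N(\chi)$ the $\chi$-twisted count of the paths in question whose primes are coprime to $Q$. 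Next I would replace each sharp path condition $|x_i/p_i - x_{i+1}/q_i| \lesssim \tilde{H}/P$, that is $|x_i q_i - x_{i+1} p_i| \lesssim \tilde{H} P$, by a smooth majorant $\Psi\bigl( (x_i q_i - x_{i+1} p_i)/(C\tilde{H}P) \bigr)$ with $\Psi \ge \mathbf{1}_{[-1,1]}$ and $\hat{\Psi}$ of rapid decay, and expand it by Fourier inversion. This decouples the sum over the $2k$ primes into a product of sums of the shape $\sum_{p \in \mathcal{P},\, (p,Q)=1} \chi(p) e(\theta p)$ over $\mathcal{P}$, with a `geometric' factor $\sum_{x \in \mathcal{A}_j} e(x\beta)$ left at each vertex of the path; Lemma \ref{pd21} enters here to confine the products $x_1 \prod_i q_i/p_i$ to $[Y/100, 100Y]$, which keeps the ranges of the frequency integrals (and of $\theta$) under control — this is the only point where the restriction $k \le \epsilon \log(Y/\tilde{H})$ is used. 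For the principal character, estimating the prime sums trivially and carrying out the $k$ frequency integrations while bounding the geometric sums by the large sieve produces the main term $\frac{C^k}{\varphi(Q)} \frac{\tilde{H}}{Y} |\mathcal{P}|^{2k} |\mathcal{A}_1| |\mathcal{A}_2|$, the $C^k$ absorbing the bounded multiplicity of the approximate relations and the combinatorics of the decoupling.

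The heart of the argument is the estimate for the non-principal characters, and this is where GRH is used. Expanding $e(\theta p)$ by Mellin inversion reduces $\sum_{p \in \mathcal{P},\, (p,Q)=1} \chi(p) e(\theta p)$ to an integral, over $|s|$ at most a fixed power of $Y$, of von Mangoldt sums of the form $\sum_{n \le P'} \Lambda(n) \chi(n) n^{is}$ with $P \le P' \le 2P$ (the passage from $\Lambda$ to primes costing a negligible $O(\sqrt{P})$); since $\log^C(Q(2+|s|)) \lesssim \log^C Y \le P$ for such $s$, Lemma \ref{GRHbound} applies and gives, for every $\chi \ne \chi_0$, a bound $\lesssim P^{1-c}$, i.e. a saving of $\lesssim P^{-c}$ relative to $|\mathcal{P}|$. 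Each of the $k$ steps of a path carries one such sum over its $q_i$ and one over its $p_i$, so the non-principal characters gain a total of $P^{-ck}$; estimating the geometric sums by Cauchy--Schwarz, which produces the $|\mathcal{A}_1|^{1/2} |\mathcal{A}_2|^{1/2}$ and, because $\chi$ runs over residues modulo $Q$, a loss of $\varphi(Q)$, and then summing the resulting bound over the $\varphi(Q)$ characters, gives the stated error term $C^k |\mathcal{A}_1|^{1/2} |\mathcal{A}_2|^{1/2} |\mathcal{P}|^{2k} \varphi(Q) P^{-ck}$.

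I expect the main obstacle to be this last step. One must organise the smooth decomposition so that the prime sums appear precisely in the form treated by Lemma \ref{GRHbound}, and, more delicately, control the interaction between the additive twists $e(\theta p)$ coming from the archimedean geometry of the paths and the multiplicative twists $\chi(p)$ coming from (\ref{convmod}), all the while tracking the dependence on $k$ and on $Q$ of every intermediate loss so that the final estimate has exactly the claimed shape. The rest is, in spirit, a rerun of the unconditional arguments of \cite{MRT, W2}.
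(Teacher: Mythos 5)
Your overall architecture (characters mod $Q$ to detect the congruence, a smooth Fourier-analytic majorant for the archimedean conditions, GRH via Lemma \ref{GRHbound} for the non-principal characters, Cauchy--Schwarz plus the large sieve for the error term) matches the paper's, but the central decoupling step as you describe it does not work, and the device that makes it work is missing from your argument. If you majorize each condition $|x_iq_i - x_{i+1}p_i| \lesssim \tilde{H}P$ separately by $\Psi\bigl((x_iq_i-x_{i+1}p_i)/(C\tilde{H}P)\bigr)$ and Fourier-expand, the phase at step $i$ is $e(\theta_i x_i q_i - \theta_i x_{i+1}p_i)$: the prime $q_i$ enters only through the product $\theta_i x_i q_i$, so the putative prime sum is $\sum_{q_i}\chi(q_i)e(\theta_i x_i q_i)$ with a frequency depending on the vertex $x_i$, which itself must still be summed over $\mathcal{A}_1\cup\mathcal{A}_2$. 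These bilinear phases prevent the expression from factoring into clean prime sums times clean vertex sums; and even granting the factorization, you would be left with exponential sums over the $k-1$ intermediate vertices that your target bound (which involves only $|\mathcal{A}_1|$ and $|\mathcal{A}_2|$) gives you no way to absorb.

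The paper resolves both problems at once by working with logarithms and by using Lemma \ref{pd21} in a much stronger way than you do. First, since the sets are $\tilde{H}$-separated, once the two endpoints and the $2k$ primes are fixed there are only $O(1)^k$ possible paths, so the intermediate vertices are disposed of by a multiplicity count rather than summed analytically. Second, Lemma \ref{pd21} telescopes all $k$ conditions into the single statement $|x_1\prod_i q_i/p_i - x_{k+1}| \lesssim k\tilde{H}$, i.e.\ $\log y_1 - \log y_2 + \sum_i\log q_i - \sum_i \log p_i = O(k\tilde{H}/Y)$, which is detected by one smooth cutoff at scale $k\tilde{H}/Y$; a single Fourier inversion then produces prime sums of the exact form $\sum_{p\in\mathcal{P}}\chi(p)p^{2\pi i\xi}$, to which Lemma \ref{GRHbound} applies directly (no Mellin inversion of $e(\theta p)$ is needed), together with the endpoint sums $\sum_{y\in\mathcal{A}_i}e(\xi\log y)$. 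Your use of Lemma \ref{pd21} only to confine $x_1\prod_i q_i/p_i$ to $[Y/100,100Y]$ discards precisely the quantitative content that makes the proof go through. The interaction between additive and multiplicative twists that you flag as ``the main obstacle'' is indeed the crux, and the linearization $\log(xq)=\log x+\log q$, combined with the telescoping and the multiplicity count, is the missing idea.
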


\begin{proof}
Recall that we write $\mathcal{P}$ for the set of primes in $[P,2P]$ and we shall also write $\mathcal{P}^k_u$ for the $k$-tuples $(p_1,\ldots,p_k) \in \mathcal{P}^k$ with $\prod_{i=1}^k p_i \equiv u$ (mod $Q$). Let $\psi:\R \rightarrow \R$ be a non-negative Schwartz function with Fourier support in $[-1,1]$ such that $\psi(t) \ge 1$ for every $|t| \le 1$. If $y_1  \in \mathcal{A}_1,y_2 \in \mathcal{A}_2$ are connected via a path of length $k$ involving primes $(p_1,q_1,\ldots,p_k,q_k)$, it follows from Lemma \ref{pd21} that
\begin{equation}
\label{mil9}
 \psi (\frac{Y}{C_1k\tilde{H}} (\log y_1 - \log y_2 + \sum_{i=1}^k \log q_i - \sum_{i=1}^k \log p_i ) ) \ge 1,
 \end{equation}
as long as $C_1$ is a sufficiently large constant. Also, since $\mathcal{A}_1$ and $\mathcal{A}_2$ are $\tilde{H}$-separated sets, it easy to see that once $y_1,y_2$ and the primes $p_1,q_1,\ldots,p_k,q_k$ are fixed, there are at most $O(1)^k$ possible paths in $\mathcal{A}_1 \cup \mathcal{A}_2$ connecting $y_1$ with $y_2$ and consisting of these primes. By (\ref{convmod}) and (\ref{mil9}), it will therefore suffice to bound the sum over all such pairs $y_1,y_2$ of 
$$  \sum_{uv^{-1} \equiv a \, (Q)} \sum_{(p_1 ,\ldots, p_k) \in \mathcal{P}_u^k} \sum_{(q_1 ,\ldots, q_k) \in \mathcal{P}_v^k} \psi \left(\frac{Y}{C_1k\tilde{H}} \left( \log y_1 - \log y_2 + \sum_{i=1}^k \log q_i - \sum_{i=1}^k \log p_i \right) \right).$$
By Fourier inversion and a change of variables, this is bounded by
\begin{equation}
\label{reg}
 \lesssim \frac{k\tilde{H}}{Y} \int_{|\xi| \le Y/(C_1 k\tilde{H})} |S_1(\xi)| |S_2(\xi)| |T(\xi)| d \xi,
 \end{equation}
where 
$$S_i(\xi) = \sum_{y \in \mathcal{A}_i} e \left(  \xi \log y \right),$$
for $i=1,2$, and 
$$ T(\xi) = \sum_{uv^{-1} \equiv a(\text{mod }Q)} \overline{ \left( \sum_{(p_1 ,\ldots, p_k) \in \mathcal{P}_u^k} e \left( \xi  \sum_{i=1}^k \log p_i  \right)  \right) } \left( \sum_{(q_1,  \ldots, q_k) \in \mathcal{P}_v^k} e \left( \xi   \sum_{i=1}^k \log q_i  \right) \right).$$
We are going to bound the factors inside the sum defining $T(\xi)$. Expanding into Dirichlet characters, we have
\begin{align*}
 \sum_{(p_1, \ldots , p_k) \in \mathcal{P}_u^k }  \prod_{i=1}^k p_i^{2 \pi i \xi} &=\sum_{\chi}  \frac{\overline{\chi(u)}}{\varphi(Q)} \left( \sum_{p \in \mathcal{P}} \chi(p) p^{2 \pi i \xi} \right)^k .
\end{align*}
By Lemma \ref{GRHbound} and our hypothesis on the sizes of $P, Q$ and $Y$, this leads to a bound of the form
$$ |T(\xi) | \lesssim C^{2k} |\mathcal{P}|^{2k}  \left( \frac{1}{\varphi(Q)}  \left( \frac{1}{1+|\xi|} \right)^{2k} + \varphi (Q) P^{-2ck} \right),$$
for some $C \sim 1$. We now handle both terms in this bound separately in (\ref{reg}). For the first term, we use the trivial bounds $|S_1(\xi)| \le |\mathcal{A}_1|, |S_2(\xi)| \le |\mathcal{A}_2|$ and integrate  $\left( \frac{1}{1+|\xi|} \right)^{2k}$ to obtain a bound of the form
$$ \lesssim \frac{k\tilde{H}}{Y} \frac{C^{2k}}{\varphi(Q)} |\mathcal{P}|^{2k} |\mathcal{A}_1| |\mathcal{A}_2|.$$
For the second term, we just take it out of the integral and use Cauchy-Schwarz and the large sieve (in the form of \cite[Lemma 2.3]{MRT}) for $S_1$ and $S_2$, thus obtaining the bound
$$ \lesssim C^{2k} k |\mathcal{A}_1|^{1/2} |\mathcal{A}_2|^{1/2} |\mathcal{P}|^{2k}  \varphi (Q) P^{-2ck}.$$
The result follows combining both bounds and adjusting $C$ if necessary.
\end{proof}

We single out a consequence of the $k=Q=1$ case, which follows from RH, since it is the estimate we will be using most of the time.

\begin{coro}
\label{ML0}
Let $P < \tilde{H} < \frac{Y}{\log^2 Y}$. Let $\mathcal{A}_1, \mathcal{A}_2$ be sets of $\tilde{H}$-separated points in $[Y/10,10Y]$ and assume $\log Y < P^{\varepsilon}$ for some sufficiently small $\varepsilon > 0$. Then, there exists some $c_0 \gtrsim 1$ such that the number of quadruples $(x,y,p,q) \in \mathcal{A}_1 \times \mathcal{A}_2 \times \mathcal{P}^2$ with $|x/p - y/q| \lesssim \tilde{H}/P$ is at most
$$ \lesssim \frac{\tilde{H}}{Y} |\mathcal{A}_1| |\mathcal{A}_2| |\mathcal{P}|^2 + |\mathcal{A}_1|^{1/2} |\mathcal{A}_2|^{1/2} P^{2-c_0}.$$
\end{coro}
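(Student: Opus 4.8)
The plan is to derive Corollary~\ref{ML0} as the special case $k=Q=1$ of Lemma~\ref{ML}, after a short reduction that translates the quadruple count into a count of length-one paths. First I would observe that a quadruple $(x,y,p,q) \in \mathcal{A}_1 \times \mathcal{A}_2 \times \mathcal{P}^2$ with $|x/p - y/q| \lesssim \tilde{H}/P$ is, by Definition~\ref{path}, exactly a path of length $k=1$ inside $\mathcal{A}_1 \cup \mathcal{A}_2$ with initial point $x \in \mathcal{A}_1$ and end point $y \in \mathcal{A}_2$, together with a choice of the two primes $p_1 = p$, $q_1 = q$. Taking $Q = 1$, the congruence condition (\ref{convmod}) is vacuous (with $a=1$), and $\varphi(1) = 1$, $\delta(\chi) = 1$ for the unique character mod $1$. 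Since $k=1 \le \epsilon \log(Y/\tilde{H})$ holds automatically once $\tilde{H} < Y/\log^2 Y$ (as $\log(Y/\tilde{H}) \ge 2 \log \log Y$, which exceeds $1/\epsilon$ for $Y$ large; for small $Y$ the statement is trivial after adjusting constants), Lemma~\ref{ML} applies directly.

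Next I would check that the hypotheses of Lemma~\ref{ML} are met. With $Q=1$ we need $P \le \tilde{H} \le Y/\log^2 Y$, which is exactly our hypothesis $P < \tilde{H} < Y/\log^2 Y$, and $\log Q \le \log Y \le P^\varepsilon$, which reduces to $\log Y < P^\varepsilon$, again part of the hypothesis. One subtlety: Lemma~\ref{ML} is stated as requiring RH (via Lemma~\ref{GRHbound}), but for $Q=1$ the only character is principal and Lemma~\ref{GRHbound} is then the classical prime number theorem estimate $\frac{1}{P}\sum_{n\le P}\Lambda(n)n^{it} \lesssim \frac{1}{|t|+1} + P^{-c}$ on the critical line — this is where the phrase ``which follows from RH'' in the statement comes from; I would note that in this $Q=1$ instance it is genuinely a consequence of RH rather than GRH, matching the remark preceding the corollary.

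Then I would just substitute $k=1$, $Q=1$ into the conclusion of Lemma~\ref{ML}. The first term becomes $\lesssim \frac{\tilde{H}}{Y} |\mathcal{P}|^2 |\mathcal{A}_1| |\mathcal{A}_2|$ and the second becomes $\lesssim |\mathcal{A}_1|^{1/2} |\mathcal{A}_2|^{1/2} |\mathcal{P}|^2 P^{-c}$. Finally I would absorb the factor $|\mathcal{P}|^2$ in the error term: since $\mathcal{P}$ consists of primes in $[P,2P]$ we have $|\mathcal{P}| \lesssim P$, hence $|\mathcal{P}|^2 P^{-c} \lesssim P^{2-c}$, and setting $c_0 = c$ gives the claimed error $|\mathcal{A}_1|^{1/2} |\mathcal{A}_2|^{1/2} P^{2 - c_0}$. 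This yields precisely the stated bound; the ``$c_0 \gtrsim 1$'' is just $c_0$ equal to the absolute constant $c$ from Lemma~\ref{GRHbound}/Lemma~\ref{ML}.

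There is no real obstacle here — the corollary is a bookkeeping specialisation of the lemma just proved. The only points requiring a word of care are (i) confirming the path-of-length-one reformulation is faithful, i.e. that each admissible quadruple contributes $O(1)$ paths and vice versa, which is immediate since a length-one path \emph{is} the data of its two endpoints and two primes; and (ii) checking the degenerate-character arithmetic ($\varphi(1)=1$, unique principal $\chi$) so that the $\frac{C^k}{\varphi(Q)}$ and $\varphi(Q)P^{-ck}$ factors collapse correctly. If one wanted to avoid even invoking the path language, one could alternatively re-run the short Fourier-analytic computation in the proof of Lemma~\ref{ML} directly with $k=Q=1$: write the count as $\lesssim \frac{\tilde H}{Y}\int_{|\xi|\le Y/(C_1\tilde H)} |S_1(\xi)||S_2(\xi)||T(\xi)|\,d\xi$ with $T(\xi) = \overline{\big(\sum_{p\in\mathcal{P}} p^{2\pi i\xi}\big)}\big(\sum_{q\in\mathcal{P}} q^{2\pi i\xi}\big)$, bound $|T(\xi)| \lesssim |\mathcal{P}|^2\big(\frac{1}{1+|\xi|}\big)^2 + P^{2-2c}$ by Lemma~\ref{GRHbound}, and split the integral exactly as before — but invoking Lemma~\ref{ML} as a black box is cleaner and is what I would write.
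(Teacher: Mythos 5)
Your proposal is correct and is exactly what the paper intends: the corollary is stated as an immediate consequence of the $k=Q=1$ case of Lemma \ref{ML}, and your substitution (vacuous congruence, $\varphi(1)=1$, absorbing $|\mathcal{P}|^2 \lesssim P^2$ into $P^{2-c_0}$) together with the check that $1 \le \epsilon \log(Y/\tilde{H})$ is all that is needed. The remark that only RH rather than full GRH is used when $Q=1$ also matches the paper's own comment preceding the corollary.
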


\subsection{Relations between the parameters}

For convenience of the reader we now summarise some of the main parameters appearing throughout the proof of Theorem \ref{2} and the relations between them. Almost all these parameters will depend on the value of $\eta$ in Theorem \ref{2} and, consequently, we will allow all implicit constants in the asymptotic notation to depend on $\eta$ without explicit mention. The only significant exceptions will be given by the absolute constants we have obtained in Section \ref{mixing lemmas}, which in turn lead to the absolute constant appearing in the statement of Lemma \ref{C1} and the absolute constant $c^{\ast}$ in Lemma \ref{grow26}.

We will start with the $(c_0,H)$-configuration $\mathcal{J}$ in $[X,2X]$ provided by Lemma \ref{base00} and recursively construct $(c_1,\tilde{H})$-configurations in intervals $[Y,2Y]$. In each case, we will have a sequence of primes $p_1^{\ast},\ldots,p_j^{\ast}$ such that $\tilde{H} = H \prod_{t=1}^j p_t^{\ast}$ and $Y = X \prod_{t=1}^j p_t^{\ast}$, so in particular we will always have $Y/\tilde{H} = X/H$. The parameters $c_0, c_0', \delta_0, C_0$ corresponding to $\mathcal{J}$ will be replaced by new parameters $c_1, c_2, \delta, C_2$ in these additional configurations, but we will crucially ensure that throughout the $\sim \frac{\log X/{H}}{\log P}$ steps of this recursion, we retain uniform bounds for these parameters (which, as usual, depend on $\eta$).

To accomplish this we will be studying families of sets $K_i^z$ associated to each configuration and this will require introducing parameters
\begin{equation}
\label{27chain}
 C_2 < C_1 = C[0] < C[1] < \overline{C}[1] < \ldots < C[s] < \overline{C}[s],
 \end{equation}
with each constant assumed sufficiently large with respect to the previous one. We will eventually need the parameter $s$ to be sufficiently large with respect to $\eta$. Because of the uniform bounds for $c_1, c_2, \delta, C_2$, one could in fact take $s$ to be fixed throughout the process. Finally, $K$ in Theorem \ref{1} will be chosen sufficiently large with respect to all of the constants we have discussed.

\section{Parallel rigidity}

We now turn to the first main ingredient of the proof, which is the idea of 'parallel rigidity' discussed in the introduction. We begin by setting up some notation that will be used throughout the article.

\begin{notation}[$P, \mathcal{P}, \varepsilon, \eta, \mathcal{J}, K, \sim$]
\label{not28}
Throughout the rest of this article we will let $P, \mathcal{P}, \varepsilon, \eta, \mathcal{J}, K$ be as in Lemma \ref{base00}. We will assume $\varepsilon \gtrsim_{\eta} 1$ is sufficiently small with respect to $\eta$ and allow all implied constants to depend on $\eta$. Given a dyadic interval $[Y,2Y]$, elements $(x,\alpha_x), (y, \alpha_y) \in [Y,2Y] \times \R / \Z$ and primes $q_1, q_2 \in \mathcal{P}$, we will write $(x,\alpha_x) \sim_{q_1,q_2} (y,\alpha_y)$ to mean that $|x/q_1 - y/q_2|  \le C \tilde{H}/P$ and $\| q_1 \alpha_x - q_2 \alpha_y \| \le CP/\tilde{H}$ for some $C \sim 1$ and where $\tilde{H}$ will always be clear from context. We will sometimes write $(x,\alpha_x) \sim_{q_1,q_2}^C (y,\alpha_y)$ if we wish to clarify the value of $C$ and, on the other hand, we will simply write $(x,\alpha_x) \sim (y,\alpha_y)$ to mean that there exist $q_1, q_2 \in \mathcal{P}$ with $(x,\alpha_x) \sim_{q_1,q_2} (y,\alpha_y)$.
\end{notation}

Let $P < \tilde{H} < Y$, $c \sim 1$ and consider a $(c,\tilde{H})$-configuration $\mathcal{A}$ in $[Y,2Y]$. We will proceed to define some sets $K_i^z$ associated with $\mathcal{A}$. These sets will not really be needed until Section 4, but will help us motivate the lemmas that we will prove in this section.

We let $\mathcal{Z}$ be a maximal set of $\delta P \tilde{H}$-separated points in $[PY, 4PY]$, for some small $1 \lesssim \delta < 1$ to be specified later, and to each $z \in \mathcal{Z}$ we associate a set $K^z$ consisting of all pairs $((x,\alpha_x),p) \in \mathcal{A} \times \mathcal{P}$ with $| z/p - x | < \tilde{H}/100$. In particular, we see that every prime appears at most once as a second coordinate in $K^z$.

We now let $ C_1 \sim 1$ be a constant to be specified later and construct a partition of $K^z$ in the following way. If there is no $((x,\alpha_x),p) \in \mathcal{A} \times \mathcal{P}$ with $|px - z| < 4 \delta P\tilde{H}$, we let $K_0^z$ be the empty set. Otherwise, we let $\tilde{K}_0^z$ be a set of elements $((x,\alpha_x),p) \in K^z$ with $|px - z| < 4 \delta P\tilde{H}$ of maximal size for which we can find a single frequency $\alpha_0^z \in \R / \Z$ with $\| p \alpha_0^z - \alpha_x \| < C_1/\tilde{H}$ for each such $((x,\alpha_x),p)$. We then let $K_0^z$ be the set of all elements $((x,\alpha_x),p) \in K^z$ satisfying this relation with $\alpha_0^z$. Once $K_i^z$ has been constructed for every $i < j$, we then let $K_j^z$ be a subset of $K^z \setminus \bigcup_{i=0}^{j-1} K_i^z$ of maximal size for which we can find a single frequency $\alpha_j^z \in \R / \Z$ with $\| p \alpha_j^z - \alpha_x \| < C_1 /\tilde{H}$ for every $((x,\alpha_x),p) \in K_j^z$.

Our goal will be to investigate how the relations between the original frequencies $\alpha_x$ translate into relations between the new frequencies $\alpha_i^z$. To do this we will establish several lemmas in the spirit of the contagion estimates from our previous work \cite{W1} and which encompass the idea of 'parallel rigidity': relations between only a few elements of $K_i^z$ and $K_j^{z'}$ are enough to guarantee that such relations translate to the new frequencies $\alpha_i^z, \alpha_j^{z'}$. 

We begin by recalling a lemma from \cite{W1}. This will allow us to guarantee that if there are many relations between the elements of $\mathcal{A}$, then many of the sets $K_i^z$ we have just constructed will turn out to be quite big.

\begin{lema}
\label{conc}
Let $0 \le \epsilon < \frac{c}{P^2}$ for some sufficiently small $c \gtrsim 1$. Let $S \subseteq \mathcal{P}$ be a set of primes and for each $p \in S$, let $\alpha_p$ be a corresponding element of $\R / \Z$. Let $\sigma > 0$. If for at least $\sigma |S|^2$ pairs $(p_1,p_2) \in S^2$ we have $\| p_1 \alpha_{p_2} - p_2 \alpha_{p_1} \| < \epsilon$, then there exists an element $\alpha \in \R / \Z$ such that $\| p \alpha - \alpha_p \| \lesssim \frac{ \epsilon}{P}$ for $\ge \sigma^2 |S| - O(1)$ primes $p \in S$. 
\end{lema}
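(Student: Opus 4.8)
The plan is to extract a ``popular frequency'' by a two-step pigeonholing argument that reduces the hypothesis about pairs $(p_1,p_2)$ to a statement about individual primes. First, since for at least $\sigma|S|^2$ pairs $(p_1,p_2)\in S^2$ we have $\|p_1\alpha_{p_2}-p_2\alpha_{p_1}\|<\epsilon$, by a standard averaging (Cauchy--Schwarz / Markov) argument there is a set $S_0\subseteq S$ with $|S_0|\gtrsim\sigma|S|$ such that every $p\in S_0$ is ``good'' in the sense that $\|p\alpha_{p'}-p'\alpha_p\|<\epsilon$ for at least $\tfrac{\sigma}{2}|S|$ primes $p'\in S$; in fact one can arrange by a further pigeonhole step that there are two primes $p_1,p_2\in S$ (or a positive-density set of such witnesses) that simultaneously serve as good partners for $\gtrsim\sigma^2|S|$ primes $p\in S$.

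Once we have such a pair $p_1,p_2$ and a set $S'\subseteq S$ of size $\ge\sigma^2|S|-O(1)$ with $\|p_1\alpha_p-p\alpha_{p_1}\|<\epsilon$ and $\|p_2\alpha_p-p\alpha_{p_2}\|<\epsilon$ for every $p\in S'$, the second step is to build the common frequency $\alpha$. Because $p_1,p_2$ are distinct primes in $[P,2P]$, they are coprime, so we may choose integers $u,v$ with $up_1+vp_2=1$ and $|u|,|v|\le P$; define $\alpha=u\alpha_{p_1}+v\alpha_{p_2}$. For $p\in S'$, multiply the first relation by $up$ and the second by $vp$ and add: one obtains $\|p\alpha-(up_1+vp_2)\alpha_p\|=\|p\alpha-\alpha_p\|\lesssim P\epsilon/P\cdot(\text{something})$ — more precisely the accumulated error is $\lesssim(|u|+|v|)\epsilon\lesssim P\epsilon$, and here one has to be slightly more careful to get the claimed $\epsilon/P$. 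The right way to get the sharper bound is to note that we may instead first fix a single good prime $p_0$ and set $\alpha$ to be a suitable preimage of $\alpha_{p_0}$: for $p\in S'$ the relations $\|p_0\alpha_p-p\alpha_{p_0}\|<\epsilon$ together with $\|p_1\alpha_p-p\alpha_{p_1}\|<\epsilon$ give, after eliminating $\alpha_p$, that $\|(p_0p_1' - p_1p_0')\cdots\|$ is small for the relevant combination; dividing through by $p\sim P$ and using $p_0,p_1\sim P$ converts the ``$\epsilon$'' into ``$\epsilon/P$'' at the cost of the coprimality bookkeeping, which is exactly where the hypothesis $\epsilon<c/P^2$ is used to ensure no wraparound in $\R/\Z$ occurs when we clear denominators.

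The main obstacle I expect is precisely this last point: getting the gain of a factor $1/P$ rather than an additional loss of $P$, while controlling the $\R/\Z$ arithmetic so that all the ``$\|\cdot\|$ small'' statements can be lifted to genuine approximate equalities in $\R$ and recombined. This is the step that forces the hypothesis $\epsilon<c/P^2$ (so that quantities of size $\sim P\cdot P\epsilon=P^2\epsilon$ are still $<1$ and the triangle inequality in $\R/\Z$ behaves like it does in $\R$), and it is why the conclusion keeps the shape $\epsilon/P$ instead of $P\epsilon$. I would handle it by fixing one reference prime $p_0\in S_0$, writing each $\alpha_p$ (for $p$ in the large good set) as $\alpha_p = \frac{p}{p_0}\alpha_{p_0}+\text{(error of size }<\epsilon/p_0)$ via the defining relation, then observing that consistency across a second reference prime $p_1$ pins down $\alpha_{p_0}$ modulo $1/(p_0p_1)$-type quantities, and finally taking $\alpha$ to be the resulting approximately-determined value and verifying $\|p\alpha-\alpha_p\|\lesssim\epsilon/P$ directly. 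Since this is stated as a lemma recalled from \cite{W1}, I would also simply cite that source for the detailed bookkeeping, noting only that the quantitative shape of the conclusion is what is needed downstream.
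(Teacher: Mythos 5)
Your proposal is correct and follows essentially the same route as the paper, whose proof simply cites Lemma 2.4 of \cite{W1} and remarks that the explicit $\sigma^{2}$ comes from a Cauchy--Schwarz step --- precisely the popular-pair pigeonholing you describe. Your first (Bezout) construction of $\alpha$ indeed only yields an error $O(P\epsilon)$ and is rightly discarded; the replacement --- fixing $\alpha$ as the preimage of $\alpha_{p_0}$ made consistent with a second reference prime $p_1$, with the hypothesis $\epsilon<c/P^{2}$ ruling out wraparound when clearing the denominators $p_0,p_1$ --- is exactly the two-prime contagion mechanism of \cite{W1} (compare Lemmas \ref{A16} and \ref{gap12} of this paper).
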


\begin{proof}
This is Lemma 2.4 of \cite{W1} except for the explicit dependence on $\sigma$, which can be obtained by performing a simple application of the Cauchy-Schwarz inequality during the proof of that lemma.
\end{proof}

We now explain the context in which the next lemma aims to be used. Given $z, z' \in \mathcal{Z}$ and integers $i,j \ge 0$, suppose we have a pair of elements in $K_i^z$ with prime coordinates $p_1, p_2 \in \mathcal{P}$ and a pair of elements in $K_j^{z'}$ with the same prime coordinates. We wish to show that if the pairs of elements that have the same prime coordinates are related to each other under $\sim$, then a corresponding relation holds for the frequencies $\alpha_i^z, \alpha_j^{z'}$ associated to $K_i^z, K_j^{z'}$.

\begin{lema}
\label{A29}
Let $0 < \epsilon < c/P^2$ for some sufficiently small $c > 0$. Let $\alpha_1, \alpha_2, \beta_1, \ldots, \beta_4 \in \R / \Z$ and let $p_1,p_2,q_1,q_2 \in \mathcal{P}$ be primes with $p_1 \neq p_2$ such that 
$$ \| p_1 \alpha_1 - \beta_1 \| , \| p_2 \alpha_1 - \beta_2 \| , \| p_1 \alpha_2 - \beta_3 \|, \| p_2 \alpha_2 - \beta_4 \| < \epsilon,$$
and
$$ \| q_1 \beta_1 - q_2 \beta_3 \| , \| q_1 \beta_2 - q_2 \beta_4 \| < \epsilon P.$$
Then, $\| q_1 \alpha_1 - q_2 \alpha_2 \| \lesssim \epsilon$.
\end{lema}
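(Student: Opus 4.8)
The plan is to collapse everything onto the single quantity $\theta := q_1 \alpha_1 - q_2 \alpha_2 \in \R/\Z$ and then exploit that $p_1$ and $p_2$ are coprime.

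First I would show that $\| p_1 \theta \| \lesssim \epsilon P$ and $\| p_2 \theta \| \lesssim \epsilon P$. Indeed, $p_1 \theta = q_1 (p_1 \alpha_1) - q_2 (p_1 \alpha_2)$, and replacing $p_1 \alpha_1$ by $\beta_1$ and $p_1 \alpha_2$ by $\beta_3$ costs at most $q_1 \| p_1 \alpha_1 - \beta_1 \| + q_2 \| p_1 \alpha_2 - \beta_3 \| \le (q_1 + q_2) \epsilon \lesssim \epsilon P$, since $q_1, q_2 \in \mathcal{P}$ have size $\sim P$; combining with the hypothesis $\| q_1 \beta_1 - q_2 \beta_3 \| < \epsilon P$ and the triangle inequality gives $\| p_1 \theta \| \lesssim \epsilon P$. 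The bound for $p_2 \theta$ is obtained identically using $\beta_2, \beta_4$ and $\| q_1 \beta_2 - q_2 \beta_4 \| < \epsilon P$.

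Next I would lift to $\R$. Fix a real representative $\tilde{\theta}$ of $\theta$; by the previous step (and since $\epsilon P < 1/2$) there are integers $m_1, m_2$ with $|p_i \tilde{\theta} - m_i| \lesssim \epsilon P$ for $i = 1,2$. A direct computation gives $p_2 m_1 - p_1 m_2 = p_1 (m_2 - p_2 \tilde{\theta}) - p_2 (m_1 - p_1 \tilde{\theta})$, whence $|p_2 m_1 - p_1 m_2| \le p_1 |p_2 \tilde{\theta} - m_2| + p_2 |p_1 \tilde{\theta} - m_1| \lesssim \epsilon P^2$. Since by hypothesis $\epsilon < c / P^2$ with $c$ sufficiently small, this integer has absolute value $< 1$, hence $p_2 m_1 = p_1 m_2$. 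As $p_1 \neq p_2$ are primes we have $\gcd(p_1, p_2) = 1$, so $p_1 \mid m_1$; writing $m_1 = p_1 k$ we get $p_1 |\tilde{\theta} - k| = |p_1 \tilde{\theta} - m_1| \lesssim \epsilon P$, and therefore $\| \theta \| \le |\tilde{\theta} - k| \lesssim \epsilon P / p_1 \lesssim \epsilon$, which is the claim.

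\textbf{Main obstacle.} There is no genuine difficulty here beyond bookkeeping of constants: the first step produces an absolute constant in front of $\epsilon P$, and this constant enters squared into the integer $p_2 m_1 - p_1 m_2$; the whole argument rests on that integer being $< 1$, which is precisely where one uses $\epsilon < c/P^2$ with $c$ taken small enough to absorb the constant. One should also record the harmless point that all primes lie in $[P, 2P]$, so the factors $q_i$ and $1/p_i$ are $\sim P$ and $\sim 1/P$, which is what converts the intermediate $\epsilon P$ losses back into a final bound of order $\epsilon$.
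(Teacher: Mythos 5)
Your proof is correct and follows essentially the same route as the paper: both arguments first establish $\|p_i(q_1\alpha_1 - q_2\alpha_2)\| \lesssim \epsilon P$ for $i=1,2$ via the triangle inequality, and then use that $p_1 \neq p_2$ are primes in $[P,2P]$ together with $\epsilon < c/P^2$ to force the relevant integer obstruction to vanish (the paper phrases this with fractions $l_i/p_i$, you with the nearest integers $m_i$, which is the same thing). The only blemish is a harmless sign slip in the identity for $p_2 m_1 - p_1 m_2$ (your right-hand side equals $p_1 m_2 - p_2 m_1$), which does not affect the absolute-value bound that follows.
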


\begin{proof}
By our hypotheses, we have that
$$ \| q_1 (p_1 \alpha_1 - \beta_1) \| , \| q_2 (p_1 \alpha_2 - \beta_3) \| < 2 \epsilon P.$$
Since $\| q_1 \beta_1 - q_2 \beta_3 \| < \epsilon P$, it then follows from the triangle inequality that
$$ \| p_1 (q_1 \alpha_1 - q_2 \alpha_2) \| < 5\epsilon P,$$
from where we conclude that
$$ q_1 \alpha_1 - q_2 \alpha_2 = \frac{l_1}{p_1} + O(\epsilon P/p_1) = \frac{l_1}{p_1} + O(\epsilon),$$
for some $l_1 \in \Z$ and as elements of $\R / \Z$. On the other hand, the same argument can be used to derive that
$$ q_1 \alpha_1 - q_2 \alpha_2 =  \frac{l_2}{p_2} + O(\epsilon),$$
for some $l_2 \in \Z$. Therefore,
$$ \frac{l_1}{p_1} - \frac{l_2}{p_2} \equiv O(\epsilon) \, \, (\text{mod }1).$$
Since $\epsilon < c/P^2$ for some sufficiently small $c > 0$, it follows that it must necessarily be $\frac{l_1}{p_1} - \frac{l_2}{p_2} \equiv 0 \, (\text{mod }1)$ and therefore $p_1|l_1$ and $p_2|l_2$, giving the desired result.
\end{proof}

\begin{lema}
\label{Ax2}
Let $\tilde{H} > C P^2$ for some sufficiently large $C \sim 1$. Let $p_1, p_2 \in \mathcal{P}$ be distinct primes and let $(x_i, \beta_i), (y_i, \beta_i')$, $i=1,2$, be elements inside a $(c,\tilde{H})$-configuration in $[Y,2Y]$. Let $(z,\alpha), (z', \alpha') \in \R \times \R / \Z$ be such that for $i=1,2$ we have $|z/p_i - x_i |, |z'/p_i - y_i| \lesssim \tilde{H}$ and $\| p_i \alpha - \beta_i \|, \| p_i \alpha' - \beta_i' \| \lesssim 1/\tilde{H}$. Furthermore, suppose there exist primes $q_1, q_2 \in \mathcal{P}$ with $(x_i, \beta_i) \sim_{q_1,q_2} (y_i, \beta_i')$ for $i=1,2$. Then $| z / q_1 - z' / q_2| \lesssim \tilde{H}$ and $\| q_1 \alpha - q_2 \alpha' \| \lesssim 1/\tilde{H}$.
\end{lema}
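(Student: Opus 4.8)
The plan is to establish the two conclusions separately: the archimedean bound $|z/q_1 - z'/q_2| \lesssim \tilde{H}$ by a direct computation using the data at a single index, and the frequency bound $\|q_1\alpha - q_2\alpha'\| \lesssim 1/\tilde{H}$ as an application of Lemma \ref{A29}.

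For the archimedean part I would work only with the index $i=1$. Since $p_1, q_1, q_2 \in \mathcal{P}$ all have size $\sim P$, the hypothesis $|z/p_1 - x_1| \lesssim \tilde{H}$ gives $z = p_1 x_1 + O(P\tilde{H})$ and likewise $z' = p_1 y_1 + O(P\tilde{H})$; dividing by $q_1$ and by $q_2$ respectively (and using $q_1, q_2 \gtrsim P$) and subtracting yields $z/q_1 - z'/q_2 = p_1(x_1/q_1 - y_1/q_2) + O(\tilde{H})$. The relation $(x_1,\beta_1) \sim_{q_1,q_2} (y_1,\beta_1')$ gives $|x_1/q_1 - y_1/q_2| \lesssim \tilde{H}/P$ by Notation \ref{not28}, and multiplying by $p_1 \lesssim P$ shows that this contribution is again $O(\tilde{H})$, which is the claim.

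For the frequency part I would invoke Lemma \ref{A29} with $\alpha_1 = \alpha$, $\alpha_2 = \alpha'$, with the quadruple $(\beta_1,\beta_2,\beta_3,\beta_4)$ there taken to be $(\beta_1,\beta_2,\beta_1',\beta_2')$, with the same primes $p_1,p_2,q_1,q_2$, and with $\epsilon$ a suitably large constant multiple of $1/\tilde{H}$. The four bounds $\|p_i\alpha - \beta_i\|, \|p_i\alpha'-\beta_i'\| \lesssim 1/\tilde{H}$ from the hypotheses are exactly the first batch of inequalities required by Lemma \ref{A29}, while the relations $(x_i,\beta_i)\sim_{q_1,q_2}(y_i,\beta_i')$ for $i=1,2$ give $\|q_1\beta_i - q_2\beta_i'\| \lesssim P/\tilde{H}$, which is the second batch (of the form $\epsilon P$). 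The one condition to verify is the admissibility hypothesis $\epsilon < c/P^2$ of that lemma, and this is precisely where $\tilde{H} > CP^2$ enters: taking $C$ large enough in terms of $c$ and of the implied constants makes $\epsilon \sim 1/\tilde{H}$ small enough. Lemma \ref{A29} then outputs $\|q_1\alpha - q_2\alpha'\| \lesssim \epsilon \sim 1/\tilde{H}$.

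There is no genuine obstacle here; the argument is a bookkeeping exercise, and the only delicate point is making a single choice of $\epsilon$ (equivalently, of the constant $C$ in $\tilde{H} > CP^2$) that simultaneously dominates all the implied constants hidden in the hypotheses and in the $\sim_{q_1,q_2}$ relations, while still respecting $\epsilon < c/P^2$. Note that the distinctness $p_1 \neq p_2$ is needed only inside Lemma \ref{A29}, where it powers the B\'ezout-type step forcing the fractions $l_1/p_1$ and $l_2/p_2$ to be integers.
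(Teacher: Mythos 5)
Your proof is correct and follows essentially the same route as the paper: the archimedean bound is obtained by the same three-term triangle-inequality decomposition at the index $i=1$ (the paper writes it as a bound on $\frac{1}{p_1}|z/q_1 - z'/q_2|$, which is your computation divided through by $p_1$), and the frequency bound is exactly the paper's one-line appeal to Lemma \ref{A29} with the same assignment of variables. Your explicit verification of the admissibility condition $\epsilon < c/P^2$ via $\tilde{H} > CP^2$ is the right bookkeeping and is where that hypothesis is used.
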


\begin{proof}
From the triangle inequality and the relation $(x_1, \beta_1) \sim_{q_1,q_2} (y_1, \beta_1')$, we see that
\begin{align*}
 \frac{1}{p_1} |z/q_1 - z'/q_2| &\le |  (z/p_1)/q_1 - x_1/q_1| + |x_1/q_1 - y_1/q_2 | + |y_1/q_2 - (z'/p_1)/q_2| \\
 &\lesssim \frac{1}{q_1} \tilde{H} + \tilde{H}/P + \frac{1}{q_2} \tilde{H} \lesssim \tilde{H}/P,
 \end{align*}
 which gives us the first claim. The second one follows from our hypotheses and Lemma \ref{A29}.
 \end{proof}
 
 The context in which we wish to apply the next lemma is similar. We are again given $z, z' \in \mathcal{Z}$, integers $i,j \ge 0$, primes $p_1, p_2 \in \mathcal{P}$, a pair of elements in $K_i^z$ whose prime coordinates are $p_1, p_2$ and a pair of elements in $K_j^{z'}$ that also have these prime coordinates. Now, however, we are also given a third element $z'' \in \mathcal{Z}$ and a pair of elements in $K^{z''}$ with these same prime coordinates, but which we do not know to be related to each other under the relation $\sim$ (in particular, we are not assuming they belong to the same set $K_s^{z''}$ in our previous decomposition of $K^{z''}$). The goal of the lemma is to show that if, for $r=1,2$, the element of $K^{z''}$ corresponding to $p_r$ relates under $\sim$ with the elements of $K_i^{z}$ and $K_j^{z'}$ corresponding to $p_r$, then these two elements from $K^{z''}$ must indeed relate with each other under $\sim$. The precise statement is as follows.
 
  \begin{lema}
\label{P11}
Let $\tilde{H} > C P^3$ for some sufficiently large $C \sim 1$. Suppose we are given primes $p_1, p_2, q_1^{(1)}, q_2^{(1)}, q_1^{(2)}, q_2^{(2)} \in \mathcal{P}$ with $q_2^{(1)} \neq q_2^{(2)}$ and elements $(y_{p_1}, \alpha_{p_1}), (y_{p_2}, \alpha_{p_2}), (x^{(1)}_{p_1}, \gamma^{(1)}_{p_1}), (x_{p_2}^{(1)}, \gamma^{(1)}_{p_2})$, $(x_{p_1}^{(2)}, \gamma_{p_1}^{(2)})$, $(x_{p_2}^{(2)}, \gamma_{p_2}^{(2)}) $ in a $(c,\tilde{H})$-configuration, for some $c \sim 1$, such that $|y_{p_1} / p_2 - y_{p_2} / p_1 | \lesssim \tilde{H}/P$, 
 \begin{equation}
 \label{M25A}
  (x^{(j)}_{p_1}, \gamma^{(j)}_{p_1}) \sim_{p_2,p_1} (x_{p_2}^{(j)}, \gamma^{(j)}_{p_2}),
  \end{equation}
 for $j= 1,2$ and
 \begin{equation}
 \label{M25B}
  (y_{p_i}, \alpha_{p_i}) \sim_{q_2^{(j)}, q_1^{(j)}} (x_{p_i}^{(j)}, \gamma_{p_i}^{(j)}),
  \end{equation}
 for every choice of $i,j \in \left\{ 1, 2 \right\}$. Then, $\| p_2 \alpha_{p_1} - p_1 \alpha_{p_2} \| \lesssim P/\tilde{H}$. That is, $(y_{p_1},\alpha_{p_1}) \sim_{p_2,p_1} (y_{p_2},\alpha_{p_2})$.
\end{lema}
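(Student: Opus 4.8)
Looking at Lemma \ref{P11}, I need to prove that $(y_{p_1},\alpha_{p_1}) \sim_{p_2,p_1} (y_{p_2},\alpha_{p_2})$, i.e. $\|p_2\alpha_{p_1} - p_1\alpha_{p_2}\| \lesssim P/\tilde H$, given the archimedean closeness of $y_{p_1}/p_2$ and $y_{p_2}/p_1$ together with the frequency relations (\ref{M25A}) and (\ref{M25B}).

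\textbf{Plan.} The strategy is to chase the frequency inequalities, eliminating the auxiliary frequencies $\gamma_{p_i}^{(j)}$ to obtain two expressions for a common quantity involving $p_2\alpha_{p_1} - p_1\alpha_{p_2}$, and then use the standard denominator-clearing trick (as in Lemma \ref{A29}) to force the obstructing rational to vanish. First I would combine, for each fixed $j$, the two relations in (\ref{M25B}) (with $i=1,2$) with the relation (\ref{M25A}): from $(y_{p_i},\alpha_{p_i}) \sim_{q_2^{(j)},q_1^{(j)}} (x_{p_i}^{(j)},\gamma_{p_i}^{(j)})$ I get $\|q_2^{(j)}\alpha_{p_i} - q_1^{(j)}\gamma_{p_i}^{(j)}\| \lesssim P/\tilde H$, so multiplying through and using (\ref{M25A}) in the form $\|p_2\gamma_{p_1}^{(j)} - p_1\gamma_{p_2}^{(j)}\| \lesssim P/\tilde H$, the triangle inequality should yield $\|q_1^{(j)}(p_2\alpha_{p_1} - p_1\alpha_{p_2})\| \lesssim P^2/\tilde H$ — here I apply $q_1^{(j)}$ to the combination $p_2\alpha_{p_1} - p_1\alpha_{p_2}$ after clearing $\gamma$'s, picking up a factor $P$ at each multiplication. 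This is exactly the pattern of the proof of Lemma \ref{A29}, just one layer deeper, which is why the hypothesis is $\tilde H > CP^3$ rather than $\tilde H > CP^2$.

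\textbf{Key steps in order.} (1) For each $j \in \{1,2\}$, derive $\|q_1^{(j)}(p_2\alpha_{p_1} - p_1\alpha_{p_2})\| \lesssim P^2/\tilde H$ by the elimination just described; the two primes $p_1,p_2$ play the role of the ``inner'' primes and $q_1^{(j)}$ the ``outer'' one. (2) Conclude $p_2\alpha_{p_1} - p_1\alpha_{p_2} = \frac{l_j}{q_1^{(j)}} + O(P^2/(\tilde H q_1^{(j)})) = \frac{l_j}{q_1^{(j)}} + O(P/\tilde H)$ for some $l_j \in \Z$, as elements of $\R/\Z$. (3) Subtract the $j=1$ and $j=2$ versions: $\frac{l_1}{q_1^{(1)}} - \frac{l_2}{q_1^{(2)}} \equiv O(P/\tilde H) \pmod 1$. (4) Since $\tilde H > CP^3$ makes $P/\tilde H < c/P^2$ for suitable small $c$, and since $q_1^{(1)}, q_1^{(2)} \in \mathcal P$ are primes in $[P,2P]$, the rational $\frac{l_1}{q_1^{(1)}} - \frac{l_2}{q_1^{(2)}}$ has denominator at most $q_1^{(1)}q_1^{(2)} \lesssim P^2$, so it must be $0 \pmod 1$; hence $q_1^{(1)} \mid l_1$, giving $p_2\alpha_{p_1} - p_1\alpha_{p_2} = O(P/\tilde H)$ in $\R/\Z$, i.e. $\|p_2\alpha_{p_1} - p_1\alpha_{p_2}\| \lesssim P/\tilde H$. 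Finally, note that the archimedean part $|y_{p_1}/p_2 - y_{p_2}/p_1| \lesssim \tilde H/P$ is already a hypothesis, so the relation $(y_{p_1},\alpha_{p_1}) \sim_{p_2,p_1} (y_{p_2},\alpha_{p_2})$ holds (with an admissible constant $C \sim 1$).

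\textbf{Main obstacle.} The one genuine subtlety is step (4): I need to ensure the hypotheses actually force $q_2^{(1)} \neq q_2^{(2)}$ to be used — or rather, I need distinctness among the ``outer'' primes so that the two rationals in step (3) genuinely have different denominators and the cancellation argument applies. The hypothesis $q_2^{(1)} \neq q_2^{(2)}$ is stated, but in my elimination I ended up using $q_1^{(j)}$; I should instead run the elimination so that it is $q_2^{(j)}$ (the prime attached to $\alpha_{p_i}$ in (\ref{M25B})) that gets applied to $p_2\alpha_{p_1} - p_1\alpha_{p_2}$, using the orientation of $\sim_{q_2^{(j)},q_1^{(j)}}$ as written. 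With that bookkeeping corrected, distinctness of $q_2^{(1)}$ and $q_2^{(2)}$ is exactly what guarantees the two denominators $q_2^{(1)} \neq q_2^{(2)}$ are coprime (being distinct primes), so $\frac{l_1}{q_2^{(1)}} - \frac{l_2}{q_2^{(2)}}$ lies in $\frac{1}{q_2^{(1)}q_2^{(2)}}\Z$, and being $O(P/\tilde H) = O(1/P^2)$ in size it must vanish mod $1$. Tracking the accumulation of constants through the two nested applications of the triangle inequality is routine but is the reason the statement demands $\tilde H > CP^3$ with $C$ large.
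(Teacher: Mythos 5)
Your proposal is correct and, after the self-correction in your final paragraph (applying the elimination so that $q_2^{(j)}$ rather than $q_1^{(j)}$ multiplies $p_2\alpha_{p_1}-p_1\alpha_{p_2}$, matching the orientation of $\sim_{q_2^{(j)},q_1^{(j)}}$), it is essentially identical to the paper's proof: both derive $\| q_2^{(j)}(p_2\alpha_{p_1}-p_1\alpha_{p_2})\| \lesssim P^2/\tilde H$ for $j=1,2$, write $p_2\alpha_{p_1}-p_1\alpha_{p_2} \equiv l_j/q_2^{(j)} + O(P/\tilde H)$, and use $q_2^{(1)}\neq q_2^{(2)}$ with $\tilde H > CP^3$ to force $q_2^{(j)}\mid l_j$. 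Your identification of the needed hypothesis and of where the $P^3$ threshold enters is exactly right.
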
 

\begin{proof}
From (\ref{M25A}), (\ref{M25B}) and the triangle inequality, we have that
$$ \| p_2 q_2^{(1)} \alpha_{p_1} - p_1 q_2^{(1)} \alpha_{p_2} \| \le \| p_2 q_1^{(1)} \gamma_{p_1}^{(1)} - p_1 q_1^{(1)} \gamma_{p_2}^{(1)} \| +  O(P^2/\tilde{H}) \lesssim P^2 /\tilde{H}. $$
This means that
$$ p_2 \alpha_{p_1} -  p_1 \alpha_{p_2}  \equiv \frac{l_1}{q_2^{(1)}} + O(P/\tilde{H}) \, \, (\text{mod }1),$$
for some $l_1 \in \Z$. Applying the same reasoning, we also have
$$ p_2 \alpha_{p_1} -  p_1 \alpha_{p_2}  \equiv \frac{l_2}{q_2^{(2)}} + O(P/\tilde{H}) \, \, (\text{mod }1),$$
for some $l_2 \in \Z$. The result then follows, since if $C$ is sufficiently large, both estimates can only hold if $q_2^{(1)}|l_1$ and $q_2^{(2)} | l_2$.
\end{proof}

Suppose we are given two sets $K_i^z$, $K_j^z$ in the decomposition of $K^z$. The next lemma shows that if at least two elements of $K_i^z$ relate under $\sim$ with elements of $K_j^z$, then the frequencies $\alpha_i^z, \alpha_j^z$ must be close to each other.

\begin{lema}
\label{A16}
Let $\epsilon < c/P^3$ for a sufficiently small $c \sim 1$. For $i, j \in \left\{ 1, 2 \right\}$, let $\alpha_i, \beta_{i,j} \in \R / \Z$ and let $p_{i,j} \in \mathcal{P}$ be distinct primes. Suppose that for every choice of $i, j$ we have $\| p_{i,j} \alpha_i - \beta_{i,j} \| < \epsilon$ and $\| p_{\overline{i},j} \beta_{i,j} - p_{i,j} \beta_{\overline{i},j} \| \le \epsilon P$, where $\overline{i}=2$ if $i=1$ and $\overline{i}=1$ if $i=2$. Then, $\| \alpha_1 - \alpha_2 \| \lesssim \epsilon / P$.
\end{lema}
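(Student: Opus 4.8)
The plan is to follow the template of the proofs of Lemma~\ref{A29} and Lemma~\ref{P11}: use the approximate relations $\|p_{i,j}\alpha_i - \beta_{i,j}\| < \epsilon$ to eliminate the $\beta_{i,j}$, producing for each $j \in \{1,2\}$ an approximate relation for $p_{1,j}p_{2,j}(\alpha_1-\alpha_2)$ modulo $1$; then extract from the two values of $j$ two rational approximations of $\alpha_1 - \alpha_2$ with coprime denominators, and use the spacing of rationals to force these approximations to be trivial.

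Concretely, fix $j$. Since multiplication by the integer $p_{\overline{i},j} \le 2P$ can expand $\|\cdot\|$ by at most that factor, the hypothesis $\|p_{i,j}\alpha_i - \beta_{i,j}\| < \epsilon$ gives $\|p_{\overline{i},j}p_{i,j}\alpha_i - p_{\overline{i},j}\beta_{i,j}\| < 2P\epsilon$ for $i=1,2$. Feeding these two estimates together with the hypothesis $\|p_{2,j}\beta_{1,j} - p_{1,j}\beta_{2,j}\| \le \epsilon P$ into the triangle inequality yields $\|p_{1,j}p_{2,j}(\alpha_1-\alpha_2)\| \lesssim \epsilon P$. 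Writing $\delta := \alpha_1 - \alpha_2$ and dividing through by $p_{1,j}p_{2,j} > P^2$, we obtain an integer $m_j$ with $\delta \equiv \frac{m_j}{p_{1,j}p_{2,j}} + O(\epsilon/P) \pmod{1}$.

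Next I would compare the cases $j=1$ and $j=2$. Subtracting the two relations gives $\frac{m_1}{p_{1,1}p_{2,1}} - \frac{m_2}{p_{1,2}p_{2,2}} \equiv O(\epsilon/P) \pmod{1}$. The left-hand side is a rational number whose denominator divides $D := p_{1,1}p_{2,1}p_{1,2}p_{2,2} < 16 P^4$, so if it is nonzero modulo $1$ its distance to $0$ in $\mathbb{R}/\mathbb{Z}$ is at least $1/D$; since $\epsilon < c/P^3$ with $c \sim 1$ small, the error term $O(\epsilon/P)$ lies below $1/D$ and we are forced to have $\frac{m_1}{p_{1,1}p_{2,1}} \equiv \frac{m_2}{p_{1,2}p_{2,2}} \pmod{1}$. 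Because the four primes $p_{i,j}$ are distinct, $\gcd(p_{1,1}p_{2,1},\, p_{1,2}p_{2,2}) = 1$, and clearing denominators in this congruence forces $p_{1,1}p_{2,1} \mid m_1$; hence $\frac{m_1}{p_{1,1}p_{2,1}} \in \mathbb{Z}$ and, plugging back in, $\|\alpha_1 - \alpha_2\| = \|\delta\| \lesssim \epsilon/P$, as claimed. The only delicate point — exactly as in Lemmas~\ref{A29} and~\ref{P11} — is the bookkeeping of the numerical losses: each multiplication by a prime of $\mathcal{P}$ costs a factor $\lesssim P$, so after the amplifications we land at the scale $\epsilon P$, which divided by $p_{1,j}p_{2,j} \sim P^2$ becomes $O(\epsilon/P)$, and one must verify this is genuinely smaller than the rational-spacing threshold $1/D \gtrsim P^{-4}$; this is precisely what the hypothesis $\epsilon < c/P^3$ guarantees once $c$ is taken small enough to absorb the implied constants. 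Beyond this constant-chasing there is no real obstacle, the structure being identical to the earlier rigidity lemmas.
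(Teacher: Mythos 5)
Your proof is correct and follows essentially the same route as the paper's: both derive $\| p_{1,j} p_{2,j} (\alpha_1 - \alpha_2) \| \lesssim \epsilon P$ for $j=1,2$ via the triangle inequality, then note that the two resulting rational approximations of $\alpha_1-\alpha_2$ have coprime denominators in $[P^2, 4P^2]$, so the hypothesis $\epsilon < c/P^3$ forces both to be integral. Your write-up merely spells out the coprimality/spacing step that the paper leaves implicit.
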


\begin{proof}
Combining the hypotheses with the triangle inequality, we see that $\| p_{1,j} p_{2,j} (\alpha_1 - \alpha_2) \| < 5 \epsilon P$ for $j=1,2$. Thus,
$$ \alpha_1 - \alpha_2 = \frac{l_1}{p_{1,1} p_{2,1}} + O(\frac{\epsilon P}{p_{1,1} p_{2,1}}) = \frac{l_2}{p_{1,2} p_{2,2}} + O(\frac{\epsilon P}{p_{1,2} p_{2,2}}),$$
for certain integers $l_1,l_2$. Since the primes $p_{i,j}$ are distinct and belong to $[P,2P]$, the result follows from our assumption on $\epsilon$.
\end{proof}

Lastly, we also have the following variant.

\begin{lema}
\label{gap12}
Let $0 < \epsilon < c/P^2$ for some sufficiently small $c \sim 1$. Let $\beta, \alpha_1, \alpha_2, \alpha_3 \in \R / \Z$ and let $p_1,p_2,p_3 \in \mathcal{P}$ be distinct primes. Suppose that for $i=1,2$ we have $\| p_i \beta - \alpha_i \| < \epsilon$ and $\| p_3 \alpha_i - p_i \alpha_3 \| < \epsilon P$. Then, $\| p_3 \beta - \alpha_3 \| \lesssim \epsilon$.
\end{lema}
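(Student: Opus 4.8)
The plan is to reproduce the mechanism of Lemma \ref{A29}: from the hypotheses I will extract two independent representations of $p_3 \beta - \alpha_3$ as a rational with denominator in $[P,2P]$ plus an error of size $O(\epsilon)$, and then the assumption $\epsilon < c/P^2$ will force the rational parts to vanish.

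First I would multiply the bound $\| p_i \beta - \alpha_i \| < \epsilon$ by $p_3$. Since $p_3 \le 2P$, this gives $\| p_3 p_i \beta - p_3 \alpha_i \| < 2 \epsilon P$ for $i = 1,2$. Combining this with the assumed bound $\| p_3 \alpha_i - p_i \alpha_3 \| < \epsilon P$ via the triangle inequality yields
$$ \| p_i (p_3 \beta - \alpha_3) \| \lesssim \epsilon P \qquad (i = 1,2).$$
Working in $\R / \Z$, from $\| p_i (p_3 \beta - \alpha_3) \| \lesssim \epsilon P$ I then deduce that
$$ p_3 \beta - \alpha_3 = \frac{l_i}{p_i} + O(\epsilon P / p_i) = \frac{l_i}{p_i} + O(\epsilon) \qquad (\text{mod }1),$$
for some integers $l_1, l_2$. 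Subtracting the two representations, $\frac{l_1}{p_1} - \frac{l_2}{p_2} \equiv O(\epsilon) \, (\text{mod }1)$.

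To finish, I would observe that since $p_1 \neq p_2$ lie in $[P,2P]$, any nonzero element of $\frac{1}{p_1 p_2}\Z / \Z$ has norm at least $1/(p_1 p_2) \ge 1/(4P^2)$; as $\epsilon < c/P^2$ with $c$ sufficiently small, the displayed congruence can only hold if $\frac{l_1}{p_1} - \frac{l_2}{p_2} \equiv 0 \, (\text{mod }1)$, whence $p_1 \mid l_1$ (and $p_2 \mid l_2$) and so $\frac{l_1}{p_1} \equiv 0 \, (\text{mod }1)$. Plugging this back, $p_3 \beta - \alpha_3 = O(\epsilon)$ as an element of $\R / \Z$, i.e. $\| p_3 \beta - \alpha_3 \| \lesssim \epsilon$, which is the claim. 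I do not anticipate any genuine obstacle here: the only point needing care is bookkeeping of absolute constants so that the accumulated error stays below $1/(4P^2)$, which is precisely what the hypothesis $0 < \epsilon < c/P^2$ with small $c$ is designed to guarantee — and note that only the distinctness of $p_1$ and $p_2$ is actually used, the role of $p_3$ being merely that $p_3 \le 2P$.
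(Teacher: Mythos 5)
Your proposal is correct and follows exactly the paper's own argument: triangle inequality to get $\| p_i (p_3\beta - \alpha_3)\| \lesssim \epsilon P$ for $i=1,2$, the two resulting representations $p_3\beta - \alpha_3 = \frac{l_i}{p_i} + O(\epsilon)$, and the size hypothesis $\epsilon < c/P^2$ together with the distinctness of $p_1, p_2$ to force $\frac{l_1}{p_1} \equiv \frac{l_2}{p_2} \equiv 0 \pmod 1$. The paper compresses the final step into "the result then follows from our size hypothesis on $\epsilon$"; your spelled-out version of that step is exactly what is intended.
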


\begin{proof}
Let $i=1,2$. By the hypotheses and the triangle inequality, we have that $\| p_3 p_i \beta - p_i \alpha_3 \| < 3 \epsilon P$. It follows that
$$ p_3 \beta - \alpha_3 = \frac{l_i}{p_i} + O ( \epsilon ),$$ 
for some integer $l_i$. The result then follows from our size hypothesis on $\epsilon$.
\end{proof}

\section{Fixed-prime liftings}

Following the strategy outlined in the introduction, our goal in this section will be to establish the following result that decomposes a configuration into several subsets with good behaviour under liftings.

\begin{prop}
\label{lift28}
Let $s=O(1)$ be a positive integer. Let $1 \lesssim c_1, c_2 \le 1$, $1 \lesssim \delta < 1/10^{10}$, $10 \le C_2, B \lesssim 1$,  $P^3 < M \tilde{H} < Y$ for some sufficiently large $M \sim 1$ and assume $K$ is sufficiently large. Let $\mathcal{A}$ be a $(c_1,\tilde{H})$-configuration in $[Y,2Y]$ with $|\mathcal{A}|=c_1 \frac{Y}{\tilde{H}}$ and write $\mathcal{Q}$ for the set of quadruples $((x,\alpha_x), (y,\alpha_y), q_1, q_2)$ in $\mathcal{A}^2 \times \mathcal{P}^2$ with $|x/q_1 - y/q_2|  < \delta \tilde{H}/P$ and $\| q_1 \alpha_x - q_2 \alpha_y \| < C_2 P / \tilde{H}$. Suppose $|\mathcal{Q}| = c_2 |\mathcal{A}||\mathcal{P}|^2$. Then, there exist $p^{\ast} \in \mathcal{P}$, a $(c_1,p^{\ast} \tilde{H})$-configuration $\mathcal{A}^{\uparrow p^{\ast}}$ in $[p^{\ast}Y,2p^{\ast}Y]$ and sets $\mathcal{A}_0, \mathcal{A}_1, \ldots, \mathcal{A}_{s}$ with $\mathcal{A} = \bigcup_{r=0}^{s} \mathcal{A}_r$, such that the following holds:
\begin{enumerate}[(i)]
\item For every $(x,\alpha_x) \in \mathcal{A}$ there exists an element $(p^{\ast} x, \alpha_x^{\uparrow}) \in \mathcal{A}^{\uparrow p^{\ast}}$ with $p^{\ast} \alpha_x^{\uparrow} = \alpha_x$ and all elements of $\mathcal{A}^{\uparrow p^{\ast}}$ are of this form. In particular, $|\mathcal{A}|=| \mathcal{A}^{\uparrow p^{\ast}}|$.
\item $|\mathcal{A}_0| \gtrsim Y/\tilde{H}$, with the implicit constant independent of $s$.
\item For all but $\lesssim \frac{1}{(\log Y)^B} \frac{Y}{\tilde{H}} |\mathcal{P}|^2$ of the quadruples $((x,\alpha_x), (y,\alpha_y), q_1, q_2) \in \mathcal{Q}$ with $(x,\alpha_x) \in \mathcal{A}_r$ for some $0 \le r < s$, we have that $(y,\alpha_y) \in \mathcal{A}_{r+1}$ and $\| q_1 \alpha_x^{\uparrow} - q_2 \alpha_y^{\uparrow} \| < \frac{C_2 P}{p^{\ast} \tilde{H}}$.
\item If $(x,\alpha_x) \in \mathcal{A}_r$ for some $0 \le r < s$, then there exist $\gtrsim \frac{1}{(\log Y)^{rB}} |\mathcal{P}|$ pairs $((y,\alpha_y),q) \in \mathcal{A} \times \mathcal{P}$ with $|(p^{\ast}x)/q-y| \lesssim \tilde{H}$ and $\| q \alpha_x^{\uparrow} - \alpha_y \| \lesssim 1/\tilde{H}$.
\end{enumerate}
\end{prop}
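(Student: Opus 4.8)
The plan is to implement the probabilistic strategy sketched in the introduction, viewing the lifting prime $p^{\ast} \in \mathcal{P}$ as a random variable. First I would set up the family of sets $K^z$ and their partition $K^z = \bigcup_i K_i^z$ exactly as described just before Lemma \ref{conc}, using the constant $C_1 = C[0]$ from the chain (\ref{27chain}), together with the maximal $\delta P \tilde{H}$-separated set $\mathcal{Z} \subseteq [PY, 4PY]$. An application of Lemma \ref{conc} to the primes appearing in each $K^z$, combined with the hypothesis $|\mathcal{Q}| = c_2 |\mathcal{A}| |\mathcal{P}|^2$ and a pigeonholing over $z \in \mathcal{Z}$, shows that for a positive proportion of $z$ the set $K_0^z$ has size $\gtrsim |\mathcal{P}|$; this is the source of conclusion (ii). For each $(x, \alpha_x) \in \mathcal{A}$ and each candidate prime $p \in \mathcal{P}$, there is a well-defined $z = z(x,p) \in \mathcal{Z}$ with $((x,\alpha_x),p) \in K^z$ and hence a well-defined index $i = i(x,p)$ with $((x,\alpha_x),p) \in K_i^z$; I define $\alpha_x^{\uparrow}$ (once $p^{\ast}$ is fixed) to be the preimage of $\alpha_x$ under multiplication by $p^{\ast}$ that lies closest to $\alpha_{i(x,p^{\ast})}^{z(x,p^{\ast})}$, which immediately gives (i) after taking $\mathcal{A}^{\uparrow p^{\ast}}$ to be the resulting set of pairs $(p^{\ast} x, \alpha_x^{\uparrow})$ (the $\tilde{H} p^{\ast}$-separation is automatic since the $x$ were $\tilde{H}$-separated).

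The heart of the argument is (iii), and this is where I would spend most of the effort. Fix a quadruple $((x,\alpha_x),(y,\alpha_y),q_1,q_2) \in \mathcal{Q}$. For a random $p^{\ast}$, write $i = i(x,p^{\ast})$, $z = z(x,p^{\ast})$, $j = i(y,p^{\ast})$, $z' = z(y,p^{\ast})$. The goal is: almost surely (failure probability $O(1/\log Y)$, which after summing over the $\lesssim \frac{Y}{\tilde H}|\mathcal P|^2$ quadruples and using $|\mathcal P| \gtrsim (\log Y)^K$ is absorbed into the $(\log Y)^{-B}$ in (iii)) we have $j = $ the index of $\mathcal A_{r+1}$-membership, and $\|q_1 \alpha_x^{\uparrow} - q_2 \alpha_y^{\uparrow}\|$ small. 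The mechanism is the one outlined in the introduction: using that a $c_2$-proportion of pairs with $x/p$ close to $y/p$ satisfy $\sim_{p,q}$, one shows that almost surely there are many $((x',\alpha_{x'}),p^{\ast}) \in K_i^z$ and $((y',\alpha_{y'}),p^{\ast}) \in K^{z'}$ with $(x',\alpha_{x'}) \sim_{q_1,q_2} (y',\alpha_{y'})$; since $(x,\alpha_x), (x',\alpha_{x'})$ both lie in $K_i^z$ they relate under $\sim$, so Lemma \ref{P11} (the second form of parallel rigidity — with $z, z', z''$ playing the roles of $z, z', z''$ there) forces $(y,\alpha_y) \sim (y',\alpha_{y'})$ for many such $y'$, then Lemma \ref{A16} (third form of rigidity) forces $(y',\alpha_{y'}) \in K_j^{z'}$ for many of them. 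Having obtained (at least two) elements of $K_i^z$ relating to elements of $K_j^{z'}$, Lemma \ref{Ax2} (first form, applied with the liftings $(z,\alpha_i^z)$, $(z',\alpha_j^{z'})$) yields $\|q_1 \alpha_i^z - q_2 \alpha_j^{z'}\| \lesssim 1/\tilde H$, and since $\alpha_x^{\uparrow}$ is chosen near $\alpha_i^z$ and $\alpha_y^{\uparrow}$ near $\alpha_j^{z'}$ this transfers to the stated bound on $\|q_1 \alpha_x^{\uparrow} - q_2 \alpha_y^{\uparrow}\|$, modulo a harmless worsening of constants along the chain $C[0] < C[1] < \overline C[1] < \cdots$. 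The sets $\mathcal{A}_r$ are then defined recursively: $\mathcal{A}_0$ consists of those $(x,\alpha_x)$ whose associated $K^z$ is large in the above sense, and $(y,\alpha_y) \in \mathcal{A}_{r+1}$ if it is the $\sim$-partner of an element of $\mathcal{A}_r$ through a quadruple in $\mathcal{Q}$; the size bound $|K_i^z| \gtrsim (\log Y)^{-rB}|\mathcal P|$ for elements of $\mathcal{A}_r$ (needed to rerun the probabilistic step at the next level, and giving (iv)) follows by induction, losing a factor of a power of $\log Y$ at each of the $s = O(1)$ steps because the ``many partners'' count degrades by such a factor each time Lemma \ref{conc} is reapplied.

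For (iv), the point is that an element $(x,\alpha_x) \in \mathcal{A}_r$ with $r < s$ has, by the inductive size control, its set $K_{i(x,p^{\ast})}^{z(x,p^{\ast})}$ of size $\gtrsim (\log Y)^{-rB}|\mathcal{P}|$, and every pair $((y,\alpha_y),q)$ in that set satisfies precisely $|(p^{\ast}x)/q - y| \lesssim \tilde H$ (from membership in the same $K^z$, since $z$ is within $\delta P \tilde H$ of both $p^{\ast} x$ and $q y$) and $\|q\alpha_x^{\uparrow} - \alpha_y\| \lesssim 1/\tilde H$ (from $\|q \alpha_i^z - \alpha_y\| \lesssim 1/\tilde H$ and $\alpha_x^{\uparrow}$ being near $\alpha_i^z$, using $q, p^{\ast} \sim P$ and $\tilde H > P^3$). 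The main obstacle, and the part requiring genuine care rather than bookkeeping, is the probabilistic step in (iii): one must choose the nested constants in (\ref{27chain}) and the exponents $B$, and set up the second-moment/counting estimates over $p^{\ast}$, so that (a) the ``exceptional'' quadruples where the chain of rigidity implications breaks down are genuinely $O(1/\log Y)$ in relative density uniformly over the $O(1)$ levels, and (b) the membership relations defining the $\mathcal{A}_r$ are consistent — in particular that an element is not forced into two different levels in a way that destroys the counts. Making the dependence of the loss on $s$ vacuous (so that $s$ can be taken as large as needed later, as promised after (\ref{27chain})) is exactly what forces the careful uniform tracking of the constants $c_1, c_2, \delta, C_2$ through the recursion.
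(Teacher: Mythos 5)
Your proposal follows essentially the same route as the paper: the sets $K^z$, the partition into $K_i^z$, the choice of $\alpha_x^{\uparrow}$ as the preimage nearest $\alpha_{i}^{z}$, Lemma \ref{conc} plus pigeonholing for (ii), and the probabilistic argument over $p^{\ast}$ combining the three forms of parallel rigidity for (iii) are all exactly what the paper does. Two caveats are worth flagging. First, the step you correctly identify as ``the main obstacle'' --- making the almost-sure implications quantitative and uniform over the $O(1)$ levels --- is where essentially all of the paper's work lies (its Lemmas \ref{po17}, \ref{T18}, \ref{quad17} and the main estimate Lemma \ref{privado19}), and your proposal describes the required estimates without carrying them out; in particular the device the paper uses to absorb the constant-worsening at each level is a family of \emph{enlarged} sets $\overline{K}_i^z[r]$ (adjoining to $K_i^z[r]$ all $K_j^z$ whose frequencies $\alpha_j^z$ lie within $\overline{C}[r]/(P\tilde{H})$ of $\alpha_i^z[r]$), together with an $O(1)$-overlap bound (Lemma \ref{O28}); your sketch gestures at the chain (\ref{27chain}) but does not introduce this structure, and without it the induction on $r$ does not close. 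Second, your recursive definition of $\mathcal{A}_{r+1}$ as the set of $\sim$-partners of $\mathcal{A}_r$ makes conclusion (iii) nearly tautological but then leaves (iv) unproved for $\mathcal{A}_{r+1}$; the paper instead defines $\mathcal{A}_r$ intrinsically, as those $(x,\alpha_x)$ with $((x,\alpha_x),p^{\ast}) \in \overline{K}_i^{z_x}[r]$ for some $i$ with $|\overline{K}_i^{z_x}[r]| \gtrsim (\log Y)^{-rB}|\mathcal{P}|$ and $\alpha_i^{z_x}[r]$ close to $\alpha_{i_x}^{z_x}$ (with $\mathcal{A}_s = \mathcal{A}$ to guarantee the covering), and then \emph{proves} that partners of $\mathcal{A}_r$-elements almost surely land in $\mathcal{A}_{r+1}$, which is the actual content of Lemma \ref{privado19}(ii). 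So the architecture is right, but the proposal should be regarded as a correct plan rather than a proof: the counting estimates that justify ``almost surely'' and the precise definition of the levels $\mathcal{A}_r$ still need to be supplied.
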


\begin{defi}
If $\mathcal{A}, \mathcal{A}^{\uparrow p^{\ast}}$ are configurations of the type given in the statement of Proposition \ref{lift28}, we say $\mathcal{A}^{\uparrow p^{\ast}}$ is a \emph{lifting} of $\mathcal{A}$.
\end{defi}

For the remainder of this section we will focus on the proof of Proposition \ref{lift28}, so we fix all notation as in its statement. In particular, since we will largely be focusing on quadruples of $\mathcal{Q}$, we will use the following variant of the $\sim$ notation introduced earlier.

\begin{notation}[$\sim$]
\label{2sim}
During this section, given elements $(x,\alpha_x), (y, \alpha_y) \in \mathcal{A}$ and primes $q_1, q_2 \in \mathcal{P}$, we will write $(x,\alpha_x) \sim_{q_1, q_2} (y, \alpha_y)$ to mean that $((x,\alpha_x), (y, \alpha_y), q_1, q_2) \in \mathcal{Q}$ and similarly, we will write $(x,\alpha_x) \sim (y, \alpha_y)$ to mean that there exist $q_1, q_2 \in \mathcal{P}$ with $(x,\alpha_x) \sim_{q_1, q_2} (y, \alpha_y)$. On the other hand, if we write $(x,\alpha_x) \sim_{q_1, q_2}^{C} (y, \alpha_y)$ for an explicit parameter $C$, this will have the same meaning as in Notation \ref{not28}. 
\end{notation}

We now need to define some variants of the sets $K_i^z$ introduced at the beginning of last section. We assume the constant $C_1$ used in the construction of these sets is sufficiently large with respect to $C_2$ and write $C[0]=C_1$. For $1 \le r \le s$, we then let $C[r] \sim 1$ be a constant that is sufficiently large with respect to $C[r-1]$. We write $K_i^z[0]=K_i^z$ and $\alpha_i^z[0]=\alpha_i^z$. Then, for every $1 \le r \le s$, we write $\alpha_0^z[r]=\alpha_0^z$ and let $K_0^z[r]$ consist of those elements $((x,\alpha_x),q) \in K^z$ with $\| q \alpha_0^z[r] - \alpha_x \| < C[r] /\tilde{H}$. Once $\alpha_i^z[r]$ and $K_i^z[r]$ have been defined for all $0 \le i < j$, we then let $K_j^z[r]$ be a subset of maximal size of $K^z \setminus \bigcup_{i=0}^{j-1} K_i^z[r]$ for which we can find a single frequency $\alpha_j^z[r] \in \R / \Z$ such that $\| q \alpha_j^z[r] - \alpha_x \| < C[r] /\tilde{H}$ for every $((x,\alpha_x),q) \in K_j^z[r]$.

Finally, we let $\overline{K}_i^z[0]=K_i^z[0]$ and, for $1 \le r \le s$, we let $\overline{K}_i^z[r]$ consist of those $((x,\alpha_x),q) \in K^z$ that either belong to $K_i^z[r]$ or belong to $K_j^z$ for some $j$ with $\| \alpha_j^z - \alpha_i^z[r] \| \le \overline{C}[r]/(P\tilde{H})$, where $\overline{C}[r] \sim 1$ is a constant that is sufficiently large with respect to $C[r]$. We will also assume that $C[r+1]$ is sufficiently large with respect to $\overline{C}[r]$.

We now proceed to study the sets and frequencies we have just constructed. We begin with the following simple observation.

\begin{lema}
\label{O28}
Every $( (x,\alpha_x) , q) \in K^z$ belongs to $O(1)$ sets $\overline{K}_i^z[r]$.
\end{lema}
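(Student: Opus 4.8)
I want to show that a fixed pair $((x,\alpha_x),q)\in K^z$ lies in only $O(1)$ of the sets $\overline K_i^z[r]$. Since there are only $s+1=O(1)$ values of $r$, it suffices to fix $r$ and bound the number of indices $i$ with $((x,\alpha_x),q)\in\overline K_i^z[r]$ by an absolute constant; summing over the $O(1)$ choices of $r$ then gives the claim. So from now on $r$ is fixed.

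First I would observe that for $r=0$ there is nothing to prove: the sets $\overline K_i^z[0]=K_i^z[0]=K_i^z$ form a partition of $K^z$, so $((x,\alpha_x),q)$ lies in exactly one of them. Now take $1\le r\le s$. By definition, $((x,\alpha_x),q)\in\overline K_i^z[r]$ means either $((x,\alpha_x),q)\in K_i^z[r]$ — and the sets $K_i^z[r]$, $i\ge 0$, partition $K^z$, so this happens for at most one index $i$ — or else $((x,\alpha_x),q)\in K_j^z$ for some $j$ with $\|\alpha_j^z-\alpha_i^z[r]\|\le\overline C[r]/(P\tilde H)$. Again, the sets $K_j^z=K_j^z[0]$ partition $K^z$, so there is a single index $j=j(x,\alpha_x,q)$ with $((x,\alpha_x),q)\in K_j^z$. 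Hence the number of $i$ with $((x,\alpha_x),q)\in\overline K_i^z[r]$ is at most $1$ plus the number of indices $i$ such that $\|\alpha_j^z-\alpha_i^z[r]\|\le\overline C[r]/(P\tilde H)$ for that fixed $j$. So the whole problem reduces to: for a fixed frequency $\beta:=\alpha_j^z\in\R/\Z$, how many of the frequencies $\alpha_i^z[r]$ can lie within $\overline C[r]/(P\tilde H)$ of $\beta$?

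The key point is that distinct frequencies $\alpha_i^z[r]$ must be \emph{well separated}. Indeed, suppose $\|\alpha_i^z[r]-\alpha_{i'}^z[r]\|$ were very small, say $\lesssim 1/(P\tilde H)$, for two indices $i\neq i'$. Pick any $((x,\alpha_x),q)\in K_{i'}^z[r]$ — nonempty by construction for the relevant indices — so $\|q\alpha_{i'}^z[r]-\alpha_x\|<C[r]/\tilde H$. Then $\|q\alpha_i^z[r]-\alpha_x\|\le\|q(\alpha_i^z[r]-\alpha_{i'}^z[r])\|+\|q\alpha_{i'}^z[r]-\alpha_x\|\lesssim P\cdot\frac{1}{P\tilde H}+\frac{C[r]}{\tilde H}<\frac{C[r]}{\tilde H}$ once the constant is mildly enlarged, which would mean that $((x,\alpha_x),q)$ already satisfies the defining inequality for $\alpha_i^z[r]$; since the $K_i^z[r]$ are built greedily by removing previously chosen sets, an element of $K^z$ compatible with an earlier frequency cannot reappear in a later set with index $i'>i$, contradicting $((x,\alpha_x),q)\in K_{i'}^z[r]$. (One has to be slightly careful about which of $i,i'$ is larger, but by symmetry of the roles we may assume $i<i'$, and then the contradiction is exactly with maximality of $K_i^z[r]$.) This forces $\|\alpha_i^z[r]-\alpha_{i'}^z[r]\|\gtrsim_{C[r]} 1/(P\tilde H)$ for all $i\neq i'$ — the frequencies are $\sim_{C[r]}(P\tilde H)^{-1}$-separated in $\R/\Z$.

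**The main obstacle, and the conclusion.** Given the separation, a ball of radius $\overline C[r]/(P\tilde H)$ around the fixed $\beta$ can contain at most $O(\overline C[r]/c_{C[r]})=O(1)$ of the $(P\tilde H)^{-1}$-separated points $\alpha_i^z[r]$ — this is a trivial volume count in $\R/\Z$, using that $\overline C[r]$ and the separation constant depend only on $C[r]\sim 1$ and the constants are nested as in \eqref{27chain}. Therefore at most $O_{C[r]}(1)$ indices $i$ have $\alpha_i^z[r]$ close to $\alpha_j^z$, and combining with the "at most one from $K_i^z[r]$ itself" bound, $((x,\alpha_x),q)$ lies in $O_{C[r]}(1)$ of the sets $\overline K_i^z[r]$. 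Summing over the $O(1)$ values of $r$ gives that $((x,\alpha_x),q)$ lies in $O(1)$ sets $\overline K_i^z[r]$ overall, as claimed. The part requiring the most care is making the greedy-maximality argument for separation of the $\alpha_i^z[r]$ fully rigorous — in particular checking that an element satisfying the inequality for an earlier-indexed frequency genuinely cannot appear in a later set, which follows because $K_j^z[r]$ is chosen inside $K^z\setminus\bigcup_{i<j}K_i^z[r]$ and $K_i^z[r]$ was taken of maximal size; everything else is the triangle inequality and a packing bound.
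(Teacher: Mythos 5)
Your reduction is the same as the paper's: fix $r$, dispose of $r=0$, use that both $\{K_i^z\}$ and $\{K_i^z[r]\}$ partition $K^z$, and reduce to showing that an interval of length $O(1/(P\tilde H))$ in $\R/\Z$ contains $O(1)$ of the frequencies $\alpha_i^z[r]$. The gap is in how you establish that last count. You claim the $\alpha_i^z[r]$ are pairwise $\gtrsim 1/(P\tilde H)$-separated, and your argument for this hinges on the step
$\| q\alpha_i^z[r]-\alpha_x\| \le \|q(\alpha_i^z[r]-\alpha_{i'}^z[r])\| + \|q\alpha_{i'}^z[r]-\alpha_x\| < C[r]/\tilde H$ ``once the constant is mildly enlarged.'' But $C[r]$ is a \emph{fixed} constant in the definition of the sets $K_i^z[r]$; you are not free to enlarge it, and the triangle inequality only gives $< C[r]/\tilde H + O(1/\tilde H)$, which does not imply membership in $K_i^z[r]$. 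In fact the pairwise-separation claim is false: an element of $K_{i'}^z[r]$ may sit in the thin annulus just outside the $C[r]/\tilde H$-window around $q\alpha_i^z[r]$, so a later set can legitimately be built around a frequency arbitrarily close to an earlier one. Your packing bound therefore has no foundation.

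The paper's proof gets around exactly this by working with \emph{three} frequencies rather than two. If an interval of length $c/(P\tilde H)$ contained many frequencies, one could extract $\alpha_{i_1}^z[r]\le\alpha_{i_2}^z[r]\le\alpha_{i_3}^z[r]$ with the indices $i_1,i_2,i_3$ monotone. Any $((y,\alpha_y),q)\in K_{i_2}^z[r]$ was rejected from $K_{i_1}^z[r]$, so $\|q\alpha_{i_1}^z[r]-\alpha_y\|\ge C[r]/\tilde H$ while $\|q\alpha_{i_2}^z[r]-\alpha_y\|<C[r]/\tilde H$; since $q\alpha_{i_1}^z[r]$ and $q\alpha_{i_2}^z[r]$ differ by at most $2c/\tilde H$, this pins $q\alpha_{i_2}^z[r]-\alpha_y$ to within $2c/\tilde H$ of one \emph{end} of the window, namely the end on the $\alpha_{i_1}$ side; moving to $\alpha_{i_3}^z[r]$, which lies on the opposite side of $\alpha_{i_2}^z[r]$, then keeps the element strictly inside the window. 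Hence all of $K_{i_2}^z[r]$ is compatible with $\alpha_{i_3}^z[r]$, and adjoining any element of the non-empty, disjoint $K_{i_3}^z[r]$ contradicts the maximality of $K_{i_2}^z[r]$. This ``one-sided'' information coming from the rejection by an earlier index is precisely what your two-frequency argument lacks. To repair your proof you would need to replace the separation claim by this three-term monotone argument (plus the observation that an interval with too many frequencies always contains such a monotone triple).
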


\begin{proof}
Since $s=O(1)$, it will suffice to prove this for a fixed choice of $r$. We may also assume $r > 0$, since the case $r=0$ is trivial. Observe now that both the sets $K_i^z$ and the sets $K_i^z[r]$ are partitions of $K^z$. Therefore, we see that identifying $\R / \Z$ with $[0,1)$, it will be enough to show that each subinterval of $\R / \Z$ of size $O(1/(P \tilde{H}))$ contains $O(1)$ elements $\alpha_i^z[r]$. To prove this, observe that otherwise we could find a sequence $\alpha_{i_1}^z[r] \le \alpha_{i_2}^z[r] \le \alpha_{i_3}^z[r]$ inside an interval of size $c/(P \tilde{H})$, for some small $c \sim 1$, and with either $i_1 > i_2 > i_3$ or $i_1 < i_2 < i_3$. We will assume the latter situation holds, since the former case can be handled analogously. But notice that every $((y,\alpha_y),q) \in K_{i_2}^z[r]$ must fail to belong to $K_{i_1}^z[r]$, which by construction means that $\| q \alpha_{i_2}^z[r] - \alpha_y \| < C[r]/ \tilde{H}$ and $\| q \alpha_{i_1}^z[r] - \alpha_y \| \ge C[r]/ \tilde{H}$. If $c$ is sufficiently small, this necessarily implies that $\| q \alpha_{i_3}^z[r] - \alpha_y \| < C[r]/\tilde{H}$. But since $K_{i_3}^z[r]$ is non-empty and must therefore contain an element that does not belong to $K_{i_2}^z[r]$, this contradicts the maximality in the construction of  $K_{i_2}^z[r]$. The result follows.
\end{proof}

\begin{lema}
\label{po17}
Let $z \in \mathcal{Z}$. Let $B_1 \sim 1$ and assume $B_2 \sim 1$ is sufficiently large with respect to $B_1$. Write $\mathcal{U}$ for the set of pairs $(((x,\alpha_x),q_1),((y,\alpha_y),q_2))$ in $K^z \times K^z$ with $\| q_2 \alpha_x - q_1 \alpha_y \| \le B_1 P/\tilde{H}$ and $\| q_2 \alpha_i^z - \alpha_y \| > B_2/\tilde{H}$ for the unique integer $i$ with $((x,\alpha_x),q_1) \in K_i^z$. Then, $|\mathcal{U}|  \lesssim \frac{1}{(\log Y)^{10 s B}} |\mathcal{P}|^2$.
\end{lema}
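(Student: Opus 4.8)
The plan is to bound $|\mathcal{U}|$ by a second-moment (Cauchy–Schwarz) argument combined with the contagion estimate of Lemma \ref{conc}, exploiting the fact that the partition $\{K_i^z\}$ into "parallel classes" is rigid in the sense made precise by the earlier lemmas of this section. First I would fix $z \in \mathcal{Z}$ and recall that every prime of $\mathcal{P}$ occurs at most once as a second coordinate in $K^z$, so that $K^z$ may be viewed as a partial function $p \mapsto (x_p, \alpha_{x_p})$ from a subset $S \subseteq \mathcal{P}$ to $\mathcal{A}$, and likewise each $K_i^z$ corresponds to a subset $S_i \subseteq S$. A pair $(((x,\alpha_x),q_1),((y,\alpha_y),q_2)) \in \mathcal{U}$ is then a pair of primes $(q_1,q_2) \in S^2$, with $q_1 \in S_i$ for some $i$, such that $\|q_2 \alpha_{x_{q_1}} - q_1 \alpha_{x_{q_2}}\| \le B_1 P/\tilde H$ but $\|q_2 \alpha_i^z - \alpha_{x_{q_2}}\| > B_2/\tilde H$.

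The key point is the second condition: since $q_1 \in S_i$ we have $\|q_1 \alpha_i^z - \alpha_{x_{q_1}}\| < C_1/\tilde H$ by construction of $K_i^z$, so $\|q_2 q_1 \alpha_i^z - q_2 \alpha_{x_{q_1}}\| < 2C_1 P/\tilde H$; combined with the relation $\|q_2 \alpha_{x_{q_1}} - q_1 \alpha_{x_{q_2}}\| \le B_1 P/\tilde H$, the triangle inequality gives $\|q_1(q_2 \alpha_i^z - \alpha_{x_{q_2}})\| < (2C_1 + B_1)P/\tilde H$, hence $q_2 \alpha_i^z - \alpha_{x_{q_2}} \equiv \frac{l}{q_1} + O(P/\tilde H)$ for some integer $l$; since $B_2$ is large relative to $B_1$, the hypothesis $\|q_2 \alpha_i^z - \alpha_{x_{q_2}}\| > B_2/\tilde H$ forces $q_1 \nmid l$, so $q_2 \alpha_i^z - \alpha_{x_{q_2}}$ has a genuinely nonzero $1/q_1$-component. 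If a second prime $q_1' \in S_i$ also satisfied $\|q_2 \alpha_{x_{q_1'}} - q_1' \alpha_{x_{q_2}}\| \le B_1 P/\tilde H$ (with the same $q_2$), we would likewise get $q_2 \alpha_i^z - \alpha_{x_{q_2}} \equiv \frac{l'}{q_1'} + O(P/\tilde H)$ with $q_1' \nmid l'$; as in Lemma \ref{A16} or \ref{gap12}, two such representations with distinct denominators $q_1 \ne q_1'$ in $[P,2P]$ and error $O(P/\tilde H) < c/P^2$ are incompatible. Hence for each $i$ and each $q_2$, there is at most one prime $q_1 \in S_i$ contributing to $\mathcal{U}$; summing over the $O((\log Y)^{?})$ relevant indices $i$ — here one uses that only the sets $K_i^z$ of size comparable to $|\mathcal P|$ matter after a dyadic pigeonhole, together with Lemma \ref{O28} — and over $q_2 \in S \subseteq \mathcal{P}$ would give $|\mathcal{U}| \lesssim (\text{number of relevant } i) \cdot |\mathcal{P}|$.

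That last bound is much stronger than what is asked, so the real content is accounting for the indices $i$ for which $|K_i^z|$ is small, where the "one $q_1$ per $(i,q_2)$" reasoning still holds but the number of such $i$ could be as large as $|\mathcal{P}|$. To absorb these I would instead bound the contribution of each $i$ by $\min(|S_i|, \#\{q_2\})$ and sum; since $\sum_i |S_i| = |S| \le |\mathcal{P}|$, the total over all $i$ is again $\lesssim |\mathcal{P}|$ from the small classes, and $\lesssim |\mathcal{P}|$ from the boundedly many large classes. Thus $|\mathcal{U}| \lesssim |\mathcal{P}| \le (\log Y)^{-10sB}|\mathcal{P}|^2$, the final inequality holding because $|\mathcal{P}| \gtrsim P \ge H^{\varepsilon^2} \ge (\log X)^{\varepsilon^2 K}$ with $K$ chosen sufficiently large relative to $s, B$ (and $Y/\tilde H = X/H$, so $\log Y \sim \log X$). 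I expect the main obstacle to be the bookkeeping around classes $K_i^z$ of intermediate size and making sure the "unique $q_1$ per $(i,q_2)$" dichotomy is applied with the correct chain of constants $C_2 \ll C_1 = C[0] \ll \dots$ from \eqref{27chain} so that all the $O(P/\tilde H)$ error terms stay below the $c/P^2$ (equivalently $c/P^3$) threshold required by Lemmas \ref{A16} and \ref{gap12}; once the constants are fixed the estimate is robust and the gain over $|\mathcal{P}|^2$ is enormous.
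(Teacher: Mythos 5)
Your core rigidity step is sound and is essentially the computation the paper extracts from Lemmas \ref{A16} and \ref{gap12}: for a fixed class $K_i^z$ and a fixed second element $((y,\alpha_y),q_2)$, the two representations $q_2\alpha_i^z-\alpha_y\equiv l/q_1+O(1/\tilde H)\equiv l'/q_{1}'+O(1/\tilde H)$, with $l,l'$ forced nonzero by the hypothesis $\|q_2\alpha_i^z-\alpha_y\|>B_2/\tilde H$, are incompatible for distinct primes $q_1,q_1'\in[P,2P]$ once $\tilde H\gg P^3$. So each pair $(i,q_2)$ indeed contributes at most one element of $\mathcal{U}$.

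The gap is in the summation over $i$. The contribution of $K_i^z$ to $\mathcal{U}$ is the graph of a partial function $q_2\mapsto q_1$ from $\mathcal{P}$ into $S_i$; its size is bounded by the number of admissible $q_2$, hence by $|\mathcal{P}|$, but \emph{not} by $|S_i|$: nothing in your argument prevents a single $((x,\alpha_x),q_1)$ from being $\mathcal{U}$-related to many distinct $((y,\alpha_y),q_2)$, so a singleton class can contribute up to $|\mathcal{P}|$ pairs, and there may be $\sim|\mathcal{P}|$ such classes. Your bound $\min(|S_i|,\#\{q_2\})$ per class, and with it the conclusion $|\mathcal{U}|\lesssim|\mathcal{P}|$, is therefore unjustified; as written the small classes only yield the trivial $|\mathcal{P}|^2$. (That your claimed bound is so much stronger than the lemma's $(\log Y)^{-10sB}|\mathcal{P}|^2$ is itself a warning sign.) The missing ingredient is global: the paper sets $j_0$ to be the last index with $|K_{j_0}^z|\ge(\log Y)^{-100sB}|\mathcal{P}|$ and shows that if the pairs of $\mathcal{U}$ with both entries in $\mathcal{X}=\bigcup_{i>j_0}K_i^z$ exceeded $(\log Y)^{-10sB}|\mathcal{P}|^2$, then Lemma \ref{conc} would produce a single frequency $\tilde\alpha$ with $\|q\tilde\alpha-\alpha_x\|\lesssim 1/\tilde H$ for $\gtrsim(\log Y)^{-20sB}|\mathcal{P}|$ of these elements, contradicting the maximality in the construction of the $K_i^z$ (such a set would have appeared as a class of index $\le j_0$). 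You announce Lemma \ref{conc} in your opening sentence but never deploy it, and without it the small-by-small count cannot be closed; the mixed case (first entry in a small class, second in a large one) also needs a separate argument, which the paper runs through the sets $\mathcal{X}_j$ and Lemma \ref{gap12} rather than through your per-$(i,q_2)$ uniqueness.
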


\begin{proof}
Let $(((x,\alpha_x),q_1),((y,\alpha_y),q_2))$ be an element of $\mathcal{U}$. It is clear that if $B_2$ is sufficiently large then both $((x,\alpha_x),q_1)$ and $((y,\alpha_y),q_2)$ cannot belong to $K_i^z$ for the same $i$. Let then $i \neq j$ and suppose that $( ((x,\alpha_x),q_1) ,((y,\alpha_y),q_2) )$ and $( ((x',\alpha_{x'}),q_1') ,((y',\alpha_{y'}),q_2') )$ are two different elements of $(K_i^z \times K_j^z) \cap \mathcal{U}$. It then follows from Lemma \ref{A16} that, if $B_2$ is sufficiently large with respect to $B_1$, the primes $q_1, q_1', q_2, q_2'$ cannot all be distinct. Furthermore, we see from Lemma \ref{gap12} that it must be $q_2 \neq q_2'$. We conclude that it must be $q_1 = q_1'$, that is, given $i \neq j$ we have that all elements in $(K_i^z \times K_j^z) \cap \mathcal{U}$ must have the same first coordinate $((x,\alpha_x),q) \in K_i$. 

Let $j_0$ be the largest integer with $|K_{j_0}^z| \ge \frac{1}{(\log Y)^{100 sB}} |\mathcal{P}|$. In particular, $j_0 \le (\log Y)^{100sB}$. By the previous observation, we have that
$$ \sum_{i,j : 0 \le i \le j_0} | (K_i^z \times K_j^z) \cap \mathcal{U}| \le (j_0+1) \sum_{j \in \N} |K_j^z| \lesssim (\log Y)^{100sB} |\mathcal{P}|,$$
which is acceptable if $K$ is sufficiently large. Let us write $\mathcal{X} = \bigcup_{i > j_0} K_i^z$. Let $j \le j_0$ and write $\mathcal{X}_j$ for those elements $((x,\alpha_x),q) \in \mathcal{X}$ with $| (\left\{ ((x,\alpha_x),q) \right\} \times K_j) \cap \mathcal{U}| > 1$. From Lemma \ref{gap12}, we deduce that $\| q \alpha_j^z - \alpha_x \| \lesssim 1/\tilde{H}$ for every $((x,\alpha_x),q) \in \mathcal{X}_j$. By the pigeonhole principle, this implies in turn that we can find some $\tilde{\alpha}$ with $\| q \tilde{\alpha} - \alpha_x \| < C_1/\tilde{H}$ for $\gtrsim |\mathcal{X}_j|$ elements of $\mathcal{X}_j$, with $C_1$ as in the construction of the sets $K_i^z$. From the definition of $\mathcal{X}$ and the maximality of the sets $K_i^z$, we deduce that $|\mathcal{X}_j| \lesssim \frac{ |\mathcal{P}|}{(\log Y)^{100 sB}}$ and therefore
$$ \sum_{j \le j_0} | (\mathcal{X} \times K_j^z) \cap \mathcal{U} | \le \sum_{j \le j_0} \left( |\mathcal{X}_j||K_j^z|+|\mathcal{X}| \right) \lesssim |\mathcal{P}| \left( \frac{ |\mathcal{P}|}{(\log Y)^{100 sB}}+j_0 \right),$$
which is again acceptable. It only remains to bound $|(\mathcal{X} \times \mathcal{X}) \cap \mathcal{U}|$. For this, it will suffice to show that there are only $\lesssim \frac{1}{(\log Y)^{10sB}} |\mathcal{P}|^2$ pairs of elements $(((x,\alpha_x),q_1),((y,\alpha_y),q_2)) \in \mathcal{X} \times \mathcal{X}$ with $\| q_2 \alpha_x - q_1 \alpha_y \| \lesssim P/\tilde{H}$. Indeed, we could otherwise use Lemma \ref{conc} to obtain a frequency $\tilde{\alpha}$ with $\| q \tilde{\alpha} - \alpha_x \| \lesssim 1/\tilde{H}$ for $\gtrsim \frac{1}{(\log Y)^{20sB}} |\mathcal{P}|$ elements $( (x,\alpha_x), q) \in \mathcal{X}$. By the same pigeonhole argument as before, this implies that we can find some $\tilde{\alpha}'$ such that $\| q \tilde{\alpha}' - \alpha_x \| < C_1/\tilde{H}$ for $\gtrsim \frac{1}{(\log Y)^{20sB}} |\mathcal{P}|$ of these elements. But since this would contradict the choice of $j_0$, we obtain the result.
\end{proof}

\begin{notation}
Given $(x,\alpha_x) \in \mathcal{A}$, we write $\mathcal{P}_x$ for the set of pairs $(q_1,q_2) \in \mathcal{P}^2$ for which there exists some $(y,\alpha_y) \in \mathcal{A}$ with $x \sim_{q_1,q_2} y$. 
\end{notation}

\begin{lema}
\label{T18}
Let $\mathcal{T}$ be the set of tuples $(i,r,(x,\alpha_x),z,p,q_1,q_2) \in \N_0^2 \times \mathcal{A} \times \mathcal{Z} \times \mathcal{P}^3$ such that $((x,\alpha_x),p) \in \overline{K}_i^z[r]$, $(q_1,q_2) \in \mathcal{P}_x$ and there are at most $\frac{1}{(\log Y)^{B}} |\overline{K}_i^z[r]|$ other elements $((x',\alpha_{x'}),p') \in \overline{K}_i^z[r]$ with $(q_1,q_2) \in \mathcal{P}_{x'}$ and $|px-p'x'| \le \frac{P\tilde{H}}{10^{10}} $. Then $|\mathcal{T}| \lesssim \frac{1}{(\log Y)^{B}} \frac{Y}{\tilde{H}} |\mathcal{P}|^3$.
\end{lema}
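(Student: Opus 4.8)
The plan is to fix a triple $(z,i,r)$ with $\overline K_i^z[r]$ non-empty, bound the number of admissible $\bigl((x,\alpha_x),p,q_1,q_2\bigr)$ attached to it, and then sum over $(z,i,r)$. Two global budgets drive everything. First, since the first coordinates of $\mathcal A$ are $\tilde H$-separated and $\mathcal Z$ is $\delta P\tilde H$-separated with $\delta\sim 1$, every $((x,\alpha_x),p)\in\mathcal A\times\mathcal P$ lies in $O(1)$ of the sets $K^z$; combining this with Lemma \ref{O28} and $s=O(1)$ gives $\sum_{z,i,r}|\overline K_i^z[r]|\lesssim|\mathcal A||\mathcal P|=c_1\tfrac{Y}{\tilde H}|\mathcal P|$. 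Second, the hypothesis $|\mathcal Q|=c_2|\mathcal A||\mathcal P|^2$ yields $\sum_{x}|\mathcal P_x|\le|\mathcal Q|\lesssim\tfrac{Y}{\tilde H}|\mathcal P|^2$. Every contribution to $|\mathcal T|$ will be charged against one of these.

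Next I would localise in space. Since $|px-z|\lesssim P\tilde H$ for every $((x,\alpha_x),p)\in K^z$, I partition $K^z$ into $O(1)$ cells according to the value of $px$, each an interval of length $P\tilde H/10^{11}$, so that each cell is contained in the $\tfrac{P\tilde H}{10^{10}}$-neighbourhood (in the $px$-coordinate) of any of its points. Hence, if $(i,r,(x,\alpha_x),z,p,q_1,q_2)\in\mathcal T$ and $((x,\alpha_x),p)$ lies in the cell $C$, then the isolation hypothesis forces
\[
 n(C,q_1,q_2):=\bigl|\{((x',\alpha_{x'}),p')\in C\cap\overline K_i^z[r]:(q_1,q_2)\in\mathcal P_{x'}\}\bigr|\ \le\ \tfrac{1}{(\log Y)^B}\,|\overline K_i^z[r]|+1,
\]
and in particular $n(C,q_1,q_2)\ge 1$. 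Bounding, for a fixed $(q_1,q_2)$, the number of admissible $(x,p)$ in $C$ by $n(C,q_1,q_2)$ times the indicator of the displayed inequality, I obtain
\[
 |\mathcal T|\ \le\ \sum_{(z,i,r)}\ \sum_{C}\ \sum_{(q_1,q_2)}\ n(C,q_1,q_2)\,\mathbf{1}\bigl[\,1\le n(C,q_1,q_2)\le\tfrac{1}{(\log Y)^B}|\overline K_i^z[r]|+1\,\bigr].
\]

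I would then split the right-hand side according to whether $n(C,q_1,q_2)\ge 2$ or $n(C,q_1,q_2)=1$. In the first case the constraint forces $\tfrac{1}{(\log Y)^B}|\overline K_i^z[r]|\ge 1$, hence $n(C,q_1,q_2)\le \tfrac{2}{(\log Y)^B}|\overline K_i^z[r]|$; bounding the number of eligible $(q_1,q_2)$ by $|\mathcal P|^2$ and using that there are $O(1)$ cells, this part of $|\mathcal T|$ is $\lesssim \tfrac{1}{(\log Y)^B}|\mathcal P|^2\sum_{(z,i,r)}|\overline K_i^z[r]|\lesssim \tfrac{1}{(\log Y)^B}\tfrac{Y}{\tilde H}|\mathcal P|^3$, exactly as wanted. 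The case $n(C,q_1,q_2)=1$ is the main obstacle: here the isolation condition is automatic, and each contribution records an element $((x,\alpha_x),p)$ that is the \emph{only} member of its spatial cell inside $\overline K_i^z[r]$ participating in any $\sim_{q_1,q_2}$-relation — precisely what happens when $\overline K_i^z[r]$ is heavily fragmented, in particular of size $<(\log Y)^B$. Charging these naively against $\sum_x|\mathcal P_x|\le|\mathcal Q|$ only gives $O(c_2\tfrac{Y}{\tilde H}|\mathcal P|^3)$, with no logarithmic gain, so genuine input is needed. I expect to resolve it by a dyadic decomposition in the size $|\overline K_i^z[r]|$ together with the mixing estimates of Section \ref{mixing lemmas}: in the size ranges below $(\log Y)^B$ one argues that having many such "lonely" $\sim_{q_1,q_2}$-relations spread over distinct primes forces either a frequency cluster — so that $\overline K_i^z[r]$ could not have been small, by Lemma \ref{conc} — or enough near-coincidences $|x/q_1-y/q_2|<\delta\tilde H/P$ to activate the error term $|\mathcal A|P^{2-c_0}$ of Corollary \ref{ML0}, which is $\ll\tfrac{1}{(\log Y)^B}|\mathcal A||\mathcal P|^2$ since $P\ge H^{\varepsilon^2}\ge(\log X)^{K\varepsilon^2}$ with $K$ large. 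Making this last step precise, and meshing it with the cell decomposition so as not to damage the $n\ge 2$ term, is where the bulk of the work lies.
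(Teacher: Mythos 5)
Your cell decomposition and your treatment of the case $n(C,q_1,q_2)\ge 2$ \emph{is} the paper's proof: the ``simple pigeonholing argument'' invoked there is precisely covering the window $|px-z|<P\tilde{H}/50$ by $O(1)$ intervals of length $P\tilde{H}/10^{10}$, noting that each isolated element forces its cell to contain at most $\frac{1}{(\log Y)^B}|\overline{K}_i^z[r]|+1$ relevant elements, and then summing over the $O(|\mathcal{P}|^2)$ choices of $(q_1,q_2)$ and over $(i,z,r)$ via Lemma \ref{O28} and $\sum_z|K^z|\lesssim \frac{Y}{\tilde{H}}|\mathcal{P}|$. So your first two paragraphs reproduce the intended argument in full.

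Where you go wrong is in the diagnosis of the residual case, and consequently in the machinery you propose for it. A cell with $n(C,q_1,q_2)=1$ is not uniformly problematic: whenever $\frac{1}{(\log Y)^B}|\overline{K}_i^z[r]|\ge 1$ it contributes $1\le \frac{1}{(\log Y)^B}|\overline{K}_i^z[r]|$ and is absorbed into exactly the same budget as your $n\ge 2$ term, with no further input. The only regime your counting (and the paper's) does not reach is $|\overline{K}_i^z[r]|<(\log Y)^B$, where an isolated element contributes $1$ against an allowance below $1$; there, as you correctly compute, charging against $\sum_x|\mathcal{P}_x|\le|\mathcal{Q}|$ loses the logarithmic factor. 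But the detour you sketch through Lemma \ref{conc} and Corollary \ref{ML0} cannot rescue this regime in the literal generality of the statement: if $\overline{K}_i^z[r]$ is a genuine singleton, its element is vacuously isolated for every $(q_1,q_2)\in\mathcal{P}_x$, and no amount of mixing changes that count. The correct resolution is structural rather than analytic: the lemma is only ever invoked, in Lemma \ref{privado19}, for tuples with $|\overline{K}_i^z[r]|\gtrsim (\log Y)^{-sB}|\mathcal{P}|$, and since $P\ge H^{\varepsilon^2}\ge(\log X)^{K\varepsilon^2}$ with $K$ large this forces $|\overline{K}_i^z[r]|\gg(\log Y)^B$, i.e.\ the application lives entirely in the regime where your argument is already complete. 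So you should either add that size restriction to the statement you prove, or note that the ``$+1$'' per cell is harmless there; either way, the ``bulk of the work'' you anticipate does not exist.
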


\begin{proof}
By definition of $K^z$ we know that every $((x,\alpha_x),p) \in K^z$ satisfies $|px -z | < P\tilde{H}/50$. For a fixed choice of $i,z,r,q_1,q_2$, it then follows by a simple pigeonholing argument that there can be at most $\lesssim \frac{1}{(\log Y)^{B}} |\overline{K}_i^z[r]|$ tuples $(i,r,(x,\alpha_x),z,p,q_1,q_2) \in \mathcal{T}$. The result then follows upon summing among all choices of $i,z,r,q_1,q_2$, using Lemma \ref{O28}.
\end{proof} 

\begin{lema}
\label{quad17}
Let $C_3 \sim 1$ and assume $C_4 \sim 1$ is sufficiently large with respect to $C_3$. Then, there are $\lesssim \frac{Y}{\tilde{H}} |\mathcal{P}|^3$ quadruples $((x_1,\alpha_{x_1}),p_1)$,$ ((x_2, \alpha_{x_2}),p_2)$, $((y_1, \alpha_{y_1}),p_1)$, $((y_2, \alpha_{y_2}),p_2)$ in $\bigcup_{z \in \mathcal{Z}} K^z$ such that $(x_1,\alpha_{x_1}) \sim_{p_2,p_1}^{C_3} (x_2, \alpha_{x_2})$, $(x_i, \alpha_{x_i}) \sim_{q_1,q_2} (y_i,\alpha_{y_i})$ for $i=1,2$ and some $q_1, q_2 \in \mathcal{P}$, but $(y_1,\alpha_{y_1}) \nsim_{p_2,p_1}^{C_4} (y_2,\alpha_{y_2})$. 
\end{lema}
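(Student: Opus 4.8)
\emph{Overview.} The plan is to show that any \emph{bad} quadruple (one with $x_1\sim^{C_3}_{p_2,p_1}x_2$, $x_i\sim_{q_1,q_2}y_i$ for $i=1,2$, but $y_1\nsim^{C_4}_{p_2,p_1}y_2$) encodes a rigid piece of arithmetic data that pins down the prime $q_2$, and then to count by choosing the data in a favourable order. Throughout, $\sim_{q_1,q_2}$ without a superscript means membership in $\mathcal{Q}$ as in Notation \ref{2sim}, i.e. $|x_i/q_1-y_i/q_2|<\delta\tilde{H}/P$ and $\|q_1\alpha_{x_i}-q_2\alpha_{y_i}\|<C_2P/\tilde{H}$. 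First I would observe that the \emph{archimedean} part of the conclusion is automatic: from $|x_i/q_1-y_i/q_2|<\delta\tilde{H}/P$ and $q_2\le 2P$ we get $y_i=(q_2/q_1)x_i+O(\tilde{H})$, and combining this with $|x_1/p_2-x_2/p_1|\le C_3\tilde{H}/P$ gives $|y_1/p_2-y_2/p_1|\lesssim C_3\tilde{H}/P$. Hence, provided $C_4$ is large enough with respect to $C_3$, the only way $(y_1,\alpha_{y_1})\nsim^{C_4}_{p_2,p_1}(y_2,\alpha_{y_2})$ can hold is that the \emph{frequency} condition fails, i.e. $\|p_2\alpha_{y_1}-p_1\alpha_{y_2}\|>C_4P/\tilde{H}$.

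\emph{A hidden fraction.} Write $\beta=p_2\alpha_{y_1}-p_1\alpha_{y_2}\in\R/\Z$, so that $q_2\beta=p_2(q_2\alpha_{y_1})-p_1(q_2\alpha_{y_2})$. Inserting $q_1\alpha_{x_1}$ and $q_1\alpha_{x_2}$ and using the hypotheses $\|q_1\alpha_{x_i}-q_2\alpha_{y_i}\|<C_2P/\tilde{H}$ together with $\|p_2\alpha_{x_1}-p_1\alpha_{x_2}\|\le C_3P/\tilde{H}$, the triangle inequality gives
$$ \|q_2\beta\| \le p_2\|q_1\alpha_{x_1}-q_2\alpha_{y_1}\| + p_1\|q_1\alpha_{x_2}-q_2\alpha_{y_2}\| + q_1\|p_2\alpha_{x_1}-p_1\alpha_{x_2}\| \lesssim \frac{P^2}{\tilde{H}}. $$
Consequently $\beta=m/q_2+O(P/\tilde{H})$ in $\R/\Z$ for some integer $0\le m<q_2$. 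If $m=0$ then $\|\beta\|\lesssim P/\tilde{H}$, contradicting the failure of the frequency condition once $C_4$ is large enough; hence $m\ne 0$, and since $q_2$ is prime the fraction $m/q_2$ is in lowest terms, so $\beta$ lies within $O(P/\tilde{H})$ of a rational with reduced denominator \emph{exactly} $q_2$.

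\emph{Rigidity and counting.} Next I would show $\beta$ determines $q_2$: if $q_2'\in\mathcal{P}$ is another prime with $\beta$ within $O(P/\tilde{H})$ of some $m'/q_2'$ in lowest terms, then $\|m/q_2-m'/q_2'\|\lesssim P/\tilde{H}$, and since $\tilde{H}$ exceeds a sufficiently large multiple of $P^3$ this forces $m/q_2\equiv m'/q_2'\pmod 1$; uniqueness of the reduced form then gives $q_2=q_2'$. Now I would bound the number of bad quadruples by ordering the choices: pick $(y_1,\alpha_{y_1})\in\mathcal{A}$ and $p_1,p_2\in\mathcal{P}$ ($\le|\mathcal{A}||\mathcal{P}|^2$ ways); then $y_2\in(p_1/p_2)y_1+O(\tilde{H})$ lies in an interval meeting $O(1)$ first coordinates of the $\tilde{H}$-separated set $\mathcal{A}$, so $(y_2,\alpha_{y_2})$ has $O(1)$ choices, which fixes $\beta$; by the above there is at most one admissible $q_2\in\mathcal{P}$; pick $q_1\in\mathcal{P}$ freely ($\le|\mathcal{P}|$ ways); finally $x_1\in(q_1/q_2)y_1+O(\tilde{H})$ and $x_2\in(q_1/q_2)y_2+O(\tilde{H})$ each have $O(1)$ choices. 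This yields $\lesssim|\mathcal{A}||\mathcal{P}|^3\lesssim\frac{Y}{\tilde{H}}|\mathcal{P}|^3$ bad quadruples; the degenerate situations $p_1=p_2$ or $q_1=q_2$ (forcing coincidences among $x_1,x_2,y_1,y_2$ by $\tilde{H}$-separation) are trivially within this bound.

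\emph{Main obstacle.} The delicate point is the middle step: one must extract from a bad quadruple an obstruction rigid enough that $q_2$ is recovered from $\beta$ alone. This is where the hypothesis that $\tilde{H}$ is large compared with $P^3$ is essential, and where one must track carefully that the chain of triangle inequalities, with the various constants $C_2,C_3,C_4$ and the implied $O(\cdot)$ terms, does not spoil the conclusion that the reduced denominator of the approximating fraction is exactly $q_2$ and not a proper divisor; this is precisely the point at which $C_4$ needs to be chosen sufficiently large with respect to $C_3$. Everything else reduces to bookkeeping with the triangle inequality and the separation of the first coordinates of $\mathcal{A}$.
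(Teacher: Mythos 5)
Your proof is correct and follows essentially the same route as the paper: the key point in both is that for a ``bad'' pair $((y_1,\alpha_{y_1}),p_1),((y_2,\alpha_{y_2}),p_2)$ the frequency difference $p_2\alpha_{y_1}-p_1\alpha_{y_2}$ lies near a nonzero reduced fraction with denominator $q_2$, which pins down $q_2$ uniquely, after which the count $|\mathcal{A}||\mathcal{P}|^2\cdot O(1)\cdot|\mathcal{P}|\cdot O(1)$ gives the bound. The only difference is that you re-derive this rigidity from scratch, whereas the paper simply cites Lemma \ref{P11}, which is exactly this statement in contrapositive form.
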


\begin{proof}
Let $((y_1,\alpha_{y_1}),p_1), ((y_2,\alpha_{y_2}),p_2)$ be a pair belonging to such a quadruple. Applying the triangle inequality, we see that the hypotheses imply that $|p_1 y_1 - p_2 y_2| \lesssim P\tilde{H}$, from where it follows that $((y_1,\alpha_{y_1}),p_1) \in K^z$ and $((y_2,\alpha_{y_2}),p_2) \in K^{z'}$ for some $z, z' \in \mathcal{Z}$ with $|z-z'|=O(P \tilde{H})$. That is, once $z$ is fixed, there are $O(1)$ possibilities for $z'$. By Lemma \ref{P11} we also see that, as long as $C_4$ is sufficiently large, any such quadruple this pair belongs to must give rise to pairs of primes $q_1, q_2 \in \mathcal{P}$ with the same value of $q_2$. But this implies that there are at most $|\mathcal{P}|$ such quadruples. The result then follows, since $\sum_{z \in \mathcal{Z}} | K^z| |\mathcal{P}|^2 \lesssim \frac{Y}{\tilde{H}} |\mathcal{P}|^3$. 
\end{proof}

\begin{lema}
\label{019}
We have $\sum_{z \in \mathcal{Z}} |K_0^z| \gtrsim \frac{Y}{\tilde{H}} |\mathcal{P}|$, with the implicit constant independent of $s$.
\end{lema}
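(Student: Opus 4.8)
The plan is to exploit the $C_2 P/\tilde H$--closeness of the frequencies recorded in the quadruples of $\mathcal{Q}$: after relocating those quadruples onto the points of $\mathcal{Z}$, this closeness becomes exactly the input needed to run the contagion lemma (Lemma \ref{conc}) at a positive proportion of the points $z \in \mathcal{Z}$, and the output of that lemma produces a popular frequency witnessing $|K_0^z| \gtrsim |\mathcal{P}|$ there. To set this up, for $z \in \mathcal{Z}$ let $L^z$ be the set of pairs $((x,\alpha_x),p) \in \mathcal{A}\times\mathcal{P}$ with $|px-z| < 4\delta P\tilde H$; since $\delta$ is tiny one has $L^z \subseteq K^z$, and since the first coordinates of $\mathcal{A}$ are $\tilde H$--separated, at most one element of $L^z$ has any given prime, so we may let $S^z \subseteq \mathcal{P}$ be the set of primes occurring in $L^z$ and, for $p \in S^z$, let $\beta^z_p \in \R/\Z$ be the frequency of the unique such element; note these are precisely the elements eligible to form $\tilde K_0^z$.

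First I would relocate the quadruples. Given $((x,\alpha_x),(y,\alpha_y),q_1,q_2) \in \mathcal{Q}$, the bound $|x/q_1 - y/q_2| < \delta\tilde H/P$ gives $|q_2 x - q_1 y| < q_1 q_2 \delta\tilde H/P \le 4\delta P\tilde H$ (as $q_1,q_2 \le 2P$); choosing $z \in \mathcal{Z}$ to be a closest point to the midpoint $(q_2 x + q_1 y)/2$, which is possible since $\mathcal{Z}$ is $\delta P\tilde H$--dense in $[PY,4PY]$, one gets $|z - q_2 x|,|z - q_1 y| < 3\delta P\tilde H$, so that $((x,\alpha_x),q_2),((y,\alpha_y),q_1) \in L^z$. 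Hence $q_1,q_2 \in S^z$ with $\beta^z_{q_2}=\alpha_x$, $\beta^z_{q_1}=\alpha_y$ and $\|q_1\beta^z_{q_2} - q_2\beta^z_{q_1}\| < C_2 P/\tilde H$. Since $z$ together with $q_2$ recovers $(x,\alpha_x)$ and $z$ together with $q_1$ recovers $(y,\alpha_y)$, the assignment $((x,\alpha_x),(y,\alpha_y),q_1,q_2)\mapsto(z,q_2,q_1)$ is injective. Writing $N(z)$ for the number of pairs $(p_1,p_2)\in(S^z)^2$ with $\|p_1\beta^z_{p_2} - p_2\beta^z_{p_1}\| < C_2 P/\tilde H$, I would then deduce $\sum_{z\in\mathcal{Z}} N(z) \ge |\mathcal{Q}| = c_2|\mathcal{A}||\mathcal{P}|^2 \gtrsim \frac{Y}{\tilde H}|\mathcal{P}|^2$.

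Next, since $|\mathcal{Z}| \sim Y/\tilde H$ and $N(z) \le |\mathcal{P}|^2$ for every $z$, a routine pigeonholing yields $\mathcal{Z}' \subseteq \mathcal{Z}$ with $|\mathcal{Z}'| \gtrsim Y/\tilde H$ on which $N(z) \gtrsim |\mathcal{P}|^2$, so in particular $|S^z| \ge N(z)^{1/2} \gtrsim |\mathcal{P}|$ there. For $z \in \mathcal{Z}'$ I would apply Lemma \ref{conc} with $S=S^z$, frequencies $\alpha_p = \beta^z_p$, parameter $\epsilon = C_2 P/\tilde H$ (admissible since $\tilde H$ is large with respect to $P^3$) and $\sigma = N(z)/|S^z|^2 \gtrsim 1$, obtaining a frequency $\alpha \in \R/\Z$ with $\|p\alpha - \beta^z_p\| \lesssim C_2/\tilde H < C_1/\tilde H$ for $\gtrsim |\mathcal{P}|$ primes $p \in S^z$ (here we use that $|\mathcal{P}|$ is large and $\sigma \gtrsim 1$, which keeps the term $\sigma^2|S^z| - O(1)$ of the expected order). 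These $\gtrsim |\mathcal{P}|$ elements of $L^z$ witness, via the maximality in its construction, that $|K_0^z| \ge |\tilde K_0^z| \gtrsim |\mathcal{P}|$. Summing over $z \in \mathcal{Z}'$ gives $\sum_{z\in\mathcal{Z}}|K_0^z| \gtrsim \frac{Y}{\tilde H}|\mathcal{P}|$, and every implied constant depends only on $c_1,c_2,\delta,C_2$, hence not on $s$.

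The main obstacle is the relocation step: one must carefully track the interplay between the constant $\delta$ in the definition of $\mathcal{Q}$, the constant $4\delta$ in the definition of $L^z$ (and hence of $K_0^z$), and the factor of up to $4$ by which primes in $[P,2P]$ inflate $|q_2 x - q_1 y|$ relative to $|x/q_1 - y/q_2|$; it is precisely this factor that forces one to attach each quadruple to the midpoint rather than to an endpoint, and to check the injectivity of the resulting assignment. Everything afterwards — the averaging over $\mathcal{Z}$ and the appeal to Lemma \ref{conc} — is routine, and, unlike the later arguments of this section and the next, this step uses no mixing and hence no appeal to GRH.
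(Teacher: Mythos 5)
Your proposal is correct and follows essentially the same route as the paper: relocate each quadruple of $\mathcal{Q}$ to a point $z \in \mathcal{Z}$ with $|q_2x - z|, |q_1y - z| < 4\delta P\tilde{H}$, pigeonhole to obtain $\gtrsim Y/\tilde{H}$ points $z$ carrying $\gtrsim |\mathcal{P}|^2$ such relations, and apply Lemma \ref{conc} together with the maximality in the construction of $\tilde{K}_0^z$ to get $|K_0^z| \gtrsim |\mathcal{P}|$ at each. Your write-up merely makes explicit the details (midpoint placement, injectivity of the relocation, the choice $\sigma \gtrsim 1$) that the paper leaves implicit.
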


\begin{proof}
For each pair $(x,\alpha_x),(y,\alpha_y) \in \mathcal{A}$ with $(x,\alpha_x) \sim_{p,q} (y,\alpha_y)$ we can find some $z \in \mathcal{Z}$ with $|qx-z|, |py - z| < 4 \delta P\tilde{H}$ and thus in particular $((x,\alpha_x),q),((y,\alpha_y),p) \in K^z$. Since 
$$\sum_{(x,\alpha_x) \in \mathcal{A}} | \left\{ (y,\alpha_y) \in \mathcal{A} : (x,\alpha_x) \sim (y,\alpha_y) \right\} | \gtrsim \frac{Y}{\tilde{H}} |\mathcal{P}|^2,$$
this allows us to find $\gtrsim \frac{Y}{\tilde{H}}$ elements $z \in \mathcal{Z}$ for which there exist $\gtrsim |\mathcal{P}|^2$ pairs $((x,\alpha_x),q),((y,\alpha_y),p) \in K^z$ with $(x,\alpha_x) \sim_{p,q} (y,\alpha_y)$ and $|qx-z|, |py - z| < 4 \delta P\tilde{H}$. As long as $C_1$ is sufficiently large with respect to $C_2$, it follows from Lemma \ref{conc} and the definition of $K_0^z$ that for each such $z$ it must be $|K_0^z| \gtrsim |\mathcal{P}|$ and this gives the result.
\end{proof}

\begin{coro}
\label{count30}
There exist $\gtrsim |\mathcal{P}|$ primes $p \in \mathcal{P}$ which are second coordinates of $K_0^z$ for $\gtrsim Y/\tilde{H}$ choices of $z \in \mathcal{Z}$ with $|K_0^z| \gtrsim |\mathcal{P}|$, with the implicit constants independent of $s$.
\end{coro}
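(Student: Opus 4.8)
The plan is to deduce this directly from Lemma \ref{019} by a routine double-counting and pigeonholing argument. First I would extract from the proof of Lemma \ref{019} the set $\mathcal{Z}_0 \subseteq \mathcal{Z}$ of those $z$ for which $|K_0^z| \gtrsim |\mathcal{P}|$; that proof produces $|\mathcal{Z}_0| \gtrsim Y/\tilde{H}$, and both implicit constants there are independent of $s$ (indeed the sets $K_0^z$ themselves do not depend on $s$, only on $C_1 = C[0]$). In particular $\sum_{z \in \mathcal{Z}_0} |K_0^z| \gtrsim \frac{Y}{\tilde{H}} |\mathcal{P}|$.

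Next I would exploit the observation, already recorded when the sets $K^z$ were introduced, that every prime in $\mathcal{P}$ occurs at most once as a second coordinate of $K^z$, hence at most once as a second coordinate of $K_0^z$; consequently $|K_0^z|$ equals the number of distinct primes appearing as second coordinates of $K_0^z$. Writing, for each $p \in \mathcal{P}$, $M_p$ for the number of $z \in \mathcal{Z}_0$ such that $p$ is a second coordinate of some element of $K_0^z$, this gives the incidence identity $\sum_{p \in \mathcal{P}} M_p = \sum_{z \in \mathcal{Z}_0} |K_0^z| \gtrsim \frac{Y}{\tilde{H}} |\mathcal{P}|$. On the other hand, since $\mathcal{Z}$ is a maximal $\delta P\tilde{H}$-separated subset of $[PY,4PY]$ with $\delta \sim 1$, we have the absolute bound $|\mathcal{Z}| \lesssim Y/\tilde{H}$, so $M_p \le |\mathcal{Z}_0| \le |\mathcal{Z}| \lesssim Y/\tilde{H}$ for every $p$. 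A standard pigeonholing step—removing those $p$ for which $M_p$ is smaller than a suitably small fixed multiple of $Y/\tilde{H}$, noting that their total contribution to $\sum_p M_p$ is at most half of the lower bound—then leaves $\gtrsim |\mathcal{P}|$ primes $p$ with $M_p \gtrsim Y/\tilde{H}$, which is exactly the claimed statement. All constants that appear are those from Lemma \ref{019} together with the geometric bound on $|\mathcal{Z}|$, hence independent of $s$.

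I do not expect a genuine obstacle here; the only points needing a touch of care are verifying that $|K_0^z|$ really counts distinct second-coordinate primes (so that the two ways of counting the incidence set agree) and checking the bookkeeping that keeps every constant independent of $s$.
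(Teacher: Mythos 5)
Your proposal is correct and fills in exactly the double-counting/pigeonholing that the paper leaves implicit when it declares the corollary ``immediate from Lemma \ref{019}'': restricting to the set of $z$ with $|K_0^z| \gtrsim |\mathcal{P}|$ produced in that lemma's proof, using that each prime occurs at most once as a second coordinate in $K^z$, and combining the incidence count with the bound $|\mathcal{Z}| \lesssim Y/\tilde{H}$ is precisely the intended route, and your bookkeeping of the $s$-independence of the constants is also right.
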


\begin{proof}
This is immediate from Lemma \ref{019}.
\end{proof}

We now come to the main estimate in the proof of Proposition \ref{lift28}.

\begin{lema}
\label{privado19}
For all but at most $\lesssim \frac{1}{(\log Y)^{B}} \frac{Y}{\tilde{H}} |\mathcal{P}|^3$ choices of $(x,\alpha_x),(y,\alpha_y) \in \mathcal{A}$, $z,z' \in \mathcal{Z}$ and $p \in \mathcal{P}$ with $((x,\alpha_x),p) \in \overline{K}_{{i}}^z[r]$, $r < s$, $|\overline{K}_{{i}}^z[r]| \gtrsim (\log Y)^{-sB}|\mathcal{P}|$, $((y,\alpha_y),p) \in K_j^{z'}$, $|py-z'| < 4 \delta P\tilde{H}$ and $(x,\alpha_x) \sim_{q_1,q_2} (y,\alpha_y)$ for some primes $q_1,q_2 \in \mathcal{P}$, the following holds true:
\begin{enumerate}[(i)]
\item $\| q_1 \alpha_{{i}}^z[r] - q_2 \alpha_j^{z'} \| \lesssim 1/\tilde{H}$,
\item there exists $\overline{j}$ with $((y,\alpha_y),p) \in \overline{K}_{\overline{j}}^{z'}[r+1]$, such that $\| \alpha_j^{z'} - \alpha_{\overline{j}}^{z'}[r+1] \| \lesssim 1/(\tilde{H}P)$ and $|\overline{K}_{\overline{j}}^{z'}[r+1]| \gtrsim \frac{1}{(\log Y)^B} |\overline{K}_{{i}}^z[r]|$.
\end{enumerate}
\end{lema}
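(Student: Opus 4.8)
The proof will be a probabilistic/counting argument over the prime $p$, using the parallel rigidity lemmas of Section 3 together with the concentration lemma (Lemma \ref{conc}) and the counting estimates just established (Lemmas \ref{T18}, \ref{quad17}, \ref{privado19} will follow the template of the bad-quadruple bounds). The idea is that the set of "bad" tuples $((x,\alpha_x),(y,\alpha_y),z,z',p)$ violating either (i) or (ii) can be covered by applying one of the rigidity lemmas — which forces a coincidence among primes, hence a deficit of freedom — and this deficit costs a factor of roughly $|\mathcal{P}|^{-1}$, which is at least $(\log Y)^{-K'}$ and hence acceptable once $K$ is large enough relative to $B,s$.

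\medskip

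First, for part (i): fix such a tuple with $((x,\alpha_x),p)\in\overline{K}_i^z[r]$, $((y,\alpha_y),p)\in K_j^{z'}$ and $(x,\alpha_x)\sim_{q_1,q_2}(y,\alpha_y)$. Since $|\overline{K}_i^z[r]|\gtrsim(\log Y)^{-sB}|\mathcal{P}|$, a positive proportion of its elements come with a prime coordinate $p'\neq p$; and since $\mathcal{Q}$ contains $\gtrsim\frac{Y}{\tilde H}|\mathcal{P}|^2$ quadruples, for most choices one can find a \emph{second} element $((x',\alpha_{x'}),p')\in\overline{K}_i^z[r]$ and an element $((y',\alpha_{y'}),p')\in K^{z'}$ with $(x',\alpha_{x'})\sim_{q_1,q_2}(y',\alpha_{y'})$ — the elements outside this "good" set are controlled by Lemma \ref{T18} and a large-sieve/Corollary \ref{ML0} input. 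Now $((x,\alpha_x),p),((x',\alpha_{x'}),p')$ both lie in $\overline{K}_i^z[r]$, so by construction $\|p\alpha_i^z[r]-\alpha_x\|,\|p'\alpha_i^z[r]-\alpha_{x'}\|\lesssim 1/\tilde H$ (up to the $\overline C[r]$ slack); similarly $((y,\alpha_y),p),((y',\alpha_{y'}),p')$ lie in $K^{z'}$ near $z'$. If $((y',\alpha_{y'}),p')$ lay in $K_j^{z'}$ we could apply Lemma \ref{Ax2} (with the roles of the "$\beta$"'s played by the $\alpha$'s of the four elements, $\alpha=\alpha_i^z[r]$, $\alpha'=\alpha_j^{z'}$) to conclude $\|q_1\alpha_i^z[r]-q_2\alpha_j^{z'}\|\lesssim 1/\tilde H$, which is (i). To force $((y',\alpha_{y'}),p')\in K_j^{z'}$ we use Lemma \ref{po17}: otherwise $(((y,\alpha_y),p),((y',\alpha_{y'}),p'))$ would be a pair in $\mathcal{U}$, and the number of such is $\lesssim(\log Y)^{-10sB}|\mathcal{P}|^2$, hence negligible after summing. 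A separate small contribution, when $(y,\alpha_y)\sim_{p_2,p_1}(y',\alpha_{y'})$ fails in the stronger sense needed, is handled by Lemma \ref{quad17}.

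\medskip

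For part (ii): once (i) is known, we know $q_1\alpha_i^z[r]$ is close to $q_2\alpha_j^{z'}$. The element $((y,\alpha_y),p)$ lies in $K_j^{z'}$, so $\|p\alpha_j^{z'}-\alpha_y\|\lesssim1/\tilde H$; combining with $\|q_1\alpha_x-q_2\alpha_y\|\le C_2P/\tilde H$ and the relation of $\alpha_x$ to $\alpha_i^z[r]$, a short triangle-inequality computation (as in Lemma \ref{A29}/\ref{gap12}) shows $\alpha_j^{z'}$ is within $O(1/(P\tilde H))$ of a frequency that must, by the construction of the $\overline K^{z'}_{\cdot}[r+1]$ with $C[r+1]$ chosen large relative to $\overline C[r]$, coincide with some $\alpha_{\overline j}^{z'}[r+1]$. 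The lower bound $|\overline K_{\overline j}^{z'}[r+1]|\gtrsim(\log Y)^{-B}|\overline K_i^z[r]|$ then follows because, for a positive $(\log Y)^{-B}$-proportion of the good partners $((x',\alpha_{x'}),p')\in\overline K_i^z[r]$, the argument of part (i) places the corresponding $((y',\alpha_{y'}),p')$ in $K_j^{z'}$ (hence in $\overline K_{\overline j}^{z'}[r+1]$) with all second coordinates $p'$ distinct — the number of exceptional ones being absorbed by the error budget via Lemmas \ref{T18} and \ref{quad17}. The constants $B$, $C[r]$, $\overline C[r]$, $s$ are chosen in the nested order of \eqref{27chain} exactly so that every application of the rigidity lemmas has enough room.

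\medskip

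\textbf{Main obstacle.} The delicate point is bookkeeping the error terms so that every "bad" event is genuinely of size $O((\log Y)^{-B}\frac{Y}{\tilde H}|\mathcal{P}|^3)$ and not merely $O((\log Y)^{-B}\frac{Y}{\tilde H}|\mathcal{P}|^2)$ times something uncontrolled: one must sum over $z,z',i,j,r$ and the two auxiliary primes while only spending one factor of $|\mathcal{P}|^{-1}\le(\log Y)^{-K}$, using Lemma \ref{O28} to control the multiplicity of the $\overline K^z_i[r]$ and the $\delta P\tilde H$-separation of $\mathcal{Z}$ to keep the number of relevant $z,z'$ bounded. The rigidity lemmas each require $\tilde H$ to exceed a fixed power of $P$ (here $\tilde H>CP^3$), so one must also check that the cumulative loss of constants through the chain \eqref{27chain} stays within the slack those lemmas allow; this is where the hypothesis that $K$ is sufficiently large (relative to $B$, $s$, and all the $C[r],\overline C[r]$) gets used.
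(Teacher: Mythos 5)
Your proposal follows essentially the same route as the paper: restrict via Lemma \ref{T18} to tuples for which $(q_1,q_2)$ is shared by $\gtrsim (\log Y)^{-B}|\overline{K}_i^z[r]|$ partners $((x',\alpha_{x'}),p')\in\overline{K}_i^z[r]$, use Lemma \ref{quad17} to upgrade most of the resulting quadruples to relations $(y,\alpha_y)\sim_{p',p}(y',\alpha_{y'})$, use Lemma \ref{po17} to align the partners $((y',\alpha_{y'}),p')$ with $\alpha_j^{z'}$, and close with the rigidity lemma \ref{A29}/\ref{Ax2}, with the error budget controlled exactly as you describe. The only structural difference is the order: you prove (i) first (applying Lemma \ref{Ax2} with $(y,\alpha_y)$ itself as one leg of the quadruple, which is fine since that element is genuinely in $K_j^{z'}$), whereas the paper establishes (ii) first and then deduces (i) by applying Lemma \ref{A29} to a quadruple of \emph{partners} together with the closeness of $\alpha_j^{z'}$ to $\alpha_{\overline{j}}^{z'}[r+1]$.

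There is, however, one genuine imprecision in your part (ii). Lemma \ref{po17} does \emph{not} force $((y',\alpha_{y'}),p')\in K_j^{z'}$: it only forces $\| p'\alpha_j^{z'}-\alpha_{y'}\|\le B_2/\tilde{H}$ for a constant $B_2$ that must be taken large with respect to $C_4$ and hence with respect to $\overline{C}[r]$, which is strictly weaker than membership in $K_j^{z'}$ (defined with the smaller constant $C_1$). This is harmless for (i), where approximate alignment is all the rigidity lemma needs, but your justification of the lower bound on $|\overline{K}_{\overline{j}}^{z'}[r+1]|$ reads ``places $((y',\alpha_{y'}),p')$ in $K_j^{z'}$ (hence in $\overline{K}_{\overline{j}}^{z'}[r+1]$)'', which both uses the false membership claim and presupposes $\|\alpha_j^{z'}-\alpha_{\overline{j}}^{z'}[r+1]\|\lesssim 1/(P\tilde{H})$ --- i.e.\ the very thing being proved --- to pass from $K_j^{z'}$ to $\overline{K}_{\overline{j}}^{z'}[r+1]$. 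The correct mechanism, which your sketch only gestures at, is: the $\gtrsim(\log Y)^{-B}|\overline{K}_i^z[r]|$ approximately aligned elements must, by maximality of the construction of the sets $K_u^{z'}[r+1]$ with the larger constant $C[r+1]\gg\overline{C}[r]$, be at least half captured by those $K_u^{z'}[r+1]$ with $|K_u^{z'}[r+1]|\gtrsim(\log Y)^{-B}|\overline{K}_i^z[r]|$; since there are $\lesssim(\log Y)^{(s+1)B}$ such sets, two captured elements with distinct primes $p',p''$ land in the same $K_u^{z'}[r+1]$, and only then does the triangle inequality pin $\|\alpha_u^{z'}[r+1]-\alpha_j^{z'}\|<\overline{C}[r+1]/(P\tilde{H})$, whence $((y,\alpha_y),p)\in\overline{K}_u^{z'}[r+1]$ and the size bound. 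You should make this pigeonhole-plus-maximality step explicit; as written, part (ii) is not established.
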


\begin{proof}
We begin by noticing that once $(x,\alpha_x), z, q_1, q_2$ are fixed, there is at most one choice for $(y,\alpha_y)$ and $p$ and $O(1)$ choices for $z'$. It then follows from Lemma \ref{T18} that we may restrict attention to those cases where $(q_1,q_2) \in \mathcal{P}_{x'}$ for $\gtrsim \frac{1}{(\log Y)^B} |\overline{K}_i^z[r]|$ other elements $((x', \alpha_{x'}),p') \in \overline{K}_i^z[r]$ with $|px - p'x'| < \frac{P\tilde{H}}{10^{10}}$. Each such $(x', \alpha_{x'}) $ then gives rise to a quadruple $((x,\alpha_x),p)$, $((x',\alpha_{x'}),p')$, $((y,\alpha_y),p)$, $((y',\alpha_{y'}),p')$ with $(x,\alpha_x) \sim_{q_1,q_2} (y,\alpha_y)$, $(x',\alpha_{x'}) \sim_{q_1,q_2} (y',\alpha_{y'})$ and $(x,\alpha_x) \sim_{p',p}^{O(\overline{C}[r])} (x',\alpha_{x'})$. By Lemma \ref{quad17} and our lower bound on $|\overline{K}_{{i}}^z[r]|$, if $C_4 \sim 1$ is sufficiently large with respect to $\overline{C}[r]$, there are $\lesssim  \frac{Y}{\tilde{H}}  |\mathcal{P}|^2 (\log Y)^{(s+1)B}$ choices of $(x,\alpha_x),z,q_1,q_2$ for which at least half of these quadruples satisfy $(y,\alpha_y) \nsim_{p',p}^{C_4} (y',\alpha_{y'})$. Hence, it will suffice to consider those $(x,\alpha_x),z,q_1,q_2$ for which at least half of these quadruples satisfy $(y,\alpha_y) \sim_{p',p}^{C_4} (y',\alpha_{y'})$. Notice that for each such quadruple we have $|xpq_2/q_1 - yp | , |x' p' q_2/q_1 - y' p' | \le 4 \delta P\tilde{H}$. It then follows from the triangle inequality that $|py - p'y'| < P \tilde{H}/10^4$, say. In particular, since $|py-z'| < 4 \delta P\tilde{H}$, this implies that $((y',\alpha_{y'}),p') \in K^{z'}$. If we have $\gtrsim  \frac{1}{(\log Y)^B} \frac{Y}{\tilde{H}} |\mathcal{P}|^3$ choices of $(x,\alpha_x),(y,\alpha_y),z,z'$ for which at least half of these elements $((y',\alpha_{y'}),p')$ satisfy $\| p' \alpha_j^{z'} - \alpha_{y'} \| > C_5 /\tilde{H}$ for some $C_5 \sim 1$ that is sufficiently large with respect to $C_4$, this leads to $\gtrsim \frac{1}{(\log Y)^{(s+2)B}} \frac{Y}{\tilde{H}} |\mathcal{P}|^4$ pairs $((y,\alpha_y),p), ((y',\alpha_{y'}),p')$ belonging to the exceptional pairs in Lemma \ref{po17}, counting repetitions among the choices of $(x,\alpha_x),(y,\alpha_y),z,z'$. But since each such pair $((y,\alpha_y),p), ((y',\alpha_{y'}),p')$ can be repeated at most $\lesssim |\mathcal{P}|^2$ times, corresponding to the possible choices of $(x,\alpha_x), z, z'$ with $(x,\alpha_x) \sim (y,\alpha_y)$, this gives us a contradiction once $Y$ is sufficiently large.

We conclude that after removing $\lesssim  \frac{1}{(\log Y)^B} \frac{Y}{\tilde{H}} |\mathcal{P}|^3$ cases, for each remaining choice of $(x,\alpha_x),(y,\alpha_y),z,z'$ we have $\gtrsim \frac{1}{(\log Y)^B} |\overline{K}_i^z[r]|$ elements $((y',\alpha_{y'}),p') \in K^{z'}$ with $\| p' \alpha_j^{z'} - \alpha_{y'} \| \lesssim 1/\tilde{H}$, with the implicit constant depending on $\overline{C}[r]$. This necessarily implies, as long as $C[r+1]$ is sufficiently large with respect to $\overline{C}[r]$ and by the maximality in the construction of the sets $K_u^{z'}[r+1]$, that at least half of these $((y',\alpha_{y'}),p')$ belong to $\bigcup_u^{'} K_u^{z'}[r+1]$, where the union is restricted to those $u$ with with $|K_u^{z'}[r+1]| \gtrsim  \frac{1}{(\log Y)^B} |\overline{K}_i^z[r]| \gtrsim \frac{1}{(\log Y)^{(s+1)B}} |\mathcal{P}|$. Since there are $\lesssim (\log Y)^{(s+1)B}$ such sets, this crudely means that we can find two such elements $((y',\alpha_{y'}),p'), ((y'',\alpha_{y''}),p'')$ belonging to the same $K_u^{z'}[r+1]$. We then have that $\| p' \alpha_u^{z'}[r+1] - \alpha_{y'} \|, \| p'' \alpha_u^{z'}[r+1] - \alpha_{y''} \| < C[r+1]/\tilde{H}$. Since we also know that $\| p' \alpha_j^{z'} - \alpha_{y'} \| , \| p'' \alpha_j^{z'} - \alpha_{y''} \| \lesssim 1/\tilde{H}$ and the primes $p', p''$ are distinct, it follows from the triangle inequality that $\| \alpha_u^{z'}[r+1] - \alpha_j^{z'} \| <\overline{C}[r+1] /(P\tilde{H})$, as long as $\overline{C}[r+1]$ was chosen sufficiently large with respect to $C[r+1]$. We thus conclude that $((y,\alpha_y),p) \in \overline{K}_u^z[r+1]$. Writing $\overline{j}=u$, this gives (ii). Finally, if $((x',\alpha_{x'}),p'), ((x'',\alpha_{x''}),p'')$ are the elements of $\overline{K}_i^z[r]$ that relate to these elements $((y',\alpha_{y'}),p'), ((y'',\alpha_{y''}),p'')$, we may apply Lemma \ref{A29} to the quadruple $((x',\alpha_{x'}),p'), ((x'',\alpha_{x''}),p''), ((y',\alpha_{y'}),p'), ((y'',\alpha_{y''}),p'')$ to deduce that $\| q_1 \alpha_{{i}}^z[r] - q_2 \alpha_{\overline{j}}^{z'}[r+1] \| \lesssim 1/\tilde{H}$. Claim (i) then follows from the triangle inequality. 
\end{proof}

We immediately obtain the following corollary.

\begin{coro}
\label{p19}
There exists some $p^{\ast} \in \mathcal{P}$ satisfying the conclusion of Corollary \ref{count30} and such that the conclusions of Lemma \ref{privado19} hold, with $p=p^{\ast}$, for all but at most $\lesssim \frac{1}{(\log Y)^B} \frac{Y}{\tilde{H}} |\mathcal{P}|^2$ choices of $(x,\alpha_x),(y,\alpha_y),z,z'$ as in the statement of that lemma.
\end{coro}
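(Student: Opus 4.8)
The plan is to deduce this from Lemma \ref{privado19} and Corollary \ref{count30} by a simple averaging argument over the choice of the prime $p$. First I would, for each $p \in \mathcal{P}$, let $N(p)$ count the tuples $(x,\alpha_x),(y,\alpha_y) \in \mathcal{A}$ and $z,z' \in \mathcal{Z}$ that satisfy the hypotheses of Lemma \ref{privado19} with this particular value of $p$ but for which one of the conclusions (i)--(ii) of that lemma fails. Lemma \ref{privado19} says exactly that $\sum_{p \in \mathcal{P}} N(p) \lesssim \frac{1}{(\log Y)^B} \frac{Y}{\tilde{H}} |\mathcal{P}|^3$, with an absolute implied constant $C_0$ (note this is independent of $s$, since $s = O(1)$). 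By Markov's inequality, for any $\lambda > 0$ at most $\frac{C_0}{\lambda} |\mathcal{P}|$ primes $p \in \mathcal{P}$ can have $N(p) > \lambda \frac{1}{(\log Y)^B} \frac{Y}{\tilde{H}} |\mathcal{P}|^2$.

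Next I would invoke Corollary \ref{count30} to obtain a set $\mathcal{G} \subseteq \mathcal{P}$ with $|\mathcal{G}| \ge c_0 |\mathcal{P}|$, where $c_0 > 0$ is an absolute constant independent of $s$, each of whose elements is a second coordinate of $K_0^z$ for $\gtrsim Y/\tilde{H}$ values of $z \in \mathcal{Z}$ with $|K_0^z| \gtrsim |\mathcal{P}|$; that is, each $p \in \mathcal{G}$ satisfies the conclusion of Corollary \ref{count30}. Taking $\lambda := 2C_0/c_0$, the set of primes with $N(p) > \lambda \frac{1}{(\log Y)^B}\frac{Y}{\tilde{H}}|\mathcal{P}|^2$ has size $< \frac{c_0}{2}|\mathcal{P}| < |\mathcal{G}|$, so $\mathcal{G}$ must contain some prime $p^{\ast}$ with $N(p^{\ast}) \lesssim \frac{1}{(\log Y)^B}\frac{Y}{\tilde{H}}|\mathcal{P}|^2$. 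This $p^{\ast}$ does the job: it satisfies the conclusion of Corollary \ref{count30} because $p^{\ast} \in \mathcal{G}$, and by definition of $N(p^{\ast})$ the conclusions of Lemma \ref{privado19} hold for $p = p^{\ast}$ except for at most $N(p^{\ast}) \lesssim \frac{1}{(\log Y)^B}\frac{Y}{\tilde{H}}|\mathcal{P}|^2$ quadruples $(x,\alpha_x),(y,\alpha_y),z,z'$, which is precisely the claimed bound.

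I expect the argument itself to be entirely routine; the only thing one has to be mildly careful about is the bookkeeping of which constants are absolute. Specifically, the choice of $\lambda$ --- and hence the final implied constant in the exceptional bound --- depends on the implied constant $c_0$ in Corollary \ref{count30}, so one genuinely needs that corollary's lower bound for $|\mathcal{G}|$ to be independent of $s$ (as is asserted there, since it ultimately descends from Lemma \ref{019}); otherwise the constants could degrade as $s$ grows, which would be fatal given how $s$ is used later. There is no circularity, since $\mathcal{G}$ is produced before the averaging step is performed. This uniformity in $s$ is the main (and essentially only) obstacle.
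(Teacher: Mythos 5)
Your proof is correct and is exactly the averaging/Markov argument the paper has in mind (the paper simply declares the corollary ``immediate'' from Lemma \ref{privado19} and Corollary \ref{count30}). Your side remark about uniformity in $s$ is the right thing to watch, and it is indeed supplied by Lemma \ref{019} and Corollary \ref{count30}, so there is no gap.
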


We are now ready to prove Proposition \ref{lift28}.

\begin{proof}[Proof of Proposition \ref{lift28}]
Let us fix a choice of $p^{\ast}$ as in Corollary \ref{p19}. To each $(x,\alpha_x) \in \mathcal{A}$ we associate a pair $(i_x,z_x) \in \N_0 \times \mathcal{Z}$ in the following way. If $((x,\alpha_x),p^{\ast}) \in K_0^z$ for some $z \in \mathcal{Z}$, we take $i_x=0$ and let $z_x$ be the choice of $z$ that maximises $|K_0^z|$ among all $K_0^z$ that contain $((x,\alpha_x),p^{\ast})$. For the remaining $(x,\alpha_x) \in \mathcal{A}$, we then let $z_x$ be an element in $\mathcal{Z}$ closest to $p^{\ast}x$ and let $i_x$ be the unique choice such that $((x,\alpha_x),p^{\ast}) \in K_{i_x}^{z_x}$. Notice in particular that, for every $(x,\alpha_x) \in \mathcal{A}$, we have $|p^{\ast}x-z_x| < 4 \delta P \tilde{H}$.

Next, we let $\mathcal{A}_0$ consist of those $(x,\alpha_x) \in \mathcal{A}$ with $|K_{i_x}^{z_x}| \gtrsim |\mathcal{P}|$. By Corollary \ref{count30} and our choice of $p^{\ast}$, we have that $|\mathcal{A}_0| \gtrsim Y/\tilde{H}$. Then, for every $1 \le r < s$, we let $\mathcal{A}_r$ consist of those $(x,\alpha_x) \in \mathcal{A}$ such that $((x,\alpha_x),p^{\ast}) \in \overline{K}_i^{z_x}[r]$ for some $i$ with $|\overline{K}_i^{z_x}[r]| \gtrsim \frac{1}{(\log Y)^{rB}} |\mathcal{P}|$ and $\| \alpha_i^{z_x}[r] - \alpha_{i_x}^{z_x} \| \lesssim 1/(\tilde{H}P)$. We then let $\mathcal{A}_s = \mathcal{A} $.

Finally, given $(x,\alpha_x) \in \mathcal{A}$ we let $\alpha_x^{\uparrow} \in \R / \Z$ be the unique pre-image of $\alpha_x$ by $p^{\ast}$ lying at distance $O(1/(\tilde{H}P))$ from $\alpha_{i_x}^{z_x}$. Notice that if $x \in \mathcal{A}_r$, $0 \le r < s$, there will be $\gtrsim \frac{1}{(\log Y)^{r B}} |\mathcal{P}|$ different elements $((y,\alpha_{y}),q) \in \mathcal{A} \times \mathcal{P}$ with $|p^{\ast}x/q - y| \lesssim \tilde{H}$ and $\| q \alpha_x^{\uparrow} - \alpha_{y} \| \lesssim 1/\tilde{H}$.

We let $\mathcal{A}^{\uparrow p^{\ast}}$ consist of the pairs of the form $(p^{\ast} x, \alpha_x^{\uparrow})$, with $(x,\alpha_x) \in \mathcal{A}$. In order to finish the proof of Proposition \ref{lift28}, it only remains to verify that the last claim of (iii) holds. For this, observe that after removing the $\lesssim \frac{1}{(\log Y)^B} \frac{Y}{\tilde{H}} |\mathcal{P}|^2$ exceptions in the statement of Corollary \ref{p19}, we have from the definitions, Lemma \ref{privado19} and the triangle inequality that the remaining choices of $(x,\alpha_x) \sim_{q_1, q_2} (y,\alpha_y)$, with $(x,\alpha_x) \in \mathcal{A}_r$ for some $0 \le r < s$, satisfy $\| q_1 \alpha_x^{\uparrow} - q_2 \alpha_y^{\uparrow} \| \lesssim 1/\tilde{H}$. But we also have that $\| p^{\ast}(q_1 \alpha_x^{\uparrow} - q_2 \alpha_y^{\uparrow}) \| = \| q_1 \alpha_x - q_2 \alpha_y \| < C_2 P /\tilde{H}$. The result then follows. 
\end{proof}

\section{Concentration increment argument}

Throughout this section we will let the hypotheses and notation be as in Proposition \ref{lift28} and Notation \ref{2sim}. We will also assume that $\log Y < P^{\epsilon}$ for some sufficiently small $\epsilon > 0$. Our goal will be to carry out the concentration increment argument discussed in the introduction. Unlike the last two sections, we will now start relying on Corollary \ref{ML0} and therefore on RH. We begin by formally defining the concentration parameter $\mathcal{C}(\mathcal{B})$ involved.

\begin{defi}[$\mathcal{R}, \mathcal{C}$]
Given subsets $\mathcal{B}_1, \mathcal{B}_2$ of $\mathcal{A}$, we shall write $\mathcal{R}(\mathcal{B}_1,\mathcal{B}_2)$ for the number of quadruples $( (x,\alpha_x), (y,\alpha_y),q_1,q_2) \in \mathcal{Q}$ with $(x,\alpha_x) \in \mathcal{B}_1$ and $(y,\alpha_y) \in \mathcal{B}_2$. Also, given a subset $\mathcal{B} \subseteq \mathcal{A}$, we will abbreviate $\mathcal{R}(\mathcal{B}) = \mathcal{R} (\mathcal{B},\mathcal{B})$ and define
$$ \mathcal{C}(\mathcal{B}) = \frac{Y}{\tilde{H}} \frac{\mathcal{R}(\mathcal{B})}{|\mathcal{P}|^2 |\mathcal{B}|^2}.$$
\end{defi}

We have the following basic estimate on this parameter.

\begin{lema}
\label{C1}
Let $\mathcal{B}$ be a subset of $\mathcal{A}$ with $|\mathcal{B}| \gtrsim \frac{1}{(\log Y)^B} \frac{Y}{\tilde{H}}$. Then, if $K$ is sufficiently large with respect to $B$, we have $\mathcal{C}(\mathcal{B}) = O(1)$ for some absolute implicit constant.
\end{lema}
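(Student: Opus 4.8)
The plan is to bound $\mathcal{R}(\mathcal{B})$ from above by discarding the frequency condition and applying the mixing estimate of Corollary \ref{ML0}. Write $\mathcal{B}' = \{ x : (x,\alpha_x) \in \mathcal{B} \}$ for the set of first coordinates; since the first coordinates of a configuration are $\tilde{H}$-separated, hence distinct, each quadruple $((x,\alpha_x),(y,\alpha_y),q_1,q_2) \in \mathcal{Q}$ counted by $\mathcal{R}(\mathcal{B})$ is determined by the tuple $(x,y,q_1,q_2) \in \mathcal{B}' \times \mathcal{B}' \times \mathcal{P}^2$, and membership in $\mathcal{Q}$ forces $|x/q_1 - y/q_2| < \delta \tilde{H}/P < \tilde{H}/P$. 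Thus $\mathcal{R}(\mathcal{B})$ is at most the number of quadruples $(x,y,q_1,q_2) \in \mathcal{B}' \times \mathcal{B}' \times \mathcal{P}^2$ with $|x/q_1 - y/q_2| \lesssim \tilde{H}/P$. The set $\mathcal{B}'$ consists of $\tilde{H}$-separated points in $[Y,2Y] \subseteq [Y/10, 10Y]$ and the remaining hypotheses of Corollary \ref{ML0} are in force (in particular $\log Y < P^{\epsilon}$), so that corollary gives
$$ \mathcal{R}(\mathcal{B}) \lesssim \frac{\tilde{H}}{Y} |\mathcal{B}|^2 |\mathcal{P}|^2 + |\mathcal{B}|\, P^{2-c_0}. $$

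Substituting this into the definition of $\mathcal{C}(\mathcal{B})$, the first term contributes exactly $O(1)$, while the second contributes $\frac{Y}{\tilde{H}} \frac{P^{2-c_0}}{|\mathcal{P}|^2 |\mathcal{B}|}$. Here I would use the lower bound $|\mathcal{B}| \gtrsim (\log Y)^{-B} Y/\tilde{H}$ to cancel the factor $Y/\tilde{H}$, together with the prime number theorem in the form $|\mathcal{P}| \gtrsim P/\log P$, to bound the second contribution by
$$ \lesssim (\log Y)^{B} (\log P)^2 P^{-c_0}. $$

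Finally, I would invoke the standing relations between the parameters recorded in Section 2.3 and Notation \ref{not28}: since $P \ge H^{\varepsilon^2} \ge (\log X)^{K \varepsilon^2}$ whereas $\log Y \lesssim \log X$ and $\log P \lesssim \log X$, the above is $\lesssim (\log X)^{B + 2 - c_0 \varepsilon^2 K}$, which is $O(1)$ — indeed $o(1)$ — as soon as $K$ is taken sufficiently large in terms of $B$ (and $\eta$, via $\varepsilon$). Combining the two contributions yields $\mathcal{C}(\mathcal{B}) = O(1)$. There is no genuine obstacle beyond bookkeeping; the one point requiring care is the order of quantifiers, namely that the absolute constant $c_0$ from Corollary \ref{ML0} and the $\eta$-dependent $\varepsilon$ are fixed \emph{before} $K$ is chosen, so that the error exponent $B + 2 - c_0 \varepsilon^2 K$ can be driven negative by enlarging $K$ alone.
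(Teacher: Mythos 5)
Your proposal is correct and follows exactly the route the paper intends: the paper's proof is the single line ``This is immediate from Corollary \ref{ML0}'', and your write-up is precisely the bookkeeping behind that remark — drop the frequency condition, apply the corollary to the $\tilde{H}$-separated set of first coordinates, observe the main term gives the absolute constant, and kill the $P^{2-c_0}$ error using $|\mathcal{B}| \gtrsim (\log Y)^{-B} Y/\tilde{H}$ and $P \ge (\log X)^{K\varepsilon^2}$ with $K$ large relative to $B$. Your point about fixing $c_0$ and $\varepsilon$ before choosing $K$ is the right quantifier discipline and matches the paper's stated parameter hierarchy.
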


\begin{proof}
This is immediate from Corollary \ref{ML0}.
\end{proof}

\begin{lema}
\label{voy31}
There exists some $0 \le t < s$ such that
$$ \mathcal{R} ( \bigcup_{r=0}^t \mathcal{A}_r, \mathcal{A} \setminus  \bigcup_{r=0}^t \mathcal{A}_r) \lesssim \max \left\{ \frac{|\mathcal{P}|^2}{(\log Y)^B} \frac{Y}{\tilde{H}}, \frac{ |\mathcal{P}|^2}{s} |\mathcal{A} \setminus  \bigcup_{r=0}^t \mathcal{A}_r| \right\}.$$
\end{lema}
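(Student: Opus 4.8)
The plan is to use property (iii) of Proposition~\ref{lift28} to show that every relation counted by $\mathcal{R}(\bigcup_{r=0}^t\mathcal{A}_r,\mathcal{A}\setminus\bigcup_{r=0}^t\mathcal{A}_r)$ is either one of the few exceptional quadruples or else runs between two consecutive ``first-appearance layers'' of the filtration, and then to choose $t$ by pigeonhole so that the relevant layer is thin.

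To set this up, first I would attach to each $(x,\alpha_x)\in\mathcal{A}$ the index $\rho(x)=\min\{r:(x,\alpha_x)\in\mathcal{A}_r\}$, which is well defined because $\mathcal{A}_s=\mathcal{A}$. Writing $\mathcal{D}_r=\{(x,\alpha_x)\in\mathcal{A}:\rho(x)=r\}$, the sets $\mathcal{D}_0,\dots,\mathcal{D}_s$ partition $\mathcal{A}$, and $\bigcup_{r\le t}\mathcal{A}_r=\bigcup_{r\le t}\mathcal{D}_r$, so that $\mathcal{A}\setminus\bigcup_{r\le t}\mathcal{A}_r=\bigcup_{r>t}\mathcal{D}_r$; set $n_t=|\mathcal{D}_t|$, so $\sum_{r=0}^s n_r=|\mathcal{A}|=c_1 Y/\tilde{H}$. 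Let $\mathcal{E}\subseteq\mathcal{Q}$ be the exceptional set of size $\lesssim (\log Y)^{-B}(Y/\tilde{H})|\mathcal{P}|^2$ furnished by Proposition~\ref{lift28}(iii), so that any $((x,\alpha_x),(y,\alpha_y),q_1,q_2)\in\mathcal{Q}\setminus\mathcal{E}$ with $(x,\alpha_x)\in\mathcal{A}_r$ for some $r<s$ has $(y,\alpha_y)\in\mathcal{A}_{r+1}$. The key claim is then that for every $0\le t<s$,
$$\mathcal{R}\Big(\bigcup_{r=0}^t\mathcal{A}_r,\ \mathcal{A}\setminus\bigcup_{r=0}^t\mathcal{A}_r\Big)\ \le\ |\mathcal{E}|+\mathcal{R}(\mathcal{D}_t,\mathcal{D}_{t+1}).$$
Indeed, a quadruple contributing to the left-hand side which does not lie in $\mathcal{E}$ has $(x,\alpha_x)\in\mathcal{A}_{\rho(x)}$ with $\rho(x)\le t<s$, hence $(y,\alpha_y)\in\mathcal{A}_{\rho(x)+1}$ and $\rho(y)\le\rho(x)+1$; but $(y,\alpha_y)\notin\bigcup_{r\le t}\mathcal{A}_r$ forces $\rho(y)\ge t+1$, so $\rho(x)=t$ and $\rho(y)=t+1$, i.e. $(x,\alpha_x)\in\mathcal{D}_t$ and $(y,\alpha_y)\in\mathcal{D}_{t+1}$.

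It then remains to bound $\mathcal{R}(\mathcal{D}_t,\mathcal{D}_{t+1})$ for a well-chosen $t$. Since a quadruple of $\mathcal{Q}$ in particular satisfies $|x/q_1-y/q_2|<\delta\tilde{H}/P$, Corollary~\ref{ML0} applied to the $\tilde{H}$-separated sets $\mathcal{D}_t,\mathcal{D}_{t+1}\subseteq[Y,2Y]$ gives $\mathcal{R}(\mathcal{D}_t,\mathcal{D}_{t+1})\lesssim \frac{\tilde{H}}{Y}n_tn_{t+1}|\mathcal{P}|^2+(n_tn_{t+1})^{1/2}P^{2-c_0}$. Using $(n_tn_{t+1})^{1/2}\le|\mathcal{A}|\lesssim Y/\tilde{H}$ together with the fact that $P\ge H^{\varepsilon^2}$ is a large power of $\log Y$ when $K$ is large (here $\log Y\lesssim\log X$, since $Y$ is $X$ times a product of $\lesssim\frac{\log(X/H)}{\log P}$ primes from $\mathcal{P}$), the second term is $\lesssim (\log Y)^{-B}(Y/\tilde{H})|\mathcal{P}|^2$, exactly as in the proof of Lemma~\ref{C1}. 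Now I would take $t\in\{0,\dots,s-1\}$ minimising $n_t$, so that $n_t\le\frac1s\sum_{r=0}^{s-1}n_r\le\frac{c_1}{s}\frac{Y}{\tilde{H}}$; then $\frac{\tilde{H}}{Y}n_tn_{t+1}|\mathcal{P}|^2\le\frac{c_1}{s}n_{t+1}|\mathcal{P}|^2\lesssim\frac{|\mathcal{P}|^2}{s}n_{t+1}$, and since $\mathcal{D}_{t+1}\subseteq\mathcal{A}\setminus\bigcup_{r\le t}\mathcal{A}_r$ we have $n_{t+1}\le|\mathcal{A}\setminus\bigcup_{r\le t}\mathcal{A}_r|$. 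Feeding these bounds into the key claim yields the asserted estimate.

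I expect the main conceptual point to be the key claim in the second paragraph: it is what converts an arbitrary crossing relation into a relation between two adjacent layers, and it relies on two features of the construction — that the exceptional set $\mathcal{E}$ in Proposition~\ref{lift28}(iii) does not depend on $r$ (so the implication $(x,\alpha_x)\in\mathcal{A}_r\Rightarrow(y,\alpha_y)\in\mathcal{A}_{r+1}$ may be applied with $r=\rho(x)$), and that $\mathcal{A}_s=\mathcal{A}$, which makes $\rho$ total and the $\mathcal{D}_r$ a genuine partition. The rest is routine: the application of Corollary~\ref{ML0} and the pigeonhole choice of $t$; the only step needing a little care is checking that the large-sieve error term $P^{2-c_0}$ is dominated by $(\log Y)^{-B}(Y/\tilde{H})|\mathcal{P}|^2$, which holds under the same largeness assumption on $K$ relative to $B$ already imposed in Lemma~\ref{C1}.
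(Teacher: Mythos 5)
Your proposal is correct and follows essentially the same route as the paper: pigeonhole over the ``new layers'' $\mathcal{A}_t \setminus \bigcup_{r<t}\mathcal{A}_r$ to find a thin one, use Proposition \ref{lift28}(iii) to show that non-exceptional crossing relations must originate in that thin layer, and then bound the remaining count with Corollary \ref{ML0}, absorbing the large-sieve error by taking $K$ large. The only cosmetic difference is that you also localise the second coordinate to $\mathcal{D}_{t+1}$, whereas the paper bounds $\mathcal{R}(\mathcal{A}_t\setminus\bigcup_{r<t}\mathcal{A}_r,\ \mathcal{A}\setminus\bigcup_{r\le t}\mathcal{A}_r)$ directly; this changes nothing material.
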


\begin{proof}
Let $0 \le t < s$ be such that $|\mathcal{A}_t \setminus \bigcup_{r=0}^{t-1} \mathcal{A}_r| \lesssim (s \tilde{H})^{-1} Y$. By Proposition \ref{lift28} (iii), we already know that 
$$\mathcal{R}( \bigcup_{r=0}^{t-1} \mathcal{A}_r,  \mathcal{A} \setminus  \bigcup_{r=0}^t \mathcal{A}_r) \lesssim  \frac{|\mathcal{P}|^2}{(\log Y)^B} \frac{Y}{\tilde{H}}.$$
But from Corollary \ref{ML0}, we have that 
$$ \mathcal{R} ( \mathcal{A}_t  \setminus \bigcup_{r=0}^{t-1} \mathcal{A}_r , \mathcal{A} \setminus  \bigcup_{r=0}^t \mathcal{A}_r) \lesssim  \frac{|\mathcal{P}|^2}{s} |\mathcal{A} \setminus  \bigcup_{r=0}^t \mathcal{A}_r| + P^{2-c_0}  Y / \tilde{H}.$$
The result then follows provided $K$ is sufficiently large with respect to $B$ and $c_0$.
\end{proof}

\begin{lema}
\label{bday1}
Assume $s$ is sufficiently large. If $ |\mathcal{A} \setminus  \bigcup_{r=0}^{s-1} \mathcal{A}_r| \gtrsim \frac{1}{(\log Y)^{B/2}} \frac{Y}{\tilde{H}}$, there exists a proper subset $\mathcal{B} \subseteq \mathcal{A}$ with $|\mathcal{B}| \gtrsim \frac{1}{(\log Y)^{B/2}} \frac{Y}{\tilde{H}}$ and $\mathcal{C}(\mathcal{B}) \ge \mathcal{C}(\mathcal{A}) \left( \frac{|\mathcal{B}|}{|\mathcal{A}|} \right)^{-1/2}$.
\end{lema}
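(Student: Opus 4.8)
The plan is to run a dichotomy-based iteration using Lemma \ref{voy31} as the engine, stopping when we reach a subset whose ``defect set'' is small, and showing that the concentration parameter can only increase (in the normalised sense) at each step. Write $\mathcal{A}^{(0)} = \mathcal{A}$. Given $\mathcal{A}^{(m)}$ with $|\mathcal{A}^{(m)}| \gtrsim (\log Y)^{-B/2} Y/\tilde H$, apply Proposition \ref{lift28} to it (its hypotheses are inherited, possibly after mild adjustment of the constants $c_1, c_2$) to obtain the sets $\mathcal{A}^{(m)}_0, \ldots, \mathcal{A}^{(m)}_s$, and then invoke Lemma \ref{voy31} to get some $0 \le t_m < s$ with, writing $\mathcal{B}_1^{(m)} = \bigcup_{r=0}^{t_m} \mathcal{A}^{(m)}_r$ and $\mathcal{B}_2^{(m)} = \mathcal{A}^{(m)} \setminus \mathcal{B}_1^{(m)}$,
\[
 \mathcal{R}(\mathcal{B}_1^{(m)}, \mathcal{B}_2^{(m)}) \lesssim \max\left\{ \frac{|\mathcal{P}|^2}{(\log Y)^B} \frac{Y}{\tilde H}, \frac{|\mathcal{P}|^2}{s} |\mathcal{B}_2^{(m)}| \right\}.
\]
If $\mathcal{B}_2^{(m)}$ is small, say $|\mathcal{B}_2^{(m)}| \lesssim (\log Y)^{-B/2} Y/\tilde H$, we stop (we will see this cannot be the first step, since by hypothesis $\mathcal{A} \setminus \bigcup_{r<s}\mathcal{A}_r$, hence $\mathcal{B}_2^{(0)}$, is large); otherwise we pick as $\mathcal{A}^{(m+1)}$ whichever of $\mathcal{B}_1^{(m)}, \mathcal{B}_2^{(m)}$ satisfies the cleaner concentration inequality and continue.

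The arithmetic heart is the elementary fact that if $\mathcal{R}(\mathcal{B}_1, \mathcal{B}_2)$ is negligible compared to $\mathcal{R}(\mathcal{A})$ then one of the two pieces inherits a boosted concentration. Indeed $\mathcal{R}(\mathcal{A}) = \mathcal{R}(\mathcal{B}_1) + \mathcal{R}(\mathcal{B}_2) + 2\mathcal{R}(\mathcal{B}_1, \mathcal{B}_2)$ (using the near-symmetry of $\mathcal{Q}$, or simply bounding the cross terms), so if the cross term is at most, say, $\tfrac14 \mathcal{R}(\mathcal{A})$ then $\mathcal{R}(\mathcal{B}_1) + \mathcal{R}(\mathcal{B}_2) \ge \tfrac12 \mathcal{R}(\mathcal{A})$, whence $\mathcal{R}(\mathcal{B}_i) \ge \tfrac14 \mathcal{R}(\mathcal{A})$ for some $i$; then
\[
 \mathcal{C}(\mathcal{B}_i) = \frac{Y}{\tilde H}\frac{\mathcal{R}(\mathcal{B}_i)}{|\mathcal{P}|^2 |\mathcal{B}_i|^2} \ge \frac{Y}{\tilde H}\frac{\tfrac14 \mathcal{R}(\mathcal{A})}{|\mathcal{P}|^2 |\mathcal{B}_i|^2} = \tfrac14 \mathcal{C}(\mathcal{A}) \frac{|\mathcal{A}|^2}{|\mathcal{B}_i|^2} \ge \mathcal{C}(\mathcal{A}) \left(\frac{|\mathcal{B}_i|}{|\mathcal{A}|}\right)^{-1/2},
\]
the last step because $|\mathcal{B}_i|/|\mathcal{A}| \le 1$ forces $(|\mathcal{A}|/|\mathcal{B}_i|)^2 \ge 4 (|\mathcal{A}|/|\mathcal{B}_i|)^{1/2}$ once $|\mathcal{A}|/|\mathcal{B}_i| \ge 4^{2/3}$; the remaining regime $|\mathcal{B}_i| \sim |\mathcal{A}|$ has to be handled separately, which is where the factor $1/s$ in Lemma \ref{voy31} rather than a fixed constant is essential — see below. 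Composing these inequalities along the chain and telescoping, $\mathcal{C}(\mathcal{A}^{(m)}) \ge \mathcal{C}(\mathcal{A}) (|\mathcal{A}^{(m)}|/|\mathcal{A}|)^{-1/2}$.

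The point where the two possibilities in Lemma \ref{voy31} matter is in verifying the hypothesis ``cross term $\le \tfrac14 \mathcal{R}(\mathcal{A}^{(m)})$'' at each step. We have $\mathcal{R}(\mathcal{A}^{(m)}) = \mathcal{C}(\mathcal{A}^{(m)}) |\mathcal{P}|^2 |\mathcal{A}^{(m)}|^2 \tilde H/Y \gtrsim \mathcal{C}(\mathcal{A}^{(m)}) |\mathcal{P}|^2 |\mathcal{A}^{(m)}| (\log Y)^{-B/2}$, using the size lower bound we maintain for $\mathcal{A}^{(m)}$ and $\mathcal{C}(\mathcal{A}^{(m)}) \ge \mathcal{C}(\mathcal{A}) \gtrsim 1$ from the telescoped bound. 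Against the first alternative $|\mathcal{P}|^2 (\log Y)^{-B} Y/\tilde H$ in Lemma \ref{voy31}, this is larger by a factor $\gtrsim (\log Y)^{B/2}$, so that term is harmless provided we never let $\mathcal{A}^{(m)}$ get smaller than $(\log Y)^{-B/2} Y/\tilde H$ — which is exactly the stopping rule, and also why the conclusion asks for $|\mathcal{B}| \gtrsim (\log Y)^{-B/2} Y/\tilde H$. Against the second alternative $s^{-1}|\mathcal{P}|^2 |\mathcal{B}_2^{(m)}|$ we get the bound $\lesssim s^{-1} \mathcal{C}(\mathcal{A}^{(m)})^{-1} (\log Y)^{B/2} \cdot \mathcal{R}(\mathcal{A}^{(m)}) \cdot |\mathcal{B}_2^{(m)}|/|\mathcal{A}^{(m)}|$; here one splits by whether $|\mathcal{B}_2^{(m)}|$ is a small or large fraction of $|\mathcal{A}^{(m)}|$. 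In the ``large fraction'' regime (hence $\mathcal{B}_1^{(m)}$ is the small-index part, on which we have good lifting control and which is what actually shrinks), one checks directly that $\mathcal{B}_1^{(m)}$ — not the whole $\mathcal{A}^{(m)}$ — can be fed forward with a concentration gain absorbing the $s^{-1}$ loss once $s$ is large enough; in the ``small fraction'' regime the cross term is $\le \tfrac14\mathcal{R}(\mathcal{A}^{(m)})$ outright and the dichotomy above applies with $\mathcal{B}_1^{(m)}$. The iteration terminates because $\mathcal{C}(\mathcal{A}^{(m)})$ is bounded above by an absolute constant (Lemma \ref{C1}, valid since $|\mathcal{A}^{(m)}| \gtrsim (\log Y)^{-B} Y/\tilde H$ throughout), while $\mathcal{C}(\mathcal{A}^{(m)}) \ge \mathcal{C}(\mathcal{A})(|\mathcal{A}^{(m)}|/|\mathcal{A}|)^{-1/2}$ grows if $|\mathcal{A}^{(m)}|$ keeps dropping; so after $O(1)$ steps we must exit via the stopping rule, i.e.\ the defect set $\mathcal{B}_2$ of the current set is small, and we take $\mathcal{B} = \mathcal{A}^{(m)}$ for that final $m$, noting it is a proper subset of $\mathcal{A}$ since the very first step was non-trivial by the standing hypothesis on $|\mathcal{A} \setminus \bigcup_{r<s}\mathcal{A}_r|$.

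I expect the main obstacle to be the bookkeeping in the ``large fraction'' regime of the second alternative: one must argue that the correct object to iterate is $\mathcal{B}_1^{(m)} = \bigcup_{r\le t_m}\mathcal{A}_r^{(m)}$ and that re-running Proposition \ref{lift28} on it is legitimate (the constants $c_1, c_2$ degrade, but only by powers of $\log Y$ absorbed into $B$), and to track how the ``$s^{-1}$'' loss at each of the $O(1)$ steps compounds — this is exactly why the lemma requires $s$ sufficiently large. Everything else is the elementary Cauchy–Schwarz-type inequality on $\mathcal{R}$ above together with the a priori bound from the mixing lemma.
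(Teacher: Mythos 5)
There is a genuine gap, and it sits exactly where you wrote ``has to be handled separately'' and ``one checks directly''. Your central inequality passes from $\mathcal{R}(\mathcal{B}_i) \ge \tfrac14 \mathcal{R}(\mathcal{A})$ to $\mathcal{C}(\mathcal{B}_i) \ge \mathcal{C}(\mathcal{A})(|\mathcal{B}_i|/|\mathcal{A}|)^{-1/2}$ via $(|\mathcal{A}|/|\mathcal{B}_i|)^{2} \ge 4(|\mathcal{A}|/|\mathcal{B}_i|)^{1/2}$, which requires $|\mathcal{B}_i| \le 4^{-2/3}|\mathcal{A}|$. But in a two-set partition at least one piece has $|\mathcal{B}_i| \ge |\mathcal{A}|/2$, and nothing prevents that piece from being the one carrying the relations; so the ``remaining regime'' is the generic case, not an edge case, and it is never actually treated. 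The correct treatment is not a crude quarter of all relations but the fair-share identity $\mathcal{R}(\mathcal{B}_1,\mathcal{A}) + \mathcal{R}(\mathcal{B}_2,\mathcal{A}) = |\mathcal{Q}| = c_2|\mathcal{P}|^2(|\mathcal{B}_1|+|\mathcal{B}_2|)$, which forces $\mathcal{R}(\mathcal{B}_i,\mathcal{A}) \ge c_2|\mathcal{P}|^2|\mathcal{B}_i|$ for some $i$; subtracting the cross term $\lesssim s^{-1}|\mathcal{P}|^2|\mathcal{B}_2|$ and dividing by $|\mathcal{B}_i|^2$ then gives $\mathcal{C}(\mathcal{B}_i) \ge \frac{c_2}{c_1}\,\mu_i^{-1}\bigl(1 - O(s^{-1}|\mathcal{B}_2|/|\mathcal{B}_i|)\bigr)$, and one must check this exceeds $\frac{c_2}{c_1}\mu_i^{-1/2} = \mathcal{C}(\mathcal{A})\mu_i^{-1/2}$. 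That check is where the hypothesis ``$s$ sufficiently large'' does its work, and it only closes because $\mu_1 = |\bigcup_{r\le t}\mathcal{A}_r|/|\mathcal{A}| \gtrsim 1$ with an implicit constant \emph{independent of $s$}, which comes from Proposition \ref{lift28}(ii) ($\mathcal{A}_0 \subseteq \mathcal{B}_1$ and $|\mathcal{A}_0|\gtrsim Y/\tilde{H}$). You never invoke that input; without it, in the branch $\mathcal{B}=\mathcal{B}_2$ the needed inequality $c(sc_2)^{-1} < 1-\mu_2^{1/2}$ cannot be guaranteed for any fixed $s$ when $\mu_2$ is close to $1$.

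Separately, your architecture proves more than the statement asks and thereby imports complications that do not belong here: the lemma only requires a \emph{single} application of Lemma \ref{voy31} followed by the two-case computation above (the hypothesis $|\mathcal{A}\setminus\bigcup_{r<s}\mathcal{A}_r| \gtrsim (\log Y)^{-B/2}Y/\tilde{H}$ is used precisely to make $|\mathcal{B}_2|$ large enough that the first alternative of the max in Lemma \ref{voy31} is dominated by $s^{-1}|\mathcal{P}|^2|\mathcal{B}_2|$, and to give the size lower bound and properness of $\mathcal{B}$). The iteration, the re-application of Proposition \ref{lift28} to subsets, the telescoping of concentration inequalities, and the termination argument via Lemma \ref{C1} all belong to the next lemma (Lemma \ref{mau1}), where the uniformity of the constants along the chain is maintained by exactly the mechanism you sketch. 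So the skeleton is in the right spirit, but the one quantitative step that constitutes the content of this particular lemma is the step you left out.
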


\begin{proof}
By Lemma \ref{voy31} we know there exists some $0 \le t < s$ and $c \sim 1$ with 
$$ \mathcal{R} ( \bigcup_{r=0}^t \mathcal{A}_r, \mathcal{A} \setminus  \bigcup_{r=0}^t \mathcal{A}_r) \le c \frac{ |\mathcal{P}|^2}{s} |\mathcal{A} \setminus  \bigcup_{r=0}^t \mathcal{A}_r| .$$
Let us fix such a choice of $t$ and write $\mathcal{B}_1 = \bigcup_{r=0}^t \mathcal{A}_r$ and $\mathcal{B}_2 = \mathcal{A} \setminus  \bigcup_{r=0}^t \mathcal{A}_r$. Let us also write $\mu_1 = |\mathcal{B}_1|/|\mathcal{A}|$ and $\mu_2 = |\mathcal{B}_2|/|\mathcal{A}|$, so in particular, $\mu_1 + \mu_2 = 1$. We will show the result holds with either $\mathcal{B}=\mathcal{B}_1$ or $\mathcal{B}=\mathcal{B}_2$. Observe that 
$$ \mathcal{R}(\mathcal{B}_1, \mathcal{A}) + \mathcal{R}(\mathcal{B}_2, \mathcal{A}) = |\mathcal{Q}| = c_2 |\mathcal{P}|^2  |\mathcal{A}| .$$
Suppose $\mathcal{R}(\mathcal{B}_1, \mathcal{A}) \ge c_2 |\mathcal{P}|^2 | \mathcal{B}_1 |$. We then have that
$$ \mathcal{C}(\mathcal{B}_1) \ge \frac{Y}{\tilde{H}} \frac{ c_2 |\mathcal{B}_1| - c s^{-1} |\mathcal{B}_2|}{|\mathcal{B}_1|^2} = \frac{c_2}{c_1} \frac{1- c(1-\mu_1) (c_2s\mu_1)^{-1}}{\mu_1}.$$
Since $\mathcal{A}_0 \subseteq \mathcal{B}_1$, we know that $\mu_1 \gtrsim 1$ with the implicit constant independent of $s$. This means that if $s$ is chosen sufficiently large, we have $\frac{\mu_1}{1 + \mu_1^{1/2}} \ge c/(s c_2)$ and therefore
$$ \mathcal{C}(\mathcal{B}_1) \ge \frac{c_2}{c_1} \mu_1^{-1/2} = \mathcal{C}(\mathcal{A}) \left( \frac{|\mathcal{B}_1|}{|\mathcal{A}|} \right)^{-1/2}.$$
We may thus assume that $\mathcal{R}(\mathcal{B}_2, \mathcal{A}) \ge c_2 |\mathcal{P}|^2 | \mathcal{B}_2 |$. Then, 
$$ \mathcal{C}(\mathcal{B}_2) \ge \frac{Y}{\tilde{H}} \frac{ c_2 |\mathcal{B}_2| - cs^{-1} |\mathcal{B}_2|}{|\mathcal{B}_2|^2}  = \frac{c_2}{c_1} \frac{1-c (sc_2)^{-1}}{\mu_2}.$$
Since $\mu_1 \gtrsim 1$ and $\mu_2 = 1 - \mu_1$, we see that $s$ may be chosen sufficiently large as to guarantee that $c (sc_2)^{-1} < 1-\mu_2^{1/2}$. The result then follows, since this implies that
$$ \mathcal{C}(\mathcal{B}_2) \ge \frac{c_2}{c_1} \mu_2^{-1/2} = \mathcal{C}(\mathcal{A}) \left( \frac{|\mathcal{B}_2|}{|\mathcal{A}|} \right)^{-1/2}.$$
\end{proof}

We can now show that there exists some dense subset $\mathcal{B} \subseteq \mathcal{A}$ whose relations lift almost surely through a fixed prime $p^{\ast}$.

\begin{lema}
\label{mau1}
Assume $s$ is sufficiently large. Then, there exists a subset $\mathcal{B} \subseteq \mathcal{A}$ with $|\mathcal{B}| \gtrsim Y/\tilde{H}$ and $\mathcal{C}(\mathcal{B}) \ge \mathcal{C}(\mathcal{A}) \left( \frac{|\mathcal{B}|}{|\mathcal{A}|} \right)^{-1/2}$, a prime $p^{\ast} \in \mathcal{P}$ and a lifting $\mathcal{B}^{\uparrow p^{\ast}}$ of $\mathcal{B}$, such that the following holds:
\begin{enumerate}[(i)]
\item for all but $\lesssim \frac{|\mathcal{P}|^2}{(\log Y)^{B/2}} \frac{Y}{\tilde{H}}$ of the pairs $(x,\alpha_x), (y,\alpha_y) \in \mathcal{B}$ with $(x,\alpha_x) \sim_{q_1,q_2} (y,\alpha_y)$ for some $q_1, q_2 \in \mathcal{P}$, the corresponding elements $(p^{\ast} x,\alpha_x^{\uparrow}), (p^{\ast} y, \alpha_y^{\uparrow}) \in \mathcal{B}^{\uparrow p^{\ast}}$ satisfy $\| q_1 \alpha_x^{\uparrow} - q_2 \alpha_y^{\uparrow} \| < \frac{C_2 P}{p^{\ast} \tilde{H}}$,
\item there exists $\tilde{c} \sim 1$, which can be made arbitrarily small provided $K$ is sufficiently large, such that for all but $\lesssim \frac{1}{(\log Y)^{B/2}} \frac{Y}{\tilde{H}}$ of the elements $(p^{\ast} x, \alpha_x^{\uparrow}) \in \mathcal{B}^{\uparrow p^{\ast}}$, there are $\gtrsim P^{1-\tilde{c}}$ pairs $((y,\alpha_y),q) \in \mathcal{B} \times \mathcal{P}$ with $|(p^{\ast}x)/q-y| \lesssim \tilde{H}$ and $\| q \alpha_x^{\uparrow} - \alpha_y \| \lesssim 1/\tilde{H}$.
\end{enumerate}
\end{lema}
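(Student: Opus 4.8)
The plan is to iterate the concentration-increment step (Lemma \ref{bday1}) until the leftover set becomes small, and then read off (i) and (ii) from a single application of Proposition \ref{lift28} to the resulting dense subset; the structural point that makes this work is that Lemma \ref{C1} prevents the iteration from eroding the density.

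I would set $\mathcal{B}^{(0)} = \mathcal{A}$ and build a chain $\mathcal{B}^{(0)} \supsetneq \mathcal{B}^{(1)} \supsetneq \cdots$ as follows. Given $\mathcal{B}^{(i)} \subseteq \mathcal{A}$ with $|\mathcal{B}^{(i)}| \gtrsim Y/\tilde{H}$ and $\mathcal{C}(\mathcal{B}^{(i)}) \ge \mathcal{C}(\mathcal{A})(|\mathcal{B}^{(i)}|/|\mathcal{A}|)^{-1/2}$, note first that $\mathcal{C}(\mathcal{A}) = c_2/c_1$, so $\mathcal{C}(\mathcal{B}^{(i)}) \gtrsim 1$ and hence $\mathcal{R}(\mathcal{B}^{(i)}) \gtrsim |\mathcal{B}^{(i)}||\mathcal{P}|^2$; thus $\mathcal{B}^{(i)}$ meets the hypotheses of Proposition \ref{lift28} (with the parameters $c_1, c_2$ there replaced by quantities $\sim 1$ and with $\delta, C_2, B, s, M, P, \tilde H, Y$ unchanged). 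Applying that proposition yields $p_i^\ast \in \mathcal{P}$, a lifting, and a decomposition $\mathcal{B}^{(i)} = \bigcup_{r=0}^s \mathcal{A}_r^{(i)}$. If the leftover $\mathcal{B}^{(i)} \setminus \bigcup_{r=0}^{s-1}\mathcal{A}_r^{(i)}$ has size $\gtrsim (\log Y)^{-B/2} Y/\tilde{H}$ I invoke Lemma \ref{bday1} to pass to a proper subset $\mathcal{B}^{(i+1)} \subsetneq \mathcal{B}^{(i)}$ with $|\mathcal{B}^{(i+1)}| \gtrsim (\log Y)^{-B/2} Y/\tilde{H}$ and $\mathcal{C}(\mathcal{B}^{(i+1)}) \ge \mathcal{C}(\mathcal{B}^{(i)})(|\mathcal{B}^{(i+1)}|/|\mathcal{B}^{(i)}|)^{-1/2}$; telescoping the concentration inequalities gives $\mathcal{C}(\mathcal{B}^{(i+1)}) \ge \mathcal{C}(\mathcal{A})(|\mathcal{B}^{(i+1)}|/|\mathcal{A}|)^{-1/2}$, and since $|\mathcal{B}^{(i+1)}| \gtrsim (\log Y)^{-B} Y/\tilde H$, Lemma \ref{C1} forces $\mathcal{C}(\mathcal{B}^{(i+1)}) = O(1)$ with an absolute constant, which combined with the previous inequality forces $|\mathcal{B}^{(i+1)}| \gtrsim Y/\tilde H$ with an absolute implied constant, closing the induction. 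Since each step produces a strictly smaller set of positive integer size, the process terminates, necessarily at a stage $\mathcal{B} := \mathcal{B}^{(i)}$ whose leftover $\mathcal{B} \setminus \bigcup_{r<s}\mathcal{A}_r$ has size $\lesssim (\log Y)^{-B/2} Y/\tilde{H}$; I keep the prime $p^\ast := p_i^\ast$, the lifting $\mathcal{B}^{\uparrow p^\ast}$ and the decomposition $\mathcal{B} = \bigcup_r \mathcal{A}_r$ from this last step, which gives $|\mathcal{B}| \gtrsim Y/\tilde{H}$ and $\mathcal{C}(\mathcal{B}) \ge \mathcal{C}(\mathcal{A})(|\mathcal{B}|/|\mathcal{A}|)^{-1/2}$.

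To deduce (i), I would first note that any fixed $(x,\alpha_x)$ is the first coordinate of $O(|\mathcal{P}|^2)$ quadruples of $\mathcal{Q}$, because for each $(q_1,q_2) \in \mathcal{P}^2$ the constraint $|x/q_1 - y/q_2| < \delta \tilde H/P$ and the $\tilde H$-separation of $\mathcal{B}$ leave only $O(1)$ choices of $y$; hence the quadruples of $\mathcal{Q}$ with first coordinate in $\mathcal{B}\setminus\bigcup_{r<s}\mathcal{A}_r$ number $\lesssim (\log Y)^{-B/2}\frac{Y}{\tilde H}|\mathcal{P}|^2$. Removing these, Proposition \ref{lift28}(iii) (together with $(\log Y)^{-B} \le (\log Y)^{-B/2}$) gives (i). For (ii), every element $(p^\ast x, \alpha_x^\uparrow)$ with $(x,\alpha_x) \in \bigcup_{r<s}\mathcal{A}_r$ satisfies the conclusion of Proposition \ref{lift28}(iv) with $r \le s-1$, hence has $\gtrsim (\log Y)^{-(s-1)B}|\mathcal{P}|$ admissible pairs $((y,\alpha_y),q)$; the remaining elements come from $\mathcal{B}\setminus\bigcup_{r<s}\mathcal{A}_r$ and number $\lesssim (\log Y)^{-B/2} Y/\tilde H$, which is the exceptional set. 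It then remains to check $(\log Y)^{-(s-1)B}|\mathcal{P}| \gtrsim P^{1-\tilde c}$ with $\tilde c$ arbitrarily small for $K$ large: since $|\mathcal{P}| \sim P/\log P$, since $s$ and $B$ are fixed, and since $\log\log Y/\log P \to 0$ as $K \to \infty$ (because $\log P \ge \varepsilon^2\log H \ge \varepsilon^2 K \log\log X$ while $\log Y \sim_\eta \log X$), one has $(\log Y)^{-(s-1)B} = P^{-o(1)}$ with the $o(1)$ tending to $0$ as $K\to\infty$, whence $\tilde c$ may be taken to be $o(1)$ in $K$. This yields (ii).

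The delicate point will be the termination of the iteration with \emph{uniform} constants: a priori each application of Lemma \ref{bday1} could shrink the lower bound on $|\mathcal{B}^{(i)}|$, and the only thing that saves us is that Lemma \ref{C1} caps $\mathcal{C}$ by an absolute constant while the telescoped concentration inequality forces $\mathcal{C}(\mathcal{B}^{(i)}) \gtrsim (|\mathcal{B}^{(i)}|/|\mathcal{A}|)^{-1/2}$, so $|\mathcal{B}^{(i)}|$ can never drop below a fixed multiple of $Y/\tilde H$. This interplay is what makes the concentration-increment scheme self-terminating with parameters independent of the number of steps, and it is the point that must be handled carefully; once it is in place, (i) and (ii) are essentially a repackaging of the conclusions of Proposition \ref{lift28} for the final subset $\mathcal{B}$.
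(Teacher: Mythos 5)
Your proposal is correct and follows essentially the same route as the paper: iterate the concentration-increment step of Lemma \ref{bday1} interleaved with Proposition \ref{lift28}, use Lemma \ref{C1} together with the telescoped concentration inequality to keep $|\mathcal{B}^{(i)}| \gtrsim Y/\tilde{H}$ uniformly, and read off (i) and (ii) from items (iii) and (iv) of Proposition \ref{lift28} applied to the terminal set. The paper leaves the final deduction of (i) and (ii) implicit; your accounting of the exceptional quadruples coming from the small leftover set and your verification that $(\log Y)^{-(s-1)B}|\mathcal{P}| \gtrsim P^{1-\tilde{c}}$ for $K$ large are exactly the intended details.
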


\begin{proof}
Notice that if we are given a sequence $\mathcal{A}=\mathcal{B}^{(1)} \supseteq \ldots \supseteq \mathcal{B}^{(m)}$ with $\mathcal{C}(\mathcal{B}^{(i+1)}) \ge \mathcal{C}(\mathcal{B}^{(i)}) \left( \frac{|\mathcal{B}^{(i+1)}|}{|\mathcal{B}^{(i)}|} \right)^{-1/2}$ for every $1 \le i < m$, then $\mathcal{C} (\mathcal{B}^{(m)}) \ge \mathcal{C}(\mathcal{B}^{(1)}) \left( \frac{|\mathcal{B}^{(m)}|}{|\mathcal{B}^{(1)}|} \right)^{-1/2}$. Furthermore, if $|\mathcal{B}^{(m)}| \gtrsim \frac{1}{(\log Y)^{B}} \frac{Y}{\tilde{H}}$, it follows from Lemma \ref{C1} that $\mathcal{C}(\mathcal{B}^{(m)}) = O(1)$. In particular, since $\mathcal{C}(\mathcal{B}^{(1)}) \sim 1$, it must be $|\mathcal{B}^{(m)}| \gtrsim |\mathcal{B}^{(1)}|$. It thus follows that after iterating Lemma \ref{bday1} finitely many times, where at each step we apply Proposition \ref{lift28} to the subset that we obtain, we arrive at some $\mathcal{B} \subseteq \mathcal{A}$ with $\mathcal{C}(\mathcal{B}) \ge \mathcal{C} (\mathcal{A} ) \left( \frac{|\mathcal{B}|}{|\mathcal{A}|} \right)^{-1/2}$, $|\mathcal{B}| \gtrsim |\mathcal{A}| \gtrsim Y/\tilde{H}$ and with $|\mathcal{B} \setminus \bigcup_{r=0}^{s-1} \mathcal{B}_i| \lesssim \frac{1}{(\log Y)^{B/2}} \frac{Y}{\tilde{H}}$, where now the sets $\mathcal{B}_0,\ldots,\mathcal{B}_s$ are the ones obtained from applying Proposition \ref{lift28} with $\mathcal{B}$ in place of $\mathcal{A}$. The result then follows from items (iii) and (iv) of that proposition.
\end{proof}

We can now iterate this to obtain a sequence of configurations at different scales related through a chain of concentration inequalities.

\begin{lema}
\label{uni2}
Let $k \lesssim \log Y$. Then, there exist a sequence of primes $p_1^{\ast}, \ldots, p_k^{\ast}$, $(c, \tilde{H} \prod_{i=1}^j p_i^{\ast} )$-configurations $\mathcal{A}_{(j)}$ in $[Y\prod_{i=1}^j p_i^{\ast} , 2Y \prod_{i=1}^j p_i^{\ast} ]$, for $0 \le j \le k$ and some $c \sim 1$ and corresponding liftings $\mathcal{A}_{(j)}^{\uparrow p_{j+1}^{\ast}}$ of $\mathcal{A}_{(j)}$ for every $0 \le j < k$, such that the following holds:
\begin{enumerate}[(i)]
\item $\mathcal{A}_{(0)} \subseteq \mathcal{A}$ and $\mathcal{A}_{(j)} \subseteq \mathcal{A}_{(j-1)}^{\uparrow p_j^{\ast}}$ for every $1 \le j \le k$,
\item $|\mathcal{A}_{(j)}| \gtrsim Y/\tilde{H}$ for every $0 \le j \le k$,
\item $\mathcal{C}(\mathcal{A}_{(j)}) \ge \mathcal{C}(\mathcal{A}) \left( \frac{|\mathcal{A}_{(j)}|}{|\mathcal{A}|} \right)^{-1/2} - O \left( j \frac{|\mathcal{A}|^{1/2}}{|\mathcal{A}_{(j)}|^{1/2}}(\log Y)^{-B/2} \right)$, for every $0 \le j \le k$,
\item $\mathcal{C} (\mathcal{A}_{(j)}^{\uparrow p_{j+1}^{\ast}} ) \ge \mathcal{C}(\mathcal{A}_{(j)}) - O((\log Y)^{-B/2}) \gtrsim 1$, for every $0 \le j < k$.
\item there exists some $\tilde{c} \sim 1$, which can be made arbitrarily small provided $K$ is sufficiently large, such that given $1 \le j \le k$ and $(y,\alpha_y) \in \mathcal{A}_{(j)}$, there are $\gtrsim P^{1-\tilde{c}}$ pairs $( (x,\alpha_x),q) \in \mathcal{A}_{(j-1)} \times \mathcal{P}$ with $|y/q-x| \lesssim \tilde{H} \prod_{i=1}^{j-1} p_i^{\ast}$ and $\| q \alpha_y - \alpha_x \| \lesssim ( \tilde{H} \prod_{i=1}^{j-1} p_i^{\ast} )^{-1}$.
\end{enumerate}
\end{lema}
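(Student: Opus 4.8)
The plan is to argue by induction on $j$, constructing $\mathcal{A}_{(j)}$, $p_{j+1}^{\ast}$ and the lifting $\mathcal{A}_{(j)}^{\uparrow p_{j+1}^{\ast}}$ at stage $j$ by applying Lemma \ref{mau1} (hence Proposition \ref{lift28}) to the configuration coming out of the previous stage. For the base case one applies Lemma \ref{mau1} to $\mathcal{A}$ itself, which is legitimate since $\mathcal{C}(\mathcal{A}) = c_2/c_1 \sim 1$: this gives $\mathcal{A}_{(0)} \subseteq \mathcal{A}$, a prime $p_1^{\ast}$ and a lifting $\mathcal{A}_{(0)}^{\uparrow p_1^{\ast}}$ with $|\mathcal{A}_{(0)}| \gtrsim Y/\tilde{H}$ and $\mathcal{C}(\mathcal{A}_{(0)}) \ge \mathcal{C}(\mathcal{A})(|\mathcal{A}_{(0)}|/|\mathcal{A}|)^{-1/2}$, i.e. (ii) and (iii) at $j=0$. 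Since $\mathcal{A}_{(0)}^{\uparrow p_1^{\ast}}$ lives in $[p_1^{\ast}Y, 2p_1^{\ast}Y]$ with separation $p_1^{\ast}\tilde{H}$ — hence with unchanged scale-to-separation ratio $Y/\tilde{H}$ — and has the same cardinality as $\mathcal{A}_{(0)}$, Lemma \ref{mau1}(i) shows that all but $\lesssim (\log Y)^{-B/2}(Y/\tilde{H})|\mathcal{P}|^2$ of the quadruples counted by $\mathcal{R}(\mathcal{A}_{(0)})$ survive in $\mathcal{A}_{(0)}^{\uparrow p_1^{\ast}}$, so $\mathcal{C}(\mathcal{A}_{(0)}^{\uparrow p_1^{\ast}}) \ge \mathcal{C}(\mathcal{A}_{(0)}) - O((\log Y)^{-B/2}) \gtrsim 1$, which is (iv) at $j=0$; (v) is vacuous there.

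For the inductive step, suppose the construction is complete through stage $j-1$. By (iv) at $j-1$ and Lemma \ref{C1}, the configuration $\mathcal{A}_{(j-1)}^{\uparrow p_j^{\ast}}$, which lives in $[Y\prod_{i=1}^{j}p_i^{\ast}, 2Y\prod_{i=1}^{j}p_i^{\ast}]$ with separation $\tilde{H}\prod_{i=1}^{j}p_i^{\ast}$ (ratio $Y/\tilde{H}$), satisfies $\mathcal{C}(\mathcal{A}_{(j-1)}^{\uparrow p_j^{\ast}}) \sim 1$ and $|\mathcal{A}_{(j-1)}^{\uparrow p_j^{\ast}}| = |\mathcal{A}_{(j-1)}| \gtrsim Y/\tilde{H}$ with constants independent of $j$, and so meets the hypotheses of Proposition \ref{lift28} with parameters $c_1, c_2, \delta, C_2$ in a fixed range; here one uses $k \lesssim \log Y$ and $K$ large so that $\log(Y\prod_{i=1}^{j}p_i^{\ast}) < P^{\epsilon}$ (the conditions $P^3 < M\tilde{H}\prod_{i=1}^{j}p_i^{\ast} < Y\prod_{i=1}^{j}p_i^{\ast}$ being automatic), and one fixes $s$ once and for all large enough relative to this range, as remarked in the introduction. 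Applying Lemma \ref{mau1} to $\mathcal{A}_{(j-1)}^{\uparrow p_j^{\ast}}$ yields a subset $\mathcal{B}' \subseteq \mathcal{A}_{(j-1)}^{\uparrow p_j^{\ast}}$ with $|\mathcal{B}'| \gtrsim Y/\tilde{H}$ and $\mathcal{C}(\mathcal{B}') \ge \mathcal{C}(\mathcal{A}_{(j-1)}^{\uparrow p_j^{\ast}})(|\mathcal{B}'|/|\mathcal{A}_{(j-1)}^{\uparrow p_j^{\ast}}|)^{-1/2}$, a prime $p_{j+1}^{\ast}$, and a lifting of $\mathcal{B}'$ enjoying properties (i)--(ii) of that lemma.

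To secure (v) for \emph{every} element, note that $\mathcal{A}_{(j-1)}^{\uparrow p_j^{\ast}}$ was itself built at stage $j-1$ as a lifting of $\mathcal{A}_{(j-1)}$, so by Lemma \ref{mau1}(ii) all but $\lesssim (\log Y)^{-B/2}(Y/\tilde{H})$ of its elements already have the required $\gtrsim P^{1-\tilde{c}}$ partners in $\mathcal{A}_{(j-1)}\times\mathcal{P}$; I would define $\mathcal{A}_{(j)}$ to be $\mathcal{B}'$ with this exceptional subset deleted and $\mathcal{A}_{(j)}^{\uparrow p_{j+1}^{\ast}}$ the induced restriction of the lifting of $\mathcal{B}'$. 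Since each element lies in $O(|\mathcal{P}|^2)$ quadruples of the relevant $\mathcal{Q}$ (the configuration being $\tilde{H}\prod_{i=1}^{j}p_i^{\ast}$-separated, so that each of the $|\mathcal{P}|^2$ choices of the other prime pair together with the first element determines at most one partner), removing $\lesssim (\log Y)^{-B/2}(Y/\tilde{H})$ elements changes the cardinality negligibly and drops $\mathcal{R}$ by $\lesssim (\log Y)^{-B/2}(Y/\tilde{H})|\mathcal{P}|^2$; hence $|\mathcal{A}_{(j)}| \gtrsim Y/\tilde{H}$ and $\mathcal{C}(\mathcal{A}_{(j)}) \ge \mathcal{C}(\mathcal{B}') - O((\log Y)^{-B/2})$. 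This delivers (i), (ii), (v) at stage $j$, and — just as in the base case, via Lemma \ref{mau1}(i) applied to $\mathcal{B}'$ — also (iv): $\mathcal{C}(\mathcal{A}_{(j)}^{\uparrow p_{j+1}^{\ast}}) \ge \mathcal{C}(\mathcal{A}_{(j)}) - O((\log Y)^{-B/2}) \gtrsim 1$. Item (iii) at stage $j$ then follows by composing $\mathcal{C}(\mathcal{A}_{(j)}) \ge \mathcal{C}(\mathcal{A}_{(j-1)}^{\uparrow p_j^{\ast}})(|\mathcal{A}_{(j)}|/|\mathcal{A}_{(j-1)}|)^{-1/2} - O((\log Y)^{-B/2})$ with (iv) and (iii) at stage $j-1$; since $|\mathcal{A}_{(j)}| \le |\mathcal{A}_{(j-1)}| \le |\mathcal{A}|$ the size ratios telescope to $(|\mathcal{A}_{(j)}|/|\mathcal{A}|)^{-1/2}$, while the $j$ error terms, each scaled by a factor $\le (|\mathcal{A}|/|\mathcal{A}_{(j)}|)^{1/2}$, add up to $O(j(|\mathcal{A}|/|\mathcal{A}_{(j)}|)^{1/2}(\log Y)^{-B/2})$, exactly as claimed.

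The hard part will not be any individual step but sustaining the construction over $\sim\log Y$ iterations: one must keep the densities and the concentration parameters of all the successive lifted configurations inside a fixed range, so that Proposition \ref{lift28} and Lemma \ref{mau1} keep applying with uniform implied constants and a common $s$. This is precisely what the concentration increment is built to achieve — the gain $(|\mathcal{B}_i|/|\mathcal{A}|)^{-1/2} \ge 1$ in Lemma \ref{bday1} (used inside Lemma \ref{mau1}) beats the $O((\log Y)^{-B/2})$ loss at each lifting, so the total loss over $k \lesssim \log Y$ steps is $O(k(\log Y)^{-B/2}) = o(1)$ for $B > 2$, harmless against $\mathcal{C}(\mathcal{A}) \sim 1$; in particular $\mathcal{C}(\mathcal{A}_{(j)})$ never drops below an absolute constant, which combined with the upper bound $\mathcal{C}(\mathcal{A}_{(j)}) = O(1)$ from Lemma \ref{C1} re-confirms $|\mathcal{A}_{(j)}| \gtrsim |\mathcal{A}| \gtrsim Y/\tilde{H}$ uniformly in $j$. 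The only piece of genuinely new work beyond the preceding sections is the small pruning above, which upgrades (v) from ``all but few'' to ``all''.
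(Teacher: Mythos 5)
Your overall strategy is the same as the paper's: iterate Lemma \ref{mau1}, telescope the concentration inequalities to get (iii), and use the upper bound from Lemma \ref{C1} to keep $|\mathcal{A}_{(j)}| \gtrsim |\mathcal{A}|$ uniformly over the $k \lesssim \log Y$ steps. The base case, the verification that the scale-to-separation ratio is preserved under lifting, and the accumulation of the error terms in (iii) are all handled correctly. However, your treatment of (v) has a genuine gap coming from the order in which you prune and apply Lemma \ref{mau1}. At stage $j$ you first apply Lemma \ref{mau1} to $\mathcal{A}_{(j-1)}^{\uparrow p_j^{\ast}}$ to get $\mathcal{B}'$, and then delete the exceptional set to form $\mathcal{A}_{(j)}$, where the exceptional set is claimed to be small ``by Lemma \ref{mau1}(ii)'' applied at stage $j-1$. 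But unwind your own construction: at stage $j-1$, Lemma \ref{mau1} was applied to $\mathcal{A}_{(j-2)}^{\uparrow p_{j-1}^{\ast}}$ and produced a set $\mathcal{B}'_{j-1}$ together with its lifting, and item (ii) of that lemma guarantees partners in $\mathcal{B}'_{j-1} \times \mathcal{P}$ — not in $\mathcal{A}_{(j-1)} \times \mathcal{P}$, since in your scheme $\mathcal{A}_{(j-1)} = \mathcal{B}'_{j-1} \setminus E_{j-1}$ is a \emph{proper} subset obtained by a subsequent deletion. An element of $\mathcal{A}_{(j-1)}^{\uparrow p_j^{\ast}}$ could in principle have all of its $\gtrsim P^{1-\tilde{c}}$ guaranteed partners inside the deleted set $E_{j-1}$, so the smallness of your exceptional set at stage $j$ does not follow from Lemma \ref{mau1}(ii) as you assert. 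A double-counting patch (each deleted element is a partner of $O(P)$ lifted elements, so at most $\lesssim |E_{j-1}| P^{\tilde{c}}$ elements lose half their partners) is not obviously acceptable, since $P^{\tilde{c}}$ need not be dominated by $(\log Y)^{B/2}$ without a more careful calibration of $\tilde{c}$, $K$ and $B$ than you provide.

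The paper avoids this entirely by reversing the order: at each stage one first forms $\mathcal{B}_{(j)} \subseteq \mathcal{A}_{(j-1)}^{\uparrow p_j^{\ast}}$ by deleting the elements without enough partners in $\mathcal{A}_{(j-1)}$ — and here the exceptional set \emph{is} directly controlled by Lemma \ref{mau1}(ii), because $\mathcal{A}_{(j-1)}$ is itself the exact output subset of the application of Lemma \ref{mau1} at stage $j-1$ and is never pruned afterwards — and only then applies Lemma \ref{mau1} to $\mathcal{B}_{(j)}$, obtaining $\mathcal{A}_{(j)} \subseteq \mathcal{B}_{(j)}$, the prime $p_{j+1}^{\ast}$ and the next lifting. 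Property (v) for $\mathcal{A}_{(j)}$ is then automatic from $\mathcal{A}_{(j)} \subseteq \mathcal{B}_{(j)}$, and the partner set referenced at each stage is stable. Your argument becomes correct if you adopt this ordering; as written, the inductive claim that sustains (v) is circular.
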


\begin{proof}
We start with $\mathcal{A}$ and apply Lemma \ref{mau1} to obtain a set $\mathcal{B} \subseteq \mathcal{A}$, $|\mathcal{B}| \gtrsim Y/\tilde{H}$, which we relabel as $\mathcal{A}_{(0)}$ and a prime $p^{\ast}$ which we label as $p_1^{\ast}$. Observe that from item (ii) of that lemma we know that we can find a subset ${\mathcal{B}}_{(1)} \subseteq \mathcal{A}_{(0)}^{\uparrow p_1^{\ast}}$ with $|{\mathcal{B}}_{(1)}| \ge |\mathcal{A}_{(0)}^{\uparrow p_1^{\ast}}| - O(\frac{1}{(\log Y)^{B/2}} \frac{Y}{\tilde{H}})$ and therefore $\mathcal{C}(\mathcal{B}_{(1)}) \ge \mathcal{C}(\mathcal{A}_{(0)}^{\uparrow p_1^{\ast}}) - O( (\log Y)^{-B/2})$, such that for every $(y,\alpha_y) \in {\mathcal{B}}_{(1)}$ there are $\gtrsim P^{1-\tilde{c}}$ pairs in $\mathcal{A}_{(0)} \times \mathcal{P}$ satisfying the conclusion of (v). Furthermore, from Lemma \ref{mau1} (i) we also have 
\begin{align*}
\mathcal{C} (\mathcal{A}_{(0)}^{\uparrow p_1^{\ast}} ) &\ge \mathcal{C}(\mathcal{A}_{(0)}) - O((\log Y)^{-B/2})) \\
&\ge \mathcal{C}(\mathcal{A}) \left( \frac{|\mathcal{A}_{(0)}|}{|\mathcal{A}|} \right)^{-1/2} - O((\log Y)^{-B/2}) \gtrsim 1,
\end{align*}
since $\mathcal{C}(A) \sim 1$. We now iterate this procedure. Once we know $|\mathcal{A}_{(j-1)}^{\uparrow p_j^{\ast}}| \gtrsim \frac{Y}{\tilde{H}} = \frac{\prod_{i=1}^j p_i^{\ast}   Y}{\prod_{i=1}^j p_i^{\ast}  \tilde{H}} $, $\mathcal{C}(\mathcal{A}_{(j-1)}^{\uparrow p_j^{\ast}}) \gtrsim 1$ and that we can find a subset $\mathcal{B}_{(j)} \subseteq \mathcal{A}_{(j-1)}^{\uparrow p_j^{\ast}}$ with 
$$|\mathcal{B}_{(j)}| \ge |\mathcal{A}_{(j-1)}^{\uparrow p_j^{\ast}}| - O(\frac{1}{(\log Y)^{B/2}} \frac{Y}{\tilde{H}}) \gtrsim Y/\tilde{H},$$ 
and therefore 
$$\mathcal{C}(\mathcal{B}_{(j)}) \ge \mathcal{C}(\mathcal{A}_{(j-1)}^{\uparrow p_j^{\ast}}) - O(\frac{1}{(\log Y)^{B/2}}) \gtrsim 1,$$
such that for every $(y,\alpha_y) \in {\mathcal{B}}_{(j)}$ there are $\gtrsim P^{1-\tilde{c}}$ pairs in $\mathcal{A}_{(j-1)} \times \mathcal{P}$ satisfying the conclusion of (v), we can then apply Lemma \ref{mau1} to obtain a subset $\mathcal{A}_{(j)} \subseteq \mathcal{B}_{(j)}$ with $|\mathcal{A}_{(j)}| \gtrsim Y/\tilde{H}$, a prime $p_{j+1}^{\ast}$ and a lifting $\mathcal{A}_{(j)}^{\uparrow p_{j+1}^{\ast}}$. Furthermore, this lemma tells us that we can find a corresponding subset $\mathcal{B}_{(j+1)}$ of $\mathcal{A}_{(j)}^{\uparrow p_{j+1}^{\ast}}$ that satisfies the analogues of the previous properties and that we have the estimates 
$$ \mathcal{C} (\mathcal{A}_{(j)}^{\uparrow p_{j+1}^{\ast}} ) \ge \mathcal{C}(\mathcal{A}_{(j)}) - O((\log Y)^{-B/2}),$$
and
\begin{align*}
\mathcal{C} (\mathcal{A}_{(j)}) &\ge \mathcal{C} (\mathcal{B}_{(j)}) |\mathcal{A}_{(j)}|^{-1/2} |\mathcal{B}_{(j)}|^{1/2} \\
&\ge \mathcal{C} (\mathcal{A}_{(j-1)}^{\uparrow p_j^{\ast}}) |\mathcal{A}_{(j)}|^{-1/2} |\mathcal{A}_{(j-1)}^{\uparrow p_j^{\ast}}|^{1/2} -  O((\log Y)^{-B/2})\\
&\ge \mathcal{C} (\mathcal{A}_{(j-1)}) \left( \frac{|\mathcal{A}_{(j)}|}{|\mathcal{A}_{(j-1)}|} \right)^{-1/2} - O((\log Y)^{-B/2}) \\
&\ge \mathcal{C} (\mathcal{A}) \left( \frac{|\mathcal{A}_{(j)}|}{|\mathcal{A}|} \right)^{-1/2} - O(j\frac{|\mathcal{A}|^{1/2}}{|\mathcal{A}_{(j)}|^{1/2}}(\log Y)^{-B/2}) \gtrsim 1,
\end{align*}
by induction. Notice that during the proof we used that we have the uniform bound (ii) for $\mathcal{A}_{(j-1)}$ to obtain that $|A_{(j)}| \gtrsim Y/ \tilde{H}$. By Lemma \ref{C1}, we then see that $\mathcal{C} (\mathcal{A}_{(j)})$ is bounded from above by an absolute constant. Since we also have $C(\mathcal{A}) \sim 1$ and $\mathcal{C} (\mathcal{A}_{(j)}) \ge \frac{1}{2} \mathcal{C} (\mathcal{A}) \left( \frac{|\mathcal{A}_{(j)}|}{|\mathcal{A}|} \right)^{-1/2}$, say, this ensures us that the bound (ii) remains uniform and this in turn guarantees the same claim for the remaining implicit constants in the statement. The result then follows.
\end{proof}

For technical reasons, we will need to ensure that for at least some of the configurations we construct, Lemma \ref{mau1} (ii) holds for $\gtrsim |\mathcal{P}|$ pairs instead of just $\gtrsim P^{1-\tilde{c}}$ pairs. This will be done rather crudely through the following lemma. 

\begin{lema}
\label{instead2}
Let $\mathcal{A}^{\uparrow p^{\ast}} [0]$ consist of those elements of $\mathcal{A}^{\uparrow p^{\ast}}$ corresponding to $\mathcal{A}_0$. Then, there are $\gtrsim |\mathcal{P}|^2 \frac{Y}{\tilde{H}}$ quadruples $((p^{\ast} x, \alpha_x^{\uparrow} ) , (p^{\ast} y, \alpha_y^{\uparrow}),q_1,q_2) \in \mathcal{A}^{\uparrow p^{\ast}} [0]^2 \times \mathcal{P}^2$ with $| p^{\ast} x / q_1 - p^{\ast} y / q_2 | < \frac{\delta \tilde{H} p^{\ast}}{P}$ and $\| q_1 \alpha_x^{\uparrow} - q_2 \alpha_y^{\uparrow} \| < \frac{C_2 P}{p^{\ast} \tilde{H}}$.
\end{lema}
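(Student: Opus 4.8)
The plan is a routine popularity count: I will extract the quadruples from pairs of elements of $\mathcal{A}_0$ that share a common ``neighbour'' in $\mathcal{A}$. Recall from Proposition \ref{lift28} (i)--(ii) that $|\mathcal{A}^{\uparrow p^{\ast}}[0]| = |\mathcal{A}_0| \gtrsim Y/\tilde{H}$, and from Proposition \ref{lift28} (iv) applied with $r=0$ (legitimate since $0<s$) that every $(x,\alpha_x)\in\mathcal{A}_0$ admits $\gtrsim |\mathcal{P}|$ pairs $((y,\alpha_y),q)\in\mathcal{A}\times\mathcal{P}$ with $|(p^{\ast}x)/q - y|\lesssim\tilde{H}$ and $\|q\alpha_x^{\uparrow}-\alpha_y\|\lesssim 1/\tilde{H}$. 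Let $\mathcal{I}$ be the set of triples $((x,\alpha_x),(y,\alpha_y),q)\in\mathcal{A}_0\times\mathcal{A}\times\mathcal{P}$ realising such a relation, so that $|\mathcal{I}|\gtrsim \frac{Y}{\tilde{H}}|\mathcal{P}|$.

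Next I would organise $\mathcal{I}$ by its middle coordinate. For fixed $(y,\alpha_y)\in\mathcal{A}$ and fixed $q\in\mathcal{P}$, the inequality $|(p^{\ast}x)/q - y|\lesssim\tilde{H}$ confines $x$ to an interval of length $\lesssim q\tilde{H}/p^{\ast}\lesssim\tilde{H}$, so the $\tilde{H}$-separation of $\mathcal{A}$ leaves $O(1)$ admissible $(x,\alpha_x)$; hence the number $d(y)$ of triples of $\mathcal{I}$ with middle coordinate $(y,\alpha_y)$ satisfies $d(y)=O(|\mathcal{P}|)$. Since $\sum_{(y,\alpha_y)\in\mathcal{A}} d(y) = |\mathcal{I}| \gtrsim \frac{Y}{\tilde{H}}|\mathcal{P}|$ and $|\mathcal{A}|\sim Y/\tilde{H}$, Cauchy--Schwarz gives
$$ \sum_{(y,\alpha_y)\in\mathcal{A}} d(y)^2 \;\ge\; \frac{\left(\sum_{(y,\alpha_y)\in\mathcal{A}} d(y)\right)^2}{|\mathcal{A}|} \;\gtrsim\; \frac{Y}{\tilde{H}}|\mathcal{P}|^2 ,$$
so there are $\gtrsim \frac{Y}{\tilde{H}}|\mathcal{P}|^2$ ordered pairs of triples of $\mathcal{I}$ sharing a common middle coordinate.

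Finally, to a pair $\big(((x,\alpha_x),(y,\alpha_y),q),\,((x',\alpha_{x'}),(y,\alpha_y),q')\big)$ of this kind I would associate the quadruple $((p^{\ast}x,\alpha_x^{\uparrow}),(p^{\ast}x',\alpha_{x'}^{\uparrow}),q,q')\in\mathcal{A}^{\uparrow p^{\ast}}[0]^2\times\mathcal{P}^2$. Applying the triangle inequality at the shared vertex $(y,\alpha_y)$ yields $|p^{\ast}x/q - p^{\ast}x'/q'|\lesssim\tilde{H}\sim p^{\ast}\tilde{H}/P$ and $\|q\alpha_x^{\uparrow} - q'\alpha_{x'}^{\uparrow}\|\lesssim 1/\tilde{H}\sim P/(p^{\ast}\tilde{H})$, which have precisely the shape of the two asserted bounds once one tracks the $\sim 1$ constants; here one restricts to the portion of $\mathcal{A}_0$ coming from the concentrated sets $K_0^z$ supplied by Lemma \ref{019}, for which the points $p^{\ast}x$ stay within $O(\delta P\tilde{H})$ of the associated $z$, so that the thresholds $\delta\tilde{H}p^{\ast}/P$ and $C_2 P/(p^{\ast}\tilde{H})$ are genuinely met. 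To conclude, observe that the map from pairs of triples to quadruples is $O(1)$-to-one: from $(p^{\ast}x,q)$ the value of $y$ is pinned down to within $\lesssim\tilde{H}$ and hence to $O(1)$ elements of $\mathcal{A}$ (which also determines $\alpha_y$ and thus the whole pair of triples). This gives $\gtrsim \frac{Y}{\tilde{H}}|\mathcal{P}|^2$ distinct quadruples.

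There is no substantial obstacle: this is the standard Cauchy--Schwarz popularity argument, as befits the ``rather crude'' nature of the statement. The only points requiring care are the $O(1)$-to-one bound, needed so that the quadruples produced are genuinely distinct, and the constant bookkeeping in the last step — which is exactly why one works with the tightly controlled part of $\mathcal{A}_0$ (and the control of $|p^{\ast}x - z|$ used in the proof of Proposition \ref{lift28}) rather than with all of $\mathcal{A}_0$ indiscriminately.
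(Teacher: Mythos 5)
Your argument is, in substance, exactly the paper's (one-line) proof: items (ii) and (iv) of Proposition \ref{lift28}, a Cauchy--Schwarz/pigeonhole count over common neighbours in $\mathcal{A}$, and the triangle inequality; the popularity count and the $O(1)$-to-one injectivity check are both fine. The one point where your write-up goes astray is the constant bookkeeping that you correctly single out as the only delicate step: restricting to the concentrated sets $K_0^z$ only controls $|p^{\ast}x - z| \lesssim \delta P\tilde{H}$, whereas the dominant contribution to $|(p^{\ast}x)/q - y|$ is the width $\tilde{H}/100$ in the very definition of $K^z$ (and, for the frequencies, the constant $C_1$ in $\| q\alpha_0^z - \alpha_y\| < C_1/\tilde{H}$, with $C_1$ chosen \emph{large} with respect to $C_2$). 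So after the triangle inequality at the shared vertex you only get $|p^{\ast}x/q_1 - p^{\ast}x'/q_2| \lesssim \tilde{H}/25$ and $\|q_1\alpha_x^{\uparrow} - q_2\alpha_{x'}^{\uparrow}\| \lesssim C_1/\tilde{H}$, which do \emph{not} clear the thresholds $\delta\tilde{H}p^{\ast}/P$ (with $\delta < 10^{-10}$) and $C_2P/(p^{\ast}\tilde{H})$. The standard remedy is one further $O(1)$-fold pigeonholing: near each common neighbour, partition the possible positions $(p^{\ast}x)/q$ into cells of length $\delta\tilde{H}p^{\ast}/(2P)$ and the values $q\alpha_x^{\uparrow}$ into arcs of length $C_2P/(2p^{\ast}\tilde{H})$, and keep only pairs of triples landing in the same cell and the same arc; this costs only a constant factor in the count (alternatively, accept degraded parameters $\delta, C_2 \sim 1$ here, which is how Proposition \ref{sets2} treats the $O(1)$ steps in the range $(i_1,i_2]$).
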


\begin{proof}
This follows from items (ii) and (iv) of Proposition \ref{lift28}, the triangle inequality and the pigeonhole principle.
\end{proof}

We will now use these results to build a series of configurations starting from the configuration $\mathcal{J}$ provided by Lemma \ref{base00}.

\begin{prop}
\label{sets2}
Let the hypotheses and notation be as in Lemma \ref{base00} and let $1 \le i_1 \le i_2 \le k$ be integers with $k \lesssim \log X$ and $|i_2 - i_1| = O(1)$. Then, there exist primes $p_1^{\ast},\ldots,p_{k}^{\ast} \in \mathcal{P}$ and  $(c, H \prod_{i=1}^j p_i^{\ast})$-configurations $\mathcal{A}^{(j)}$ in $[X \prod_{i=1}^j p_i^{\ast}, 2X \prod_{i=1}^j p_i^{\ast}]$, for $0 \le j \le k$ and some $c \sim 1$, such that:
\begin{enumerate}[(i)]
\item $\mathcal{A}^{(0)} \subseteq \mathcal{J}$,
\item given $0 \le j_1 \le j_2 \le k$ and $(y,\alpha_y) \in \mathcal{A}^{(j_2)}$, there exists a unique $(x,\alpha_x) \in \mathcal{A}^{(j_1)}$ with $y = x \prod_{i=j_1+1}^{j_2} p_i^{\ast}$ and $\left(\prod_{i=j_1+1}^{j_2} p_i^{\ast} \right) \alpha_y = \alpha_x$,
\item there exists some $\tilde{c} \sim 1$, which can be made arbitrarily small provided $K$ is sufficiently large, such that given $1 \le j \le k$ and $(y,\alpha_y) \in \mathcal{A}^{(j)}$, there are $\gtrsim P^{1-\tilde{c}}$ pairs $( (x,\alpha_x),q) \in \mathcal{A}^{(j-1)} \times \mathcal{P}$ with $|y/q-x| \lesssim {H} \prod_{i=1}^{j-1} p_i^{\ast}$ and $\| q \alpha_y - \alpha_x \| \lesssim ( {H} \prod_{i=1}^{j-1} p_i^{\ast} )^{-1}$,
\item if $i_1 < j \le i_2$, then for every $(y,\alpha_y) \in \mathcal{A}^{(j)}$ we have $\gtrsim |\mathcal{P}|$ pairs $( (x,\alpha_x),q) \in \mathcal{A}^{(j-1)} \times \mathcal{P}$ of the above form.
\end{enumerate}
\end{prop}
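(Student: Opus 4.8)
The plan is to iterate Lemma \ref{uni2} on the configuration $\mathcal{J}$, taking a bit of extra care at a bounded number of steps (those between $i_1$ and $i_2$) where we want the stronger conclusion (iv). First I would observe that $\mathcal{J}$ satisfies the hypotheses of Proposition \ref{lift28} with $\tilde H = H$, $Y = X$ and $\mathcal{Q}$ the set of quadruples in Lemma \ref{base00}; indeed the lower bound $|\mathcal{Q}| \ge c_0' \frac{X}{H}|\mathcal{P}|^2$ together with $|\mathcal{J}| = c_0 X/H$ gives $\mathcal{C}(\mathcal{J}) \sim 1$, and Corollary \ref{ML0} gives the matching upper bound, so all the running hypotheses of Sections 4 and 5 are met. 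Applying Lemma \ref{uni2} with the parameter $k$ (allowed since $k \lesssim \log X$) produces primes $p_1^\ast,\ldots,p_k^\ast$ and configurations $\mathcal{A}_{(j)}$ with $|\mathcal{A}_{(j)}| \gtrsim X/H$, $\mathcal{C}(\mathcal{A}_{(j)}) \gtrsim 1$, and property (v) of that lemma, which is exactly (iii) here. Relabelling $\mathcal{A}^{(j)} := \mathcal{A}_{(j)}$, item (i) is immediate from Lemma \ref{uni2}(i), and item (ii) follows by composing the identities $p_j^\ast x = z$, $p_j^\ast \alpha^\uparrow = \alpha$ from Proposition \ref{lift28}(i) along the chain $j_1, j_1+1, \ldots, j_2$ — each lifting multiplies the first coordinate by $p_j^\ast$ and the frequency is a $p_j^\ast$-preimage, so telescoping gives $y = x\prod_{i=j_1+1}^{j_2} p_i^\ast$ and $\bigl(\prod_{i=j_1+1}^{j_2} p_i^\ast\bigr)\alpha_y = \alpha_x$, with uniqueness because the first coordinates determine everything.

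The only new point is (iv): at the steps $i_1 < j \le i_2$ we need $\gtrsim |\mathcal{P}|$ connecting pairs rather than merely $\gtrsim P^{1-\tilde c}$. Here I would use Lemma \ref{instead2}. The point is that when we apply Lemma \ref{mau1} / Proposition \ref{lift28} to pass from $\mathcal{A}^{(j-1)}$ to its lifting, the subset $\mathcal{A}^{\uparrow p^\ast}[0]$ corresponding to $\mathcal{A}_0$ has size $\gtrsim Y/\tilde H$ (Proposition \ref{lift28}(ii)) and, by Proposition \ref{lift28}(iv) with $r = 0$, every element of it has $\gtrsim |\mathcal{P}|$ connecting pairs in $\mathcal{A}^{(j-1)} \times \mathcal{P}$ of the required form. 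So instead of continuing the iteration with the full lifted configuration, at the step landing at index $j$ (for $i_1 < j \le i_2$) I would take $\mathcal{A}^{(j)}$ to be (a further large subset of) this distinguished piece $\mathcal{A}^{(j-1),\uparrow p_j^\ast}[0]$; Lemma \ref{instead2} guarantees it still carries $\gtrsim \frac{Y}{\tilde H}|\mathcal{P}|^2$ relations, hence $\mathcal{C}$ of it is still $\sim 1$, so the concentration-increment machinery can be resumed on it for the remaining steps. Since $|i_2 - i_1| = O(1)$, we only lose a bounded number of factors in the implicit constants, which is harmless.

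The main obstacle I expect is bookkeeping rather than anything conceptually deep: one has to check that inserting these $O(1)$ "restricted" steps does not destroy the uniform lower bounds $\mathcal{C}(\mathcal{A}^{(j)}) \gtrsim 1$ and $|\mathcal{A}^{(j)}| \gtrsim X/H$ that Lemma \ref{uni2} supplies for the unrestricted iteration — concretely, that after each restricted step the hypotheses of Lemma \ref{mau1} (namely $\mathcal{C} \sim 1$ and size $\gtrsim Y/\tilde H$, with the implicit constants independent of $s$) still hold so the iteration can continue. This is fine because Lemma \ref{instead2} gives the relation count with constants depending only on the data of Proposition \ref{lift28}, and passing to $\mathcal{A}^{\uparrow p^\ast}[0]$ only improves the per-element connectivity (from $P^{1-\tilde c}$ to $|\mathcal{P}|$); one then feeds this back into Lemma \ref{uni2}'s inductive scheme. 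A secondary point to verify is that property (iii) is not lost at the restricted steps, but since $|\mathcal{P}| \gtrsim P^{1-\tilde c}$ the stronger conclusion (iv) trivially implies (iii) there. Assembling these observations gives the proposition.
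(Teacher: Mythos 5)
Your proposal is correct and follows essentially the same route as the paper: Lemma \ref{uni2} for the ranges $[0,i_1]$ and $(i_2,k]$, and Lemma \ref{instead2} with $\mathcal{A}^{(j)} = (\mathcal{A}^{(j-1)})^{\uparrow p_j^{\ast}}[0]$ for the $O(1)$ intermediate steps, checking that the concentration and size bounds survive so the iteration can resume. The paper's own proof is exactly this argument, stated more tersely.
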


\begin{proof}
In the range $0 \le j \le i_1$ the result follows from Lemma \ref{uni2}, with $\mathcal{A}^{(j)}=\mathcal{A}_{(j)}$. Notice that by construction of the liftings $\mathcal{A}_{(j-1)}^{\uparrow p_{j}^{\ast}}$ and the estimates on $\mathcal{C}(\mathcal{A}_{(j)})$, we can ensure a uniform bound throughout the process for the parameters $c_1, c_2, C_2$ and $\delta$ in Proposition \ref{lift28}.  In the range $(i_1,i_2]$ we use Lemma \ref{instead2} instead, with $\mathcal{A}^{(j)}$ then given, using the notation in the statement of that lemma, by $(\mathcal{A}^{(j-1)})^{\uparrow p_j^{\ast}} [0]$. Since the range $(i_1,i_2]$ involves $O(1)$ elements, we still end with a configuration $\mathcal{A}^{(i_2)}$ that satisfies the hypotheses of Proposition \ref{lift28} for certain parameters $c_1, c_2, C_2, \delta \sim 1$. We can then apply Lemma \ref{uni2} again to cover the remaining range $(i_2,k]$ and conclude the proof.
\end{proof}

\section{Downward expansion}

We will now develop analogues of some estimates from \cite{W2} that will allow us to study certain 'downward paths' associated to the configurations we have constructed. We will use this in the next section to show that, if $k$ is sufficiently large, every fixed choice of $(x_0, \alpha_{x_0}) \in \mathcal{A}^{(k)}$ is connected through products of primes in $\mathcal{P}$ to $\gtrsim X/H$ elements of $\mathcal{J}$.

\begin{defi}
Let the hypotheses and notation be as in Proposition \ref{sets2}. A \emph{downward path} of length $s$ is a set of elements $(x_0,\alpha_{x_0}), (x_1, \alpha_{x_1}),\ldots,(x_s,\alpha_{x_s})$ with $(x_i, \alpha_{x_i}) \in \mathcal{A}^{(j-i)}$ for every $0 \le i \le s$ and some $1 \le j \le k$, such that for every $0 \le i \le s-1$ there exists a prime $q_i \in \mathcal{P}$ with $|x_i/q_i - x_{i+1}| \lesssim {H}\prod_{t=1}^{j-i-1} p_t^{\ast}$ and $\| q_i \alpha_{x_i} - \alpha_{x_{i+1}} \| \lesssim   \left( {H}\prod_{t=1}^{j-i-1} p_t^{\ast} \right)^{-1}$. We say the downward path \emph{connects} $(x_0,\alpha_{x_0})$ and $(x_s,\alpha_{x_s})$. Given $Q \in \N$, we say the downward path is coprime with $Q$ if no $q_i$ divides $Q$.
\end{defi}

The following two lemmas are variants of results from \cite[Section 4]{W2}.

\begin{lema}
\label{prod2}
There exists $c \sim 1$ such that, if $q_0,\ldots,q_{s-1}$ are the primes associated to a downward path of length $s \le c \log (X/{H})$ given by elements $(x_i,\alpha_{x_i}) \in \mathcal{A}^{(j-i)}$, $0 \le i \le s$, then $\prod_{i=0}^{s-1} q_i/p_{j-i}^{\ast} \sim 1$.
\end{lema}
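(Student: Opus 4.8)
The plan is to track the accumulated multiplicative error along the downward path, exactly as in the proof of Lemma \ref{pd21} (and \cite[Corollary 4.2]{W2}), but now keeping control of the scales $\tilde{H}_i := H \prod_{t=1}^{j-i} p_t^{\ast}$ that change from step to step. For each $0 \le i \le s-1$ the defining inequality of a downward path gives $|x_i/q_i - x_{i+1}| \lesssim \tilde{H}_{i+1}$, and since $x_{i+1} \in \mathcal{A}^{(j-i-1)} \subseteq [X\prod_{t=1}^{j-i-1} p_t^{\ast}, 2X\prod_{t=1}^{j-i-1} p_t^{\ast}]$ we have $x_{i+1} \sim X \prod_{t=1}^{j-i-1} p_t^{\ast} = Y \tilde{H}_{i+1}/(H)$-scale, i.e. $x_{i+1} \gtrsim (X/H)\,\tilde{H}_{i+1}$. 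Hence $|x_i/q_i - x_{i+1}| \lesssim \tilde{H}_{i+1} \lesssim \frac{H}{X} x_{i+1}$, so passing to logarithms, $\bigl|\log x_i - \log q_i - \log x_{i+1}\bigr| \lesssim H/X$, a quantity that is $\le \epsilon'$ for any fixed small $\epsilon'$ once $X/H$ is large enough.

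First I would sum these telescoping estimates over $0 \le i \le s-1$: writing $p^{\ast}_{j-i}$ for the prime with $x_i \in \mathcal{A}^{(j-i)}$ and recalling from Proposition \ref{sets2}(ii) that consecutive configurations live at scales differing exactly by the factor $p^{\ast}_{j-i}$, we get
$$ \Bigl| \log x_0 - \log x_s - \sum_{i=0}^{s-1} \log q_i + \sum_{i=0}^{s-1}\log p_{j-i}^{\ast} \Bigr| \;\cdot\; \text{(is really)} \;\; \Bigl| \log x_0 - \log x_s + \sum_{i=0}^{s-1}\bigl(\log p_{j-i}^{\ast} - \log q_i\bigr)\Bigr| \lesssim s\, H/X. $$
Here I have used that $\log x_i - \log x_{i+1}$ contributes a genuine scale change of $\log p^{\ast}_{j-i}$ plus an $O(H/X)$ fluctuation, which is precisely the content of combining the path inequality with the membership $x_i \in \mathcal{A}^{(j-i)}$, $x_{i+1}\in\mathcal{A}^{(j-i-1)}$. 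Rearranging, $\log\bigl(\prod_{i=0}^{s-1} q_i/p_{j-i}^{\ast}\bigr) = \log(x_s/x_0) + O(s H/X)$. Since $x_0 \in \mathcal{A}^{(j)}$ and $x_s \in \mathcal{A}^{(j-s)}$, we have $x_s/x_0 \sim \prod_{t=j-s+1}^{j} (p^{\ast}_t)^{-1}$... no: more simply, both $x_0/\prod_{t\le j}p^{\ast}_t$ and $x_s/\prod_{t\le j-s}p^{\ast}_t$ lie in $[X,2X]$, so $x_s/x_0 = \Theta\bigl(\prod_{t=j-s+1}^{j}(p^{\ast}_t)^{-1}\bigr)$, and after multiplying through by that product one gets $\prod_{i=0}^{s-1} q_i/p^{\ast}_{j-i} \sim 1$ provided the accumulated error $sH/X$ is bounded, which holds once $s \le c\log(X/H)$ for a suitably small absolute $c \sim 1$ — because each $q_i/p^{\ast}_{j-i} = \Theta(1)$ individually (both primes are in $[P,2P]$), so the only thing that can go wrong is the exponential build-up of the additive log-errors, and $\exp(O(sH/X)) = \exp(o(1))$ in that range.

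The main obstacle I anticipate is purely bookkeeping: making sure the ''scale'' estimate $|x_i/q_i - x_{i+1}| \lesssim \tilde{H}_{i+1}$ is correctly paired with the fact that $x_{i+1}$ is of size $\asymp X\prod_{t\le j-i-1}p^{\ast}_t$, so that the error at step $i$ is genuinely a relative error $O(H/X)$ in $\log x_{i+1}$ rather than in $\log x_i$; and then confirming that summing $s$ such relative errors and exponentiating stays $O(1)$ exactly when $s \lesssim \log(X/H)$. This is the same mechanism as in Lemma \ref{pd21}, so I would either cite \cite[Corollary 4.2]{W2} directly after recording the scale dictionary from Proposition \ref{sets2}(ii)–(iii), or (as the authors indicate they will do for similar statements) reproduce the short telescoping computation in Section 6. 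No genuinely new idea is needed beyond Lemma \ref{pd21}; the content is adapting it to a path whose ambient interval shrinks by a factor $p^{\ast}_{j-i} \in [P,2P]$ at each step.
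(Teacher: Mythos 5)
Your proposal is correct and follows essentially the same route as the paper: both reduce to the per-step estimate that $q_i/p_{j-i}^{\ast}$ equals the ratio of the underlying $[X,2X]$-coordinates up to a relative error $O(H/X)$, and then accumulate this over the $s \le c\log(X/H)$ steps (the paper by expanding the product $\prod_i(\tilde{x}_{i+1}/\tilde{x}_i + O(H/X))$ into $2^s$ terms, you by telescoping in logarithms, which is an equivalent and if anything slightly cleaner bookkeeping of the same mechanism).
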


\begin{proof}
Each $x_i$ may be written as $\tilde{x}_i \prod_{t=1}^{j-i} p_t^{\ast}$, for some $\tilde{x}_i$ which is the first coordinate of an element of $\mathcal{J}$. We then have that
$$ | \frac{\tilde{x}_i }{q_i}\prod_{t=1}^{j-i} p_t^{\ast} - \tilde{x}_{i+1}  \prod_{t=1}^{j-i-1} p_t^{\ast} | \lesssim {H}\prod_{t=1}^{j-i-1} p_t^{\ast},$$
for every $0 \le i < s$, which after simplifying implies that
$$ | \frac{p_{j-i}^{\ast}}{q_i} - \frac{\tilde{x}_{i+1}}{\tilde{x}_{i}} |  \lesssim \frac{{H}}{X}.$$
In particular, we see that
$$ \prod_{i=0}^{s-1} \frac{p_{j-i}^{\ast}}{q_i}  = \prod_{i=0}^{s-1} \left( \frac{\tilde{x}_{i+1}}{\tilde{x}_{i}} + O ({H}/X ) \right).$$
We expand this into $2^s$ terms, with the first one given by $\tilde{x}_{s}/\tilde{x}_0 \sim 1$. Since the remaining terms are all bounded by $O(2^s {H}/X)$, we obtain the result as long as $c \sim 1$ is sufficiently small in the statement.
\end{proof}

\begin{lema}
\label{down3}
Let the hypotheses be as in Lemma \ref{prod2}. Then, there exists $C \sim 1$ such that
\begin{enumerate}[(i)]
\item $|x_0/\prod_{i=0}^{s-1} q_i - x_s| \le C s {H} \prod_{t=1}^{j-s} p_t^{\ast}$,
\item $\| \left( \prod_{i=0}^{s-1} q_i \right) \alpha_{x_0} - \alpha_{x_s} \| \le C s  \left( {H} \prod_{t=1}^{j-s} p_t^{\ast} \right)^{-1}$.
\end{enumerate}
\end{lema}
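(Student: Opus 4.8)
The plan is a direct telescoping argument, carried out simultaneously for (i) and (ii). First I would introduce the partial products $Q_i := \prod_{l=i}^{s-1} q_l$ for $0 \le i \le s$ (so $Q_s = 1$ and $Q_0 = \prod_{l=0}^{s-1} q_l$), and observe that each tail $(x_{i+1},\alpha_{x_{i+1}}),\ldots,(x_s,\alpha_{x_s})$ is again a downward path, of length $\le s \le c\log(X/H)$. Lemma \ref{prod2} applies to it and yields $Q_{i+1} \sim \prod_{l=i+1}^{s-1} p_{j-l}^{\ast} = \prod_{t=j-s+1}^{j-i-1} p_t^{\ast}$, with an implicit constant uniform in $i$, since the one in Lemma \ref{prod2} is (its proof only uses the ratio $\tilde x_s/\tilde x_0 \sim 1$ of first coordinates of elements of $\mathcal{J}$). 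Consequently $\prod_{t=1}^{j-i-1} p_t^{\ast} / Q_{i+1} \sim \prod_{t=1}^{j-s} p_t^{\ast}$ for every $i$, which is the bookkeeping identity that makes everything go through.

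For (i) I would write the telescoping identity
\[
\frac{x_0}{\prod_{l=0}^{s-1} q_l} - x_s = \sum_{i=0}^{s-1}\left( \frac{x_i}{Q_i} - \frac{x_{i+1}}{Q_{i+1}} \right) = \sum_{i=0}^{s-1} \frac{1}{Q_{i+1}}\left( \frac{x_i}{q_i} - x_{i+1}\right),
\]
then invoke the defining bound $|x_i/q_i - x_{i+1}| \lesssim H\prod_{t=1}^{j-i-1}p_t^{\ast}$ of a downward path, so that the $i$-th summand has size $\lesssim H\prod_{t=1}^{j-i-1}p_t^{\ast}/Q_{i+1} \sim H\prod_{t=1}^{j-s}p_t^{\ast}$ by the first paragraph; summing the $s$ terms and choosing $C\sim 1$ large enough to absorb the implied constants gives (i).

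For (ii) the same manipulation, now in $\R/\Z$, gives
\[
\left(\prod_{l=0}^{s-1} q_l\right)\alpha_{x_0} - \alpha_{x_s} = \sum_{i=0}^{s-1} Q_{i+1}\bigl(q_i\alpha_{x_i} - \alpha_{x_{i+1}}\bigr),
\]
and since $Q_{i+1}$ is a positive integer, $\| Q_{i+1}(q_i\alpha_{x_i}-\alpha_{x_{i+1}})\| \le Q_{i+1}\|q_i\alpha_{x_i}-\alpha_{x_{i+1}}\| \lesssim Q_{i+1}\bigl(H\prod_{t=1}^{j-i-1}p_t^{\ast}\bigr)^{-1} \sim \bigl(H\prod_{t=1}^{j-s}p_t^{\ast}\bigr)^{-1}$ using the second downward-path bound; applying the triangle inequality for $\|\cdot\|$ across the $s$ summands and again adjusting $C$ finishes (ii).

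I do not expect a genuine obstacle here: the only point requiring care is the bookkeeping with the products of primes living at the various scales $\tilde H\prod_{t=1}^{j-i}p_t^{\ast}$, together with the uniform applicability of Lemma \ref{prod2} to every tail of the path — which is exactly the mechanism already used for the analogous statements in \cite[Section 4]{W2}.
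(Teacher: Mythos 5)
Your proof is correct and is essentially the paper's argument: the paper proves (i) and (ii) by induction on $s$, peeling off the first step and applying Lemma \ref{prod2} to the tail product $\prod_{i=1}^{s-1} q_i$, which is exactly your telescoping sum unrolled. The key input — that each tail of the path is again a downward path to which Lemma \ref{prod2} applies uniformly, so that $Q_{i+1} \sim \prod_{t=j-s+1}^{j-i-1} p_t^{\ast}$ — is the same in both versions, and your handling of (ii) via $\|Q_{i+1}\beta\| \le Q_{i+1}\|\beta\|$ matches the paper's "entirely analogous argument".
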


\begin{proof}
In both cases we proceed by induction on $s$, the result following for $s=1$ from the definition of a downward path. Let then $s > 1$. To see (i), observe that by induction we have $|x_1 / \prod_{i=1}^{s-1} q_i - x_s| \le C (s-1) {H} \prod_{t=1}^{j-s} p_t^{\ast}$. On the other hand, from the definition of a downward path and Lemma \ref{prod2}, we have that $( \prod_{i=1}^{s-1} q_i)^{-1} | x_0/q_0 - x_1| \lesssim {H} \prod_{t=1}^{j-s} p_t^{\ast}$. Claim (i) then follows using the triangle inequality provided $C$ was chosen sufficiently large. Finally, claim (ii) follows from an entirely analogous argument using that ${H}$ is sufficiently large.
\end{proof}

We also have the following analogue of \cite[Lemma 5.1]{W2}.

\begin{lema}
\label{expansion3}
Suppose we are given two downward paths as in Lemma \ref{prod2}, $(x_0,\alpha_{x_0})$, $(x_1, \alpha_{x_1}) , \ldots , (x_s,\alpha_{x_s})$ and $(x_0',\alpha_{x_0'})$, $(x_1', \alpha_{x_1'}) , \ldots , (x_s',\alpha_{x_s'})$, consisting of primes $q_0, \ldots, q_{s-1}$ and $q_0',\ldots,q_{s-1}'$, respectively. Assume both paths share the same initial and end points, that is, $(x_0,\alpha_{x_0}) = (x_0',\alpha_{x_0'})$ and $(x_s,\alpha_{x_s}) = (x_s',\alpha_{x_s}')$. Then, either $\prod_{i=0}^{s-1} q_i = \prod_{i=0}^{s-1} q_i'$ or $ \prod_{t=j-s+1}^{j} p_t^{\ast}  \gtrsim \frac{X}{s {H}}$.
\end{lema}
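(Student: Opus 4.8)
The plan is to derive everything from the two instances of Lemma \ref{down3}(i) together with Lemma \ref{prod2}, the decisive point being that $\prod_{i=0}^{s-1} q_i$ and $\prod_{i=0}^{s-1} q_i'$ are positive integers, so they either coincide or differ by at least $1$. This is the direct analogue of \cite[Lemma 5.1]{W2} for downward paths.

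First I would apply Lemma \ref{down3}(i) to each of the two downward paths. Since both paths have initial point $(x_0,\alpha_{x_0})$ and end point $(x_s,\alpha_{x_s})$, this yields
$$\left| \frac{x_0}{\prod_{i=0}^{s-1} q_i} - x_s \right| \lesssim s {H} \prod_{t=1}^{j-s} p_t^{\ast}, \qquad \left| \frac{x_0}{\prod_{i=0}^{s-1} q_i'} - x_s \right| \lesssim s {H} \prod_{t=1}^{j-s} p_t^{\ast},$$
whence by the triangle inequality
$$x_0 \, \left| \prod_{i=0}^{s-1} q_i' - \prod_{i=0}^{s-1} q_i \right| \lesssim s {H} \left( \prod_{t=1}^{j-s} p_t^{\ast} \right) \left( \prod_{i=0}^{s-1} q_i \right) \left( \prod_{i=0}^{s-1} q_i' \right).$$

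Next I would insert the size information. Since $(x_0,\alpha_{x_0}) \in \mathcal{A}^{(j)}$, the first coordinate satisfies $x_0 \sim X \prod_{t=1}^{j} p_t^{\ast}$, while Lemma \ref{prod2} gives $\prod_{i=0}^{s-1} q_i \sim \prod_{i=0}^{s-1} q_i' \sim \prod_{i=0}^{s-1} p_{j-i}^{\ast} = \prod_{t=j-s+1}^{j} p_t^{\ast}$. Substituting these bounds into the previous display and using the factorisation $\prod_{t=1}^{j} p_t^{\ast} = \left( \prod_{t=1}^{j-s} p_t^{\ast} \right)\left( \prod_{t=j-s+1}^{j} p_t^{\ast} \right)$, the powers of $\prod_{t=1}^{j} p_t^{\ast}$ cancel (one copy of $\prod_{t=j-s+1}^{j}p_t^{\ast}$ from $\prod q_i$ combining with $\prod_{t=1}^{j-s}p_t^{\ast}$ from the error term to match the size of $x_0$), and one is left with
$$\left| \prod_{i=0}^{s-1} q_i' - \prod_{i=0}^{s-1} q_i \right| \lesssim \frac{s {H}}{X}\prod_{t=j-s+1}^{j} p_t^{\ast}.$$
Finally, if $\prod_{i=0}^{s-1} q_i \neq \prod_{i=0}^{s-1} q_i'$ the left-hand side is a positive integer, hence at least $1$, which forces $\prod_{t=j-s+1}^{j} p_t^{\ast} \gtrsim X/(s{H})$, as claimed.

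I do not anticipate a genuine obstacle: the only thing requiring care is the bookkeeping of the exponents of the $p_t^{\ast}$ in the cancellation described above, and it is worth recording explicitly that every estimate used is one-directional (upper bounds on $x_0 \,|\!\prod q_i' - \prod q_i|$, upper bounds on $\prod q_i$, lower bound on $x_0$), so the implicit constants cause no difficulty. One should also note at the outset that the hypothesis "downward paths as in Lemma \ref{prod2}" builds in the restriction $s \le c\log(X/{H})$, which is exactly what licenses the use of Lemma \ref{prod2} and Lemma \ref{down3}.
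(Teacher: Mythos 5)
Your proposal is correct and follows essentially the same route as the paper: both apply Lemma \ref{down3}(i) to the two paths, use the triangle inequality, invoke Lemma \ref{prod2} to compare $\prod q_i$, $\prod q_i'$ and $x_0$ with the relevant products of the $p_t^{\ast}$, and conclude by integrality of the difference of the two prime products. The only difference is cosmetic (you clear denominators at the outset, while the paper works with the reciprocals and then multiplies by $(\prod_{t=j-s+1}^{j} p_t^{\ast})^2$).
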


\begin{proof}
It follows from Lemma \ref{down3} (i) that
$$ \left| (\prod_{i=0}^{s-1} q_i)^{-1} - (\prod_{i=0}^{s-1} q_i')^{-1} \right| \lesssim s \frac{{H} \prod_{t=1}^{j-s} p_t^{\ast}}{X \prod_{t=1}^{j} p_t^{\ast}}.$$
The result then follows simplifying the right-hand side, multiplying both sides by $\left(  \prod_{t=j-s+1}^{j} p_t^{\ast} \right)^2$ and applying Lemma \ref{prod2}.
\end{proof}

We finish this section showing how the mixing lemma implies certain expansion properties of downward paths.

\begin{lema}
\label{grow26}
Let the notation be as in Proposition \ref{sets2} and let $Q \in \N$ with $Q \le X$. Let $S$ be a subset of $\mathcal{A}^{(j)}$ for some $1 \le j \le \log(X/H)$ and write $S' \subseteq \mathcal{A}^{(j-1)}$ for those elements that are connected to an element of $S$ by a downward path coprime with $Q$. Let $N_j = 1$ if $i_1 < j \le i_2$ and $N_j = P^{-\tilde{c}}$ otherwise, with $\tilde{c}$ as in Proposition \ref{sets2} (iii). Then $|S'| \ge \min \left\{c N_j^2 X/{H}, |S| P^{c^{\ast}} \right\}$ for some $c, c^{\ast} \sim 1$, with $c^{\ast}$ an absolute constant.
\end{lema}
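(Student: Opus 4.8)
The plan is to run the standard Cauchy--Schwarz popularity argument against the mixing lemma, in the spirit of \cite[Lemma 5.1]{W2}; here the relevant ``downward path'' has length one, so $S'$ is just the set of elements of $\mathcal{A}^{(j-1)}$ joined to some element of $S$ by a single downward step through a prime $q\in\mathcal{P}$ with $q\nmid Q$. Throughout I would abbreviate $\tilde{H}_i = H\prod_{t=1}^i p_t^{\ast}$ and $Y_i = X\prod_{t=1}^i p_t^{\ast}$, so that $\mathcal{A}^{(i)}$ is a $\tilde{H}_i$-separated subset of $[Y_i,2Y_i]$ with $Y_i/\tilde{H}_i=X/H$, and note $\tilde{H}_{j-1}=\tilde{H}_j/p_j^{\ast}\le \tilde{H}_j/P$ since $p_j^{\ast}\ge P$.

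First I would record a lower bound for the incidence count. By Proposition \ref{sets2} (iii), or (iv) when $i_1<j\le i_2$, every $(y,\alpha_y)\in S$ has $\gtrsim P^{1-\tilde c}$, respectively $\gtrsim |\mathcal{P}|$, pairs $((x,\alpha_x),q)\in\mathcal{A}^{(j-1)}\times\mathcal{P}$ realising a downward step; in both cases this is $\gtrsim |\mathcal{P}|N_j$. Since at most $\log Q/\log P\le \log X/\log P$ primes of $\mathcal{P}$ divide $Q$, and this is $o(P^{1-\tilde c})$ once $K$ is large (as $P$ is a large power of $\log X$), removing the pairs with $q\mid Q$ still leaves $\gtrsim |\mathcal{P}|N_j$ pairs per $y$, each of which places $(x,\alpha_x)$ in $S'$. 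Summing over $S$, the number $E$ of triples $((y,\alpha_y),(x,\alpha_x),q)$ with $(y,\alpha_y)\in S$, $q\nmid Q$ and the downward step holding satisfies $E\gtrsim |S|\,|\mathcal{P}|\,N_j$, and $E=\sum_{(x,\alpha_x)\in S'}d(x)$ where $d(x)$ counts the corresponding pairs $((y,\alpha_y),q)$.

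Next I would apply Cauchy--Schwarz, $E^2\le |S'|\sum_{(x,\alpha_x)\in S'}d(x)^2$, and bound $\sum_x d(x)^2$ by the number of quintuples $((x,\alpha_x),(y,\alpha_y),q,(y',\alpha_{y'}),q')$ with $(y,\alpha_y),(y',\alpha_{y'})\in S$, $q,q'\in\mathcal{P}$ coprime to $Q$, and both downward steps landing on the same $x$. For fixed $(y,\alpha_y),q,(y',\alpha_{y'}),q'$ there are $O(1)$ admissible $x$, by $\tilde{H}_{j-1}$-separation of $\mathcal{A}^{(j-1)}$ together with $|x-y/q|\lesssim\tilde{H}_{j-1}$; moreover any such quintuple forces $|y/q-y'/q'|\lesssim\tilde{H}_{j-1}\le\tilde{H}_j/P$. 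Hence $\sum_x d(x)^2$ is $\lesssim$ the number of quadruples $((y,\alpha_y),(y',\alpha_{y'}),q,q')\in S^2\times\mathcal{P}^2$ with $|y/q-y'/q'|\lesssim\tilde{H}_j/P$, which by Corollary \ref{ML0} (applied to $\mathcal{A}_1=\mathcal{A}_2=S\subseteq[Y_j,2Y_j]$ at scale $\tilde{H}_j$, the hypotheses on $P,\tilde{H}_j,Y_j$ being met once $K$ is large, and restriction to $q,q'\nmid Q$ only lowering the count) is
\begin{equation*}
\lesssim \frac{\tilde{H}_j}{Y_j}|S|^2|\mathcal{P}|^2 + |S|P^{2-c_0} = \frac{H}{X}|S|^2|\mathcal{P}|^2 + |S|P^{2-c_0}.
\end{equation*}

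Finally I would combine the two bounds: $|S'|\gtrsim E^2\big/\big(\frac{H}{X}|S|^2|\mathcal{P}|^2+|S|P^{2-c_0}\big)\gtrsim |S|^2|\mathcal{P}|^2N_j^2\big/\big(\frac{H}{X}|S|^2|\mathcal{P}|^2+|S|P^{2-c_0}\big)$, and use $\tfrac{a}{b+c}\ge\tfrac12\min\{a/b,a/c\}$ to get $|S'|\gtrsim\min\{N_j^2 X/H,\ |S|N_j^2 P^{c_0-2}|\mathcal{P}|^2\}$. Since $|\mathcal{P}|^2\gtrsim P^2/(\log P)^2$ and $N_j^2\ge P^{-2\tilde c}$ with $\tilde c$ as small as desired (for $K$ large), the second term is $\gtrsim |S|P^{c_0-2\tilde c}/(\log P)^2\ge |S|P^{c_0/2}$, which gives the claim with $c^{\ast}=c_0/2$, an absolute constant since $c_0$ is. I expect no deep obstacle: this is the standard one-step expansion estimate, and the only real care lies in the bookkeeping --- checking that the $q\mid Q$ primes are negligible, that the error term $|S|P^{2-c_0}$ of Corollary \ref{ML0} is dominated in the regime of interest, and that the size conditions $P<\tilde{H}_j<Y_j/\log^2 Y_j$ and $\log Y_j<P^{\varepsilon}$ needed to invoke it all follow from taking $K$ sufficiently large.
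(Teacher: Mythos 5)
Your proposal is correct and follows essentially the same route as the paper: the per-element lower bound from Proposition \ref{sets2} (iii)/(iv), discarding the $O(\log X/\log P)$ primes dividing $Q$, Cauchy--Schwarz on the incidence count, and Corollary \ref{ML0} applied to $S$ to bound the resulting quadruple count, with the final case analysis absorbing $N_j^2$ and $|\mathcal{P}|^2$ into $P^{c^{\ast}}$. The only cosmetic difference is that the paper counts triples $((x,\alpha_x),q_1,q_2)$ where you count quintuples and then reduce by separation, which differ only by $O(1)$ factors.
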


\begin{proof}
By items (iii) and (iv) of Proposition \ref{sets2}, we know that for each $(y,\alpha_y) \in S$ there exist $\gtrsim N_j|\mathcal{P}|$ pairs $((x,\alpha_x),q) \in S' \times \mathcal{P}$ with $(y,\alpha_y)$ connected to $(x,\alpha_x)$ through the prime $q$. Because of our size hypothesis on $Q$, as long as $K$ is sufficiently large we may assume all these primes $q$ are coprime with $Q$, after adjusting the implicit constant if necessary. By Cauchy-Schwarz, the number of triples $((x,\alpha_x),q_1,q_2) \in S' \times \mathcal{P}^2$ for which there exist $(y_1,\alpha_{y_1}),(y_2,\alpha_{y_2}) \in S$ connected with $(x,\alpha_x)$ through $q_1$ and $q_2$, respectively, is then at least $ \gtrsim \frac{\left( |S| |\mathcal{P}| N_j \right)^2}{|S'|}$. Since each such triple leads to a relation $(y_1,\alpha_{y_1}) \sim_{q_1,q_2} (y_2,\alpha_{y_2})$, it thus follows from Corollary \ref{ML0} that
$$ |S|^2 |\mathcal{P}|^2 N_j^2 \lesssim |S'| \left( \frac{{H}}{X} |S|^2 |\mathcal{P}|^2 + |S| P^{2-c_0} \right).$$
Since $c_0 > 0$ is an absolute constant, the result follows choosing $K$ sufficiently large as to make $\tilde{c}$ sufficiently small.
\end{proof}

\section{Building a global frequency}

We will now show how the estimates we have developed so far can be used to prove Theorem \ref{2}, albeit with a slightly worse bound of the form $|T| \lesssim (X^2/H^2) \log X$. In the next section, we will show how this additional $\log X$ factor can be removed to conclude the proof.

With this in mind, we let the hypotheses and notation be as in Proposition \ref{sets2}, for some choices of $i_1 < i_2 < i_3 < k$ that we are about to specify. We let $k-i_3= (1-\rho) \lceil \frac{\log X/{H}}{\log P} \rceil$, $i_3 - i_2 = \lceil \frac{\sigma}{c^{\ast}} \frac{\log X/{H}}{\log P} \rceil$, $|i_2 - i_1|$ equal to a sufficiently large constant and $i_1 = (1+\gamma) \lceil \frac{\log X/{H}}{\log P} \rceil$, where $0 < \rho < \sigma < \gamma < 1/100$, $c^{\ast}$ is as in Lemma \ref{grow26}, $\sigma$ is sufficiently small with respect to $\gamma$ and $\rho$ sufficiently small with respect to $\sigma$. We fix an element $(x_0,\alpha_{x_0}) \in \mathcal{A}^{(k)}$ and given $Q \in \N$, we write $\mathcal{A}_{x_0,Q}^{(j)}$ for those elements of $\mathcal{A}^{(j)}$ that are the endpoint of a downward path coprime with $Q$ of length $k-j$ with initial point $(x_0,\alpha_{x_0})$.  We begin with the following estimate.

\begin{lema}
\label{mon6}
Let $\tilde{c}$ be as in Lemma \ref{grow26}. Assume $K$ is sufficiently large with respect to $\sigma, \rho$ and let $Q \in \N$ with $Q \le X$. Then, $|\mathcal{A}_{x_0,Q}^{(i_3)}| \gtrsim \left( X/{H} \right)^{1-\sigma}$, $|\mathcal{A}_{x_0,Q}^{(i_2)}| \gtrsim P^{-2 \tilde{c}} X/{H}$ and $|\mathcal{A}_{x_0,Q}^{(j)}| \gtrsim X/{H}$ for every $j < i_2$.
\end{lema}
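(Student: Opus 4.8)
The plan is to obtain all three bounds by iterating the downward--expansion estimate of Lemma~\ref{grow26} down the tower $\mathcal{A}^{(k)},\mathcal{A}^{(k-1)},\dots$. The first thing to record is the elementary remark that, because downward paths concatenate, $\mathcal{A}_{x_0,Q}^{(j)}$ is exactly the set produced from $\{(x_0,\alpha_{x_0})\}$ by $k-j$ applications of the one--level operation $S\mapsto S'$ appearing in Lemma~\ref{grow26}, where $S'\subseteq\mathcal{A}^{(j-1)}$ consists of the endpoints of $Q$--coprime downward steps issuing from $S\subseteq\mathcal{A}^{(j)}$. Writing $S_j:=\mathcal{A}_{x_0,Q}^{(j)}$, that lemma gives $|S_{j-1}|\ge\min\{c N_j^2\,X/H,\ |S_j|\,P^{c^{\ast}}\}$, with $N_j=1$ for $i_1<j\le i_2$ and $N_j=P^{-\tilde{c}}$ otherwise. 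The decisive feature, which makes the roughly $\frac{\log(X/H)}{\log P}$--fold iteration affordable, is that in the growth regime $|S_j|P^{c^{\ast}}\le cN_j^2 X/H$ this estimate is a clean multiplication by $P^{c^{\ast}}$, losing no constant.

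First I would iterate in the range $j>i_2$, where $N_j\equiv P^{-\tilde{c}}$, so the cap is $cP^{-2\tilde{c}}X/H$; an induction from $|S_k|=1$ gives $|S_{k-m}|\ge\min\{cP^{-2\tilde{c}}X/H,\ P^{mc^{\ast}}\}$. Taking $m=k-i_3$ and using that $c^{\ast}(1-\rho)\ge 1-\sigma$ --- which holds because $c^{\ast}$ is close to $1$ by GRH (through Corollary~\ref{ML0} and Lemma~\ref{GRHbound}) and $\rho$ is small relative to $\sigma$ --- one gets $P^{(k-i_3)c^{\ast}}\ge(X/H)^{1-\sigma}$; together with $cP^{-2\tilde{c}}X/H\ge(X/H)^{1-\sigma}$, valid since $\tilde{c}$ can be made small relative to $\sigma$ (and $\varepsilon$) so that $P^{2\tilde{c}}\le(X/H)^{\sigma}$, this gives the first claim. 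Continuing $i_3-i_2=\lceil\frac{\sigma}{c^{\ast}}\frac{\log(X/H)}{\log P}\rceil$ more steps multiplies the growth term by $P^{c^{\ast}(i_3-i_2)}$, so it now exceeds $P^{(c^{\ast}(1-\rho)+\sigma)\lceil\log(X/H)/\log P\rceil}\ge X/H$; hence the cap is attained and $|S_{i_2}|\ge cP^{-2\tilde{c}}X/H$, which is the second claim.

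For the third claim I would cross below level $i_2$. For $i_1<j\le i_2$ one has $N_j=1$, so the cap jumps to $cX/H$, and the step from level $i_2$ to $i_2-1$ upgrades $|S_{i_2}|\gtrsim P^{-2\tilde{c}}X/H$ to $\min\{cX/H,\ cP^{c^{\ast}-2\tilde{c}}X/H\}=cX/H$ (using $c^{\ast}>2\tilde{c}$), after which the bound stays $\gtrsim X/H$ for all $i_1\le j<i_2$; the remaining range $j<i_1$ is handled in the same way, composing with the explicit descent paths of Proposition~\ref{sets2}(ii), which on their own already reach $\gtrsim X/H$ elements of every lower $\mathcal{A}^{(j)}$. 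The side hypotheses of Lemma~\ref{grow26} are available ($Q\le X$, $K$ large), and that lemma already absorbs the coprimality with $Q$ into its implied constants, so $N_j$ is unaffected by it. I expect the only genuinely delicate point to be the numerology rather than any single estimate: one must track precisely which $N_j$ governs each of the $\sim\frac{\log(X/H)}{\log P}$ steps and verify the chain $c^{\ast}(1-\rho)\ge1-\sigma$ and $P^{2\tilde{c}}\le(X/H)^{\sigma}$, so that the exponential growth phase reaches the cap $cP^{-2\tilde{c}}X/H$ precisely by level $i_2$ --- the inductive inequality itself is applied verbatim and loses nothing in the growth regime.
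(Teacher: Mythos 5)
Your treatment of the second and third bounds follows the paper: from level $i_3$ down to $i_2$ one iterates Lemma \ref{grow26} with $N_j = P^{-\tilde{c}}$ until the cap $c P^{-2\tilde{c}} X/H$ is reached, the single step across $(i_1,i_2]$ with $N_j=1$ upgrades this to $\gtrsim X/H$ using $c^{\ast} > 2\tilde{c}$, and the range $j< i_1$ is inherited via the projections of Proposition \ref{sets2}(ii). That part is fine.

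The gap is in the first bound. You propose to reach $|\mathcal{A}_{x_0,Q}^{(i_3)}| \gtrsim (X/H)^{1-\sigma}$ by iterating Lemma \ref{grow26} starting from the singleton $\{(x_0,\alpha_{x_0})\}$, which after $m=k-i_3$ steps gives only $P^{m c^{\ast}} \approx (X/H)^{c^{\ast}(1-\rho)}$ in the growth regime. You then assert $c^{\ast}(1-\rho)\ge 1-\sigma$ "because $c^{\ast}$ is close to $1$ by GRH". This is not available: $c^{\ast}\sim 1$ in Lemma \ref{grow26} only means $c^{\ast}$ is a (possibly small) absolute constant, inherited from the exponent $c_0$ in the error term $P^{2-c_0}$ of Corollary \ref{ML0}, and nothing in the paper forces it near $1$. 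Indeed the whole architecture of Section 7 presupposes that it is not: the bridge range $(i_2,i_3]$ has length $\lceil \frac{\sigma}{c^{\ast}}\frac{\log X/H}{\log P}\rceil$ precisely because each expansion step only gains a factor $P^{c^{\ast}}$, and if $c^{\ast}$ were close to $1$ that intermediate range (and the lemma's first claim as a separate statement) would be unnecessary. With a genuinely small $c^{\ast}$ your iteration stalls at $(X/H)^{c^{\ast}(1-\rho)} \ll (X/H)^{1-\sigma}$, and the subsequent $i_3-i_2$ steps are then far too few to reach the cap, so the second and third bounds collapse as well.

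The paper obtains the first bound by an entirely different mechanism, which you would need to supply: by Proposition \ref{sets2}(iii) there are at least $(cP^{1-\tilde{c}})^{k-i_3}$ downward paths of length $k-i_3$ issuing from $(x_0,\alpha_{x_0})$ (coprimality with $Q\le X$ costs only an adjustment of $c$), while Lemma \ref{expansion3} — applicable because $k-i_3=(1-\rho)\lceil\frac{\log X/H}{\log P}\rceil$ keeps $\prod_{t=i_3+1}^{k}p_t^{\ast}$ well below $X/(sH)$ — forces any two such paths with the same endpoint to use the same multiset of primes. Hence each endpoint is reached by at most $(k-i_3)!$ paths, and Stirling gives $|\mathcal{A}_{x_0,Q}^{(i_3)}| \gtrsim (cP^{1-\tilde{c}})^{k-i_3}/(k-i_3)! \gtrsim (X/H)^{1-\sigma}$ once $K$ is large in terms of $\sigma$ and $\rho,\tilde{c}$ are small. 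This path-counting step, not downward expansion, is what produces an exponent close to $1$ at level $i_3$.
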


\begin{proof}
Since $k-i_3= (1-\rho) \lceil \frac{\log X/{H}}{\log P} \rceil$, we see from Lemma \ref{expansion3} that any pair of downward paths that start at $(x_0,\alpha_{x_0})$ and have the same endpoint in $\mathcal{A}_{x_0,Q}^{(i_3)}$ must consist of the same primes, counting repetitions. On the other hand, from Proposition \ref{sets2} (iii) we know there exist some $c, \tilde{c} \sim 1$ such that we have at least $ \left( c P^{1-\tilde{c}} \right)^{k-i_3}$ such downward paths, where furthermore $\tilde{c}$ can be made arbitrarily small provided $K$ is sufficiently large and we can take these paths to be coprime with $Q$ because of our size hypothesis on $Q$, upon adjusting $c$ if necessary. Combining these observations, the bound $|\mathcal{A}_{x_0,Q}^{(i_3)}| \gtrsim \left( X/{H} \right)^{1-\sigma}$ follows from Stirling's formula using that $K$ can be chosen sufficiently large with respect to $\sigma$. The bound $|\mathcal{A}_{x_0,Q}^{(i_2)}| \gtrsim P^{-2 \tilde{c}} X/{H}$ then follows from iterating Lemma \ref{grow26} starting with $S=\mathcal{A}_{x_0,Q}^{(i_3)}$, using that $i_3 - i_2 = \lceil \frac{\sigma}{c^{\ast}} \frac{\log X/{H}}{\log P} \rceil$. Similarly, applying Lemma \ref{grow26} with $S=\mathcal{A}_{x_0,Q}^{(i_2)}$ and using that $K$ may be chosen sufficiently large as to make $\tilde{c}$ much smaller than the constant $c^{\ast}$ in that lemma, we obtain that $|\mathcal{A}_{x_0,Q}^{(i_2-1)}| \gtrsim X/{H}$. Since we know from Proposition \ref{sets2} (ii) that $|\mathcal{A}^{(j_1)}| \ge |\mathcal{A}^{(j_2)}|$ whenever $0 \le j_1 \le j_2 \le k$, this concludes the proof.
\end{proof}

\begin{lema}
\label{d6}
Let $\tau > 0$ and assume $\sigma$ is sufficiently small with respect to $\tau$. Then, there exist coprime integers $a,b \lesssim X^{\tau}$ such that $\alpha_{x_0} = \frac{a}{b} + O\left( \frac{kX}{{H}^2 \prod_{t=1}^k p_t^{\ast}} \right)$.
\end{lema}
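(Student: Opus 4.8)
The plan is to exploit that the fixed element $(x_0,\alpha_{x_0})\in\mathcal{A}^{(k)}$ is joined, through products of primes in $\mathcal{P}$, to essentially all of a much lower-level configuration, and to read off a Diophantine relation for $\alpha_{x_0}$ from two downward paths that end at the \emph{same} point but carry prime products of \emph{different} value.

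First I would descend from level $k$ to level $i_2$, put $s=k-i_2$ and $\Pi_0=\prod_{t=i_2+1}^{k}p_t^{\ast}$, and apply Proposition~\ref{sets2}(iii) repeatedly to produce at least $(cP^{1-\tilde c})^{s}$ downward paths of length $s$ issuing from $(x_0,\alpha_{x_0})$, all ending in $\mathcal{A}^{(i_2)}$, which has $\lesssim X/H$ elements. A fixed endpoint together with a fixed value of the prime product is realized by at most $C^{s}s!$ of these paths, since there are at most $s!$ orderings of the prime multiset and, by the $H\prod_{t=1}^{j}p_t^{\ast}$-separation of the intermediate configurations, only $O(1)$ choices for each intermediate vertex once the ordered prime sequence is fixed. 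Hence, as soon as $(cP^{1-\tilde c})^{s}$ exceeds $2\,\tfrac{X}{H}C^{s}s!$ --- which holds once $K$ is large, using $\log P\ge K\varepsilon^2\log\log X$ and that $\tilde c$ can be taken small --- some endpoint $(x_1,\alpha_{x_1})$ is reached by two downward paths whose prime products $\Pi_1\neq\Pi_2$.

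For such a pair, Lemma~\ref{down3}(ii) gives $\|\Pi_i\alpha_{x_0}-\alpha_{x_1}\|\lesssim s\,(H\prod_{t=1}^{i_2}p_t^{\ast})^{-1}$ for $i=1,2$, so the nonzero integer $D:=\Pi_1-\Pi_2$ satisfies $\|D\alpha_{x_0}\|\lesssim s\,(H\prod_{t=1}^{i_2}p_t^{\ast})^{-1}$; rounding $D\alpha_{x_0}$ to the nearest integer and reducing to lowest terms $a/b$ yields $b\mid|D|$ and $\alpha_{x_0}=\tfrac{a}{b}+O\bigl(s\,(|D|\,H\prod_{t=1}^{i_2}p_t^{\ast})^{-1}\bigr)$. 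By Lemma~\ref{down3}(i) both $\Pi_i$ lie within $O(\Pi_0 sH/X)$ of $x_0/x_1$, so $|D|\lesssim \Pi_0 sH/X$; since $\Pi_0\le(2P)^{\,k-i_2}$ and $k-i_2\approx(1-\rho+\sigma/c^{\ast})\tfrac{\log(X/H)}{\log P}$, this is $\lesssim(X/H)^{\sigma/c^{\ast}-\rho}$ up to a polylogarithmic factor, hence $\lesssim X^{\tau}$ once $\sigma$ is small with respect to $\tau$; thus $a,b\lesssim X^{\tau}$. Finally, using $\prod_{t=1}^{k}p_t^{\ast}=\Pi_0\prod_{t=1}^{i_2}p_t^{\ast}$, the error term becomes $O\bigl(kX(H^{2}\prod_{t=1}^{k}p_t^{\ast})^{-1}\bigr)$, the factor $k$ absorbing the loss.

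The delicate point --- and the reason for the precise choice of $i_1<i_2<i_3<k$ and of $\gamma,\sigma,\rho$ --- is that $s$ must be large enough for the counting to force a repeated product, yet $D$ must be neither too large (so that $b\mid|D|\lesssim X^{\tau}$) nor too small (so that $s\,(|D|\,H\prod_{t=1}^{i_2}p_t^{\ast})^{-1}$ beats $kX(H^2\prod_{t=1}^{k}p_t^{\ast})^{-1}$). Controlling $|D|$ from below is where the `archimedean expansion' furnished by the mixing lemma enters: the lower bounds of Lemma~\ref{mon6} (derived from Lemma~\ref{grow26}) ensure that the downward paths reach essentially all of $\mathcal{A}^{(i_2)}$, which is what lets one locate an endpoint whose realizable prime products are spread across the full admissible window of length $\sim\Pi_0 sH/X$, and hence a $D$ within a polylogarithmic factor of its maximal size. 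I expect verifying this balance --- in particular pinning down which multiple of the minimal admissible difference lands in the required window --- to be the main technical obstacle; the rest is the triangle inequality, Lemma~\ref{down3}, and counting.
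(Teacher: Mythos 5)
Your overall skeleton matches the paper's: produce two downward paths from $(x_0,\alpha_{x_0})$ whose prime products $\Pi_1\neq\Pi_2$ are forced (via Lemma \ref{down3}(ii) and the triangle inequality) to satisfy $\|(\Pi_1-\Pi_2)\alpha_{x_0}\|\lesssim s(H\prod_t p_t^{\ast})^{-1}$, take $b\mid|\Pi_1-\Pi_2|$, and bound $|\Pi_1-\Pi_2|$ above by $X^{\tau}$ using the choice of $\rho,\sigma$. The upper bound on $D=\Pi_1-\Pi_2$ and the divisibility step are fine, and your path-counting pigeonhole (distinct products outnumber endpoints times $C^ss!$) is plausible with the stated numerology.

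The genuine gap is the \emph{lower} bound on $|D|$, which you correctly identify as the crux but do not supply, and which your construction cannot supply. With only $|D|\ge 1$ your error term is $O\bigl(s(H\prod_{t=1}^{i_2}p_t^{\ast})^{-1}\bigr)$, which exceeds the required $O\bigl(kX(H^{2}\prod_{t=1}^{k}p_t^{\ast})^{-1}\bigr)$ by a factor of order $\frac{H}{X}\prod_{t=i_2+1}^{k}p_t^{\ast}\approx(X/H)^{\sigma/c^{\ast}-\rho}$, a positive power of $X/H$; so you genuinely need $|D|$ within a bounded (or at worst $\mathrm{poly}(k)$) factor of its maximal value $\sim\Pi_0 sH/X$. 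Pigeonholing two paths onto the \emph{same} endpoint gives no control over which two products collide there: they could differ in only two prime factors and have $|D|$ as small as $O(\Pi_0/P^{2})$ or smaller, and Lemma \ref{mon6} (which concerns the spread of \emph{endpoints}, not the spread of products attached to a fixed endpoint) does not repair this. The paper avoids the issue by not requiring equal endpoints: it assigns one path to each of the $\gtrsim X/H$ elements of $\mathcal{A}_{x_0,1}^{(j)}$, uses Proposition \ref{sets2}(iv) to produce $\gtrsim\frac{H}{X}|B|^2|\mathcal{P}|^2$ pairs of paths ending at \emph{related but distinct} points, and then applies Corollary \ref{ML0} to the separated set $B=\{x_0/\prod_i q_i^{x}\}$ at the finer scale $\tilde{H}=\epsilon H\prod_{t=1}^{j}p_t^{\ast}$: since not all of these pairs can have their pulled-back points within $\epsilon H\prod_{t=1}^{j-1}p_t^{\ast}$ of each other, some pair satisfies $|x_0/\prod q_i^{x}-x_0/\prod q_i^{y}|\gtrsim H\prod_{t=1}^{j-1}p_t^{\ast}$, which is exactly the lower bound $|D|\gtrsim\frac{H}{X}\prod_{t=j}^{k}p_t^{\ast}$ that makes the error term close. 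This use of the mixing lemma as an anti-concentration input is the missing idea; without it (or a substitute) your argument proves the statement only with an error term weaker by a polynomial factor, which is not enough for Lemmas \ref{b6} and \ref{almost6} downstream.
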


\begin{proof}
Let $i_1 < j < i_2$. By Lemma \ref{mon6}, we know that $|\mathcal{A}_{x_0,1}^{(j)}| \gtrsim X/{H}$. To each $(x,\alpha_x) \in \mathcal{A}_{x_0,1}^{(j)}$ we may associate a sequence of primes $q_0^x,\ldots,q_{s-1}^x$, $s=k-j$, for which there exists a downward path of length $s$ consisting of these primes and connecting $(x_0,\alpha_{x_0})$ with $(x,\alpha_x)$. By Lemma \ref{down3} (i), we can find a subset $\mathcal{B} \subseteq \mathcal{A}_{x_0,1}^{(j)}$ of size $|\mathcal{B}| \gtrsim \frac{X}{s {H}}$ such that the points $x_0/ \prod_{i=1}^{s-1} q_i^x$ corresponding to elements $(x,\alpha_x) \in \mathcal{B}$ form an ${H} \prod_{t=1}^j p_t^{\ast}$-separated set $B$ of points in $[X \prod_{t=1}^j p_t^{\ast}, 2 X \prod_{t=1}^j p_t^{\ast}]$. By Proposition \ref{sets2} (iv) and the Cauchy-Schwarz and triangle inequalities, we know there are $ \gtrsim \frac{H}{X} |B|^2 |\mathcal{P}|^2$ quadruples $( (x,\alpha_x),(y,\alpha_y) , q_s^x, q_s^y ) \in \mathcal{B}^2 \times \mathcal{P}^2$ with $| x/q_s^x - y/q_s^y| \lesssim {H} \prod_{t=1}^{j-1} p_t^{\ast}$ and $\| q_s^x \alpha_x - q_s^y \alpha_y \| \lesssim \left( {H} \prod_{t=1}^{j-1} p_t^{\ast} \right)^{-1}$. By Corollary \ref{ML0} applied with $\mathcal{A}_1=\mathcal{A}_2=B$ and $\tilde{H} = \epsilon {H} \prod_{t=1}^j p_t^{\ast}$ for some sufficiently small $\epsilon \sim 1$, we then deduce that there must exist at least one such quadruple $( (x,\alpha_x),(y,\alpha_y) , q_s^x, q_s^x ) \in \mathcal{B}^2 \times \mathcal{P}^2$ such that the corresponding primes satisfy
$$ \left| \frac{x_0}{\prod_{i=0}^s q_i^x} -   \frac{x_0}{\prod_{i=0}^s q_i^y} \right| \gtrsim {H} \prod_{t=1}^{j-1} p_t^{\ast}.$$
Rearranging and using Lemma \ref{prod2}, this tells us that
$$ \left| \prod_{i=0}^s q_i^x - \prod_{i=0}^s q_i^y \right| \gtrsim \frac{{H}}{X} \prod_{t=j}^k p_t^{\ast}.$$
By construction of these primes and estimates (i) and (ii) from Lemma \ref{down3}, we also have that
$$ \left| \prod_{i=0}^s q_i^x - \prod_{i=0}^s q_i^y \right| \lesssim s \frac{{H}}{X} \prod_{t=j}^k p_t^{\ast} \lesssim X^{\tau},$$
provided $\sigma$ is sufficiently small with respect to $\tau$, and
$$ \left\| \left(  \prod_{i=0}^s q_i^x \right) \alpha_{x_0} - \left(  \prod_{i=0}^s q_i^y \right) \alpha_{x_0} \right\| \lesssim s \left( {H} \prod_{t=1}^{j-1} p_t^{\ast} \right)^{-1}.$$
It follows that
$$ \alpha_{x_0} = \frac{a_0}{b_0} + O \left( \frac{s}{b_0  {H} \prod_{t=1}^{j-1} p_t^{\ast}} \right),$$
with $b_0 = \left| \prod_{i=0}^s q_i^x - \prod_{i=0}^s q_i^y \right|$ and $a_0 \in \Z$, which gives the result.
\end{proof}

We will now show that it must be $b=O(1)$. This lemma is the only place in the proof where use the full generality of Lemma \ref{ML} and therefore the Riemann Hypothesis for Dirichlet L-functions.

\begin{lema}
\label{b6}
Let the hypotheses be as in Lemma \ref{d6} and assume $\tau$ is sufficiently small with respect to $\gamma$. Then $b=O(1)$.
\end{lema}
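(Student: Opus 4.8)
The plan is to combine the rational approximation from Lemma \ref{d6} with the archimedean/modular expansion supplied by the general mixing lemma, Lemma \ref{ML}. The idea is that for $i_1 < j < i_2$ we have $|\mathcal{A}^{(j)}_{x_0,b}| \gtrsim X/H$ by Lemma \ref{mon6} (applied with $Q=b$, which is legitimate since $b \lesssim X^\tau \le X$), and every element of this set is connected to $(x_0,\alpha_{x_0})$ by a downward path of length $k-j$ whose primes are coprime to $b$. Since $\alpha_{x_0}$ is close to $a/b$ with $\gcd(a,b)=1$, pushing this approximation down a coprime path of primes $q_0,\dots,q_{s-1}$ forces each endpoint frequency $\alpha_x$ to be close to $\left(\prod_i q_i\right) a/b \pmod 1$, i.e. to $a_x/b$ with $a_x \equiv a\prod_i q_i \pmod b$ and $\gcd(a_x,b)=1$; the error here is $O\!\left(\frac{s}{H\prod_{t\le j}p_t^\ast}\right)$, which is much smaller than $P/(H\prod_{t\le j}p_t^\ast)$ and hence negligible at this scale.

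Next I would exploit the relations inside $\mathcal{A}^{(j)}$. By Proposition \ref{sets2} (iv) (valid since $i_1 < j \le i_2$) every $(y,\alpha_y)\in\mathcal{A}^{(j)}$ has $\gtrsim |\mathcal{P}|$ connections to $\mathcal{A}^{(j-1)}$, and by Cauchy–Schwarz and the triangle inequality this yields $\gtrsim \frac{H}{X}|\mathcal{A}^{(j)}_{x_0,b}|^2|\mathcal{P}|^2 \gtrsim \frac{X}{H}|\mathcal{P}|^2$ quadruples $((x,\alpha_x),(y,\alpha_y),p,q)\in (\mathcal{A}^{(j)}_{x_0,b})^2\times\mathcal{P}^2$ with $x\sim_{p,q}y$ at the scale $\tilde H = H\prod_{t\le j}p_t^\ast$. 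For each such relation, $\|p\alpha_x - q\alpha_y\|$ is small, so substituting $\alpha_x = a_x/b + O(\cdots)$, $\alpha_y = a_y/b + O(\cdots)$ gives $\|p a_x/b - q a_y/b\| \lesssim P/\tilde H$. Provided $\tilde H$ is large compared to $bP$ — which holds since $b\lesssim X^\tau$, $\tilde H \ge H\prod_{t\le j}p_t^\ast$ with $j > i_1 = (1+\gamma)\lceil\frac{\log X/H}{\log P}\rceil$, so $\tilde H \gtrsim X^{1+\gamma/2}$ say, and $\tau$ is small with respect to $\gamma$ — this forces the \emph{exact} congruence $p a_x \equiv q a_y \pmod b$. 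Thus we obtain $\gtrsim \frac{X}{H}|\mathcal{P}|^2$ relations in which, tracing back through $a_x \equiv a\prod q_i^x$, $a_y \equiv a\prod q_i^y \pmod b$, we have products of primes from $\mathcal{P}$ that are congruent $\pmod b$ after multiplying by $p$ and $q$ respectively.

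I would then feed this into Lemma \ref{ML}. Lifting the $\sim_{p,q}$ relations back to $\mathcal{J}$ via the downward paths (Lemma \ref{down3}), each relation produces a path in a $\tilde H$-separated subset of $[X,2X]$ — or more precisely in $[Y/10,10Y]$ at the appropriate scale — of bounded length $s+1 = k-j+1 \le c\log(X/H)$, whose primes are coprime to $b$ and satisfy a congruence $\prod p_i \equiv a' \prod q_i \pmod b$ for a fixed unit $a'$ (namely $a'$ determined by $p a_x \equiv q a_y$, i.e. $a' \equiv pq^{-1}\cdot(\prod q_i^x)(\prod q_i^y)^{-1}$; one should average/pigeonhole over the possible residues so that a single $a'$ carries $\gtrsim \frac{1}{b}\cdot\frac{X}{H}|\mathcal{P}|^{2(s+1)}/(\text{path count normalization})$ of them, and here one uses $b=O(X^\tau)$ with $\tau$ small to keep the loss manageable). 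Applying Lemma \ref{ML} with $Q=b$ and $\mathcal{A}_1=\mathcal{A}_2$ the relevant separated set, the main term is $\lesssim \frac{C^s}{\varphi(b)}\frac{\tilde H}{Y}|\mathcal{P}|^{2s}|\mathcal{A}_1|^2$ and the error is $\lesssim C^s|\mathcal{A}_1||\mathcal{P}|^{2s}\varphi(b)P^{-cs}$. Comparing with the lower bound $\gtrsim \frac{1}{b}\frac{X}{H}|\mathcal{P}|^{2(s+1)}$ forces $\varphi(b)\lesssim 1$, hence $b=O(1)$, provided the error term is dominated — which holds since $\varphi(b)P^{-cs} \le X^\tau P^{-cs}$ is tiny when $s \sim (1-\gamma)\frac{\log X/H}{\log P}$ makes $P^{cs}\gtrsim (X/H)^{c(1-\gamma)}$, larger than any fixed power $X^\tau$ once $\tau$ is small with respect to $\gamma$.

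The main obstacle I anticipate is the bookkeeping in the second and third steps: one must simultaneously (a) promote the approximate relation $\|pa_x - q a_y\| \lesssim bP/\tilde H$ to the exact congruence mod $b$, which needs a clean comparison of $bP$ against the scale $\tilde H$ and is where the choice $i_1 = (1+\gamma)\lceil\cdots\rceil$ and "$\tau$ small with respect to $\gamma$" earns its keep; and (b) organize the $\gtrsim \frac{X}{H}|\mathcal{P}|^2$ relations so that, after lifting to $\mathcal{J}$ and pigeonholing over the $O(b)$ possible values of the unit $a'$ and over the $O(1)^s$ choices of path realizing a given prime tuple, one still has enough surviving paths to contradict Lemma \ref{ML} unless $\varphi(b)=O(1)$. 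Both are essentially the same type of estimate already carried out in Lemma \ref{d6} and in the proof of Lemma \ref{grow26}, so I expect no genuinely new difficulty, only care with the chain of inequalities among $\tau,\sigma,\rho,\gamma$ and $K$.
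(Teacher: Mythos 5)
Your first two steps match the paper: you get $\|\alpha_x-\tfrac{a_x}{b}\|<X^{-2\tau}$ with $\gcd(a_x,b)=1$ for all of $\mathcal{A}^{(j)}_{x_0,b}$, $i_1<j<i_2$, and you promote the $\gtrsim\frac{X}{H}|\mathcal{P}|^2$ relations to exact congruences $pa_x\equiv qa_y\ (\text{mod }b)$; that part is fine. The gap is in the endgame. First, the quantitative conclusion you draw from Lemma \ref{ML} is vacuous: after pigeonholing over the unit $a'$ your lower bound is $\gtrsim\frac{1}{b}\frac{X}{H}|\mathcal{P}|^{2(s+1)}$, while the main term of Lemma \ref{ML} with $\mathcal{A}_1,\mathcal{A}_2$ of size $\lesssim X/H$ is $\lesssim\frac{C^{s+1}}{\varphi(b)}\frac{X}{H}|\mathcal{P}|^{2(s+1)}$, so the comparison yields only $\varphi(b)\lesssim C^{s+1}b$, which holds for every $b$. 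You flag this bookkeeping as the main obstacle but do not resolve it, and it is not merely bookkeeping. The paper's resolution is to partition $\mathcal{B}=\mathcal{A}^{(j)}_{x_0,b}$ into residue classes $\mathcal{B}[u]=\{(x,\alpha_x):a_x\equiv u\}$, keep the lower bound in the form $\gtrsim\max\{|\mathcal{B}[u]|,\frac{X}{bH}\}c^s|\mathcal{P}|^{2s}$, and split into two cases according to whether the endpoints concentrate on classes $\mathcal{B}[v]$ with $|\mathcal{B}[v]|>b^{-\epsilon}\frac{X}{H}$ (there are at most $b^{\epsilon}$ of these, so the pigeonholing loss is only $b^{\epsilon}$) or on a single class with $|\mathcal{B}[v]|\le b^{-\epsilon}\frac{X}{H}$ (then the main term of Lemma \ref{ML}, being proportional to $|\mathcal{B}[v]|$, gains the compensating factor $b^{-\epsilon}$). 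Without some such exploitation of the sizes of the classes $\mathcal{B}[v]$, the factor of $b$ you lose is never recovered.

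Second, the objects you feed into Lemma \ref{ML} are not paths in the sense of Definition \ref{path}. A downward path has steps $|x_i/q_i-x_{i+1}|\lesssim H\prod_t p_t^{\ast}$ with the elements living at geometrically shrinking scales, whereas Lemma \ref{ML} counts sequences inside a single set of $\tilde{H}$-separated points in $[Y/10,10Y]$ with steps $|x_i/p_i-x_{i+1}/q_i|\lesssim\tilde{H}/P$; concatenating two downward paths through $x_0$ does not produce such an object, so the lemma does not apply as stated. The paper instead never leaves the scale $j$: it takes the length-one relations inside $\mathcal{A}^{(j)}_{x_0,b}$ (each already carrying the congruence $pa_x\equiv qa_y$), and uses the Blakley--Roy inequality to amplify them into $\gtrsim\frac{X}{H}c^s|\mathcal{P}|^{2s}$ genuine paths of length $s=M\lceil\log b/\log P\rceil$, the congruence along a path being obtained by composing the step congruences. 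This choice of $s$, adapted to $b$ rather than fixed at $k-j$, is exactly what makes the error term $\varphi(b)P^{-c_0s}\le\varphi(b)b^{-c_0M}$ negligible while keeping $(C/c)^s=b^{o(1)}$, so that the main-term comparison in either of the two cases above forces $\varphi(b)\lesssim b^{1-\epsilon+o(1)}$ and hence a contradiction for $b$ large. Both the amplification step and the two-case class analysis are missing from your argument and are needed for the conclusion.
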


\begin{proof}
Let $i_1 < j < i_2$. By Lemma \ref{prod2}, Lemma \ref{down3} (ii) and Lemma \ref{d6}, we see that for every $(x,\alpha_x) \in \mathcal{A}_{x_0,b}^{(j)}$ there exists some integer $a_x$ coprime with $b$ such that
\begin{equation}
\label{Op6}
 \| \alpha_x - \frac{a_x}{b} \| \lesssim \frac{kX}{{H}^2 \prod_{t=1}^j p_t^{\ast}} < X^{-2\tau} ,
 \end{equation}
say, provided $\tau$ is sufficiently small with respect to $\gamma$. Furthermore, since $|\mathcal{A}_{x_0, b}^{(j)}| \gtrsim X/{H}$, we see from Proposition \ref{sets2} (iv) and the Cauchy-Schwarz and triangle inequalities that there are $\gtrsim \frac{X}{{H}} |\mathcal{P}|^2$ quadruples $( (x,\alpha_x), (y,\alpha_y), p, q)$ in $(\mathcal{A}_{x_0, b}^{(j)})^2 \times \mathcal{P}^2$, with $\| p \alpha_x - q \alpha_y \| \lesssim ( {H} \prod_{t=1}^{j-1} p_t^{\ast} )^{-1}$ and $|x/p - y/q| \lesssim {H} \prod_{t=1}^{j-1} p_t^{\ast}$, where adjusting the implicit constant if necessary we may assume they all involve primes coprime with $b$. Using (\ref{Op6}), we see that each such quadruple must satisfy $p a_x \equiv q a_y \, (\text{mod }b)$.

Let $s = M \lceil \frac{\log b}{\log P} \rceil$ for some sufficiently large $M \in \N$. By an application of the Blakley-Roy inequality \cite{BR}, we deduce from our last observation that the number of $(2s+2)$-tuples of elements $((x,\alpha_x), (y, \alpha_y),p_1,\ldots,p_s,q_1,\ldots,q_s) \in (\mathcal{A}_{x_0,b}^{(j)})^2 \times \mathcal{P}^{2s}$, consisting of primes coprime with $b$, with $a_x a_y^{-1} \equiv \prod_{i=1}^s q_i p_i^{-1} \, (\text{mod }b)$ and with $(x,\alpha_x)$ connected to $(y,\alpha_y)$ through a path of length $s$ (Definition \ref{path}) involving these primes, is at least $\ge \frac{X}{{H}} c^s |\mathcal{P}|^{2s}$ for some $c \sim 1$, where the inverses are taken mod $b$. We will now proceed to assume $b$ is sufficiently large and use Lemma \ref{ML} to derive a contradiction. 

Let us abbreviate $\mathcal{B} := \mathcal{A}_{x_0,b}^{(j)}$ and given an element $u \in (\Z / b \Z)^*$, let us write $\mathcal{B}[u]$ for those elements $(x,\alpha_x) \in \mathcal{B}$ with $a_x \equiv u \, (\text{mod }b)$. It is clear that we can find some $u \in (\Z / b \Z)^*$ such that at least $\gtrsim \max \left\{ | \mathcal{B}[u] |, \frac{X}{b{H}} \right\} c^s |\mathcal{P}|^{2s}$ of the $(2s+2)$-tuples have their initial point $(x,\alpha_x)$ in $\mathcal{B}[u]$. Let us fix such a choice of $u$.

Let now $\epsilon \sim 1$ be a sufficiently small absolute constant and assume at least half of these paths with initial point in $\mathcal{B}[u]$ have their end points in $\bigcup_v^{'} \mathcal{B}[v]$, where the union is restricted to those $v \in (\Z / b \Z)^*$ such that $|\mathcal{B}[v]| > b^{-\epsilon} \frac{X}{{H}}$. If this is the case, we can find some $v$ such that the number of such paths with end point in $\mathcal{B}[v]$ is at least $\gtrsim b^{-\epsilon} \max \left\{ | \mathcal{B}[u] |, \frac{X}{b{H}} \right\} c^s |\mathcal{P}|^{2s}$. We can then apply Lemma \ref{ML} with $k=s$, $Q = b$ and $a= u v^{-1}$ to conclude that
\begin{align*}
b^{-\epsilon} \max \left\{ | \mathcal{B}[u] |, \frac{X}{b{H}} \right\}  c^s & |\mathcal{P}|^{2s} \lesssim \frac{{H}}{X} \frac{C^{s}}{\varphi(b)} |\mathcal{P}|^{2s} |\mathcal{B}[u]| |\mathcal{B}[v]| \\
&+ C^{s} |\mathcal{B}[u]|^{1/2} |\mathcal{B}[v]|^{1/2} |\mathcal{P}|^{2s}  \varphi (b) P^{-c_0 s},
 \end{align*}
for some $c_0,C \sim 1$. Using that $\varphi(b) \gtrsim_{\mu} b^{1-\mu}$ for any $\mu > 0$, this can easily be seen to give a contradiction if $b$ and $M$ are sufficiently large, by using the first member of the $\max$ for the first term in the upper bound and the second member of the $\max$ for the second term in the upper bound.

We may therefore assume that there is some $v \in (\Z / b \Z)^*$ with $|\mathcal{B}[v]| \le b^{-\epsilon} \frac{X}{H}$ such that $\mathcal{B}[v]$ is the endpoint of at least $\gtrsim \frac{1}{b} \max \left\{ | \mathcal{B}[u] |, \frac{X}{b{H}} \right\} c^s |\mathcal{P}|^{2s}$ of the paths with initial point in $\mathcal{B}[u]$. Applying Lemma \ref{ML} as before, we now obtain
\begin{align*}
\frac{1}{b} \max \left\{ | \mathcal{B}[u] |, \frac{X}{b{H}} \right\}  c^s & |\mathcal{P}|^{2s} \lesssim b^{-\epsilon} \frac{C^{s}}{\varphi(b)} |\mathcal{P}|^{2s} |\mathcal{B}[u]| \\
&+ C^{s} |\mathcal{B}[u]|^{1/2} |\mathcal{B}[v]|^{1/2} |\mathcal{P}|^{2s}  \varphi (b) P^{-c_0 s},
 \end{align*}
 from where we can again obtain a contradiction upon choosing $M$ sufficiently large with respect to the implicit constants and $b$ sufficiently large with respect to the implicit constants and $M$.
 \end{proof}

We can now 'almost' prove the result we are after.

\begin{prop}
\label{almost6}
Let the hypotheses and notation be as in Lemma \ref{base00}. Then, there exists $T_0 \in \R$ with $|T_0| \lesssim (X^2/H^2) \log X$, $b \in \N$ with $b=O(1)$ and a subset $\tilde{\mathcal{J}}  \subseteq \mathcal{J}$ with $| \tilde{\mathcal{J}}| \gtrsim X/H$ such that, for every $(x,\alpha_x) \in \tilde{\mathcal{J}}$, there exists some $a_x \in \N$ with
\begin{equation}
\label{T7}
 \alpha_x \equiv \frac{a_x}{b} + \frac{T_0}{x}  + O(\frac{\log^2 X}{H}) \, \, \, (\text{mod }1).
 \end{equation}
 Furthermore, there are $\gtrsim \frac{X}{H} |\mathcal{P}|^2$ pairs $(x,\alpha_x), (y, \alpha_y) \in \tilde{\mathcal{J}}$ with $(x, \alpha_x) \sim (y,\alpha_y)$.
\end{prop}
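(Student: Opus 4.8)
### Plan of Proof

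The plan is to run the machinery of the previous sections with a carefully chosen $(x_0,\alpha_{x_0}) \in \mathcal{A}^{(k)}$ and the modulus $b$ produced by Lemma \ref{d6}, and then to 'push down' the resulting rational approximation of $\alpha_{x_0}$ to $\gtrsim X/H$ elements of $\mathcal{J}$ via downward paths. First I would fix $(x_0,\alpha_{x_0}) \in \mathcal{A}^{(k)}$ and invoke Lemma \ref{d6} and Lemma \ref{b6} to obtain coprime integers $a,b$ with $b=O(1)$ and $\alpha_{x_0} = \frac{a}{b} + O\!\left(\frac{kX}{H^2 \prod_{t=1}^k p_t^{\ast}}\right)$; since $k \lesssim \log X/\log P \lesssim \log X$ and $\prod_{t=1}^k p_t^{\ast} \gtrsim (X/H)$, the error term is $\lesssim \frac{(\log X) X}{H^2 (X/H)} = \frac{\log X}{H}$, which is comfortably within the error budget of \eqref{T7}. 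Writing $\alpha_{x_0} = \frac{a}{b} + \frac{T_0}{x_0}$ then forces $|T_0| \lesssim \frac{\log X \, X}{H^2 \prod_{t=1}^k p_t^{\ast}} \cdot x_0 \lesssim \frac{\log X \, X}{H^2 \prod_{t=1}^k p_t^{\ast}} \cdot X \prod_{t=1}^k p_t^{\ast} = \frac{(\log X) X^2}{H^2}$, which is the claimed bound on $|T_0|$.

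Next I would transport this information to level $0$. By Lemma \ref{mon6} applied with $Q=b$ (recall $b=O(1) \le X$), we have $|\mathcal{A}_{x_0,b}^{(0)}| \gtrsim X/H$: indeed $|\mathcal{A}_{x_0,b}^{(j)}| \gtrsim X/H$ for all $j < i_2$, and by Proposition \ref{sets2} (ii) the sizes only grow as $j$ decreases, so in particular the level-$0$ set is large. For each $(x,\alpha_x) \in \mathcal{A}_{x_0,b}^{(0)}$ there is a downward path of length $k$ from $(x_0,\alpha_{x_0})$ to $(x,\alpha_x)$ coprime with $b$, with some associated primes $q_0^x,\dots,q_{k-1}^x \in \mathcal{P}$; then Lemma \ref{down3} gives $\big| x_0/\prod_i q_i^x - x \big| \le C k H$ and $\big\| (\prod_i q_i^x)\alpha_{x_0} - \alpha_x \big\| \le C k / H$. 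Combining the latter with $\alpha_{x_0} = \frac{a}{b} + \frac{T_0}{x_0}$ and Lemma \ref{prod2} (so $\prod_i q_i^x \sim \prod_t p_t^{\ast}$, whence $\prod_i q_i^x / x_0 = 1/x + O(kH/X^2 \cdot \text{stuff})$ after using the first estimate of Lemma \ref{down3}), one obtains that $\alpha_x$ is congruent mod $1$ to $\frac{a(\prod_i q_i^x)}{b} + \frac{T_0}{x} + O(k^2/H)$; the integer $a \prod_i q_i^x$ (or its residue mod $b$) plays the role of $a_x$, and since $k^2 \lesssim (\log X/\log P)^2 \lesssim \log^2 X$, this yields \eqref{T7} with $\tilde{\mathcal{J}}$ taken to be (a set of representatives in $\mathcal{J}$ of) $\mathcal{A}_{x_0,b}^{(0)} \subseteq \mathcal{A}^{(0)} \subseteq \mathcal{J}$. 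The one subtlety here is checking that the error in replacing $\prod_i q_i^x / x_0$ by $1/x$ inside $T_0 \cdot (\cdot)$ stays $O(\log^2 X / H)$ — this uses $|T_0| \lesssim (\log X) X^2/H^2$ together with $|x_0/\prod q_i^x - x| \lesssim kH$, giving a contribution $\lesssim |T_0| \cdot kH / X^2 \cdot (\prod q_i^x / x_0) \lesssim (\log X)(X^2/H^2)(kH/X^2)(1) = k \log X / H \lesssim \log^2 X/H$.

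Finally, for the last sentence of the proposition I would show that $\tilde{\mathcal{J}}$ retains $\gtrsim \frac{X}{H}|\mathcal{P}|^2$ relations $\sim$. This follows from the concentration machinery: by Lemma \ref{uni2} (iii)–(iv) and the fact that $\mathcal{C}(\mathcal{A}) \sim 1$ with $|\tilde{\mathcal{J}}| \gtrsim X/H = |\mathcal{A}^{(0)}|/\text{const}$, we have $\mathcal{R}(\tilde{\mathcal{J}}) = \mathcal{C}(\tilde{\mathcal{J}}) \frac{H}{X}|\mathcal{P}|^2 |\tilde{\mathcal{J}}|^2$, and since $\mathcal{C}(\tilde{\mathcal{J}}) \gtrsim 1$ (a subset of a configuration of size $\gtrsim X/H$ drawn from $\mathcal{A}^{(0)}$, which by construction in Lemma \ref{mau1}/Lemma \ref{uni2} has bounded-below concentration) and $|\tilde{\mathcal{J}}|^2 \gtrsim (X/H)^2$, this gives $\mathcal{R}(\tilde{\mathcal{J}}) \gtrsim \frac{X}{H}|\mathcal{P}|^2$. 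Alternatively and more directly: Proposition \ref{sets2} (iii) gives each element of $\mathcal{A}^{(1)}$ many downward neighbours in $\mathcal{A}^{(0)}$, and a Cauchy–Schwarz argument as in the proof of Lemma \ref{d6} converts this into $\gtrsim \frac{X}{H}|\mathcal{P}|^2$ relations among elements of $\mathcal{A}^{(0)}$; one then checks the relations survive restriction to $\tilde{\mathcal{J}}$ using that $\mathcal{A}^{(0)} \setminus \tilde{\mathcal{J}}$ is removed in a controlled way, or simply replaces $\tilde{\mathcal{J}}$ by the larger set $\mathcal{A}^{(0)}$ if $\mathcal{A}_{x_0,b}^{(0)}$ does not a priori carry enough relations — but in fact $\mathcal{A}_{x_0,b}^{(0)} \gtrsim X/H$ combined with the mixing lemma (Corollary \ref{ML0}) only bounds relations from above; the lower bound must come from the construction, so the concentration route is the robust one. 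The main obstacle I expect is precisely this last point: ensuring the relation count survives passing to the specific subset $\mathcal{A}_{x_0,b}^{(0)}$ rather than all of $\mathcal{A}^{(0)}$, which is why one wants to track the concentration parameter $\mathcal{C}$ (bounded below throughout by Lemma \ref{uni2}) rather than argue directly from mixing.
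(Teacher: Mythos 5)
Your construction of $T_0$, $b$ and the congruence (\ref{T7}) is essentially the paper's argument (the error analysis via Lemmas \ref{d6}, \ref{b6}, \ref{prod2} and \ref{down3} is carried out correctly), but there is a genuine gap in the final claim, and it is located exactly where you suspected. Neither of your two routes to the lower bound $\mathcal{R}(\tilde{\mathcal{J}}) \gtrsim \frac{X}{H}|\mathcal{P}|^2$ works. The concentration route fails because a lower bound $\mathcal{C}(\mathcal{A}^{(0)}) \gtrsim 1$ does \emph{not} pass to an arbitrary subset of comparable size: the set $\mathcal{A}_{x_0,b}^{(0)}$ is defined by reachability from $(x_0,\alpha_{x_0})$ via downward paths and has nothing to do with the subsets produced by the concentration-increment machinery of Lemmas \ref{bday1} and \ref{mau1}; Lemma \ref{mon6} only guarantees $|\mathcal{A}_{x_0,b}^{(0)}| \ge c X/H$ for some constant $c$ that need not be close to the density of $\mathcal{A}^{(0)}$, so the complement could in principle carry almost all of the relations. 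Your alternative route via Proposition \ref{sets2} (iii) fails quantitatively, as you noticed: at the transition from level $1$ to level $0$ one only has $\gtrsim P^{1-\tilde{c}}$ downward neighbours per element, and Cauchy--Schwarz then yields only $\gtrsim \frac{X}{H}P^{2-2\tilde{c}}$ relations, short of $\frac{X}{H}|\mathcal{P}|^2$ by a power of $P$.

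The missing idea is to not work at level $0$ at all. The paper establishes \emph{both} the congruence and the relation count at an intermediate level $j$ with $i_1 < j < i_2$, precisely because this is the window in which Proposition \ref{sets2} (iv) (engineered via Lemma \ref{instead2}) upgrades the downward-neighbour count from $\gtrsim P^{1-\tilde{c}}$ to $\gtrsim |\mathcal{P}|$ for \emph{every} element of $\mathcal{A}^{(j)}$. Applying Cauchy--Schwarz to these incidences over the $\lesssim X/H$ possible common neighbours in $\mathcal{A}^{(j-1)}$ gives $\gtrsim \frac{H}{X}|\mathcal{A}_{x_0,1}^{(j)}|^2|\mathcal{P}|^2 \gtrsim \frac{X}{H}|\mathcal{P}|^2$ related pairs inside $\mathcal{A}_{x_0,1}^{(j)}$ itself, with no loss. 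One then takes $\tilde{\mathcal{J}}$ to be the image of $\mathcal{A}_{x_0,1}^{(j)}$ in $\mathcal{J}$ under the projection of Proposition \ref{sets2} (ii): dividing first coordinates by $\prod_{t=1}^{j}p_t^{\ast}$ and multiplying frequencies by the same product converts a relation at level $j$ (with gaps $\lesssim H\prod_{t=1}^{j-1}p_t^{\ast}$ and frequency error $\lesssim (H\prod_{t=1}^{j-1}p_t^{\ast})^{-1}$) into a relation $\sim_{p,q}$ in $\mathcal{J}$ with gaps $\lesssim H/P$ and error $\lesssim P/H$, and likewise converts the congruence at level $j$ (which has the smaller error $O(\log^2 X/(H\prod_{t=1}^{j}p_t^{\ast}))$) into (\ref{T7}). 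You should restructure the proof around this intermediate level rather than around $\mathcal{A}_{x_0,b}^{(0)}$.
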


\begin{proof}
By Proposition \ref{sets2} (ii) we know there exists some $x_0' \in [X,2X]$ with $x_0 = x_0' \prod_{t=1}^k p_t^{\ast}$. Since $k \lesssim \log X$, it follows from Lemma \ref{d6} and Lemma \ref{b6} that there exists some $T_0 \in \R$ with $|T_0| \lesssim (X^2/H^2) \log X$ such that $ \alpha_{x_0} = \frac{a}{b} + \frac{T_0}{x_0}$, for some $a,b \in \N$ of size $O(1)$. It then follows from Lemma \ref{down3} and Lemma \ref{mon6} that we can find some $i_1 < j < i_2$ such that $|\mathcal{A}_{x_0,1}^{(j)}| \gtrsim X/H$ and such that, for every $(x,\alpha_x) \in \mathcal{A}_{x_0,1}^{(j)}$, there exists some $a_x \in \N$ and primes $q_0,\ldots,q_{k-j-1}$ with 
\begin{equation}
\label{dist6}
|x_0 / \prod_{i=0}^{k-j-1} q_i  - x| \lesssim k {H} \prod_{t=1}^j p_t^{\ast} ,
\end{equation}
and
\begin{equation}
\label{aprox6}
 \alpha_x \equiv \frac{a_x}{b} + T_0 \frac{\prod_{i=0}^{k-j-1} q_i}{x_0} + O \left( \frac{k}{ {H} \prod_{t=1}^j p_t^{\ast}} \right) \, \, (\text{mod }1).
 \end{equation}
Since again by Proposition \ref{sets2} (ii) we have some $x' \in [X,2X]$ with $x=x' \prod_{t=1}^j p_t^{\ast}$, we deduce from (\ref{dist6}) that
$$ |T_0| \left| \frac{\prod_{i=0}^{k-j-1} q_i}{x_0} - \frac{1}{x} \right| \lesssim |T_0| \frac{k H}{X^2} \frac{ \left( \prod_{t=1}^j p_t^{\ast} \right) \left( \prod_{i=0}^{k-j-1} q_i \right)}{ \left( \prod_{t=1}^k p_t^{\ast} \right) \left( \prod_{t=1}^j p_t^{\ast} \right)}.$$
Using Lemma \ref{prod2}, it then follows from (\ref{aprox6}) that every element $(x,\alpha_x) \in \mathcal{A}_{x_0,1}^{(j)}$ satisfies
\begin{equation}
\label{f6}
 \alpha_x \equiv \frac{a_x}{b} + \frac{T_0}{x} + O \left( \frac{\log^2 X}{H  \prod_{t=1}^j p_t^{\ast}} \right) \, \, (\text{mod }1).
 \end{equation}
But as we have just noticed, for each such $(x,\alpha_x)$ there exists some $(x',\alpha_{x'}) \in \mathcal{A}^{(0)} \subseteq \mathcal{J}$ with $x=x' \prod_{t=1}^j p_t^{\ast}$ and $ \left( \prod_{t=1}^j p_t^{\ast}  \right) \alpha_x = \alpha_{x'}$. Estimate (\ref{T7}) for $(x',\alpha_{x'})$ then follows immediately from (\ref{f6}). Finally, we also know from Proposition \ref{sets2} (iv) and the Cauchy-Schwarz and triangle inequalities, that there are $\gtrsim \frac{X}{H} |\mathcal{P}|^2$ pairs $(x,\alpha_x), (y,\alpha_y) \in \mathcal{A}_{x_0,1}^{(j)}$ with $|x/p - y /q| \lesssim {H} \prod_{t=1}^{j-1} p_t^{\ast}$ and $\| p \alpha_x - q \alpha_y \| \lesssim ({H} \prod_{t=1}^{j-1} p_t^{\ast})^{-1}$, for some $p,q \in \mathcal{P}$, which similarly implies the last claim in the statement.
\end{proof}

\section{Sharpening}

We will now conclude the proof of Theorem \ref{2}, and therefore Theorem \ref{1}, by showing how to remove the extra logarithmic factors in Proposition \ref{almost6}. Throughout this section we will let the hypotheses and notation be as in that proposition. We begin with the following estimate.

\begin{lema}
\label{T9}
We have $|T_0| \lesssim X^2/H^2$.
\end{lema}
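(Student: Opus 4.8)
The plan is to bootstrap the weak bound $|T_0| \lesssim (X^2/H^2)\log X$ from Proposition \ref{almost6} into the sharp bound $|T_0| \lesssim X^2/H^2$ by re-running the global-frequency argument, but this time exploiting the additional rigidity that comes from knowing $\alpha_{x_0}$ is already \emph{close} to $\frac{a}{b} + \frac{T_0}{x_0}$ with $b = O(1)$. The point is that once we know $\alpha_x \equiv \frac{a_x}{b} + \frac{T_0}{x} + O(\log^2 X/H)$ for a positive-density subset $\tilde{\mathcal{J}} \subseteq \mathcal{J}$ with $\gtrsim \frac{X}{H}|\mathcal{P}|^2$ relations among its elements, we can pass this structure back up the tower of configurations $\mathcal{A}^{(j)}$. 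First I would invoke Proposition \ref{sets2} (ii) to transport the approximation (\ref{T7}) from $\tilde{\mathcal{J}}$ to the lifted configurations: for $(y,\alpha_y) \in \mathcal{A}^{(j)}$ corresponding to $(x,\alpha_x) \in \tilde{\mathcal{J}}$ with $y = x\prod_{t=1}^j p_t^{\ast}$, dividing the frequency by $\prod_{t=1}^j p_t^{\ast}$ gives $\alpha_y \equiv \frac{a_y}{b} \cdot (\prod p_t^{\ast})^{-1} + \frac{T_0}{y} + O(\log^2 X/(H\prod_{t=1}^j p_t^{\ast}))$ for an appropriate integer $a_y$ (using $\gcd(p_t^{\ast},b) = O(1)$, which we may arrange), so the error term shrinks as we climb.

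The key step is then to look at downward paths from $(x_0,\alpha_{x_0}) \in \mathcal{A}^{(k)}$ down to level $j$ with $i_1 < j < i_2$, exactly as in Lemma \ref{d6} and Proposition \ref{almost6}, but now extract \emph{two} such paths with the same endpoint (which exist, for a suitable choice of $j$, by the expansion provided by Lemma \ref{expansion3} combined with Lemma \ref{mon6}, since $k - j$ exceeds $(1-\rho)\lceil \frac{\log X/H}{\log P}\rceil$ only after we choose $j$ small — so instead I would use a pair of paths to \emph{an} endpoint at a level where Lemma \ref{expansion3} does not force them to coincide, i.e.\ at level $j$ just below $i_1$). Arguing as in Lemma \ref{d6}, the two products of primes $\prod q_i^x$, $\prod q_i^y$ differ by $b_0$ with $b \mid b_0$ (by Lemma \ref{b6}-type reasoning, $p a_x \equiv q a_y \pmod b$ forces the difference of products to be a multiple of $b$ once we chase the congruences), and Lemma \ref{down3} gives us the two estimates $|(\prod q_i^x)\alpha_{x_0} - (\prod q_i^y)\alpha_{x_0}| \lesssim s(H\prod_{t=1}^{j-1}p_t^{\ast})^{-1}$ and $|\prod q_i^x - \prod q_i^y| \lesssim s\frac{H}{X}\prod_{t=j}^k p_t^{\ast}$. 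Writing $\alpha_{x_0} = \frac{a}{b} + \frac{T_0}{x_0}$ exactly, the $\frac{a}{b}$ part contributes an integer to $b_0 \alpha_{x_0}$ (since $b \mid b_0$), so the displacement is governed entirely by $\frac{T_0}{x_0}(\prod q_i^x - \prod q_i^y)$; combining with $x_0 \sim X\prod_{t=1}^k p_t^{\ast}$ and the bound on the product difference, we get $\|\frac{T_0}{x_0} b_0\| \lesssim s(H\prod_{t=1}^{j-1}p_t^{\ast})^{-1}$, i.e.\ $|T_0| \lesssim \frac{s}{b_0}\cdot\frac{x_0}{H\prod_{t=1}^{j-1}p_t^{\ast}} \cdot (\text{something})$ — and here I must be careful: the naive bound still carries a factor $\prod_{t=j}^k p_t^{\ast}/(X/H) \cdot s$ worth of slack. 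The improvement over Proposition \ref{almost6} comes from choosing $j$ as \emph{large} as possible (close to $i_2$) rather than near $i_1$, so that $\prod_{t=j}^k p_t^{\ast}$ is as small as $(X/H)^{O(\rho+\sigma)}$, which after optimizing $\rho,\sigma$ kills the $\log X$ factor; concretely, since $s = k - j = O(\frac{\log X/H}{\log P})$ and we need the error in (\ref{T7}) to already be $o(1/H)$, the bound $|T_0| \lesssim X^2/H^2$ drops out.

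Alternatively — and this is probably the cleaner route to write — I would argue directly from (\ref{T7}) and the $\gtrsim \frac{X}{H}|\mathcal{P}|^2$ relations inside $\tilde{\mathcal{J}}$: for a relation $(x,\alpha_x) \sim_{p,q} (y,\alpha_y)$ in $\tilde{\mathcal{J}}$ we have $|x/p - y/q| \lesssim \delta_0 H/P$ and $\|p\alpha_x - q\alpha_y\| \lesssim C_0 P/(PH) = C_0/H$, and substituting (\ref{T7}) gives $\|\frac{pa_x - qa_y}{b} + T_0(\frac{p}{x} - \frac{q}{y})\| \lesssim \frac{\log^2 X}{H} + \frac{P}{H}$. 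Since $|x/p - y/q|$ is small, $|\frac{p}{x} - \frac{q}{y}| = \frac{pq}{xy}|y/q - x/p| \lesssim \frac{P^2}{X^2}\cdot\frac{H}{P} = \frac{PH}{X^2}$. To turn this into a bound on $|T_0|$ I would run a path of length $\ell \sim \frac{\log X/H}{\log P}$ through $\tilde{\mathcal{J}}$ (using the density of relations and Corollary \ref{ML0} to guarantee such a path connects two far-apart points), accumulating: along the path $T_0(\frac{1}{x_1}\prod q_i/p_i - \frac{1}{x_{\ell+1}})$ is controlled by $\ell \cdot (\text{error})$ while $|x_1\prod q_i/p_i - x_{\ell+1}| \gtrsim X$ for a well-chosen path (again by a large-sieve/mixing argument, which is the main obstacle), whence $|T_0| \cdot \frac{1}{X} \lesssim \ell(\frac{\log^2 X}{H} + \frac{P}{H}) \cdot \frac{X^2}{?}$... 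The honest statement is that \textbf{the main obstacle is the bookkeeping}: ensuring the path can be chosen long enough and "spread out" enough (endpoints $\Theta(X)$ apart) that the accumulated modulus $b_0 = |\prod q_i^x - \prod q_i^y|$ — or the accumulated archimedean displacement — is forced to be $\gtrsim X^2/H^2$ in the right sense, which is precisely where one re-invokes the mixing lemma (Corollary \ref{ML0}, hence RH) in the same spirit as Lemma \ref{expansion3}. Once the path is in hand, removing the $\log X$ is a matter of noting that the error terms in (\ref{T7}) are $X^{-c}$-smaller than $1/H$ while the gain from the product difference being $\lesssim X^{\tau}$ rather than just $\lesssim X$ supplies exactly the saved logarithm.
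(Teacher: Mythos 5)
Your second route is the one closest in spirit to the paper's actual argument, but neither of your sketches closes the key step, and the first rests on a mechanism that does not exist. In route 1, the logarithm you are trying to kill enters Proposition \ref{almost6} through the path length $s=k-j$ in Lemma \ref{down3} (ii); choosing $j$ close to $i_2$ still leaves $s \sim \frac{\log X/H}{\log P}$ steps of accumulated error, so nothing is gained, and the claim that ``the bound drops out'' is not substantiated. In route 2, substituting (\ref{T7}) into a relation gives the \emph{upper} bounds $\| \frac{pa_x-qa_y}{b} + T_0(\frac{p}{x}-\frac{q}{y})\| \lesssim P\log^2 X/H$ and $|\frac{p}{x}-\frac{q}{y}| \lesssim PH/X^2$, which at best reproduces $|T_0| \lesssim (X^2/H^2)\log^2 X$. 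To do better you need a relation for which the displacement is bounded \emph{below}, i.e.\ $|x/p - y/q| \gtrsim H/P$, and you leave precisely this point to an unspecified ``large-sieve/mixing argument'' over a long path with endpoints $\Theta(X)$ apart --- which is both unnecessary (a single relation with displacement $\gtrsim H/P$ suffices, since $T_0/x$ moves by $\sim |T_0|H/X^2 \sim 1/H$ when $x$ shifts by $H$, once $|T_0| \sim X^2/H^2$) and never actually constructed. Moreover, even with such a relation in hand, the $O(\log^2 X/H)$ error in (\ref{T7}) swamps the $O(P/H)$ signal you are trying to detect, so the argument as written cannot terminate.

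The paper supplies the two ideas you are missing. First, it argues by contradiction: assuming $|T_0| \ge CX^2/H^2$ with $C$ large, each $x$ can be replaced by a nearby $\tilde{x}$ with $|x-\tilde{x}| \lesssim H\log^2 X$ so that $\alpha_x \equiv \frac{a_x}{b} + \frac{T_0}{\tilde{x}} \pmod 1$ holds \emph{exactly}; this exactification is only possible because $T_0$ is assumed large, and it eliminates the error term from (\ref{T7}) entirely. Second, for any single relation $(x,\alpha_x)\sim_{p,q}(y,\beta_y)$ one then deduces $|T_0\frac{p\tilde{y}-q\tilde{x}}{\tilde{x}\tilde{y}}| \lesssim P/H$ (using $b=O(1)$ and the smallness of the left-hand side as a real number to rule out a nonzero residue $a_{x,y}/b$), so it suffices to exhibit \emph{one} relation with $|\tilde{x}/p-\tilde{y}/q| \ge \epsilon H/P$. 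If none existed, all $\gtrsim \frac{X}{H}|\mathcal{P}|^2$ relations would yield quadruples at distance $< \epsilon H/P$, and after pigeonholing the $\tilde{x}$'s into $H$-separated classes $B_1,B_2$ (losing only powers of $\log X$) this contradicts Corollary \ref{ML0} for $\epsilon$ small. Without the contradiction structure and the $\tilde{x}$ trick, your proposal does not prove the lemma.
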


\begin{proof}
Let us assume that $|T_0| \gtrsim X^2/H^2$ for a sufficiently large implicit constant and derive a contradiction. Since given $(x,\alpha_x) \in \tilde{\mathcal{J}}$ and $\tilde{x} \in \R$, we have
$$ \left| \frac{T_0}{x} - \frac{T_0}{\tilde{x}} \right| = |T_0| \frac{|x-\tilde{x}|}{|x \tilde{x}|},$$
it follows from (\ref{T7}) that we can find some $\tilde{x}$ with $|x-\tilde{x}| \lesssim H \log^2 X$ and
$$ \alpha_x \equiv \frac{a_x}{b} + \frac{T_0}{\tilde{x}} \, \, (\text{mod }1).$$
Let us associate such an element $\tilde{x}$ to each $(x,\alpha_x) \in \tilde{\mathcal{J}}$. Suppose now that $(x,\alpha_x), (y,\beta_y) \in \tilde{\mathcal{J}}$ are such that $(x,\alpha_x) \sim_{p,q} (y,\beta_y)$ for some $p,q \in \mathcal{P}$. It follows that 
$$ \frac{T_0}{\tilde{x}} p \equiv \frac{T_0}{\tilde{y}}q + \frac{a_{x,y}}{b}+ O(P/H) \, \, (\text{mod }1),$$
for some integer $a_{x,y} $. Rearranging, this means that
\begin{equation}
\label{bf20}
 T_0 \frac{p\tilde{y} - q \tilde{x}}{\tilde{x} \tilde{y}} \equiv \frac{a_{x,y}}{b} + O(P/H) \, \, (\text{mod 1}).
 \end{equation}
Since $|qx - py| \lesssim PH$, we see from the definition of the elements $\tilde{x}, \tilde{y}$ and the triangle inequality that the real number on the left hand side is $ \lesssim (P/H) \log^3 X$. Given that $b=O(1)$, (\ref{bf20}) then necessary implies that it must in fact be
\begin{equation}
\label{pm9}
 \left| T_0 \frac{p\tilde{y} - q \tilde{x}}{\tilde{x} \tilde{y}} \right| \lesssim P/H.
 \end{equation}
It will thus suffice to find a pair $(x,\alpha_x), (y,\beta_y) \in \tilde{\mathcal{J}}$ with $(x,\alpha_x) \sim_{p,q} (y,\beta_y)$ and $|\tilde{x}/p-\tilde{y}/q| \ge \epsilon H/P$ for some $\epsilon \sim 1$. Let us write $B$ for the set of elements $\tilde{x}$ arising from the elements $x \in \tilde{\mathcal{J}}$. Since we know that there are $\gtrsim \frac{X}{H} |\mathcal{P}|^2$ pairs $(x,\alpha_x) \sim (y,\beta_y)$, if no pair of the desired form exists, we can find a subset $B_1 \subseteq B$ with $|B_1| \sim \frac{|B|}{\log^2 X}$ consisting of $H$-separated elements and such that for at least $\gtrsim \frac{X}{H} \frac{|\mathcal{P}|^2}{\log^2 X}$ of the pairs $(x,\alpha_x) \sim_{p,q} (y,\beta_y)$ we have $\tilde{x} \in B_1$ and $|\tilde{x}/p-\tilde{y}/q| < \epsilon H/P$. This in turn allows us to find a set $B_2 \subseteq B$ of $H$-separated points with $|B_2 | \sim \frac{|B|}{\log^2 X}$ and such that at least $\gtrsim \frac{X}{H} \frac{|\mathcal{P}|^2}{\log^4 X}$ of the pairs $(x,\alpha_x) \sim_{p,q} (y,\beta_y)$ satisfy $\tilde{x} \in B_1$, $\tilde{y} \in B_2$ and $|\tilde{x}/p-\tilde{y}/q| < \epsilon H/P$. Since this can easily be seen to contradict Corollary \ref{ML0} if $\epsilon \sim 1$ is sufficiently small and $K$ is sufficiently large, the result follows.
\end{proof}

Given $(x,\alpha_x) \in \tilde{\mathcal{J}}$, we know from Proposition \ref{almost6} that we can write
\begin{equation}
\label{w21}
  \alpha_x \equiv \frac{a_x}{b} + \frac{T_0+T_x}{x}  \, \, \, (\text{mod }1),
  \end{equation}
for some real number $T_x$ with $|T_x| \lesssim (X/H) \log^2 X$. If  $(x,\alpha_x) \sim_{p,q} (y,\beta_y)$, we have that
$$ \frac{p T_x}{x} - \frac{q T_y}{y} \equiv \frac{a_{x,y}}{b} + \frac{T_0 (q x - p y)}{xy} + O(P/H) \equiv \frac{a_{x,y}}{b} + O(P/H) \, \, \, (\text{mod }1)$$
for some integer $a_{x,y}$ and where we have used Lemma \ref{T9}. Given the size of $T_x, T_y$ and that $b=O(1)$, this implies that $| \frac{T_x}{x} - \frac{q T_y}{p y}| \lesssim 1/H$ as real numbers. But since $| \frac{q T_y}{p y} - \frac{T_y}{x} | \lesssim \frac{\log^2 X}{X} \lesssim 1/H$, we deduce from the triangle inequality that
\begin{equation}
\label{tilde21}
 |T_x - T_y | \lesssim X/H.
 \end{equation}
Recall that $|T_x| \lesssim (X/H) \log^2 X$ for every $(x,\alpha_x) \in \tilde{\mathcal{J}}$. We now cover this range with disjoint intervals of the form $I_r = (r,r + CX/H]$ for some $C \sim 1$ and write $\tilde{\mathcal{J}}[r]$ for those $(x,\alpha_x) \in \tilde{\mathcal{J}}$ with $T_x \in I_r$. By (\ref{tilde21}) and an easy pigeonholing argument, we see that if $C \sim 1$ is sufficiently large, this covering can be done in such a way that for $\gtrsim \frac{X}{H} |\mathcal{P}|^2$ of the pairs $(x,\alpha_x), (y,\beta_y) \in \tilde{\mathcal{J}}$ with $(x,\alpha_x) \sim (y,\beta_y)$ there exists some $r$ such that both elements lie in the same subset $\tilde{\mathcal{J}}[r]$. In particular, this guarantees that there is some $r_0$ such that $\gtrsim |\tilde{\mathcal{J}}[r_0]| |\mathcal{P}|^2$ of these pairs consist of elements of $\tilde{\mathcal{J}}[r_0]$. By Corollary \ref{ML0}, this implies that it must be $|\tilde{\mathcal{J}}[r_0]| \gtrsim X/H$. Writing $T = T_0+r_0$ and using (\ref{w21}), this concludes the proof of Theorem \ref{2}.

\end{document}